\pdfoutput=1
\documentclass[a4paper,11pt]{amsart}
\usepackage[hmarginratio={1:1},vmarginratio={1:1},lmargin=80.0pt,tmargin=90.0pt]{geometry}
\usepackage{calligra}
\usepackage{fontenc}
\usepackage{extarrows}
\usepackage[ugly]{nicefrac}

\usepackage{mathtools}
\mathtoolsset{mathic}
\usepackage[final]{microtype}
\usepackage{multicol}
\usepackage{graphicx}
\usepackage{tabularx,tabu}
\usepackage{latexsym,exscale,enumitem,amsfonts,amssymb,mathtools}
\usepackage{amsmath,amsthm,amsfonts,amssymb,amscd,textcomp,bbm}
\usepackage{stmaryrd}
\SetSymbolFont{stmry}{bold}{U}{stmry}{m}{n}
\usepackage[normalem]{ulem}
\usepackage{thmtools}
\usepackage[backend=biber]{biblatex}
\usepackage{etex}
\usepackage{fancybox}
\usepackage[table]{xcolor}
\usepackage{mathrsfs}
\DeclareMathAlphabet{\mathscrbf}{OMS}{mdugm}{b}{n}

\usepackage{caption}
\usepackage[cal=stixfancy]{mathalpha}
\usepackage[all]{xy}
\SelectTips{cm}{}

\definecolor{myorange}{RGB}{225,127,0}
\definecolor{mygreen}{RGB}{0,225,0}
\definecolor{mypurple}{RGB}{128,0,128}
\definecolor{myred}{RGB}{255,0,0}
\definecolor{myblue}{RGB}{0,0,195}
\definecolor{myyellow}{RGB}{210,210,0}
\definecolor{mycream}{RGB}{200,200,150}

\definecolor{dummy}{RGB}{100,100,100}
\definecolor{mygray}{gray}{0.7}

\definecolor{orchid}{RGB}{143,40,194}
\definecolor{lava}{RGB}{207,16,32}

\definecolor{mydarkblue}{RGB}{10,10,170}

\definecolor{sea}{RGB}{46,139,87}
\definecolor{tomato}{RGB}{255,99,71}

\usepackage{tikz}
\tikzset{anchorbase/.style={baseline={([yshift=-0.5ex]current bounding box.center)}},
  tinynodes/.style={font=\tiny,text height=0.75ex,text depth=0.15ex},
  dstrand/.style={line width=1.5, color=black},
}
\usetikzlibrary{cd}
\usetikzlibrary{decorations}
\usetikzlibrary{decorations.markings}
\usetikzlibrary{decorations.pathreplacing}
\usetikzlibrary{decorations.pathmorphing}
\usetikzlibrary{shapes,positioning,matrix,calc}
\usetikzlibrary{shapes.callouts}
\usetikzlibrary{patterns}
\tikzstyle directed=[postaction={decorate,decoration={markings,
    mark=at position #1 with {\arrow[draw=black, line width=0.3mm]{>}}}}]
\tikzstyle rdirected=[postaction={decorate,decoration={markings,
    mark=at position #1 with {\arrow[draw=black, line width=0.3mm]{<}}}}]

\usepackage{comment}
\makeatletter
\newcommand{\setword}[2]{%
  \phantomsection
  #1\def\@currentlabel{\unexpanded{#1}}\label{#2}%
}
\makeatother

\newcommand{\placeholder}{\underline{\phantom{a}}}

\DeclareMathSymbol{\smin}{\mathbin}{AMSa}{"39}

\newcommand{\C}{\mathbb{C}}
\newcommand{\Z}{\mathbb{Z}}

\newcommand{\K}{\mathbb{K}}

\newcommand{\N}{\mathbb{N}}

\newcommand{\typeADE}{\mathsf{ADE}}
\newcommand{\typeADET}{\mathsf{ADET}}
\newcommand{\typeDT}{\mathsf{DT}}
\newcommand{\typeA}{\mathsf{A}}
\newcommand{\typeD}{\mathsf{D}}
\newcommand{\typeE}{\mathsf{E}}
\newcommand{\typeT}{\mathsf{T}}

\newcommand{\mstuff}[1]{\mathsf{#1}}
\newcommand{\varstuff}[1]{\mathtt{#1}}
\newcommand{\algstuff}[1]{\mathrm{#1}}
\newcommand{\catstuff}[1]{\mathcal{#1}}
\newcommand{\twocatstuff}[1]{\mathscrbf{#1}}

\newcommand{\obstuff}[1]{\mathtt{#1}}
\newcommand{\morstuff}[1]{\mathrm{#1}}

\newcommand{\End}{\algstuff{E}\mathrm{nd}}
\newcommand{\Aut}{\algstuff{A}\mathrm{ut}}
\newcommand{\Sym}{\algstuff{S}\mathrm{ym}}
\newcommand{\Hom}{\algstuff{H}\mathrm{om}}

\newcommand{\oneHom}{\catstuff{H}\mathrm{om}}
\newcommand{\oneMod}{\catstuff{M}}

\newcommand{\acts}{\centerdot}
\newcommand{\Gros}{\mathrm{K}_0^{\oplus}}
\newcommand{\lb}{\lambda}

\newcommand{\iso}{\xrightarrow{\,\smash{\raisebox{-0.65ex}{\ensuremath{\scriptstyle\sim}}}\,}}

\newcommand{\zmatrix}{\mstuff{0}}
\newcommand{\imatrix}{\mstuff{I}}
\newcommand{\amatrix}[1][x]{\mstuff{A}_{#1}}
\newcommand{\batrix}[1][]{\mstuff{B}_{#1}}
\newcommand{\mmatrix}[1][x]{\mstuff{M}_{#1}}

\newcommand{\qpar}{\zeta}

\newcommand{\taft}{\algstuff{T}}
\newcommand{\gt}{\algstuff{g}}
\newcommand{\xx}{\algstuff{z}}

\newcommand{\tmod}{\catstuff{T}}
\newcommand{\stmod}{\underline{\catstuff{T}}}
\newcommand{\sstmod}{\overline{\catstuff{T}}}

\newcommand{\tmodule}[1][M]{\obstuff{#1}}

\newcommand{\tsimple}[1][r]{\obstuff{S}_{#1}}
\newcommand{\tinde}[1][{\ell,r}]{\obstuff{M}_{#1}}
\newcommand{\twmod}[1][a]{\obstuff{W}_{#1}}
\newcommand{\tprmod}[1][r]{\obstuff{P}_{#1}}

\newcommand{\cpoly}[1][e]{\mathrm{U}_{#1}}
\newcommand{\ctpoly}[1][e]{\tilde{\mathrm{U}}_{#1}}
\newcommand{\xpar}{\varstuff{X}}
\newcommand{\ypar}{\varstuff{Y}}
\newcommand{\zpar}{\varstuff{Z}}

\newcommand{\cell}[1][1]{\varstuff{C}_{#1}}

\newcommand{\JD}{\obstuff{J}_{\typeD}}
\newcommand{\JT}{\obstuff{J}_{\typeT}}
\newcommand{\J}{\obstuff{J}}

\newcommand{\taftp}{\chi(\tmod)}
\newcommand{\taftb}{\algstuff{B}}
\newcommand{\taftm}[1][]{\algstuff{N}_{#1}}

\newcommand{\taobj}[1][d{,}\lambda]{\obstuff{A}_{#1}}
\newcommand{\gat}{\obstuff{g}}
\newcommand{\zapar}{\obstuff{z}}

\newcommand{\xmodule}[1][M]{\obstuff{#1}_{\taobj[]}}

\newcommand{\tpmod}{\catstuff{P}\mathrm{r}(\catstuff{T})}
\newcommand{\tpid}{\catstuff{P}}

\newcommand{\sqgroup}{\algstuff{u}_{q}}

\theoremstyle{definition}
\newtheorem{theoremm}{Theorem}[section]

\declaretheorem[style=definition,name=Theorem,numberlike=theoremm]{theorem}

\declaretheorem[style=definition,name=Corollary,numberlike=theoremm]{corollary}
\declaretheorem[style=definition,name=Lemma,numberlike=theoremm]{lemma}
\declaretheorem[style=definition,name=Lemma,numberlike=theoremm]{lemmaqed}

\declaretheorem[style=definition,name=Proposition,numberlike=theoremm]{proposition}
\declaretheorem[style=definition,name=Proposition 1,numbered=no]{proposition1}
\declaretheorem[style=definition,name=Proposition 2,numbered=no]{proposition2}
\declaretheorem[style=definition,name=Theorem 3,numbered=no]{theorem3}
\declaretheorem[style=definition,name=Theorem 4,numbered=no]{theorem4}
\declaretheorem[style=definition,name=Proposition,numberlike=theoremm]{propositionqed}

\declaretheorem[style=definition,name=Example,numberlike=theorem]{example}
\declaretheorem[style=definition,name=Definition,numberlike=theorem]{definition}

\declaretheorem[style=definition,name=Remark,numberlike=theorem]{remark}

\def\notation#1#2#3{\rlap{\hyperref[#1]{{\color{orchid}#2}}}\hspace*{8.2mm} \hbox to 47mm{#3\hfill}}

\allowdisplaybreaks

\numberwithin{equation}{section}
\usepackage[hypertexnames=false]{hyperref}
\usepackage{cleveref,bookmark}
\hypersetup{
    pdftoolbar=true,
    pdfmenubar=true,
    pdffitwindow=false,
    pdfstartview={FitH},
    pdftitle={Title},
    pdfauthor={Names},
    pdfsubject={},
    pdfcreator={Names},
    pdfproducer={Names},
    pdfkeywords={},
    pdfnewwindow=true,
    colorlinks=true,
    linkcolor=mydarkblue,
    citecolor=teal,
    filecolor=magenta,
    urlcolor=orchid,
    linkbordercolor=lava,
    citebordercolor=teal,
    urlbordercolor=orchid,
    linktocpage=true
}
\let\fullref\autoref
\def\makeautorefname#1#2{\expandafter\def\csname#1autorefname\endcsname{#2}}
\makeautorefname{equation}{Equation}%
\makeautorefname{footnote}{footnote}%
\makeautorefname{item}{item}%
\makeautorefname{figure}{Figure}%
\makeautorefname{table}{Table}%
\makeautorefname{part}{Part}%
\makeautorefname{appendix}{Appendix}%
\makeautorefname{chapter}{Chapter}%
\makeautorefname{section}{Section}%
\makeautorefname{subsection}{Section}%
\makeautorefname{subsubsection}{Section}%
\makeautorefname{paragraph}{Paragraph}%
\makeautorefname{subparagraph}{Paragraph}%
\makeautorefname{theorem}{Theorem}%
\makeautorefname{theo}{Theorem}%
\makeautorefname{thm}{Theorem}%
\makeautorefname{addendum}{Addendum}%
\makeautorefname{addend}{Addendum}%
\makeautorefname{add}{Addendum}%
\makeautorefname{maintheorem}{Main theorem}%
\makeautorefname{mainthm}{Main theorem}%
\makeautorefname{corollary}{Corollary}%
\makeautorefname{corol}{Corollary}%
\makeautorefname{coro}{Corollary}%
\makeautorefname{cor}{Corollary}%
\makeautorefname{lemma}{Lemma}%
\makeautorefname{lemm}{Lemma}%
\makeautorefname{lem}{Lemma}%
\makeautorefname{sublemma}{Sublemma}%
\makeautorefname{sublem}{Sublemma}%
\makeautorefname{subl}{Sublemma}%
\makeautorefname{proposition}{Proposition}%
\makeautorefname{proposit}{Proposition}%
\makeautorefname{propos}{Proposition}%
\makeautorefname{propo}{Proposition}%
\makeautorefname{prop}{Proposition}%
\makeautorefname{property}{Property}
\makeautorefname{proper}{Property}
\makeautorefname{scholium}{Scholium}%
\makeautorefname{step}{Step}%
\makeautorefname{conjecture}{Conjecture}%
\makeautorefname{conject}{Conjecture}%
\makeautorefname{conj}{Conjecture}%
\makeautorefname{question}{Question}
\makeautorefname{questn}{Question}
\makeautorefname{quest}{Question}
\makeautorefname{ques}{Question}
\makeautorefname{qn}{Question}
\makeautorefname{definition}{Definition}%
\makeautorefname{defin}{Definition}%
\makeautorefname{defi}{Definition}%
\makeautorefname{def}{Definition}%
\makeautorefname{dfn}{Definition}%
\makeautorefname{notation}{Notation}
\makeautorefname{nota}{Notation}
\makeautorefname{notn}{Notation}
\makeautorefname{remark}{Remark}%
\makeautorefname{rema}{Remark}%
\makeautorefname{rem}{Remark}%
\makeautorefname{rmk}{Remark}%
\makeautorefname{rk}{Remark}%
\makeautorefname{remarks}{Remarks}%
\makeautorefname{rems}{Remarks}%
\makeautorefname{rmks}{Remarks}%
\makeautorefname{rks}{Remarks}%
\makeautorefname{example}{Example}%
\makeautorefname{examp}{Example}%
\makeautorefname{exmp}{Example}%
\makeautorefname{exam}{Example}%
\makeautorefname{exa}{Example}%
\makeautorefname{algorithm}{Algorith}%
\makeautorefname{algo}{Algorith}%
\makeautorefname{alg}{Algorith}%
\makeautorefname{axiom}{Axiom}%
\makeautorefname{axi}{Axiom}%
\makeautorefname{ax}{Axiom}%
\makeautorefname{case}{Case}%
\makeautorefname{claim}{Claim}%
\makeautorefname{clm}{Claim}%
\makeautorefname{assumption}{Assumption}%
\makeautorefname{assumpt}{Assumption}%
\makeautorefname{conclusion}{Conclusion}%
\makeautorefname{concl}{Conclusion}%
\makeautorefname{conc}{Conclusion}%
\makeautorefname{condition}{Condition}%
\makeautorefname{condit}{Condition}%
\makeautorefname{cond}{Condition}%
\makeautorefname{construction}{Construction}%
\makeautorefname{construct}{Construction}%
\makeautorefname{const}{Construction}%
\makeautorefname{cons}{Construction}%
\makeautorefname{criterion}{Criterion}%
\makeautorefname{criter}{Criterion}%
\makeautorefname{crit}{Criterion}%
\makeautorefname{exercise}{Exercise}%
\makeautorefname{exer}{Exercise}%
\makeautorefname{exe}{Exercise}%
\makeautorefname{problem}{Problem}%
\makeautorefname{problm}{Problem}%
\makeautorefname{probm}{Problem}%
\makeautorefname{prob}{Problem}%
\makeautorefname{solution}{Solution}%
\makeautorefname{soln}{Solution}%
\makeautorefname{sol}{Solution}%
\makeautorefname{summary}{Summary}%
\makeautorefname{summ}{Summary}%
\makeautorefname{sum}{Summary}%
\makeautorefname{operation}{Operation}%
\makeautorefname{oper}{Operation}%
\makeautorefname{observation}{Observation}%
\makeautorefname{observn}{Observation}%
\makeautorefname{obser}{Observation}%
\makeautorefname{obs}{Observation}%
\makeautorefname{ob}{Observation}%
\makeautorefname{convention}{Convention}%
\makeautorefname{convent}{Convention}%
\makeautorefname{conv}{Convention}%
\makeautorefname{cvn}{Convention}%
\makeautorefname{warning}{Warning}%
\makeautorefname{warn}{Warning}%
\makeautorefname{note}{Note}%
\makeautorefname{fact}{Fact}%

\makeautorefname{problem}{CP}%
\makeautorefname{gproblem}{GP}%
\makeautorefname{dihedral}{`The dihedral story'}%

\usepackage{fancyhdr}
\begin{document}
\vbadness=10001
\hbadness=10001
\title[$2$-representation theory of the Taft algebra]{$2$-representation theory of the Taft algebra}
\author{Raphaël Paegelow \\\today}
\address{Department of Mathematics, University of Kaiserslautern–Landau, Postfach 3049, 67663 Kaiserslautern, Germany}
\email{raphael.paegelow@rptu.de}

\maketitle

\begin{abstract}
We study the $2$-representation theory of the Taft category, the tensor category of finite-dimensional modules over the Taft algebra. This category has two left cells. We first classify suitable transitive modules over its based representation ring. For the upper cell, we recall a construction by Etingof and Ostrik giving an infinite family of pairwise nonequivalent simple $2$-representations. For the bottom cell, we pass to the semisimplification of the Taft category and identify a distinguished fusion subcategory with an adjoint $\mathfrak{sl}_2$ Verlinde category. Using algebra objects associated with $\typeADET$-diagrams and categorical Clifford theory, we classify the regular exact indecomposable module categories of this semisimplification. For these regular module categories, the split Grothendieck group is a complete invariant. Finally, we identify the two cell $2$-representations of the Taft category.
\end{abstract}

\section{Introduction}

This article is a first step towards a complete description of the categorical representation theory of the Taft algebra based on the cell theory of $2$-representations, developed in \cite{MM14, MM11}.

The Taft algebra, originally defined in \cite{Taft}, is a finite-dimensional non-semisimple algebra, which has the advantage of having an explicit but non-trivial representation theory. At the same time, it exhibits many features characteristic of nonsemisimple algebras. The Taft algebra is a Hopf algebra that can be identified with the Borel part of the small quantum group of $\mathfrak{sl}_2$ at roots of unity \cite[Lemma 8.4]{KW16}. Moreover, this algebra is not unimodular. The tensor category of finite-dimensional Taft modules, which we call the Taft category, has a unique nonzero proper thick tensor ideal generated by all projective modules \cite[Theorem 5.5]{PW15}. Another special feature of this algebra is that it has only one-dimensional simple modules. From the perspective of cell $2$-representation theory, the Taft category has only two cells, which we refer to as the bottom and the upper cell. The upper cell is mainly controlled by projective modules, whereas the bottom cell is associated with the stable category and its semisimplification. Thus, the Taft category provides a good example of a tensor category that is not a fusion category and whose categorical representation theory can be described concretely.

Take $n\in\Z_{\geq 2}$, and $\K$ an algebraically closed field  whose characteristic does not divide $n$. Fix a primitive $n^{\mathrm{th}}$ root of unity $\qpar\in\K$. By default, all categories and functors are assumed to be additive, $\K$-linear and all monoidal categories will be strict (thanks to Mac Lane's coherence theorem).

The main results of this article are the following. First, we classify certain transitive modules over the representation ring of the Taft algebra associated with the upper and bottom cells. These modules arise as decategorifications of the module categories considered later in the article.

\begin{proposition1}[{\fullref{proposition:taft-big-apex}}]
For each divisor $d$ of $n$, there exists an admissible, transitive, Taft-split $\N$-module with upper cell apex. Up to $\N$-equivalence, all such $\N$-modules are of this form.
\end{proposition1}

\begin{proposition2}[{\fullref{proposition:taft-small-apex}}]

For each $\typeADET$ diagram $\Gamma$ with Coxeter number equal to $n$ and $d$ a divisor of $n$, there exists an admissible, regular, transitive, NIM $\N$-module with bottom cell apex. Up to $\N$-equivalence, all such $\N$-modules are of this form.
\end{proposition2}

Let $\tmod:=\taft\text{-}\catstuff{M}_{\mathrm{fd}}$ denote the Taft category.

We obtain an explicit infinite family of non-equivalent simple $2$-representations. For each pair $(d,\lb) \in \Z_{\geq 1} \times \K$ with $d|n$, Etingof and Ostrik \cite[Section 4.3]{EO04} construct an algebra object. Denote by $\catstuff{M}_{d,\lb}$ its associated exact module category. We have shown the following theorem.

\begin{theorem3}[{\fullref{theorem:taft-module-cat}}]
The simple $2$-representations
$\catstuff{M}_{d,\lb}$ and $\catstuff{M}_{d^{\prime},\lambda^{\prime}}$
are equivalent as $2$-representations if and only if ${d=d^{\prime} \text{ and } \lambda=\lambda^{\prime}}$.
\end{theorem3}

The tensor category $\tmod$ is non-semisimple, hence Ocneanu rigidity \cite[\S 9.1]{EGNO} does not apply. Going one step further, we know that the Davydov-Yetter cohomology of the Taft category does not vanish in positive even degrees, cf. \cite[\S 6.2]{FGS24}. Ocneanu rigidity and Theorem $3$ motivate the following question. Let $\catstuff{C}$ be a fiat tensor category  over an algebraically closed field of characteristic $0$.

\medskip
\textbf{Question:} Is having finitely many equivalence classes of simple finitary $2$-representations equivalent to the vanishing of the Davydov-Yetter cohomology $\mathrm{H}^i(\morstuff{F})$ for all $i>0$ and every monoidal functor $\morstuff{F}:\catstuff{C} \to \catstuff{D}$, where $\catstuff{D}$ is a fiat tensor category?
\medskip

Finally, we give a complete classification of the regular, exact, indecomposable module categories of the semisimplification $\sstmod$ of the Taft category. It can be shown that these are exactly the simple, regular, finitary $2$-representations of the Taft category with bottom cell apex:

\begin{theorem4}[{\fullref{thm:class-bottom}}]
Taking the split Grothendieck group induces a bijection between
\begin{equation*}
\left\{ \begin{gathered} \text{ exact, indecomposable, regular }\\ \sstmod\text{-module categories} \end{gathered} \right\} \big/_{\simeq} \iso \left\{ \begin{gathered} \text{admissible, regular, transitive, NIM} \\  \N\text{-modules with bottom cell apex } \end{gathered} \right\} \big/_{\cong_{\N}}.
\end{equation*}
\end{theorem4}

In one sentence, decategorification loses essential information for the upper cell, whereas it is a complete invariant in the regular bottom cell setting.

The article is organized as follows. In the third section, we recall the representation theory of this non-semisimple algebra. Section 4 studies the representation ring of the Taft category and its representation theory. Sections 5 and 6 construct and study an infinite family of finitary, simple $2$-representations with upper cell apex. Section 7 classifies the regular, simple $2$-representations with bottom cell apex. Finally, section 8 computes the cell $2$-representations associated with the upper and bottom cells. These cell $2$-representations play a fundamental role in the theory of finitary $2$-representations, cf. \cite[Theorem 43]{MM11}, \cite[Theorem 8]{MM16c}, \cite[Theorem 18]{MM16c}.

\section{Acknowledgments}
The author is supported by the SFB-TRR 195 “Symbolic Tools in Mathematics and their Application” of the German Research Foundation (DFG). We would like to thank Ulrich Thiel for suggesting the study of $2$-representations of the Taft algebra with bottom cell apex and for useful comments and remarks. The paper originates from and further develops some ideas that were informally discussed
by Marco Mackaay, Volodymyr Mazorchuk,
Vanessa Miemietz, Daniel Tubbenhauer and Xiaoting Zhang in 2018. Parts of the preliminary calculations and the upper-cell analysis grew out of notes from these discussions. We are particularly grateful to Daniel Tubbenhauer for many fruitful discussions and for the useful correspondence.

\newpage
\section{Representation theory of the Taft algebra}\label{section:taft}

%%%%%%%%%%%%%%%%%%%%%%%%%%%%%%%%%%%%%%%%%%%%%%%%%%%

\subsection{The Taft algebra}\label{subsection:taft-def}

%%%%%%%%%%%%%%%%%%%%%%%%%%%%%%%%%%%%%%%%%%%%%%%%%%%

\begin{definition}\label{definition:taft-def}
Let $\K\langle\gt,\xx\rangle$ denote the associative, unital algebra
freely generated by $\gt,\xx$.
The Taft algebra, denoted by $\taft=\taft_{n}(\qpar)$, is the quotient
\begin{gather*}
\K\langle\gt,\xx\rangle/(\gt^{n}=1,\;\xx^{n}=0,\;\gt\xx=\qpar\xx\gt).
\end{gather*}
\end{definition}

One can easily check that the assignments
\begin{gather}\label{eq:taft-hopf}
\begin{gathered}
\Delta(\gt)=\gt\otimes\gt,\quad\Delta(\xx)=\xx\otimes 1+\gt\otimes\xx,
\\
\epsilon(\gt)=1,\quad\epsilon(\xx)=0,\quad S(\gt)=\gt^{-1},\quad S(\xx)=-\gt^{-1}\xx,
\end{gathered}
\end{gather}
give $\taft$ the structure of a Hopf algebra.

\begin{remark}\label{remark:taft-def}
The algebra $\taft$ is not semisimple since the ideal generated by $\xx$ is nilpotent.
Our definition of $\taft$ is taken from \cite[Section 4.3]{EO04}.
Note that the conventions in \cite{COZ14} are slightly
different from these. We give an explicit (Hopf) algebra isomorphism: ${H_n(\zeta) \iso \taft, g \mapsto \gt^{-1}, h \mapsto \xx\gt^{-1}}$.
\end{remark}

\begin{example}\label{example:taft-sweedler}
For $n=2$ the algebra $\taft$ is Sweedler's four-dimensional Hopf algebra; cf. \cite[Chapter 4]{Sweed69}.
\end{example}

If $i\leq j$ are two integers, let us denote by $\llbracket i,j \rrbracket := \{i, i+1,\dots, j\}$.
The following lemma is immediate:

\begin{lemmaqed}\label{lemma:taft-basis}
The sets
\[
\{
\gt^{i}\xx^{j}
\mid
i,j\in\llbracket0,n-1\rrbracket
\}
, \quad
\{
\xx^{j}\gt^{i}
\mid
i,j\in\llbracket0,n-1\rrbracket
\}
\]
are two bases of $\taft$, and $\mathrm{dim}_{\K}(\taft)=n^{2}$.
\end{lemmaqed}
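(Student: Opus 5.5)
We prove that the monomials $\gt^{i}\xx^{j}$ span $\taft$ by a rewriting argument, and that they are linearly independent by exhibiting an $n^{2}$-dimensional representation. The second basis then follows from the first.

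\emph{Spanning.} Every element of $\K\langle\gt,\xx\rangle$ is a linear combination of words in $\gt,\xx$. The relation $\xx\gt=\qpar^{-1}\gt\xx$, which is equivalent to $\gt\xx=\qpar\xx\gt$ because $\qpar$ is invertible, lets us move every $\gt$ to the left of every $\xx$ at the cost of a nonzero scalar. Hence every word equals a scalar multiple of some $\gt^{a}\xx^{b}$ in $\taft$. The relations $\gt^{n}=1$ and $\xx^{n}=0$ then reduce $a$ modulo $n$ and kill all terms with $b\geq n$. So the $n^{2}$ elements $\gt^{i}\xx^{j}$ with $i,j\in\llbracket0,n-1\rrbracket$ span $\taft$, and $\dim_{\K}\taft\leq n^{2}$.

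\emph{Linear independence.} Let $V$ be the $n^{2}$-dimensional vector space with basis $e_{i,j}$, $i,j\in\llbracket0,n-1\rrbracket$, where the first index is read modulo $n$. Define operators on $V$ by
\[
G e_{i,j}=e_{i+1,j},\qquad
Z e_{i,j}=\qpar^{-i}e_{i,j+1}\ \text{ for } j<n-1,\qquad
Z e_{i,n-1}=0.
\]
We check the three relations:
\begin{itemize}
\item $G^{n}=1$ holds because the first index is taken modulo $n$.
\item $Z^{n}=0$ holds because each application of $Z$ raises $j$ by one, and $Z$ kills $e_{i,n-1}$.
\item For the commutation relation, compute
\[
GZe_{i,j}=\qpar^{-i}e_{i+1,j+1},\qquad
ZGe_{i,j}=\qpar^{-(i+1)}e_{i+1,j+1},
\]
so $GZ=\qpar ZG$. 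Both sides also vanish on $e_{i,n-1}$. This relation is the only place where the scalar twist in $Z$ has to be chosen correctly, and it is the main thing to get right.
\end{itemize}
Thus $\gt\mapsto G$, $\xx\mapsto Z$ defines a representation of $\taft$ on $V$. Now
\[
G^{i}Z^{j}e_{0,0}=e_{i,j}\quad\text{for } i,j\in\llbracket0,n-1\rrbracket,
\]
since $Z^{j}e_{0,0}=e_{0,j}$ and then $G^{i}$ shifts the first index. These vectors are linearly independent, so the elements $\gt^{i}\xx^{j}$ are linearly independent in $\taft$. Therefore they form a basis and $\dim_{\K}\taft=n^{2}$.

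\emph{Second basis.} The commutation relation gives $\xx^{j}\gt^{i}=\qpar^{-ij}\gt^{i}\xx^{j}$. So the second set is obtained from the first by rescaling each element by a nonzero scalar, and it is again a basis.

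Nothing here depends on the characteristic assumption beyond $\qpar$ being invertible.
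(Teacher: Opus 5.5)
Your proof is correct. The paper gives no argument at all (it calls the lemma immediate), and your normal-ordering argument for spanning, together with the regular representation on $V$ for independence, is exactly the standard verification it leaves to the reader.

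One quibble concerns your closing remark. The well-definedness of $Z$ with the first index read modulo $n$ (equivalently, the commutation check at $i=n-1$) genuinely uses $\qpar^{n}=1$, not just the invertibility of $\qpar$. This hypothesis is essential: otherwise $\xx=\gt^{n}\xx=\qpar^{n}\xx\gt^{n}=\qpar^{n}\xx$ forces $\xx=0$.
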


Recall that $\tmod$ denotes the category of finite-dimensional left $\taft$-modules. Since $\taft$ is a finite-dimensional $\K$-algebra, $\tmod$ is a Krull-Schmidt category.
By \fullref{lemma:taft-basis}, $\tmod$ is a
locally finite abelian category, as defined here \cite[\S 1.8]{EGNO}. Since $\taft$ is a Hopf algebra, cf. \eqref{eq:taft-hopf}, the category $\tmod$ is monoidal.

%%%%%%%%%%%%%%%%%%%%%%%%%%%%%%%%%%%%%%%%%%%%%%%%%%%

\subsection{Simple and indecomposable modules}\label{subsection:taft-reps}

%%%%%%%%%%%%%%%%%%%%%%%%%%%%%%%%%%%%%%%%%%%%%%%%%%%

The category $\tmod$ is non-semisimple, but representation-finite, as we will now see. In this subsection, we recall the classification of simple and indecomposable modules obtained in \cite{Cib93, Gun03}. This classification is also summarized in \cite{COZ14}.

\begin{lemmaqed}\label{lemma:taft-simples}(\cite[Lemma 2.1]{COZ14})
For $r\in \Z/n\Z$, there exists a
one-dimensional simple $\taft$-module $\tsimple=\K\{m\}$ determined by
\[
\gt\acts m=\qpar^{r}m,
 \quad
\xx\acts m=0.
\]
Moreover, for each $\tmodule[V]$ simple $\taft$-module, there exists $r \in \Z/n\Z$ such that $\tmodule[V] \cong \tsimple$, as $\taft$-modules.
\end{lemmaqed}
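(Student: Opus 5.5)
The proof has two parts: first show that each assignment defines a $\taft$-module, then show that every simple module is isomorphic to one of them.

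\emph{Existence.} For $r\in\Z/n\Z$, put $\gt\acts m=\qpar^{r}m$ and $\xx\acts m=0$ on $\K\{m\}$. This is well defined because $\qpar^{rn}=1$ does not depend on the representative of $r$. We check the defining relations of \fullref{definition:taft-def}. First, $\gt^{n}$ acts by $\qpar^{rn}=1$. Second, $\xx^{n}$ acts by $0$. Third, both $\gt\xx$ and $\qpar\xx\gt$ act by $0$. A one-dimensional module is automatically simple. The modules $\tsimple$ are pairwise non-isomorphic, since $\qpar$ is a primitive $n^{\mathrm{th}}$ root of unity and so the eigenvalues $\qpar^{r}$ of $\gt$ are distinct.

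\emph{Classification.} Let $\tmodule[V]$ be a simple $\taft$-module. It is finite-dimensional, being a quotient of $\taft$, which is finite-dimensional by \fullref{lemma:taft-basis}.

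The key step is to show that $\xx$ acts by zero. The relation $\gt\xx=\qpar\xx\gt$ gives $\xx\gt=\qpar^{-1}\gt\xx$. Since $\gt$ is invertible, it follows that $\xx\taft=\taft\xx$, so the left ideal $\taft\xx$ is a two-sided ideal. Using the basis of \fullref{lemma:taft-basis} and $\xx^{n}=0$, we get $(\taft\xx)^{n}=\taft\xx^{n}=0$, so $\taft\xx$ is nilpotent. Now consider the submodule $\xx\tmodule[V]$ of $\tmodule[V]$; it is a submodule because $\taft\xx\tmodule[V]=\xx\taft\tmodule[V]\subseteq\xx\tmodule[V]$. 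If $\xx\tmodule[V]=\tmodule[V]$, then iterating gives $\tmodule[V]=\xx^{n}\tmodule[V]=0$, which is impossible. By simplicity, $\xx\tmodule[V]=0$.

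Hence $\tmodule[V]$ is a simple module over $\taft/(\xx)\cong\K[\gt]/(\gt^{n}-1)$. This algebra is commutative and, since $\mathrm{char}\,\K\nmid n$, semisimple: it is isomorphic to $\K^{n}$ via the distinct roots $\qpar^{r}$. Concretely, $\gt$ has an eigenvector in $\tmodule[V]$ because $\K$ is algebraically closed, and its span is a submodule because $\xx$ acts by zero. Simplicity then forces $\tmodule[V]$ to be one-dimensional, with $\gt$ acting by an $n^{\mathrm{th}}$ root of unity. Every such root has the form $\qpar^{r}$ by primitivity, so $\tmodule[V]\cong\tsimple$.

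The only step that needs care is the proof that $\xx$ acts by zero, that is, that the ideal generated by $\xx$ is nilpotent and normal. Everything else is routine.
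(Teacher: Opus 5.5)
Your proof is correct and complete. The paper gives no argument of its own, only the citation \cite[Lemma 2.1]{COZ14}, and your route is the standard one, consistent with the nilpotency of $(\xx)$ noted in \fullref{remark:taft-def}: the ideal generated by $\xx$ is normal and nilpotent, so it kills every simple module, and the quotient $\taft/(\xx)\cong\K[\gt]/(\gt^{n}-1)$ has exactly the $n$ characters $\gt\mapsto\qpar^{r}$.

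Two cosmetic points: the equality $\xx\taft=\taft\xx$ comes from $\gt^{i}\xx=\qpar^{i}\xx\gt^{i}$ rather than from the invertibility of $\gt$, and the semisimplicity of $\K[\gt]/(\gt^{n}-1)$ is not needed once you use the eigenvector argument.
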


\begin{lemmaqed}\label{lemma:taft-inde}(\cite[Lemmas 2.2 and 2.3(3), Theorem 2.5]{COZ14})
For $1\leq \ell \leq n$ and $r \in \Z/n\Z$ there exists an
indecomposable $\taft$-module $\tinde=\K\{m_{1},\dots,m_{\ell}\}$ determined by
\[
\gt\acts m_{k}=\qpar^{r{+}k{-}1}m_{k}
,\quad
\xx\acts m_{k}=
\begin{cases}
m_{k+1}, & \text{ for }1\leq k\leq \ell\smin 1,
\\
0, & \text{ for }k=\ell.
\end{cases}
\]

Moreover, for each $\tmodule[V]$ indecomposable $\taft$-module, there exists $(\ell,r) \in \llbracket 1, n \rrbracket \times \Z/n\Z$ such that $\tmodule[V] \cong \tinde$, as $\taft$-modules.
\end{lemmaqed}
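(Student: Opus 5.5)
First I check that the formulas define a $\taft$-module. Let $\rho(\gt),\rho(\xx)$ be the operators on $\K\{m_1,\dots,m_\ell\}$ given in the statement. I verify the three defining relations of \fullref{definition:taft-def} on basis vectors. First, $\rho(\gt)^n=1$ holds because $\qpar^n=1$. Second, $\rho(\xx)$ is a single nilpotent Jordan block of size $\ell\le n$, so $\rho(\xx)^n=0$. Third, for $k<\ell$ we have
\[
\gt\xx\acts m_k=\qpar^{r+k}m_{k+1}=\qpar\cdot\qpar^{r+k-1}m_{k+1}=\qpar\,\xx\gt\acts m_k,
\]
and both sides vanish for $k=\ell$. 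Since $\K\langle\gt,\xx\rangle$ is free, these relations give a module structure on $\tinde$.

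Next I show $\tinde$ is indecomposable. Since $\rho(\xx)$ is one nilpotent Jordan block, $\ker\rho(\xx)=\K m_\ell$ is one-dimensional. This kernel is the socle of $\tinde$: by \fullref{lemma:taft-simples}, every simple submodule is spanned by a $\gt$-eigenvector killed by $\xx$. A decomposition into two nonzero summands would give two independent vectors in $\ker\rho(\xx)$, which is impossible. Hence $\tinde$ is indecomposable, in fact uniserial.

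For the classification, let $\tmodule[V]$ be an indecomposable $\taft$-module. The operator $\gt$ satisfies $\gt^n=1$ and $\mathrm{char}\,\K\nmid n$, so $\gt$ is diagonalizable with eigenvalues $\qpar^s$. This gives a grading $\tmodule[V]=\bigoplus_{s\in\Z/n\Z}\tmodule[V]_s$, and the relation $\gt\xx=\qpar\xx\gt$ shows that $\xx$ maps $\tmodule[V]_s$ into $\tmodule[V]_{s+1}$. Thus $\tmodule[V]$ is a finite-dimensional $\Z/n\Z$-graded $\K[\xx]/(\xx^n)$-module with $\xx$ of degree $1$.

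The key step, and the main obstacle, is a graded Jordan normal form: every such graded module is a direct sum of graded strings $m,\xx m,\dots,\xx^{\ell-1}m$ with $m$ homogeneous and $\xx^\ell m=0$. I would prove this by adapting the usual proof of Jordan form for a nilpotent operator, choosing homogeneous vectors throughout. Pick a homogeneous $m$ of maximal nilpotency length $\ell$. The string $U=\langle m,\dots,\xx^{\ell-1}m\rangle$ is then a graded injective module over the self-injective graded algebra $\K[\xx]/(\xx^\ell)$, which acts on $\tmodule[V]$ because $\xx^\ell$ kills $\tmodule[V]$ by maximality. Hence $U$ has a graded complement, and induction on dimension finishes the decomposition.

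Indecomposability of $\tmodule[V]$ then forces a single string. Set $r$ equal to the degree of $m$ and $m_k=\xx^{k-1}m$. The bound $\ell\le n$ follows from $\xx^n=0$, and we get $\tmodule[V]\cong\tinde$.
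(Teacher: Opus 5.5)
Your proof is correct. The paper gives no argument of its own here: the lemma is quoted from \cite{COZ14}, with the classification going back to \cite{Cib93, Gun03}. The standard argument behind that citation is structural. The idempotents $\frac{1}{n}\sum_j\qpar^{-ij}\gt^j$ split $\taft$ into the $n$ uniserial projectives $\tprmod$, so $\taft$ is a basic Nakayama algebra (the oriented $n$-cycle truncated at paths of length $n$). Over such an algebra every indecomposable is $P/\mathrm{rad}^{\ell}P$ for an indecomposable projective $P$.

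Your $\gt$-eigenspace grading is exactly this quiver picture, but you replace the appeal to Nakayama theory by a hand-made graded Jordan form. This makes the proof self-contained and shows precisely where $\mathrm{char}\,\K\nmid n$ and the primitivity of $\qpar$ are used. The citation, by contrast, gives uniseriality and the projective covers in one stroke.

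The one step worth an extra line is the graded injectivity of $U$. Take any $\xx$-linear retraction $p\colon\tmodule[V]\to U$, which exists because $\K[\xx]/(\xx^{\ell})$ is self-injective. Average it to $\tilde p=\frac{1}{n}\sum_{j}\gt^{j}p\,\gt^{-j}$. This map still retracts onto the $\gt$-stable subspace $U$, commutes with $\gt$, and stays $\xx$-linear, since the scalars $\qpar^{\pm j}$ produced by moving $\xx$ past $\gt^{\pm j}$ cancel; hence it is $\taft$-linear. Finally, once $\tmodule[V]=U\oplus\ker\tilde p$, indecomposability already gives $\tmodule[V]=U$, so the induction on dimension is not needed.
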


Recall that the head of a module $M$ is its largest semisimple quotient $\mathrm{hd}(M):=M/\mathrm{rad}(M)$, where $\mathrm{rad}(M)$ denotes the Jacobson radical of $M$. Its socle, denoted $\mathrm{soc}(M)$, the sum of all simple submodules of $M$ i.e. the (unique) maximal semisimple submodule of $M$.

\begin{remark}\label{remark:taft-socle-head}
One has $\mathrm{hd}(\tinde)\cong\tsimple$ and
$\mathrm{soc}(\tinde)\cong\tsimple[r{+}\ell{-}1]$. The module  $\tinde$ is simple if
and only if $\ell=1$, i.e. $\tinde[1,r]\cong\tsimple$.
\end{remark}

\begin{lemmaqed}\label{lemma:taft-prinjective}(\cite[Corollary 2.4]{COZ14})
The module $\tinde$ is projective, respectively injective, if
and only if $\ell=n$. Moreover, $\tprmod[r]:=\tinde[n,r]$ is the projective cover of $\tsimple$.
\end{lemmaqed}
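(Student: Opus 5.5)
The plan is to compare dimensions and then use self-injectivity of the Hopf algebra $\taft$. First I will show that $\tprmod[r]=\tinde[n,r]$ is projective and is the projective cover of $\tsimple$. Consider the left ideal $\taft e_r$, where
\[
e_r:=\frac{1}{n}\sum_{i=0}^{n-1}\qpar^{-ri}\gt^{i}
\]
is the idempotent projecting onto the $\qpar^{r}$-eigenspace of $\gt$. The characteristic of $\K$ does not divide $n$ and $\qpar$ is primitive, so the $e_r$ for $r\in\Z/n\Z$ form a complete set of orthogonal idempotents summing to $1$. Hence each $\taft e_r$ is projective and $\taft=\bigoplus_r \taft e_r$.

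By \fullref{lemma:taft-basis}, $\taft e_r$ has basis $\{\xx^{j}e_r\mid j\in\llbracket 0,n-1\rrbracket\}$. The relation $\gt\xx=\qpar\xx\gt$ gives $\gt\,\xx^{j}e_r=\qpar^{r+j}\xx^{j}e_r$. So sending $m_k\mapsto \xx^{k-1}e_r$ defines an isomorphism $\tinde[n,r]\cong\taft e_r$, and $\tinde[n,r]$ is projective. Its head is $\tsimple$ by \fullref{remark:taft-socle-head}, and it is indecomposable by \fullref{lemma:taft-inde}. It is therefore the projective cover of $\tsimple$.

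Next, for $\ell<n$ I will show that $\tinde$ is not projective. The indecomposable projectives are exactly the projective covers of the simples, so by \fullref{lemma:taft-simples} they are the $\tinde[n,r]$. Each has dimension $n$, while $\dim\tinde=\ell<n$. Krull–Schmidt then shows $\tinde$ is not projective. Alternatively, a surjection $\tprmod[r]\to\tinde$ onto a projective module would split, contradicting indecomposability of $\tprmod[r]$.

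For injectivity I will use that a finite-dimensional Hopf algebra is Frobenius (Larson–Sweedler), so injective and projective modules coincide. To keep the argument elementary instead, I will apply the duality $\Hom_\K(-,\K)$, which exchanges left and right modules. Under this duality the right-module version of the computation above gives an analogous list of indecomposables and projectives. Injective left modules are duals of projective right modules, so they are indecomposable of dimension $n$, which again forces $\ell=n$.

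The main obstacle is making this injectivity step clean. The simplest route is to cite the Frobenius property. Failing that, I would check directly that $\tinde[n,r]$ has simple socle $\tsimple[r+n-1]$ and that an injective hull $I(\tsimple[s])$ has dimension $n$. The dimension count comes from $\sum_s\dim I(\tsimple[s])=\dim\taft=n^2$ and the evident symmetry under $s\mapsto s+1$ induced by tensoring with $\tsimple[1]$.
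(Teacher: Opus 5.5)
Your argument is correct. The paper gives no proof of its own here and simply imports the statement from \cite[Corollary 2.4]{COZ14}. What you wrote is the standard self-contained proof:
\begin{itemize}
\item the idempotent decomposition $\taft=\bigoplus_r \taft e_r$ with $\taft e_r\cong\tinde[n,r]$;
\item a dimension count against the indecomposable projectives for $\ell<n$;
\item self-injectivity of the finite-dimensional Hopf algebra $\taft$ via Larson--Sweedler.
\end{itemize}
So you are supplying the details the paper outsources rather than taking a different route.

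If you prefer the elementary route for injectivity over citing the Frobenius property, note that as written it only gives ``injective $\Rightarrow \ell=n$''. For the converse, add the following. The $n$ pairwise non-isomorphic indecomposable injectives $I(\tsimple[s])$ all have dimension $n$, so by \fullref{lemma:taft-inde} each is of the form $\tinde[n,r]$. Comparing socles, $\mathrm{soc}\,\tinde[n,r]\cong\tsimple[r-1]$, gives $I(\tsimple[s])\cong\tinde[n,s+1]$. Hence every $\tinde[n,r]$ is injective.
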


\begin{remark}\label{remark:taft-simple-proj-inde}
From now on, the notation $\tinde$ will be reserved for $2\leq \ell\leq n-1$. When $\ell=1$ and $\ell=n$, we will use
$\tsimple$ or $\tprmod$.
\end{remark}

Let us recall the decomposition formulas for tensor products.

\begin{propositionqed}\label{proposition:taft-tensor}(\cite[Proposition 3.2]{COZ14})
Tensor products of indecomposable objects in $\tmod$ decompose as follows.
\begin{gather}\label{eq:tensordecomp-0}
\begin{gathered}
\tsimple\otimes\tsimple[r^{\prime}]\cong\tsimple[r^{\prime}]\otimes\tsimple\cong
\tsimple[r{+}r^{\prime}],
\\
\tsimple\otimes\tinde[\ell,r^{\prime}]\cong\tinde[\ell,r^{\prime}]\otimes\tsimple\cong
\tinde[\ell,r{+}r^{\prime}],
\\
\tsimple\otimes\tprmod[r^{\prime}]\cong\tprmod[r^{\prime}]\otimes\tsimple\cong
\tprmod[r{+}r^{\prime}],
\end{gathered}
\end{gather}
and hence, $\tsimple[0]$ is the tensor identity in
$\tmod$.

\begin{gather}\label{eq:tensordecomp}
\begin{gathered}
\tinde\otimes\tinde[\ell^{\prime},r^{\prime}]\cong\tinde[\ell^{\prime},r^{\prime}]\otimes\tinde\cong
\begin{cases}
{\textstyle\bigoplus_{i=1}^{\min(\ell,\ell^{\prime})}}\,\tinde[|\ell{-}\ell^{\prime}|{+}2i{-}1,r{+}r^{\prime}{-}i{+}\min(\ell,\ell^{\prime})],
& \text{if }\ell+\ell^{\prime}\leq n,
\\[6pt]
\begin{gathered}
{\textstyle\bigoplus_{i=1}^{n{-}\max(\ell,\ell^{\prime})}}\,\tinde[|\ell{-}\ell^{\prime}|{+}2i{-}1,r{+}r^{\prime}{-}i{+}\min(\ell,\ell^{\prime})]
\\[3pt]
\oplus
\text{ } {\textstyle\bigoplus_{i=1}^{\ell{+}\ell^{\prime}-n}}\,\tprmod[r{+}r^{\prime}{+}i{-}1],
\end{gathered}
& \text{if }\ell+\ell^{\prime}>n,
\end{cases}\\
\\
\tinde\otimes\tprmod[r^{\prime}]\cong\tprmod[r^{\prime}]\otimes\tinde\cong
{\textstyle\bigoplus_{i=1}^{\ell}}\,\tprmod[r{+}r^{\prime}{-}i{+}\ell],
\end{gathered}
\end{gather}
\begin{gather}\label{eq:tensordecomp-2}
\tprmod\otimes\tprmod[r^{\prime}]\cong\tprmod[r^{\prime}]\otimes\tprmod\cong
{\textstyle\bigoplus_{i=1}^{n}}\,\tprmod[r{+}r^{\prime}{-}i{+}n].
\end{gather}
In particular, combining \eqref{eq:tensordecomp-0},
\eqref{eq:tensordecomp} and \eqref{eq:tensordecomp-2},
the tensor product of any $\taft$-module
with any projective $\taft$-module is projective.
\end{propositionqed}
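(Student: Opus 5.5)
The plan is to compute directly in the explicit bases of \fullref{lemma:taft-inde}, handling the easy cases first and the hard case last.

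\emph{Simples.} Since $\Delta(\gt)=\gt\otimes\gt$ and $\xx$ acts on $\tsimple\otimes\tmodule[V]$ by $\xx\otimes 1+\gt\otimes\xx$, tensoring with $\tsimple$ only multiplies the $\gt$-eigenvalues by $\qpar^{r}$. The action of $\xx$ becomes $\qpar^{r}$ times the old one, and rescaling the basis vectors absorbs this factor. This gives $\tsimple\otimes\tinde[\ell,r^{\prime}]\cong\tinde[\ell,r+r^{\prime}]$, and similarly on the other side and for projectives. These are the identities in \eqref{eq:tensordecomp-0}, and in particular $\tsimple[0]$ is the unit. Using them, every other identity reduces to the case $r=r^{\prime}=0$ after an overall shift.

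\emph{Products with projectives.} The functor $-\otimes\tprmod[r^{\prime}]$ is exact and $\tprmod[r^{\prime}]$ is projective, so $\tmodule[V]\otimes\tprmod[r^{\prime}]$ is projective for any $\tmodule[V]$. This is the standard Hopf algebra fact $\Hom(\tmodule[V]\otimes \tprmod,-)\cong\Hom(\tprmod,\tmodule[V]^{*}\otimes -)$, using rigidity. A projective module is determined up to isomorphism by its composition factors. Those of $\tinde\otimes\tprmod[r^{\prime}]$ are the characters $\qpar^{a+b}$ with $a\in r+\llbracket0,\ell-1\rrbracket$ and $b\in r^{\prime}+\llbracket0,n-1\rrbracket$. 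Grouping them into $n$-blocks, each block is the full set of $\gt$-eigenvalues of one projective, and by \fullref{lemma:taft-prinjective} this yields $\ell$ copies of $\tprmod$ with the stated heads. The same argument handles $\tprmod\otimes\tprmod[r^{\prime}]$. Commutativity in these cases follows because both sides are projective with the same character.

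\emph{Products $\tinde\otimes\tinde[\ell^{\prime},r^{\prime}]$.} This is the main obstacle. I would reduce to $r=r^{\prime}=0$ and assume $\ell\le\ell^{\prime}$. In the basis $m_i\otimes m_j$, the operator $\xx$ is nilpotent and acts by $\xx(m_i\otimes m_j)=m_{i+1}\otimes m_j+\qpar^{i-1}m_i\otimes m_{j+1}$. It preserves total degree $i+j$ and raises it by one. An indecomposable $\tinde[k,s]$ corresponds exactly to a Jordan block of $\xx$ of size $k$ whose top vector has $\gt$-weight $\qpar^{s}$, because $\gt$ is diagonal and $\xx$ shifts weights. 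So it suffices to find the Jordan type of $\xx$ together with the degrees of the top vectors.

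To get the Jordan type, I would compute the ranks of $\xx^{k}$ restricted to each degree. Expanding, $\xx^{k}(m_i\otimes m_j)=\sum_t \binom{k}{t}_{\qpar}\qpar^{\ast}\,m_{i+k-t}\otimes m_{j+t}$, a $q$-binomial formula valid because $(\gt\otimes\xx)(\xx\otimes1)=\qpar(\xx\otimes1)(\gt\otimes\xx)$. When $\ell+\ell^{\prime}\le n$, all the relevant $\binom{k}{t}_{\qpar}$ with $k<n$ are nonzero. The matrices are then the same as in the generic Clebsch--Gordan situation, giving blocks of sizes $\ell^{\prime}-\ell+2i-1$ with top vectors in degree $i$, that is, weight $\qpar^{\,r+r^{\prime}-i+\min(\ell,\ell^{\prime})}$ after relabelling. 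One can confirm this by producing explicit highest vectors $v_i=\sum_t c_t\,m_{t}\otimes m_{i-t}$ in the kernel of the dual lowering operator, equivalently vectors generating cyclic submodules of the right length, and checking that these submodules span by a dimension count.

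When $\ell+\ell^{\prime}>n$, the would-be blocks of length exceeding $n$ cannot occur because $\xx^{n}=0$. The vanishing $\binom{n}{t}_{\qpar}=0$ for $0<t<n$ makes $\xx^{n}=\xx^{n}\otimes1+\gt^{n}\otimes\xx^{n}=0$ consistent. Here I would instead show that the submodule generated by the top vectors in the first $\ell+\ell^{\prime}-n$ degrees is free, so each of those vectors generates a copy of $\tprmod$ of length $n$. Since projectives are injective, these copies split off. The complement is then identified, by the rank computation in the remaining degrees, with the $n-\max(\ell,\ell^{\prime})$ nonprojective blocks.

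Finally, commutativity $\tinde\otimes\tinde[\ell^{\prime},r^{\prime}]\cong\tinde[\ell^{\prime},r^{\prime}]\otimes\tinde$ follows because the right-hand side of the formula is symmetric and both products are computed by the same rank data. The closing claim about projectives is the exactness and rigidity argument given above.
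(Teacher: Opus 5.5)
The paper gives no argument of its own for this statement; it only cites \cite[Proposition 3.2]{COZ14}. Your treatment of tensoring with $\tsimple$ is fine, and so is your proof that $\tmodule[V]\otimes\tprmod[r']$ is projective. The problem is how you identify the projective summands: it relies on a principle that is false for $\taft$. Every $\tprmod[s]=\tinde[n,s]$ has $\gt$-eigenvalues $\qpar^{s},\dots,\qpar^{s+n-1}$, so it contains each $\tsimple[t]$ exactly once as a composition factor, and the Cartan matrix is the all-ones matrix. For instance, $\tprmod[0]\oplus\tprmod[1]$ and $\tprmod[0]^{\oplus 2}$ have the same composition factors. Grouping eigenvalues into $n$-blocks therefore only shows that $\tinde\otimes\tprmod[r']$ has $\ell$ indecomposable summands. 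It does not show that their heads are $\tsimple[r+r'],\dots,\tsimple[r+r'+\ell-1]$. The same objection applies to $\tprmod\otimes\tprmod[r']$ and to your commutativity claim in these cases. The repair is easy, in either of two ways.
\begin{itemize}
\item Keep the $\Z$-grading by $i+j$, which $\xx$ raises by one. Being projective over $\taft$, the module is free over $\K[\xx]/(\xx^{n})$, hence graded free. The generator degrees can then be read off uniquely from the Hilbert series $t^{r+r'}(1+\dots+t^{\ell-1})(1+\dots+t^{n-1})$.
\item Compute heads by adjunction. The dimension $\dim\Hom(\tinde\otimes\tprmod[r'],\tsimple[s])=\dim\Hom(\tprmod[r'],\tinde[\ell,1-\ell-r]\otimes\tsimple[s])$ is the multiplicity of $\tsimple[r']$ in $\tinde[\ell,1-\ell-r+s]$. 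This equals $1$ exactly when $s\in r+r'+\llbracket 0,\ell-1\rrbracket$, and $0$ otherwise.
\end{itemize}

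For $\tinde\otimes\tinde[\ell',r']$, reducing to the graded Jordan type of $\xx$ is legitimate, but the decisive step is only asserted. The entries $\binom{k}{t}_{\qpar}$ with $k<n$ being nonzero does not imply that $\xx^{k}$ has the generic rank between graded pieces; ranks are governed by minors. For $\ell+\ell'\le n$ you need, for example, that $\xx^{\ell+\ell'+2-2d}$ maps the degree-$d$ piece bijectively onto the degree-$(\ell+\ell'+2-d)$ piece. That is a nonvanishing statement for determinants of $q$-binomial matrices at $\qpar$, and it is the actual content of the Clebsch--Gordan rule at a root of unity. There is no lowering operator in $\taft$ with which to define your ``highest vectors''. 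The fallback of exhibiting cyclic submodules of the right lengths is the same rank computation, still not carried out. For $\ell+\ell'>n$, two things are likewise left open: that the submodule generated by the low-degree tops is free, and what the complement is.

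Commutativity does not follow from ``the same rank data'' either. The operator $\xx\otimes 1+\gt\otimes\xx$ is not symmetric in the two factors, and you assumed $\ell\le\ell'$. Dualizing via $(\tmodule\otimes\tmodule[N])^{*}\cong\tmodule[N]^{*}\otimes\tmodule^{*}$ and \fullref{lemma:taft-duals} would handle the other order.

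A more economical route is to compute by hand only $\tinde[2,0]\otimes\tinde[m,s]$ for $2\le m\le n$, which gives the relations \eqref{eq:taft-action-XY}. There the summand $\tinde[m+1,s]$, resp.\ $\tprmod[s]$, is generated by $m_1\otimes m_1$. The general formula then follows by induction on $\ell$ from $\tinde[2,0]\otimes\tinde[\ell-1,0]\cong\tinde[\ell,0]\oplus\tsimple[1]\otimes\tinde[\ell-2,0]$, using associativity and Krull--Schmidt. Commutativity comes for free, because the split Grothendieck ring is generated by the commuting classes of $\tinde[2,0]$ and $\tsimple[1]$.
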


The following is an immediate consequence of
\fullref{proposition:taft-tensor}.

\begin{corollary}\label{corollary:taft-tensor-exact}
For any $r \in \Z/n\Z$, the functor
$\tsimple\otimes\placeholder\colon\tmod\to\tmod$ is an
invertible endofunctor on $\tmod$,
with inverse $\tsimple[n{-}r]\otimes\placeholder\colon\tmod\to\tmod$.
These functors are exact and send
projectives to projectives. A similar statement holds concerning the functor $\placeholder\otimes\tsimple\colon\tmod\to\tmod$.
\end{corollary}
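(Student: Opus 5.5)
The plan is to read everything off the first line of \eqref{eq:tensordecomp-0} in \fullref{proposition:taft-tensor}, which gives $\tsimple\otimes\tsimple[r^{\prime}]\cong\tsimple[r{+}r^{\prime}]$. Taking $r^{\prime}=n-r$ yields $\tsimple\otimes\tsimple[n{-}r]\cong\tsimple[0]\cong\tsimple[n{-}r]\otimes\tsimple$, and $\tsimple[0]$ is the tensor unit. Since the category is strict monoidal, associativity then gives natural isomorphisms
\[
\tsimple[n{-}r]\otimes(\tsimple\otimes\placeholder)\cong(\tsimple[n{-}r]\otimes\tsimple)\otimes\placeholder\cong\tsimple[0]\otimes\placeholder\cong\mathrm{Id}_{\tmod},
\]
and the same holds in the other order. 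Hence $\tsimple\otimes\placeholder$ is an autoequivalence with quasi-inverse $\tsimple[n{-}r]\otimes\placeholder$. I would be careful to note that the isomorphism $\tsimple[n{-}r]\otimes\tsimple\cong\tsimple[0]$ is one of modules. It therefore induces a natural isomorphism of functors via $f\otimes\mathrm{id}$, so nothing beyond functoriality of $\otimes$ is needed.

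For exactness, I will use that the tensor product of $\taft$-modules is the tensor product of the underlying $\K$-vector spaces, with action through $\Delta$. The functor $\tsimple\otimes\placeholder$ is therefore $\K\otimes_{\K}\placeholder$ on underlying spaces, which is exact; a sequence of $\taft$-modules is exact if and only if it is exact on underlying spaces. Alternatively, exactness follows formally from the previous step, since an equivalence of abelian categories is exact.

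For preservation of projectives, the third line of \eqref{eq:tensordecomp-0} gives $\tsimple\otimes\tprmod[r^{\prime}]\cong\tprmod[r{+}r^{\prime}]$, which is projective by \fullref{lemma:taft-prinjective}. Every projective is a finite direct sum of modules $\tprmod[r^{\prime}]$ (Krull--Schmidt together with \fullref{lemma:taft-inde} and \fullref{lemma:taft-prinjective}), and the additive functor $\tsimple\otimes\placeholder$ preserves direct sums, so the claim follows. One can also argue abstractly that equivalences preserve projectivity.

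The statement for $\placeholder\otimes\tsimple$ is identical, because \eqref{eq:tensordecomp-0} is stated symmetrically in both tensor orders. I expect no real obstacle here. The only point needing care is remembering that the isomorphism $\tsimple[n{-}r]\otimes\tsimple\cong\tsimple[0]$ is a $\taft$-module isomorphism and hence natural after tensoring.
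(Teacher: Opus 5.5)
Your proposal is correct and matches the paper, which gives no separate proof and presents this corollary as an immediate consequence of \fullref{proposition:taft-tensor}. Your argument is the natural expansion of that remark: invertibility from the first line of \eqref{eq:tensordecomp-0}, preservation of projectives from the third line, and exactness either from the underlying vector spaces or from the fact that equivalences of abelian categories are exact.
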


%%%%%%%%%%%%%%%%%%%%%%%%%%%%%%%%%%%%%%%%%%%%%%%%%%%

\subsection{The monoidal category $\tmod$}\label{subsection:taft-mod-structure}

%%%%%%%%%%%%%%%%%%%%%%%%%%%%%%%%%%%%%%%%%%%%%%%%%%%

Since $\taft$ is a Hopf algebra, each object
$\tmodule$ has,
in particular, a left dual $\tmodule^{\ast}=\Hom_{\K}(\tmodule,\K)$ with
$\taft$-action given by
\[
(\algstuff{x}\acts\phi)(m)=\phi(S(\algstuff{x})m),\quad\algstuff{x}\in\taft,\; m\in\tmodule.
\]
The evaluation and coevaluation maps are the usual ones for
finite-dimensional vector spaces and their duals. These induce a left
dual $f^{\ast}\colon\tmodule[N]^{\ast}\to\tmodule^{\ast}$ for each morphism $f\colon\tmodule\to\tmodule[N]$.
Recall that: $(\tmodule\otimes\tmodule[N])^{\ast}\cong\tmodule[N]^{\ast}\otimes\tmodule^{\ast}$
and $(f\otimes g)^{\ast}=g^{\ast}\otimes f^{\ast}$,  cf. \cite[Exercise 2.10.7]{EGNO}.

The antipode is invertible, with inverse
$S^{-1}(\gt)=\gt^{-1}$ and $S^{-1}(\xx)=-\xx\gt^{-1}$. Therefore, $\tmodule$ also has a right dual
${}^{\ast}\tmodule$, with the same underlying vector space as $\tmodule^{\ast}$, but the $\taft$-action defined by
using the inverse of the antipode, i.e.
\begin{equation}
\label{eq:rigidity}
(\algstuff{x}\acts\phi)(m)=\phi(S^{-1}(\algstuff{x})m),\quad\algstuff{x}\in\taft,\; m\in\tmodule.
\end{equation}

Moreover, $S^{2}$ is an inner automorphism of $\taft$. Indeed,
$S^{2}(\algstuff{x})=\gt^{-1}\algstuff{x}\gt$, for all
$\algstuff{x}\in\taft$. Therefore, there
is a natural isomorphism $\tmodule^{\ast}\to{}^{\ast}\tmodule$ given by
\[
\phi\mapsto\gt\phi.
\]
This also implies that there are natural tensor isomorphisms
$\tmodule^{\ast\ast}\cong\tmodule\cong{}^{\ast\ast}\tmodule$ defining a pivotal structure on $\tmod$. To be more precise, for each $\tmodule \in \tmod$, consider the natural isomorphism
\begin{equation}\label{eq:piv-struct}
a_{\tmodule}:\begin{array}{ccc}
\tmodule &\iso& \tmodule^{\ast\ast}\\
 m &\mapsto& (f \mapsto f(\gt^{-1}m))
\end{array}.
\end{equation}

Duals of indecomposable modules are given by the next lemma.

\begin{lemmaqed}\label{lemma:taft-duals}
We have the dualities
\[
\tsimple^{\ast}\cong\tsimple[n{-}r]
,\quad
\tinde^{\ast}\cong\tinde[l,1{-}l{-}r]
,\quad
\tprmod^{\ast}\cong\tprmod[1{-}r]
\]
in the category $\tmod$.
\end{lemmaqed}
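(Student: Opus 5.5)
Since the modules $\tsimple$, $\tinde$ and $\tprmod$ are all of the form $\tinde[\ell,r]$ (with $\ell=1$, $2\leq \ell\leq n-1$, $\ell=n$ respectively), it suffices to prove the uniform statement $\tinde[\ell,r]^{\ast}\cong\tinde[\ell,1{-}\ell{-}r]$ for all $1\leq\ell\leq n$. For $\ell=1$ this gives $\tsimple^{\ast}\cong\tsimple[-r]=\tsimple[n{-}r]$, and for $\ell=n$ it gives $\tprmod^{\ast}\cong\tprmod[1{-}n{-}r]=\tprmod[1{-}r]$, since indices live in $\Z/n\Z$. The strategy is to compute the action of $\gt$ and $\xx$ on the dual basis and then identify the result using \fullref{lemma:taft-inde}.

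Let $m_1,\dots,m_\ell$ be the basis of $\tinde[\ell,r]$ from \fullref{lemma:taft-inde}, and let $m_1^{\ast},\dots,m_\ell^{\ast}$ be the dual basis of $\tinde[\ell,r]^{\ast}$. The action of $\gt$ is determined by $S(\gt)=\gt^{-1}$:
\[
(\gt\acts m_k^{\ast})(m_j)=m_k^{\ast}(\gt^{-1}m_j)=\qpar^{-(r+k-1)}\delta_{kj}.
\]
Hence $m_k^{\ast}$ is a $\gt$-eigenvector with eigenvalue $\qpar^{-r-k+1}$. The action of $\xx$ is determined by $S(\xx)=-\gt^{-1}\xx$. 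Since $\xx m_j=m_{j+1}$ and $\gt^{-1}m_{j+1}=\qpar^{-(r+j)}m_{j+1}$, we get
\[
(\xx\acts m_k^{\ast})(m_j)=-\qpar^{-(r+j)}\delta_{k,j+1}.
\]
Therefore $\xx\acts m_k^{\ast}=-\qpar^{-(r+k-1)}m_{k-1}^{\ast}$ for $k\geq 2$, and $\xx\acts m_1^{\ast}=0$.

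Reversing the order, $\xx$ sends $m_\ell^{\ast}\mapsto m_{\ell-1}^{\ast}\mapsto\cdots\mapsto m_1^{\ast}\mapsto 0$, with nonzero scalars at each step. Set $v_1:=m_\ell^{\ast}$ and $v_{k+1}:=\xx\acts v_k$. Then each $v_k$ is a nonzero multiple of $m_{\ell-k+1}^{\ast}$, so $v_1,\dots,v_\ell$ is a basis, $\xx v_k=v_{k+1}$ for $k<\ell$, and $\xx v_\ell=0$. The vector $v_k$ has $\gt$-eigenvalue
\[
\qpar^{-r-(\ell-k+1)+1}=\qpar^{(1-\ell-r)+k-1}.
\]
These are exactly the defining relations of $\tinde[\ell,1{-}\ell{-}r]$ in \fullref{lemma:taft-inde}, so $v_k\mapsto m_k$ gives the required isomorphism.

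The only potential obstacle is bookkeeping: the sign and scalar factors coming from $S(\xx)$, and the index shift when the basis is reversed. The rescaled basis $v_k=\xx^{k-1}m_\ell^{\ast}$ absorbs all scalars, so only the $\gt$-weights need checking, and that is the direct computation above.
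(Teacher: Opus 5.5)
Your proof is correct: the dual-basis computation using $S(\gt)=\gt^{-1}$ and $S(\xx)=-\gt^{-1}\xx$, the reversed basis $v_k=\xx^{k-1}m_\ell^{\ast}$ with $\gt$-weights $\qpar^{(1-\ell-r)+k-1}$, and the specialisations $\ell=1$ and $\ell=n$ (reading indices in $\Z/n\Z$) all check out. The paper states this lemma without proof as a routine verification, and your argument is exactly that verification.
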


\makeautorefname{subsection}{Sections}

Recall from \cite[Definition 4.1.1]{EGNO}
that a tensor category is a locally finite abelian, rigid monoidal
category with the endomorphism space of the unit object being $\K$.
Moreover, a tensor category is called finite if its underlying abelian category is finite.
In the language of \cite{MM11} one calls a finite pivotal tensor category which is representation-finite
a fiat tensor category, and
\eqref{eq:rigidity} as well as \fullref{subsection:taft-def}
and \fullref{subsection:taft-reps} imply this proposition.

\makeautorefname{subsection}{Section}

\begin{propositionqed}\label{proposition:taft-fiat}
$\tmod$ is a non-semisimple fiat tensor category.
\end{propositionqed}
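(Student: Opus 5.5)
The plan is to check each clause in the definition of a fiat tensor category in turn, using the results of \fullref{subsection:taft-def}, \fullref{subsection:taft-reps} and \fullref{subsection:taft-mod-structure}, and then to check non-semisimplicity separately.

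\textbf{Finite tensor category.} By \fullref{lemma:taft-basis}, $\taft$ is finite-dimensional, so $\tmod$ is equivalent to finite-dimensional modules over a finite-dimensional algebra. It is therefore a finite abelian $\K$-linear category: it has finitely many simples (\fullref{lemma:taft-simples}), each with a projective cover (\fullref{lemma:taft-prinjective}), and all Hom spaces are finite-dimensional. The Hopf structure \eqref{eq:taft-hopf} makes it monoidal with unit $\tsimple[0]$. The tensor product is exact in each variable, since it is the tensor product over $\K$ of the underlying vector spaces. The unit $\tsimple[0]$ is one-dimensional, so $\End(\tsimple[0])=\K$.

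\textbf{Rigidity and pivotality.} Left duals $\tmodule^{\ast}$ come from the antipode $S$, with the usual evaluation and coevaluation; the only check is that these are $\taft$-linear, which is the standard Hopf algebra argument. Right duals ${}^{\ast}\tmodule$ come from $S^{-1}$ via \eqref{eq:rigidity}. For pivotality I use the isomorphism $a_{\tmodule}$ of \eqref{eq:piv-struct}. Three things need verifying:
\begin{enumerate}
\item $a_{\tmodule}$ is $\taft$-linear, which follows from $S^{2}(\algstuff{x})=\gt^{-1}\algstuff{x}\gt$;
\item $a_{\tmodule}$ is natural, which is clear;
\item $a_{\tmodule}$ is monoidal, i.e.\ $a_{\tmodule\otimes\tmodule[N]}$ corresponds to $a_{\tmodule}\otimes a_{\tmodule[N]}$ under $(\tmodule\otimes\tmodule[N])^{\ast\ast}\cong\tmodule^{\ast\ast}\otimes\tmodule[N]^{\ast\ast}$. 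This reduces to $\gt$ being grouplike, $\Delta(\gt^{-1})=\gt^{-1}\otimes\gt^{-1}$.
\end{enumerate}
I expect this pivotal check to be the only step needing actual computation. It is still routine: the key point is that the element implementing $S^2$ is grouplike.

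\textbf{Representation-finiteness.} \fullref{lemma:taft-inde} gives exactly $n^{2}$ isomorphism classes of indecomposables, indexed by $(\ell,r)\in\llbracket1,n\rrbracket\times\Z/n\Z$, so $\tmod$ has finitely many indecomposables up to isomorphism. Together with the previous steps, this shows $\tmod$ is fiat in the sense of \cite{MM11}.

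\textbf{Non-semisimplicity.} By \fullref{remark:taft-socle-head}, any $\tinde$ with $\ell\ge 2$ (for instance $\tprmod[0]$, since $n\ge2$) is indecomposable but not simple. Hence $\tmod$ is not semisimple, which matches \fullref{remark:taft-def}.
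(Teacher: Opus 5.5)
Your proposal is correct and follows essentially the paper's route. The paper deduces the proposition directly from four ingredients: finite-dimensionality of $\taft$, duals coming from $S$ and $S^{-1}$, the pivotal structure \eqref{eq:piv-struct} arising from $S^{2}$ being conjugation by the grouplike $\gt$, and the finite classification of indecomposables. Your explicit checks that $a_{\tmodule}$ is $\taft$-linear and monoidal, and your use of $\tprmod[0]$ as an indecomposable that is not simple, just fill in details the paper leaves implicit.
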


\begin{remark}\label{remark:taft-braided}
The category $\tmod$ is not braided for $n>2$, cf. \cite[Remark 3.13(3)]{COZ14},
although $\tmodule\otimes\tmodule[N]\cong\tmodule[N]\otimes\tmodule$
for all $\tmodule,\tmodule[N]\in\tmod$
by \fullref{proposition:taft-ring}. It is also not spherical for $n>2$.
Following \cite[Definition 4.7.11]{EGNO}, one can define the
quantum dimension $\qpar\dim$ of objects $\tmodule$
of $\tmod$. Using the pivotal structure \eqref{eq:piv-struct} we define
\[
\qpar\dim(\tmodule)=
\mathrm{tr}(a_{\tmodule})\in\End_{\tmod}(\tsimple[0]),
\]
where $\mathrm{tr}$ denotes the categorical trace.
One gets $\qpar\dim(\tsimple[1])=\qpar^{-1}\neq\qpar=\qpar\dim(\tsimple[\smin 1])$,
showing that $\tmod$ is not spherical as long as $n>2$.
\end{remark}

\begin{remark}\label{remark:taft-center}
Let $\K=\C$. As stated in \cite[Example 8.3.7]{EGNO} there
is a one-parameter family of quasi-triangular structures on $\taft_{2}$.

Moreover, the Drinfeld double $\algstuff{D}(\taft)$ of $\taft$ is a quasi-triangular
Hopf algebra for any $n\in\Z_{\geq 2}$.
According to \cite[Exercise 7.14.11]{EGNO}, for any odd integer $n$, there is an isomorphism of Hopf algebras
\[
\algstuff{D}(\taft)\cong\C[\Z/n\Z]\otimes_{\C}\sqgroup,
\]
where $\sqgroup=\sqgroup(\mathfrak{sl}_2)$ is Lusztig's small quantum group at the root of unity $q:=\qpar^{\frac{\smin 1}{2}}$ and the right hand side is equipped with a suitable Drinfeld twist of its standard coproduct.
This isomorphism implies that
\[
\algstuff{Z}(\tmod)\cong(\C[\Z/n\Z]\otimes_{\C}\sqgroup)\text{-}\catstuff{M}_{\mathrm{fd}},
\]
where $\algstuff{Z}(\tmod)$ denotes the Drinfeld center.
\end{remark}

%%%%%%%%%%%%%%%%%%%%%%%%%%%%%%%%%%%%%%%%%%%%%%%%%%%

\subsection{Categorical cells}\label{subsection:taft-cells}

%%%%%%%%%%%%%%%%%%%%%%%%%%%%%%%%%%%%%%%%%%%%%%%%%%%

From now on we will identify every monoidal category $\catstuff{C}$ with its delooping, i.e. with the $2$-category with a unique object $*$, hom-category $\mathrm{Hom}(*,*)=\catstuff{C}$ and horizontal composition given by the monoidal product in $\catstuff{C}$. Let us conclude this section by computing the cells of $\tmod$ in the sense of \cite{MM14, MM11}. We recall here the construction for the reader's convenience.

Let us write $\tmodule\subset_{\oplus}\tmodule[N]$ for two $\taft$-modules
if $\tmodule$ is a direct summand of $\tmodule[N]$.
The indecomposable objects $\tmodule$ of $\tmod$ give rise to the preorder $<$
by declaring
\[
\tmodule<\tmodule[M^{\prime}]
\;
\Leftrightarrow
\;
\text{there exists }\tmodule[M^{\prime\prime}]\text{ such that }
\tmodule[M^{\prime}]\subset_{\oplus}\tmodule[M]\otimes \tmodule[M^{\prime\prime}].
\]

\begin{remark}\label{rmk:cells}
The experienced reader may wonder which of the standard cell preorders is being used here (left, right, two-sided). In fact, there is no distinction in that case thanks to \fullref{lemma:taft-duals} and the fact that projective modules form a two-sided tensor ideal of $\tmod$.
\end{remark}

Cells of $\tmod$ are the
equivalence classes of the equivalence relation defined
on the set of indecomposable objects by
\[
\tmodule\sim\tmodule[M^{\prime}]
\;
\Leftrightarrow
\;
(\tmodule<\tmodule[M^{\prime}]\;\text{ and }\;\tmodule[M^{\prime}]<\tmodule).
\]

The following lemma is a direct consequence of \fullref{proposition:taft-tensor}.

\begin{lemma}\label{lemma:taft-cells}
	The category $\tmod$ has two cells, namely
\begin{gather*}
\cell[b]=\{\tsimple,\tinde\mid 2\leq \ell\leq n-1, r\in \Z/n\Z \}
\quad\ \text{ and }\quad
\cell[u]=\{\tprmod\mid r \in \Z/n\Z\},
\end{gather*}
of cardinalities $(n-1)n$ and $n$, respectively.
\end{lemma}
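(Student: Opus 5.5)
We have to show two things: $\cell[u]$ is a single equivalence class, and so is $\cell[b]$. Both follow from the tensor product formulas of \fullref{proposition:taft-tensor} together with the observation that $\tsimple[0]$ is the unit, so $\tmodule<\tmodule$ for every indecomposable $\tmodule$.

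\emph{Step 1 (the upper cell).} By \eqref{eq:tensordecomp-0}, $\tprmod\otimes\tsimple[r^{\prime}-r]\cong\tprmod[r^{\prime}]$, so $\tprmod<\tprmod[r^{\prime}]$ for all $r,r^{\prime}$. Hence $\cell[u]$ lies in one equivalence class.

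\emph{Step 2 (separating the two cells).} By the last sentence of \fullref{proposition:taft-tensor}, $\tprmod\otimes\tmodule[M^{\prime\prime}]$ is projective for every $\tmodule[M^{\prime\prime}]$. By Krull--Schmidt, its indecomposable summands are then among the $\tprmod[s]$, which are the only indecomposable projectives by \fullref{lemma:taft-prinjective}. So no element of $\cell[b]$ satisfies $\tprmod<\tmodule$, and $\cell[u]$ is a full class. Conversely, every element of $\cell[b]$ lies below $\cell[u]$.

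\emph{Step 3 (the bottom cell).} It suffices to show two things:
\begin{itemize}
\item $\tsimple[0]$ is below every element of $\cell[b]$;
\item every element of $\cell[b]$ is below $\tsimple[0]$.
\end{itemize}
Then transitivity gives a single class.

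The first point is clear, since $\tsimple[0]\otimes\tmodule\cong\tmodule$.

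For the second point, tensoring with simples by \eqref{eq:tensordecomp-0} shifts $r$ arbitrarily. So it suffices to find, for each $2\leq\ell\leq n-1$ and some $r$, a module $\tmodule[M^{\prime\prime}]$ such that some $\tsimple[s]$ is a direct summand of $\tinde\otimes\tmodule[M^{\prime\prime}]$.

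Take $\tmodule[M^{\prime\prime}]=\tinde[\ell,r^{\prime}]$, so that $\ell^{\prime}=\ell$ in \eqref{eq:tensordecomp}. If $2\ell\leq n$, the $i=1$ summand is $\tinde[1,\ast]=\tsimple[\ast]$. If $2\ell>n$, the first sum runs over $i=1,\dots,n-\ell$, which is nonempty since $\ell\leq n-1$, and again the $i=1$ term has length $|\ell-\ell|+1=1$. In either case a simple occurs as a direct summand.

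Equivalently, one could tensor with the dual $\tinde^{\ast}$ from \fullref{lemma:taft-duals}. This is the expected rigidity argument, but the explicit formula already suffices.

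\emph{Step 4 (cardinalities).} There are $n-2$ values of $\ell$ in $\llbracket 2,n-1\rrbracket$ plus the simples, each with $n$ values of $r$. This gives $(n-1)n$ elements in $\cell[b]$ and $n$ in $\cell[u]$.

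The only point needing care is the boundary case in Step 3, namely checking that the first sum in the case $\ell+\ell^{\prime}>n$ is nonempty. This is exactly where $\ell\leq n-1$, i.e. non-projectivity, enters.
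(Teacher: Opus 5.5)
Your proof is correct and follows essentially the same route as the paper. Both arguments shift by simples within each cell and extract a simple summand from $\tinde\otimes\tinde[\ell,r^{\prime}]$ to get $\tinde<\tsimple[0]$; the paper chooses $r^{\prime}=1-\ell-r$, i.e.\ the dual, so that the summand is $\tsimple[0]$ directly, whereas you take any $r^{\prime}$ and shift afterwards. Both then separate $\cell[u]$ using that projectives form a tensor ideal.
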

\begin{proof}
Since  $\tsimple[i] \otimes \tinde \simeq \tinde[\ell,r+i]$ and $\tsimple[i] \otimes \tprmod \simeq \tprmod[r+i]$, for a given $\ell$, all $\tinde$ belong to the same cell and all projective modules $\tprmod$ belong to the same cell. It is also clear that $\tsimple[0] < \tinde$ and since $\tsimple[0] \subset_{\oplus}\tinde \otimes \tinde[\ell,1 \smin \ell\smin r]$, we have $\tinde < \tsimple[0]$. Therefore all $\tsimple$ and $\tinde$ for $\ell \in \llbracket 2, n-1 \rrbracket$ form one cell and the projective covers form the second one. Finally, $\tsimple[0] < \tprmod$ but $\tprmod \nless \tsimple[0]$ since the tensor product of any module with a projective module is projective and thus cannot have $\tsimple[0]$ as direct summand since $\tsimple[0]$ is not projective.
\end{proof}

\begin{remark}\label{remark:taft-cell}
The two cells are ordered $\cell[b]<\cell[u]$. We will thus call $\cell[b]$ the bottom cell and $\cell[u]$ the upper cell.
\end{remark}

\section{Representation ring and decategorification}\label{section:taft-decat}
In this section, we recall the main result of \cite{COZ14} on the representation ring of $\tmod$ and then study important classes of modules for this ring, attached to each categorical cell.

\subsection{The representation ring of $\tmod$}\label{subsection:taft-repring}

%%%%%%%%%%%%%%%%%%%%%%%%%%%%%%%%%%%%%%%%%%%%%%%%%%%

Let us recursively define a family of two-variable Chebyshev polynomials
\begin{gather}\label{eq:chebyshev}
\begin{gathered}
\cpoly[1](\xpar,\ypar)=1,
\quad
\cpoly[2](\xpar,\ypar)=\xpar,
\\
\cpoly(\xpar,\ypar)=
\xpar\cdot\cpoly[e{-}1](\xpar,\ypar)
-
\ypar\cdot\cpoly[e{-}2](\xpar,\ypar),\;\text{for }e\geq 3.
\end{gathered}
\end{gather}

Denote by $\chi(\tmod)_{\Z}$ the representation ring of $\tmod$. As an abelian group, it is freely generated by the isomorphism classes $[\tmodule]$ of indecomposable finite-dimensional $\taft$-modules. Addition and multiplication are induced by direct sums and tensor products: ${[\tmodule]+[\tmodule[N]]:=[\tmodule \oplus \tmodule[N]]}$, ${[\tmodule][\tmodule[N]]:=[\tmodule \otimes \tmodule[N]]}$.
By \eqref{eq:tensordecomp-0} and \eqref{eq:tensordecomp-2},
we can restrict our attention to the indecomposable $\taft$-modules with $r=0$,
since tensoring by $\tsimple[1]$ increases $r$ by $1$. Moreover, for $\ell \in \llbracket 3,n \rrbracket$, equation
\eqref{eq:tensordecomp} gives
\begin{gather}\label{eq:taft-action-XY}
\tinde[2,0]\otimes\tinde[\ell{-}1,0]\cong\tinde[\ell,0]\oplus(\tsimple[1]\otimes\tinde[\ell{-}2,0])
,\quad
\tinde[2,0]\otimes\tprmod[0]\cong\tprmod[0]\oplus\tprmod[1].
\end{gather}
In $\chi(\tmod)_{\Z}$, we then have
\begin{equation*}
[\tinde[\ell,0]] = [\tinde[2,0]][\tinde[\ell{-}1,0]]-[\tsimple[1]][\tinde[\ell{-}2,0]], \quad \left([\tinde[2,0]]-[\tsimple[1]]-[\tsimple[0]]\right)[\tprmod[0]]=0.
\end{equation*}
This shows that $\left[\tinde[2,0]\right]$ and $\left[\tsimple[1]\right]$ generate $\chi(\tmod)_{\Z}$.

\begin{propositionqed}[{\cite[Theorem 3.10]{COZ14}}]\label{proposition:taft-ring}
We have an isomorphism of rings
\[
\begin{array}{ccc}
 \chi(\tmod)_{\Z} & \to & \Z[\xpar,\ypar]/(\ypar^{n}-1,(\xpar-\ypar-1)\cpoly[n](\xpar,\ypar))\\
 \left[\tinde[2,0]\right] & \mapsto & \xpar \\
 \left[\tsimple[1]\right] & \mapsto & \ypar
\end{array}
\]
where $\cpoly[n](\xpar,\ypar)$ is the two-variable Chebyshev polynomial from \eqref{eq:chebyshev}.
\end{propositionqed}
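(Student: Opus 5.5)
We plan to build the ring map from the generators and relations already found, then show it is an isomorphism by comparing ranks.

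\textbf{Step 1: define the map.} Define $\Phi\colon \Z[\xpar,\ypar]\to\chi(\tmod)_{\Z}$ by $\xpar\mapsto[\tinde[2,0]]$ and $\ypar\mapsto[\tsimple[1]]$. This is a ring map, because $\chi(\tmod)_{\Z}$ is commutative: by \fullref{proposition:taft-tensor}, $\tmodule\otimes\tmodule[N]\cong\tmodule[N]\otimes\tmodule$ on indecomposables.

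\textbf{Step 2: surjectivity and images of the Chebyshev polynomials.} The recursion \eqref{eq:taft-action-XY} has the same shape as \eqref{eq:chebyshev}. Induction on $\ell$ therefore gives
\[
\Phi(\cpoly[\ell](\xpar,\ypar))=[\tinde[\ell,0]] \quad\text{for } 1\leq \ell\leq n,
\]
and in particular $\Phi(\cpoly[n])=[\tprmod[0]]$. Multiplying by $\ypar^{r}$ then shows
\[
\Phi(\ypar^{r}\cpoly[\ell])=[\tinde[\ell,r]],
\]
using \eqref{eq:tensordecomp-0}. Hence $\Phi$ is surjective.

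\textbf{Step 3: the relations lie in the kernel.} We have $\ypar^{n}\mapsto[\tsimple[n]]=[\tsimple[0]]=1$. By the identity displayed just before the proposition, $(\xpar-\ypar-1)\cpoly[n]\mapsto([\tinde[2,0]]-[\tsimple[1]]-[\tsimple[0]])[\tprmod[0]]=0$. So $\Phi$ descends to a surjection
\[
\bar\Phi\colon R:=\Z[\xpar,\ypar]/(\ypar^{n}-1,(\xpar-\ypar-1)\cpoly[n])\to\chi(\tmod)_{\Z}.
\]

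\textbf{Step 4: injectivity by a rank count.} The target $\chi(\tmod)_{\Z}$ is free abelian of rank $n^{2}$ by \fullref{lemma:taft-inde}. We claim $R$ is spanned over $\Z$ by the $n^{2}$ elements $\ypar^{r}\cpoly[\ell]$ with $0\leq r\leq n-1$ and $1\leq\ell\leq n$. Granting this, a surjection from a $\Z$-module generated by $n^2$ elements onto $\Z^{n^2}$ sends those generators to a basis, so it is injective, and $\bar\Phi$ is the inverse of the map in the statement.

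To prove the spanning claim, view $R$ as a quotient of $\Z[\ypar]/(\ypar^n-1)[\xpar]$. Since $\cpoly[\ell]$ is monic in $\xpar$ of degree $\ell-1$, the polynomial $(\xpar-\ypar-1)\cpoly[n]$ is monic of degree $n$ in $\xpar$. Division by a monic polynomial shows that $R$ is generated as a $\Z[\ypar]/(\ypar^n-1)$-module by $1,\xpar,\dots,\xpar^{n-1}$. By triangularity (again because each $\cpoly[\ell]$ is monic of degree $\ell-1$ in $\xpar$), these are equivalently generated by $\cpoly[1],\dots,\cpoly[n]$.

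The only step needing care is Step 4. One must observe that the monic-division argument works over the coefficient ring $\Z[\ypar]/(\ypar^{n}-1)$, so no torsion issues arise when counting generators. The rest is bookkeeping.
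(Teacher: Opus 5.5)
Your proof is correct and follows the same route as the paper. The paper establishes your Steps 2--3 (generation by $[\tinde[2,0]]$ and $[\tsimple[1]]$, and the two relations) just before the statement and cites \cite[Theorem 3.10]{COZ14} for the rest; your rank count finishes it correctly, since $(\xpar-\ypar-1)\cpoly[n](\xpar,\ypar)$ is monic of degree $n$ in $\xpar$ over $\Z[\ypar]/(\ypar^{n}-1)$, so the $n^{2}$ elements $\ypar^{r}\cpoly[\ell]$ span the quotient and map onto the basis $\{[\tinde[\ell,r]]\}$.
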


\begin{remark}
The ring $\chi(\tmod)_{\Z}$ is in general bigger than
the Grothendieck ring of $\tmod$, since the latter is just freely generated by
the isomorphism classes $[\tsimple]$, thus isomorphic to $\Z[\ypar]/(\ypar^{n}-1)$.
\end{remark}

Under the isomorphism from \fullref{proposition:taft-ring}
we have
\begin{gather}\label{eq:taft-apex}
[\tsimple]\mapsto\ypar^{r}
,\quad
[\tinde[\ell,0]]\mapsto\cpoly[\ell](\xpar,\ypar)
,\quad
[\tprmod[0]]\mapsto\cpoly[n](\xpar,\ypar).
\end{gather}

%%%%%%%%%%%%%%%%%%%%%%%%%%%%%%%%%%%%%%%%%%%%%%%%%%%

\subsection{Positive integral $\tmod$-modules}\label{subsection:taft-n-modules}

%%%%%%%%%%%%%%%%%%%%%%%%%%%%%%%%%%%%%%%%%%%%%%%%%%%

Let $\N=\Z_{\geq 0}$.
An $\N$-basis of an algebra is a basis
with structure constants from $\N$. Here is the
key example for our purposes which we
will always fix:

Denote by $\taftb(\tmod)$ the following $\N$-basis of the algebra $\taftp:=\chi(\tmod)_{\Z}\otimes_{\Z}\K$
\[
\{
[\tsimple],[\tinde],[\tprmod]
\mid 2\leq \ell\leq n-1, r\in \Z/n\Z
\}.
\]
The algebra $\taftp$ is therefore of dimension $n^{2}$.

\begin{definition}\label{definition:taft-pos-int-module}
A $\taftp$-module $\taftm$ is called an $\N$-module
if it has a finite $\K$-basis $\taftb(\taftm)$ on which elements
from $\taftb(\tmod)$ act by matrices with coefficients in $\N$. We additionally require that these integral matrices define a representation of $\taftp_{\Z}$. Two such $\N$-modules are $\N$-equivalent, denoted by $\cong_{\N}$, if there exists a permutation matrix which intertwines the $\taftb(\tmod)$-actions.
\end{definition}

\begin{definition}\label{definition:irreducible-module}
An $\N$-matrix (matrix with coefficients in $\N$) is called
\textit{irreducible} if it is the adjacency matrix of a
directed, strongly connected graph $\Gamma=(V(\Gamma), E(\Gamma))$
with finitely many vertices, for some enumeration of the
vertices.
\end{definition}

To fix notation, we also write $\Gamma^{\mathrm{op}}$ for the graph with the opposite orientation, and $\Gamma \coprod \Gamma^{\mathrm{op}}$ for the graph $(V(\Gamma), E(\Gamma) \coprod E(\Gamma^{\mathrm{op}}))$.

\begin{example}\label{example:taft-circle}
An important example is the circle graph $\Gamma=\Theta_{d}$ on $d$ vertices, where $d\in \N$ vertices
\[
\xy
(0,0)*{
\Theta_{1}=
\begin{tikzcd}[ampersand replacement=\&,row sep=tiny,column sep=tiny,arrows={shorten >=-.5ex,shorten <=-.5ex},labels={inner sep=.05ex}]
\phantom{.}
\\
\mathtt{1}\arrow[thick,out=120,in=60,loop,distance=1.25em,->]{}{}
\\
\phantom{.}
\end{tikzcd}};
(0,14)*{\text{\tiny $\amatrix[\Theta_{1}]=
\begin{psmallmatrix}
1
\end{psmallmatrix}$}};
(0,-10)*{\phantom{a}};
(0,17)*{\phantom{a}};
\endxy
,\quad
\xy
(0,0)*{
\Theta_{2}=
\begin{tikzcd}[ampersand replacement=\&,row sep=tiny,column sep=tiny,arrows={shorten >=-.5ex,shorten <=-.5ex},labels={inner sep=.05ex}]
\phantom{.} \& \phantom{.}
\\
\mathtt{1}\arrow[thick,yshift=-.1cm,->]{r}{} \& \mathtt{2}\arrow[thick,yshift=.1cm,->]{l}{}
\\
\phantom{.} \& \phantom{.}
\end{tikzcd}};
(0,14)*{\text{\tiny $\amatrix[\Theta_{2}]=
\begin{psmallmatrix}
0 & 1\\
1 & 0
\end{psmallmatrix}$}};
(0,-10)*{\phantom{a}};
(0,17)*{\phantom{a}};
\endxy
,\quad
\xy
(0,0)*{
\Theta_{3}=
\begin{tikzcd}[ampersand replacement=\&,row sep=tiny,column sep=tiny,arrows={shorten >=-.5ex,shorten <=-.5ex},labels={inner sep=.05ex}]
\phantom{.} \& \mathtt{2}\arrow[thick,->]{dd}{}
\\
\mathtt{1}\arrow[thick,->]{ru}{} \& \phantom{.}
\\
\phantom{.} \& \mathtt{3}\arrow[thick,->]{lu}{}
\end{tikzcd}};
(0,14)*{\text{\tiny $\amatrix[\Theta_{3}]=
\begin{psmallmatrix}
0 & 0 & 1\\
1 & 0 & 0\\
0 & 1 & 0
\end{psmallmatrix}$}};
(0,-10)*{\phantom{a}};
(0,17)*{\phantom{a}};
\endxy
,\quad
\xy
(0,0)*{
\Theta_{4}=
\begin{tikzcd}[ampersand replacement=\&,row sep=tiny,column sep=tiny,arrows={shorten >=-.5ex,shorten <=-.5ex},labels={inner sep=.05ex}]
\phantom{.} \& \mathtt{2}\arrow[thick,->]{dr}{} \& \phantom{.}
\\
\mathtt{1}\arrow[thick,->]{ru}{} \& \phantom{.} \& \mathtt{3}\arrow[thick,->]{ld}{}
\\
\phantom{.} \& \mathtt{4}\arrow[thick,->]{lu}{} \& \phantom{.}
\end{tikzcd}};
(0,14)*{\text{\tiny $\amatrix[\Theta_{4}]=
\begin{psmallmatrix}
0 & 0 & 0 & 1\\
1 & 0 & 0 & 0\\
0 & 1 & 0 & 0\\
0 & 0 & 1 & 0
\end{psmallmatrix}$}};
(0,-10)*{\phantom{a}};
(0,17)*{\phantom{a}};
\endxy
.
\]
We use the indicated enumeration and regard the adjacency matrix $\amatrix[\Theta_{d}]$ as acting on column vectors.
\end{example}

By \fullref{proposition:taft-tensor}, the action of
$\xpar=[\tinde[2,0]]$ and $\ypar=[\tsimple[1]]$ on any $\N$-module are given
by commuting $\N$-matrices
which we denote by $\mmatrix[\xpar]$
and $\mmatrix[\ypar]$, respectively. Similar notation will be used for all
other basis elements from $\taftb(\tmod)$.

\begin{definition}\label{definition:taft-transitive}
We call an $\N$-module:
\begin{itemize}
\item \textit{transitive} if $\xpar+\ypar$ acts by an irreducible $\N$-matrix.
\item \textit{Taft-split} if $\mmatrix[\xpar]=\mmatrix[\ypar]+\imatrix$.
\item \textit{NIM} (nonnegative integer matrix) if $\mmatrix[\zpar^{\ast}]=\mmatrix[\zpar]^{\mathrm{T}}$, for each $\zpar \in \taftb(\tmod)$.
\item \textit{admissible} if $\xpar$ and $\ypar$
act by diagonalizable matrices over $\C$.
\end{itemize}
\end{definition}

Note that Taft-split implies NIM, since $\mmatrix[\xpar^{\ast}]=\mmatrix[\ypar]^{-1}\mmatrix[\xpar]=\mmatrix[\ypar]^{-1}(\mmatrix[\ypar]+\imatrix)=\imatrix + \mmatrix[\ypar]^{\mathrm{T}}$.
We will see that being Taft-split has a categorical interpretation. Indeed, if $\catstuff{M}$ is an exact $\tmod$-module category, then the split Grothendieck group of $\catstuff{M}$ is Taft-split if and only if $\catstuff{M}$ is semisimple.

The use of nonnegative integer matrix representations and their associated graphs is already well explored (see, for example, \cite{Ga02} for the fusion-ring and fusion-graph point of view). In the context of tensor categories, the relationship between module categories, adjacency matrices and $\typeADET$ graphs plays a central role in the classification of module categories over $\mathrm{SL}_q(2)$; see \cite{EO04-2}. Closely related graph-theoretic methods occur in the theory of simple $2$-representations, notably in \cite{MT19} and \cite{MMMT20}, the latter of which also develops the language of nonnegative integral representations explicitly.

\begin{remark}
Note that transitivity, as defined above, implies that for
each $m,m^{\prime}\in\taftb(\taftm)$ there exists
an element $t\in\taftb(\tmod)$ such that $m$ appears in $t\acts m^{\prime}$
with nonzero coefficient when expressed in terms of $\taftb(\taftm)$.
\end{remark}

\begin{remark}\label{remark:perm-matrix}
The matrix $\mmatrix[\ypar]$ is a permutation matrix, since the coefficients of $\mmatrix[\ypar]$ are all non-negative integers and $\mmatrix[\ypar]^n=\imatrix$.
\end{remark}

\begin{definition}\label{definition:taft-regular}
We call an $\N$-module \textit{regular} if all disjoint cycles of the permutation associated to $\mmatrix[\ypar]$ have the same length.
\end{definition}

Recall from \cite{CM19} that any transitive $\N$-module has an associated apex. By definition, it is the highest cell that acts nontrivially. By \fullref{lemma:taft-cells} and \eqref{eq:taft-apex}
we get the following characterisation.

\begin{lemmaqed}\label{lemma:taft-apex-matrix}
Let $\taftm$ be a transitive $\N$-module of $\taftp$. It has
apex $\cell[b]$ if and only if $\cpoly[n](\mmatrix[\xpar],\mmatrix[\ypar])=0$.
Otherwise, the apex is $\cell[u]$.
\end{lemmaqed}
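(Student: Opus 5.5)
The plan is to use the cell structure of \fullref{lemma:taft-cells} together with the dictionary \eqref{eq:taft-apex}. The apex of a transitive $\N$-module $\taftm$ is the highest cell acting nontrivially. The bottom cell $\cell[b]$ always acts nontrivially, because $\tsimple[0]\in\cell[b]$ is the identity and acts by $\imatrix\neq 0$. Hence the apex is $\cell[u]$ exactly when some element of $\cell[u]$ acts by a nonzero matrix; otherwise the apex is $\cell[b]$. So the statement reduces to the following claim: all of $\cell[u]=\{\tprmod\mid r\in\Z/n\Z\}$ acts by zero if and only if $\cpoly[n](\mmatrix[\xpar],\mmatrix[\ypar])=0$.

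The first step is to express the action of $\cell[u]$ through $\cpoly[n]$. By \eqref{eq:taft-apex} and \eqref{eq:tensordecomp-0}, the class $[\tprmod]$ corresponds to $\ypar^{r}\cpoly[n](\xpar,\ypar)$. Therefore $\tprmod$ acts by $\mmatrix[\ypar]^{r}\,\cpoly[n](\mmatrix[\xpar],\mmatrix[\ypar])$. This uses that the $\N$-module is a representation of $\taftp_{\Z}$, so the polynomial identity transfers to the matrices.

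The second step is to note that $\mmatrix[\ypar]$ is invertible. It is a permutation matrix by \fullref{remark:perm-matrix}, or simply because $\mmatrix[\ypar]^n=\imatrix$. Consequently $\mmatrix[\ypar]^{r}\cpoly[n](\mmatrix[\xpar],\mmatrix[\ypar])=0$ for some $r$ if and only if it holds for all $r$, and both are equivalent to $\cpoly[n](\mmatrix[\xpar],\mmatrix[\ypar])=0$. Combined with the reduction above, this gives both directions of the lemma.

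There is no real obstacle; the only care needed is in the convention for the apex. Following \cite{CM19}, the apex is the unique maximal cell not annihilating the module, and its existence relies on transitivity. Since there are only two cells, linearly ordered as $\cell[b]<\cell[u]$ by \fullref{remark:taft-cell}, this maximal cell is $\cell[u]$ when $\cpoly[n](\mmatrix[\xpar],\mmatrix[\ypar])\neq 0$ and $\cell[b]$ otherwise. Transitivity is therefore used only to guarantee that the apex is well defined.
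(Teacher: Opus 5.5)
Your proposal is correct and follows the route the paper intends: the paper states this lemma without a written proof, deriving it directly from \fullref{lemma:taft-cells} and the dictionary \eqref{eq:taft-apex}. You fill in that argument exactly, by writing the action of $\tprmod$ as $\mmatrix[\ypar]^{r}\cpoly[n](\mmatrix[\xpar],\mmatrix[\ypar])$, using invertibility of $\mmatrix[\ypar]$, and noting that $\cell[b]$ always acts nontrivially through the unit $\tsimple[0]$.
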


For any $d\in\Z_{> 0}$ with $n=dk$ we define a
$\taftp$-module $\taftm(d)=\K\{m_{1},\dots,m_{d}\}$ by
\[
\xy
(0,0)*{
\xpar\acts m_{i}
=
m_{i}+m_{i+1}
,\quad
\ypar\acts m_{i}
=
m_{i+1}};
\endxy
\]
where the subscripts are taken modulo $d$.

\begin{example}\label{example:taft-big-apex}
If $n=6$, then for $d\in\{1,2,3,6\}$, the action of $\xpar$ and $\ypar$ on $\taftm(d)$ is respectively given by $\imatrix+\amatrix[\Theta_{d}]$
(cf. \fullref{example:taft-circle}) and $\amatrix[\Theta_{d}]$.
\end{example}

\begin{proposition}\label{proposition:taft-big-apex}
For each admissible, transitive, Taft-split $\N$-module $\taftm$ with apex $\cell[u]$, there exists a unique $d|n$ such that $\taftm \cong_{\N} \taftm(d)$.
\end{proposition}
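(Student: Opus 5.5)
Since $\taftm$ is Taft-split, the whole action is determined by the single matrix $\mmatrix[\ypar]$, because $\mmatrix[\xpar]=\mmatrix[\ypar]+\imatrix$ and $\xpar,\ypar$ generate $\taftp$ (Proposition \ref{proposition:taft-ring}). By Remark \ref{remark:perm-matrix}, $\mmatrix[\ypar]$ is a permutation matrix $P_\sigma$ with $\sigma^n=\mathrm{id}$. The plan is therefore to reduce the statement to a statement about the permutation $\sigma$ of the basis $\taftb(\taftm)$, and then use transitivity and the apex condition to pin $\sigma$ down.

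First, transitivity. The matrix of $\xpar+\ypar$ is $\imatrix+2P_\sigma$. It is irreducible exactly when the graph whose edges are $i\to\sigma(i)$, together with loops, is strongly connected. This holds exactly when $\sigma$ is a single cycle on all $d:=|\taftb(\taftm)|$ basis vectors. Since $\sigma^n=\mathrm{id}$, the length $d$ of this cycle divides $n$. Relabelling the basis along the cycle gives $m_{i+1}=\sigma(m_i)$, so $\mmatrix[\ypar]=\amatrix[\Theta_d]$ and $\mmatrix[\xpar]=\imatrix+\amatrix[\Theta_d]$. This is precisely $\taftm(d)$. Uniqueness of $d$ is immediate because $d=\dim\taftm$ is an $\N$-equivalence invariant. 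Admissibility is automatic, since a permutation matrix of finite order is diagonalizable over $\C$, and so is $\imatrix$ plus it.

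Second, existence. We must check that $\taftm(d)$ really is a $\taftp_{\Z}$-module, i.e.\ that the relations $\ypar^n=1$ and $(\xpar-\ypar-1)\cpoly[n](\xpar,\ypar)=0$ hold. The first holds since $d\mid n$. The second holds trivially, because $\xpar-\ypar-1$ acts by zero. We must also check that every basis element of $\taftb(\tmod)$ acts by an $\N$-matrix. Since $\mmatrix[\xpar]$ and $\mmatrix[\ypar]$ are polynomials in the single matrix $A:=\amatrix[\Theta_d]$, it suffices to check that $\cpoly[\ell](\imatrix+A,A)$ has nonnegative coefficients.

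To do this, I would diagonalize: on the eigenvector with $A=\omega$, the recursion \eqref{eq:chebyshev} with $\xpar=1+\omega$, $\ypar=\omega$ has characteristic roots $1$ and $\omega$. It therefore gives $\cpoly[\ell](1+\omega,\omega)=1+\omega+\dots+\omega^{\ell-1}$. Hence $\cpoly[\ell](\imatrix+A,A)=\sum_{j=0}^{\ell-1}A^j$, which is an $\N$-matrix. This is just the categorical fact that $\tinde[\ell,0]$ has a filtration by $\tsimple[0],\dots,\tsimple[\ell-1]$. Multiplying by the permutation matrices $\mmatrix[\ypar]^r$ covers all $\tinde$ and $\tprmod$.

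Third, the apex. By Lemma \ref{lemma:taft-apex-matrix}, we need $\cpoly[n](\mmatrix[\xpar],\mmatrix[\ypar])\neq 0$. By the formula above this equals $\sum_{j=0}^{n-1}A^j=k\sum_{j=0}^{d-1}A^j$, where $n=dk$. This is $k$ times the all-ones matrix, hence nonzero. So the apex is $\cell[u]$ for every $d\mid n$, and in fact the apex hypothesis imposes no further restriction.

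The only step requiring genuine computation is the closed form $\cpoly[\ell](1+\omega,\omega)=\sum_{j<\ell}\omega^j$. The repeated-root case $\omega=1$ has to be checked separately, but there the recursion directly gives $\cpoly[\ell](2,1)=\ell$. Beyond this, the argument is bookkeeping.
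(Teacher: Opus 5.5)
Your proof is correct, and its core is the paper's argument. Taft-splitness reduces everything to the permutation matrix $\mmatrix[\ypar]$. Irreducibility of $2\mmatrix[\ypar]+\imatrix$ forces a single cycle whose length $d$ divides $n$, relabelling along that cycle gives $\taftm(d)$, and uniqueness follows from $d=\dim\taftm$.

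The one place you argue differently is the check that $\taftm(d)$ is an $\N$-module. The paper deduces it from the categorification $\Gros(\oneMod_{\tmod}(\taobj))\cong_{\N}\taftm(d)$ of \fullref{proposition:taft-decat}, that is, from the semisimple module categories built from the Etingof--Ostrik algebra objects. You instead verify it directly via $\cpoly[\ell](\imatrix+A,A)=\sum_{j=0}^{\ell-1}A^j$.

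Your route keeps the decategorified statement self-contained. It also justifies the identity $\cpoly[n](\mmatrix[\ypar]+\imatrix,\mmatrix[\ypar])=\sum_{i=0}^{n-1}\mmatrix[\ypar]^i$, which the paper uses without comment in the apex check. The paper's route gives the stronger fact that each $\taftm(d)$ is realized by a $2$-representation.

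A small simplification: the identity $\cpoly[\ell](1+t,t)=\sum_{j<\ell}t^j$ already holds in $\Z[t]$ by the same two-term induction. So the diagonalization and the separate treatment of $\omega=1$ are unnecessary.
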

\begin{proof}
First, for $d|n$, let us show that $\taftm(d)$ has the claimed properties. By \fullref{proposition:taft-ring}, it is enough to prove $\mmatrix[\xpar]\mmatrix[\ypar]=\mmatrix[\ypar]\mmatrix[\xpar]$,
$\mmatrix[\ypar]^{n}=\imatrix$ and $\mmatrix[\xpar]-\mmatrix[\ypar]-\imatrix=0$, which follows
directly from the definition of $\taftm(d)$. Moreover, all involved matrices are
clearly diagonalizable, and transitivity is also clear.
That they correspond to the apex $\cell[u]$ follows from \fullref{lemma:taft-apex-matrix} since $\cpoly[n](\mmatrix[\ypar]+\imatrix)=\sum_{i=0}^{n-1}{\mmatrix[\ypar]^i}\neq 0$. The fact that it is an $\N$-module follows from \fullref{proposition:taft-decat}.

\medskip
Second, assume that $\taftm$ is an admissible, transitive, Taft-split $\N$-module with apex $\cell[u]$.
We know that $\mmatrix[\ypar]$ is a permutation matrix (cf. \fullref{remark:perm-matrix}).
 The relation $\mmatrix[\xpar]=\mmatrix[\ypar]+\imatrix$ implies that the matrix $\mmatrix[\xpar]+\mmatrix[\ypar] = 2\mmatrix[\ypar] + \imatrix$ is irreducible exactly when the permutation associated to $\mmatrix[\ypar]$ consists of a single cycle. If $d$ is the length of this cycle, we must have $d|n$ since $\mmatrix[\ypar]^n=\imatrix$, and the module is $\N$-equivalent to $\taftm(d)$.

\medskip
Third, for different $d,d^{\prime}$, the
$\taftp$-modules $\taftm(d)$ and $\taftm(d^{\prime})$ are of different dimensions,
and therefore cannot be $\N$-equivalent, hence uniqueness.
\end{proof}

\begin{example}
For $n=2$, consider the $\taftp$-module defined by:
\begin{equation*}
\mmatrix[\xpar]=\begin{pmatrix} 1 & 1\\ 1& 1\end{pmatrix}, \quad \mmatrix[\ypar]=\imatrix_2.
\end{equation*}
Indeed, it is an $\N$-module since $(\mmatrix[\xpar]-\mmatrix[\ypar]-\imatrix_2)\cpoly[2](\mmatrix[\xpar],\mmatrix[\ypar])=(\mmatrix[\xpar]-2\imatrix_2)\mmatrix[\xpar]=0$ and the matrices $\mmatrix[\ypar]$ and $\mmatrix[\ypar]$ commute. It is moreover clearly admissible and transitive. The fact that it is NIM because
\begin{equation*}
\mmatrix[\xpar^{\ast}]=\mmatrix[\ypar]^{-1}\mmatrix[\xpar]=\mmatrix[\xpar], \quad \mmatrix[\ypar^{\ast}]=\mmatrix[\ypar]^{\smin 1}=\mmatrix[\ypar].
\end{equation*}
Most importantly, this $\taftp$-module is not Taft-split. Moreover, it is clearly not $\N$-equivalent to $\taftm(1)$ since it is two-dimensional and is also not $\N$-equivalent to $\taftm(2)$, since the cycle type underlying the permutation of $\imatrix$ is $(1)(2)$, but the cycle type underlying $\mmatrix[\ypar]$ of $\taftm(2)$ is $(1\text{ }2)$.
\end{example}

Let us now turn to $\N$-modules with apex $\cell[b]$. To that end, let us consider the families of oriented graphs of type $\typeADET$ oriented in the following way:
\medskip
\[
A_{\ell}:\qquad
\begin{tikzcd}[column sep=1.2em]
1 \arrow[thick,->]{r}{}
& 2 \arrow[thick,->]{r}{}
& \cdots \arrow[thick,->]{r}{}
& \ell\smin 1 \arrow[thick,->]{r}{}
& \ell
\end{tikzcd}
\]

\[
D_{\ell}:\qquad
\begin{tikzcd}[column sep=1.2em, row sep=0.8em]
&&&& \ell\smin 1 \\
1  \arrow[thick,->]{r}{}
& 2  \arrow[thick,->]{r}{}
& \cdots \arrow[thick,->]{r}{}
& \ell \smin 2
\arrow[thick,->]{ur}{}
\arrow[thick,->]{dr}{}\\
&&&& \ell
\end{tikzcd}
\]

\[
T_{\ell}:\qquad
\begin{tikzcd}[column sep=1.2em]
1 \arrow[thick,->]{r}{}
& 2 \arrow[thick,->]{r}{}
& \cdots \arrow[thick,->]{r}{}
& \ell\smin 1 \arrow[thick,->]{r}{}
& \ell \arrow[thick, ->,loop right]{}{}
\end{tikzcd}
\]

\[
E_6:\qquad
\begin{tikzcd}[column sep=1.2em, row sep=0.8em]
&& 4  \\
1  \arrow[thick,->]{r}{}
& 2  \arrow[thick,->]{r}{}
& 3  \arrow[thick,->]{r}{} \arrow[thick,->]{u}{}
& 5  \arrow[thick,->]{r}{}
& 6
\end{tikzcd}
\]

\[
E_7:\qquad
\begin{tikzcd}[column sep=1.2em, row sep=0.8em]
&&& 5\\
1  \arrow[thick,->]{r}{}
& 2  \arrow[thick,->]{r}{}
& 3  \arrow[thick,->]{r}{}
& 4  \arrow[thick,->]{r}{} \arrow[thick,->]{u}{}
& 6  \arrow[thick,->]{r}{}
& 7
\end{tikzcd}
\]

\[
E_8:\qquad
\begin{tikzcd}[column sep=1.2em, row sep=0.8em]
&&&& 6\\
1  \arrow[thick,->]{r}{}
& 2  \arrow[thick,->]{r}{}
& 3  \arrow[thick,->]{r}{}
& 4  \arrow[thick,->]{r}{}
& 5  \arrow[thick,->]{r}{} \arrow[thick,->]{u}{}
& 7  \arrow[thick,->]{r}{}
& 8
\end{tikzcd}
\]
\medskip

We recall the Coxeter number $h(\Gamma)$ of each $\typeADET$ diagram.

\medskip
\begin{center}
\begin{tabular}{|c|c|}
\hline
\text{Type} $\Gamma$ & $h(\Gamma)$\\
\hline
$\typeA_{\ell}$& $\ell+1$ \\
\hline
$\typeD_{\ell}$  & $2(\ell-1)$ \\
\hline
$\typeT_{\ell}$  & $2\ell+1$ \\
\hline
$\typeE_6$ & $12$\\
\hline
$\typeE_7$ & $18$\\
\hline
$\typeE_8$ & $30$\\
\hline
\end{tabular}
\end{center}
\medskip

Let us denote by $\typeADET(n)$ the set of pairs $(\Gamma,d)$ such that $\Gamma$ is an $\typeADET$ graph with $h(\Gamma)=n$ and $d \in \N$ is a divisor of $n$.

%Fix a finite oriented graph $\Gamma$, possibly with loops whose symmetrization $\Gamma \coprod \Gamma^{\mathrm{op}}$ is strongly connected. Let us enumerate the vertices $1,\dots,v(\Gamma)$ and fix $d\in\Z_{> 0}$ with $n=dk$, such that either $\Gamma$ has no loops or $d$ is odd and $\Gamma$ has at most one loop per vertex.

Associated to $(\Gamma,d) \in \typeADET(n)$ we define the $\taftp$-module $\taftm(\Gamma,d)$
of dimension $v(\Gamma)d$ as follows. It has a $\K$-basis
$\{m_{ij}\mid (i,j) \in  \llbracket 1, v(\Gamma)\rrbracket \times \Z/d\Z\}$ and $\taftp$-action
\[
\xy
(0,0)*{
\xpar\acts m_{ij}
=
{\textstyle\sum_{\substack{(i\to h)\in\Gamma\\ \hspace*{-0.4cm}i \neq h}} m_{hj}}
+
{\textstyle\sum_{\substack{(i\to h)\in\Gamma^{\mathrm{op}}\\ \hspace*{-0.7cm} i \neq h}} m_{hj{+}1}}
+
{\textstyle\sum_{\substack{(i\to i)\in\Gamma}} \epsilon_im_{ij{+}\frac{d+1}{2}}}
,\quad
\ypar\acts m_{ij}
=
m_{ij{+}1}}
\endxy
\]
where $\epsilon_i$ is equal to $1$ if $\Gamma$ has a loop at vertex $i$ and $0$ otherwise.

\begin{remark}
If $\tilde{\Gamma}$ is obtained from $\Gamma$ by changing the orientation, note that ${\taftm(\tilde{\Gamma},d) \cong_{\N} \taftm(\Gamma,d)}$. In particular, the choice of orientations of the $\typeADET$ graphs plays no role.
\end{remark}

\begin{example}\label{example:taft-small-apex}
To understand the $\taftp$-module $\taftm(\Gamma,d)$, let us consider an example for $n=6$, where
we take the type $\typeD_{4}$ diagram.
\[
\xy
(0,0)*{
\Gamma=
\begin{tikzcd}[ampersand replacement=\&,row sep=tiny,column sep=tiny,arrows={shorten >=-.5ex,shorten <=-.5ex},labels={inner sep=.05ex}]
\phantom{.} \& \phantom{.} \& \mathtt{3}
\\
\mathtt{1}\arrow[thick,->]{r}{} \& \mathtt{2}\arrow[thick,->]{rd}{}\arrow[thick,->]{ru}{} \& \phantom{1}
\\
\phantom{.} \& \phantom{.} \& \mathtt{4}
\end{tikzcd}\quad\quad\&\quad\quad
\Gamma^{\mathrm{op}}=
\begin{tikzcd}[ampersand replacement=\&,row sep=tiny,column sep=tiny,arrows={shorten >=-.5ex,shorten <=-.5ex},labels={inner sep=.05ex}]
\phantom{.} \& \phantom{.} \& \mathtt{3}
\\
\mathtt{1}\arrow[thick,<-]{r}{} \& \mathtt{2}\arrow[thick,<-]{rd}{}\arrow[thick,<-]{ru}{} \& \phantom{1}
\\
\phantom{.} \& \phantom{.} \& \mathtt{4}
\end{tikzcd}};
(0,14)*{\text{\tiny $\amatrix[\Gamma]=
\begin{psmallmatrix}
0 & 0 & 0 & 0\\
1 & 0 & 0 & 0\\
0 & 1 & 0 & 0\\
0 & 1 & 0 & 0
\end{psmallmatrix}\quad\quad\&\quad\quad
\amatrix[\Gamma^{\mathrm{op}}]=
\begin{psmallmatrix}
0 & 1 & 0 & 0\\
0 & 0 & 1 & 1\\
0 & 0 & 0 & 0\\
0 & 0 & 0 & 0
\end{psmallmatrix}$}};
(0,-14)*{\text{\tiny $\amatrix[\Gamma \mathbin{\scalebox{0.7}{$\coprod$}} \Gamma^{\mathrm{op}}]=
\begin{psmallmatrix}
0 & 1 & 0 & 0\\
1 & 0 & 1 & 1\\
0 & 1 & 0 & 0\\
0 & 1 & 0 & 0
\end{psmallmatrix}$}};
\endxy
\]
The adjacency matrices are as above. The number of vertices is
$v(\Gamma)=4$, and the Coxeter number is $h(\Gamma)=6$. Let us
choose $d=3$. Then we get the following $12\times 12$ matrices
\[
\mmatrix[\xpar]=
\scalebox{.8}{$\displaystyle
\left(
\begin{array}{c|c|c|c}
\zmatrix & \amatrix[\Theta_{3}] & \zmatrix & \zmatrix
\\
\hline
\imatrix & \zmatrix & \amatrix[\Theta_{3}] & \amatrix[\Theta_{3}]
\\
\hline
\zmatrix & \imatrix & \zmatrix & \zmatrix
\\
\hline
\zmatrix & \imatrix & \zmatrix & \zmatrix
\end{array}
\right)
$}
,\quad
\mmatrix[\ypar]=
\scalebox{.8}{$\displaystyle
\left(
\begin{array}{c|c|c|c}
\amatrix[\Theta_{3}] & \zmatrix & \zmatrix & \zmatrix
\\
\hline
\zmatrix & \amatrix[\Theta_{3}] & \zmatrix & \zmatrix
\\
\hline
\zmatrix & \zmatrix & \amatrix[\Theta_{3}] & \zmatrix
\\
\hline
\zmatrix & \zmatrix & \zmatrix & \amatrix[\Theta_{3}]
\end{array}
\right)
$}
\]
as the action matrices for $\xpar$ and $\ypar$. The matrix $\mmatrix[\xpar]$ is
obtained from $\amatrix[\Gamma\coprod\Gamma^{\mathrm{op}}]$ by replacing
all nonzero entries in $\amatrix[\Gamma]$ by $3 \times 3$ identity matrices,
and all such entries of $\amatrix[\Gamma^{\mathrm{op}}]$
by the adjacency matrix $\amatrix[\Theta_{d}]$
(cf. \fullref{example:taft-circle}). One can then check that
these give a $\taftp$-module, such that $\cpoly[6](\mmatrix[\xpar],\mmatrix[\ypar])=0$ holds.
\end{example}

\begin{example}
\label{ex:non-reg}
If $n=4$, consider the $\N$-module defined by
\[
\mmatrix[\xpar]=
\scalebox{.8}{$\displaystyle
\left(
\begin{array}{ccc}
0 & 1 & 1
\\
1 & 0 & 0
\\
1 & 0 & 0
\end{array}
\right)
$}
,\quad
\mmatrix[\ypar]=
\scalebox{.8}{$\displaystyle
\left(
\begin{array}{ccc}
1 & 0 & 0
\\
0 & 0 & 1
\\
0 & 1 & 0
\end{array}
\right)
$}.
\]
It is admissible, transitive, NIM, with apex $\cell[b]$, but not regular.
\end{example}

\begin{proposition}\label{proposition:taft-small-apex}
For each admissible, regular, transitive, NIM $\N$-module $\taftm$ with apex $\cell[b]$, there exists a unique pair $(\Gamma,d) \in \typeADET(n)$ such that $\taftm \cong_{\N} \taftm(\Gamma,d)$.
\end{proposition}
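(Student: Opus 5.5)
Our plan is to reduce the classification to the classical one of symmetric irreducible $\N$-matrices with spectral radius below $2$ (the $\typeADET$ graphs), via a ``quotient by the $\ypar$-action''. We proceed in four steps.

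\textbf{Step 1: the $\ypar$-orbits.} Let $\taftm$ be admissible, regular, transitive, NIM with apex $\cell[b]$. By \fullref{remark:perm-matrix}, $\mmatrix[\ypar]$ is a permutation matrix. By regularity all its cycles have a common length $d$, and $d\mid n$ because $\mmatrix[\ypar]^n=\imatrix$. Write $\taftb(\taftm)=\{m_{ij}\}$ with $i\in\llbracket 1,v\rrbracket$ indexing the cycles and $\ypar\acts m_{ij}=m_{ij+1}$, $j\in\Z/d\Z$. Since $\mmatrix[\xpar]$ commutes with $\mmatrix[\ypar]$, it is block circulant: its $(i,h)$ block is a polynomial in $\amatrix[\Theta_d]$ with $\N$-coefficients. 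The NIM condition gives $\mmatrix[\xpar^{\ast}]=\mmatrix[\xpar]^{\mathrm{T}}$. By \fullref{lemma:taft-duals}, $\xpar^{\ast}=[\tinde[2,\smin 1]]=\ypar^{-1}\xpar$, so $\mmatrix[\ypar]^{-1}\mmatrix[\xpar]=\mmatrix[\xpar]^{\mathrm{T}}$. This is the key symmetry constraint.

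\textbf{Step 2: passing to the quotient.} Apply an eigenvalue $\omega$ of $\ypar$ with $\omega^d=1$. Equivalently, restrict to the $\omega$-eigenspace of $\mmatrix[\ypar]$. On the common eigenvectors, $\xpar$ acts by a $v\times v$ matrix $B_\omega$ whose entries are the block polynomials evaluated at $\omega$. The twisted symmetry says that $\omega^{-1/2}B_\omega$ is Hermitian for a suitable square root. Admissibility lets us diagonalize simultaneously. The apex condition $\cpoly[n](\mmatrix[\xpar],\mmatrix[\ypar])=0$ (\fullref{lemma:taft-apex-matrix}) then forces every joint eigenvalue pair $(x,y)$ to satisfy $\cpoly[n](x,y)=0$. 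Writing $y=\eta^2$ and $x=\eta(t+t^{-1})$, one has $\cpoly[n](x,y)=\eta^{n-1}\frac{t^n-t^{-n}}{t-t^{-1}}$, so $x/\eta=2\cos(\pi k/n)$ with $1\le k\le n-1$.

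Take $\omega=1$. Then $B_1$ is a symmetric $\N$-matrix, namely the sum of the block entries of $\mmatrix[\xpar]$, which is irreducible by transitivity. Its eigenvalues are of the form $2\cos(\pi k/n)$, so $\|B_1\|<2$. By the Smith/Lemmens--Seidel classification, $B_1$ is the adjacency matrix of a graph $\Gamma$ of type $\typeA,\typeD,\typeE$, or of a tadpole $\typeT$ (a loop is allowed, appearing as a diagonal $1$). The Perron--Frobenius eigenvalue is $2\cos(\pi/h(\Gamma))$. Requiring that the eigenvalue $2\cos(\pi/n)$ actually occur (the case $k=1$, obtained from the top eigenvalue $\omega=1$) forces $h(\Gamma)=n$. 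We expect this last point to need the regular/irreducible structure to show that the maximal possible eigenvalue is attained.

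\textbf{Step 3: reconstructing the lift.} Each edge $i-h$ of $\Gamma$ ($i\neq h$) corresponds to exactly one monomial $\amatrix[\Theta_d]^{a}$ in block $(i,h)$. The twisted symmetry forces block $(h,i)$ to be $\amatrix[\Theta_d]^{1-a}$. A loop at $i$ gives $\amatrix[\Theta_d]^{a}$ with $2a\equiv 1 \pmod d$. This forces $d$ odd and $a=\frac{d+1}{2}$, matching the definition of $\taftm(\Gamma,d)$. Since $\Gamma$ is a tree (or a tree with one loop), we can reindex each cycle by a shift $j\mapsto j+c_i$. This is a permutation, hence an $\N$-equivalence, and it normalizes every edge exponent to $a\in\{0,1\}$ according to an orientation. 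By the remark on orientations, the result is $\taftm(\Gamma,d)$.

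The main obstacle is the tadpole case when $d$ is even. We need to exclude it, or to verify that the apex condition fails there. The point is that the congruence $2a\equiv1$ has no solution for $d$ even. We also need to show that each $\taftm(\Gamma,d)$ really is a $\taftp$-module with apex $\cell[b]$. For this we reverse Step 2: the eigenvalues of $\mmatrix[\xpar]$ on the $\omega$-eigenspace are $\omega^{1/2}$ times the eigenvalues of $\Gamma$, which are the $2\cos(\pi k/n)$ with $k$ running over the exponents. Hence $\cpoly[n]$ vanishes, and commutation and diagonalizability are clear.

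\textbf{Step 4: uniqueness.} The pair $(\Gamma,d)$ is recovered from $\taftm$: $d$ is the cycle length of $\mmatrix[\ypar]$, and $\Gamma$ is the graph of $B_1$. Both are invariant under $\cong_{\N}$.
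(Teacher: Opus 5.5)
You have a genuine gap at the point where you conclude $h(\Gamma)=n$. From $\cpoly[n](B_1,\imatrix)=0$ you only learn that the Perron--Frobenius eigenvalue $2\cos(\pi/h)$ equals some $2\cos(\pi k/n)$, i.e.\ $h\mid n$. Nothing you use forces $k=1$, and ``requiring that $2\cos(\pi/n)$ occur'' is just a restatement of $h=n$. Regularity and irreducibility cannot supply it either. Take $n=4$ and the one-dimensional module with $\mmatrix[\xpar]=(0)$, $\mmatrix[\ypar]=(1)$. It satisfies the ring relations because $\cpoly[4](0,1)=0$, and it is admissible, regular and transitive. It satisfies your NIM symmetry $\mmatrix[\ypar]^{-1}\mmatrix[\xpar]=\mmatrix[\xpar]^{\mathrm{T}}$ and has $\cpoly[4]=0$. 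Yet $B_1=(0)$ is the adjacency matrix of $\typeA_1$, with $h=2\neq 4$. What excludes this example is that $[\tinde[3,0]]$ acts by $\cpoly[3](0,1)=-1$. In other words, the decisive input is the $\N$-module condition on the basis elements of $\taftb(\tmod)$ other than $\xpar,\ypar$, and your argument never uses it.

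The missing step, which is how the paper argues, is as follows. Suppose $h<n$. Since $h\mid n$ and $h\geq 2$, you get $2\leq h+1\leq n-1$. Hence $\cpoly[h+1](\mmatrix[\xpar],\mmatrix[\ypar])$ is the action matrix of $[\tinde[h+1,0]]\in\taftb(\tmod)$ and must have entries in $\N$. Now extend the Perron--Frobenius eigenvector $v>0$ of $B_1$ constantly along each $\ypar$-cycle. This gives a strictly positive vector in $\ker(\mmatrix[\ypar]-\imatrix)$. On it the matrix acts by $\cpoly[h+1](2\cos(\pi/h),1)=\sin((h+1)\pi/h)/\sin(\pi/h)=-1$, which is a contradiction.

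With $h=n$ established, your Step 3 goes through; it is in fact more careful than the paper about normalising edge exponents by shifting each cycle. The even-$d$ tadpole is not an obstacle: $2a\equiv 1 \pmod d$ has no solution for even $d$, and $n=h(\typeT_\ell)$ is odd anyway. The same positivity issue reappears if you also want the converse, namely that $\taftm(\Gamma,d)$ is an $\N$-module. Nonnegativity of $\cpoly[\ell](\mmatrix[\xpar],\mmatrix[\ypar])$ for $2\leq \ell\leq n-1$ does not follow from your spectral computation; the paper obtains it by categorification (\fullref{prop:decat-coherence}).
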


\begin{proof}
First, for $(\Gamma,d) \in \typeADET(n)$, let us show that $\taftm(\Gamma,d)$ has the claimed properties.
 Observe that $\mmatrix[\xpar]\mmatrix[\ypar]=\mmatrix[\ypar]\mmatrix[\xpar]$,
which follows since $\mmatrix[\ypar]$ is a block-diagonal matrix whose
block entries commute with all block entries of $\mmatrix[\xpar]$. Moreover $\mmatrix[\ypar]^{n}=\imatrix$ holds by construction and $\mmatrix[\ypar]$
is evidently diagonalizable. That $\mmatrix[\xpar]$ is diagonalizable over $\C$
follows from
\[
\left(
\mmatrix[\xpar]\mmatrix[\ypar]^{-1}=\mmatrix[\xpar]^{\mathrm{T}}=\mmatrix[\ypar]^{-1}\mmatrix[\xpar]
\quad\&\quad
\mmatrix[\xpar]\mmatrix[\ypar]=\mmatrix[\ypar]\mmatrix[\xpar]
\right)
\Rightarrow
\left(
\mmatrix[\xpar]\mmatrix[\xpar]^{\mathrm{T}}=\mmatrix[\xpar]^{\mathrm{T}}\mmatrix[\xpar]
\right),
\]
proving admissibility. Regularity follows directly by construction since every disjoint cycle has length $d$.
Transitivity follows by construction since the underlying $\typeADET$ diagram is connected, and
the two directed actions associated with each edge connect the corresponding $\ypar$-orbits.
The fact that it is an $\N$-module follows from \fullref{prop:decat-coherence}.
By \fullref{lemma:taft-apex-matrix}, it remains to show that $\cpoly[n](\mmatrix[\xpar],\mmatrix[\ypar])=0$ holds.
To this end, recall that $n=dk$ and that
the spectra of the type $\typeADET$ graphs are
known and contained in $(\smin2,2)$.
%and multiplicity free up to the
%type $\typeD$ graphs which have a double eigenvalue zero in
%the case of an even number of vertices (which happens
%only if $\cpoly[n](\xpar,1)$ has no constant term), cf. \cite{Smith70}.
For each $d^{\mathrm{th}}$ root of unity $\omega$, let $V_{\omega}$ denote the $\omega$-eigenspace of $\amatrix[\Theta_{d}]$. Denote also by $\mstuff{D}:= \delta^{\typeT}_{\Gamma}\mathrm{diag}(0, 0, \dots,1)$ where $\delta^{\typeT}_{\Gamma}$ is equal to $1$ if $\Gamma$ is of type $\typeT$ and $0$ otherwise. The restriction of $\mmatrix[\xpar]$ to $\K^{v(\Gamma)}\otimes V_{\omega}$ is
\begin{equation}
{\mmatrix[\xpar]}_{|\K^{v(\Gamma)}\otimes V_{\omega}} = \amatrix[\Gamma]-\mstuff{D} + \omega (\amatrix[\Gamma]^{\mathrm{T}}-\mstuff{D}) +  \omega^{\frac{d+1}{2}}\mstuff{D}.
\end{equation}
Let us denote by $C_{\Gamma}:=\amatrix[\Gamma] + \amatrix[\Gamma]^{\mathrm{T}}- \mstuff{D}$ the adjacency matrix of the diagram $\Gamma$.
Having this, we observe that the joint eigenvalues of $\mmatrix[\xpar]$ and $\mmatrix[\ypar]$ in type $\typeADET$ is
\begin{equation*}\label{eq:taft-spectra}
\left\{(\sigma_{\omega}\tau, \omega) \in \C^2 \mid \omega^d=1, \tau \in \mathrm{Spec}(C_{\Gamma}) \right\}
\end{equation*}
where $\sigma_{\omega}$ is a square root of $\omega$.
In the end, we have $U_n(\sigma_{\omega}\tau,\omega)=\sigma_{\omega}^{n\smin 1}U_n(\tau,1)=0$.% since $C_{\Gamma}$ is the adjacency matrix of $\Gamma$, which is of type $\typeADET$ with $h(\Gamma)=n$.

\medskip
Second let $\taftm$ be an admissible, regular, transitive, NIM $\N$-module such that ${\cpoly[n](\mmatrix[\xpar],\mmatrix[\ypar])=0}$. Since $\mmatrix[\ypar]$ is the permutation matrix of a regular permutation, we can choose a basis of $\taftm$ such that $\mmatrix[\ypar]$ is the following Kronecker product $\imatrix \otimes \amatrix[\Theta_{d}]$, for $d$ the length of a disjoint cycle. This basis gives us an identification as $\K$-vector spaces of $\taftm$ with $\K^p \otimes_{\K} \K^d$ where $p$ is the number of cycles. Moreover, $\mmatrix[\xpar]$ and $\mmatrix[\ypar]$  commute. This implies that there exists $d$ matrices $\batrix[j] \in M_{p}(\K)$ such that $\mmatrix[\xpar]$ is the sum of Kronecker products:
\[
\mmatrix[\xpar]=\sum_{j=0}^{d-1}{\batrix[j] \otimes \amatrix[\Theta_{d}]^j}.
\]
Let $\batrix:=\sum_{j=0}^{d-1}{\batrix[j]}$. We want to argue that the spectral radius of $\batrix$ is strictly smaller than 2, so that using transitivity, we can apply \cite[Theorem 10]{McS07} (see also \cite[Theorem 2]{Smith70} or \cite[Theorem 1.4.3]{GHJ89} applied to the bipartite double) and obtain that $\batrix$ is the adjacency matrix of an $\typeADET$ graph. Indeed, transitivity implies that $\batrix$ is indecomposable and the fact that the $\N$-module is NIM, gives that $\batrix$ is symmetric.

To do that, we know that $\mmatrix[\xpar]$ and $\mmatrix[\ypar]$ are both diagonalizable over $\C$ and are commuting. Consider a basis where these two matrices are diagonal. Let us denote by $\lb_i$ the eigenvalues of $\mmatrix[\xpar]$, and by $\mu_i$ those of $\mmatrix[\ypar]$. Since $\mmatrix[\ypar]^n=\imatrix$, every $\mu_i$ is an $n^{\mathrm{th}}$ root  of unity. The condition $\cpoly[n](\mmatrix[\xpar],\mmatrix[\ypar])=0$ is equivalent to $\cpoly[n](\lb_i,\mu_i)=0, \forall i \in \llbracket 1, \mathrm{dim}(\taftm) \rrbracket$. Now, \cite[Lemma 3.11]{COZ14} gives:

\begin{align*}
\cpoly[n](\lb_i,\mu_i) &= \sum_{j=0}^{\lfloor \frac{n-1}{2} \rfloor}{(-1)^j \binom{n-1-j}{j}{\lb_i}^{n-1-2j}{\mu_i}^{j}}\\
                 &= {\mu_i}^{\frac{n \smin 1}{2}} \sum_{j=0}^{\lfloor \frac{n-1}{2} \rfloor}{(-1)^j \binom{n-1-j}{j}{\left(\frac{\lb_i}{\sqrt{\mu_i}}\right)}^{n-1-2j}}\\
                 &= {\mu_i}^{\frac{n \smin 1}{2}} \ctpoly[n-1]\left(\frac{\lb_i}{2\sqrt{\mu_i}}\right)
\end{align*}
where $\ctpoly[n]$ is the Chebyshev polynomial of the second kind and can be defined as follows:
\[
\ctpoly[n](x) := \sum_{j=0}^{\lfloor \frac{n}{2} \rfloor}{(-1)^j \binom{n-j}{j}{(2x)}^{n-2j}}.
\]
The roots of $\ctpoly[n]$ are $\left(\mathrm{cos}\left(\frac{j\pi}{n+1}\right)\right)_{j \in \llbracket 1,n \rrbracket}$. For every $i$, there exists $j \in \llbracket 1,n \smin 1 \rrbracket$ such that:
\[
  \lambda_i=2\sqrt{\mu_i}\mathrm{cos}\left(\frac{j\pi}{n}\right).
\]
Hence $|\lambda_i| < 2$. Now let $V:= \mathrm{Ker}(\mmatrix[\ypar]- \imatrix)$. Since $\mmatrix[\xpar]$ and $\mmatrix[\ypar]$ commute, $V$ is $\mmatrix[\xpar]$ stable. By construction, the restriction of $\mmatrix[\xpar]$ to $V$ is precisely $\batrix$. This implies that every eigenvalue of $\batrix$ is an eigenvalue of $\mmatrix[\xpar]$. We now have that $\batrix$ is the adjacency matrix of an $\typeADET$ graph $\Gamma$. There remains now to show that $h:=h(\Gamma)$ is equal to $n$ and that the action of $\taftp$ on $\taftm$ is exactly the one defined for $\taftm(\Gamma,d)$.

Restricting the equality $\cpoly[n](\mmatrix[\xpar],\mmatrix[\ypar])=0$ to $V$, we get $\cpoly[n](\batrix,\imatrix)=0$. Thanks to \cite[Theorem 1.4.3]{GHJ89}, we know that if $v$ denotes the Perron-Frobenius eigenvector of $\batrix$, then:
\begin{equation*}
\batrix v = 2\mathrm{cos}\left(\frac{\pi}{h}\right)v.
\end{equation*}
This result still holds for type $\typeT_{\ell}$ since it can be constructed as a folding of the graph of type $\typeA_{2\ell}$. Now, the fact that $\cpoly[n](\batrix,\imatrix)v=\cpoly[n]\left(2\mathrm{cos}\left(\frac{\pi}{h}\right),1\right)v=0$ implies that $\mathrm{sin}\left(\frac{n\pi}{h}\right)=0$. Indeed, by definition we have
\begin{equation*}
\cpoly[n](2\mathrm{cos}\left(\frac{\pi}{h}\right),1)=\frac{\mathrm{sin}\left(n\pi/h\right)}{\mathrm{sin}\left(\pi/h\right)}.
\end{equation*}
As a result we get that $h |n$. Let us now suppose, for contradiction, that $h<n$. Because $h \geq 2$, we have that $2 \leq h+1 \leq n-1$. Let $\mmatrix[h]:=\cpoly[h+1](\mmatrix[\xpar],\mmatrix[\ypar])$. Using \fullref{proposition:taft-ring}, it is the action matrix of $\tinde[h+1,0]$ and hence must be an $\N$-matrix. We get a contradiction because $\mmatrix[h]v=\cpoly[h+1](\batrix,\imatrix)v=\frac{\mathrm{sin}\left((h+1)\pi/h\right)}{\mathrm{sin}\left(\pi/h\right)}v=-v$.

The fact that $\taftm$ has the desired $\taftp$-action follows from the fact that the module is NIM. Indeed, we get that $\mmatrix[\xpar]^{\mathrm{T}}=\mmatrix[\ypar]^{-1}\mmatrix[\xpar]$, hence that
\begin{equation*}
\batrix[j]^{\mathrm{T}}=\batrix[1-j],\qquad \forall j \in \Z/d\Z.
\end{equation*}
For type $\typeADE$, the underlying graph being a tree, there are no cycles and the reversed edge has then a shift of $1$. Finally, for the loop of type $\typeT$, if the cyclic shift is denoted by $s$, then $\batrix[j]^{\mathrm{T}}=\batrix[1-j]$ implies that $s \equiv 1-s \mod d$, so $s=\frac{d+1}{2}$.
\medskip

Third, uniqueness is a direct consequence of these two facts: the cycle decomposition of $\mmatrix[\ypar]$ determines $d$ and the set of $\ypar$-orbits. Moreover, the induced action of $\xpar$ on the orbit sums determines the underlying diagram.
\end{proof}

\section{Module categories of $\tmod$}\label{section:taft-rep}

We retain the conventions from \fullref{section:taft}. Thus $n\in\Z_{\geq 2}$, $\K$ is an algebraically closed field whose characteristic does not divide $n$ and $\qpar\in\K$ is a primitive $n^{\mathrm{th}}$ root of unity.

%%%%%%%%%%%%%%%%%%%%%%%%%%%%%%%%%%%%%%%%%%%%%%%%%%%

\subsection{Algebra objects}\label{subsection:taft-algebra-objects}

%%%%%%%%%%%%%%%%%%%%%%%%%%%%%%%%%%%%%%%%%%%%%%%%%%%
We recall the definition from \cite[Section 7.8]{EGNO} of an algebra object.
\begin{definition}
An algebra object $\taobj[]$
in a monoidal category $\catstuff{C}$ is an object of $\catstuff{C}$
together with a unit and multiplication morphisms $\morstuff{u}\colon 1\to\taobj[]$ and $\morstuff{m}\colon\taobj[]\otimes\taobj[]\to\taobj[]$ satisfying the axioms:
\begin{equation*}
\begin{cases}
\morstuff{m} \circ (\morstuff{m} \otimes \mathrm{id}_{\taobj[]}) = \morstuff{m} \circ (\mathrm{id}_{\taobj[]} \otimes \morstuff{m}),\\
\morstuff{m} \circ (\morstuff{u} \otimes \mathrm{id}_{\taobj[]}) = \mathrm{id}_{\taobj[]} = \morstuff{m} \circ (\mathrm{id}_{\taobj[]} \otimes \morstuff{u}).
\end{cases}
\end{equation*}

For such an algebra object the notion of a (right) module object $\xmodule$ in $\catstuff{C}$ is defined in the following way: an object of $\catstuff{C}$ together with a multiplication morphism
$\acts_{\taobj[]}\colon\xmodule\otimes\taobj[]\to\xmodule$, satisfying the axioms:
\begin{equation*}
\begin{cases}
\acts_{\taobj[]} \circ (\acts_{\taobj[]} \otimes \mathrm{id}_{\taobj[]}) = \acts_{\taobj[]} \circ (\mathrm{id}_{\xmodule} \otimes \morstuff{m}),\\
\acts_{\taobj[]} \circ (\mathrm{id}_{\xmodule} \otimes \morstuff{u}) = \mathrm{id}_{\xmodule}.
\end{cases}
\end{equation*}
The collection of such module objects forms a category which we denote by $\oneMod_{\catstuff{C}}(\taobj[])$.
\end{definition}

\begin{example}\label{example:taft-alg-objects-1}
If  $\catstuff{C}$ is a tensor category and $\tmodule$ an object of $\catstuff{C}$, the object $\taobj[\tmodule]=\tmodule\otimes\tmodule^{\ast}$
is an algebra object in $\catstuff{C}$ with unit morphism
$\mathrm{coev}_{\tmodule}\colon 1\to\tmodule\otimes\tmodule^{\ast}$ and multiplication morphism ${\mathrm{id}_{\tmodule} \otimes \mathrm{ev}_{\tmodule} \otimes \mathrm{id}_{\tmodule^{\ast}}\colon\tmodule\otimes\tmodule^{\ast}\otimes\tmodule\otimes\tmodule^{\ast}\to\tmodule\otimes\tmodule^{\ast}}$.
\begin{comment}
can be diagrammatically
presented by
\[
\morstuff{u}\colon 1\to\tmodule\otimes\tmodule^{\ast}=
\begin{tikzpicture}[anchorbase, scale=.4, tinynodes]
	\draw[dstrand,directed=.55] (2,2) node [above] {$\tmodule^{\ast}$} to [out=270, in=0] (1,1) to [out=180, in=270] (0,2) node [above] {$\tmodule$};
\end{tikzpicture}
,\quad\quad
\morstuff{m}\colon\tmodule\otimes\tmodule^{\ast}\otimes\tmodule\otimes\tmodule^{\ast}\to\tmodule\otimes\tmodule^{\ast}=
\begin{tikzpicture}[anchorbase, scale=.4, tinynodes]
	\draw[dstrand,directed=.55] (2,-2) node [below] {$\tmodule$} to [out=90, in=0] (1,-1) to [out=180, in=90] (0,-2) node [below] {$\tmodule^{\ast}$};
	\draw[dstrand,directed=.55] (-2,-2) node [below] {$\tmodule$} to [out=90, in=270] (0,0) node [above] {$\tmodule$};
	\draw[dstrand,directed=.55] (2,0) node [above] {$\tmodule^{\ast}$} to [out=270, in=90] (4,-2) node [below] {$\tmodule^{\ast}$};
\end{tikzpicture}
\]
where we use the string calculus (reading
left to right for $\otimes$ and bottom to top for $\circ$) for monoidal categories
to represent the morphisms. The algebra axioms then follow from the standard isotopies of the corresponding string diagrams.
\end{comment}
Moreover, if $\xmodule$ is a module object, then, for any object
$\tmodule[M^{\prime}]$ of $\catstuff{C}$, the object $\tmodule[M^{\prime}]\otimes\xmodule$ is
also a module object. In particular, $\tmodule\otimes\taobj[]$ is always a module object.
\end{example}

Let $\taobj[]$ be an algebra object in a finite tensor
category $\catstuff{C}$. There
is the forgetful functor $\morstuff{U}\colon\oneMod_{\catstuff{C}}(\taobj[])\to\catstuff{C}$
sending $\xmodule$ to its underlying object $\tmodule$ in $\catstuff{C}$, and the
free functor ${\morstuff{F}\colon\catstuff{C}\to\oneMod_{\catstuff{C}}(\taobj[])}$ is given by
$\tmodule\mapsto\tmodule\otimes\taobj[]$. By \cite[Lemma 7.8.12]{EGNO}, $\morstuff{U}$ is right
adjoint to $\morstuff{F}$, i.e. for any $\tmodule\in\catstuff{C}$
and $\xmodule[M]^{\prime}\in\oneMod_{\catstuff{C}}(\taobj[])$ there is a natural isomorphism
\[
\Hom_{\taobj[]}(\tmodule\otimes\taobj[],\xmodule[M]^{\prime})\cong
\Hom_{\catstuff{C}}(\tmodule,\morstuff{U}(\xmodule[M]^{\prime}))
=\Hom_{\catstuff{C}}(\tmodule,\tmodule[M]^{\prime}),
\]
where we write $\Hom_{\taobj[]}:=\Hom_{\oneMod_{\catstuff{C}}(\taobj[])}$ for short.

\begin{definition}\label{definition:simple-aobject}
An algebra object $\taobj[]$ in a category $\catstuff{C}$ is called simple if its only two-sided $\catstuff{C}$-ideals are $0$ and $\taobj[]$.
\end{definition}

%%%%%%%%%%%%%%%%%%%%%%%%%%%%%%%%%%%%%%%%%%%%%%%%%%%

\subsection{Simple algebra objects in $\tmod$ with upper cell apex}\label{subsection:taft-simple-algebra-objects}

%%%%%%%%%%%%%%%%%%%%%%%%%%%%%%%%%%%%%%%%%%%%%%%%%%%

Let $G=\langle\gat\mid\gat^{n}=1\rangle\cong\Z/n\Z$.

\begin{definition}\label{definition:taft-algebra-objects}\cite[Section 4.3]{EO04}
Let $\lambda\in\K$ and $d,k\in\Z_{\geq 0}$, with $n=dk$.
The algebra object $\taobj\in\tmod$ is defined as follows.
\begin{itemize}

\setlength\itemsep{.15cm}

\item As a vector space:
\[
\taobj=\K[G/H]\otimes_{\K}\K[\zapar]/(\zapar^{n}-\lambda\cdot 1),
\]
where $H\subset G$ is the subgroup of order $d$
generated by $\gat^{k}$.

\item The (left) $\taft$-action on $\taobj$ is given by
\begin{gather}\label{eq:taft-alg-ob-action}
\begin{gathered}
\gt\acts 1=1,\;\gt\acts\gat=\qpar^{-d}\gat,\;\gt\acts\zapar=\qpar^{-1}\zapar
,
\quad
\xx\acts 1=0,\;\xx\acts\gat=0,\;\xx\acts\zapar=1,
\\
\gt\acts(\obstuff{a}\obstuff{b})=(\gt\acts\obstuff{a})(\gt\acts\obstuff{b})
,
\quad
\xx\acts(\obstuff{a}\obstuff{b})
=(\xx\acts\obstuff{a})\obstuff{b}+(\gt\acts\obstuff{a})(\xx\acts\obstuff{b}),
\quad
\text{for all }\obstuff{a},\obstuff{b}\in\taobj.
\end{gathered}
\end{gather}

\item The algebra structure is defined by the products in $\K[G/H]$ and
$\K[\zapar]/(\zapar^{n}-\lambda\cdot 1)$,
with the relation $\gat\zapar=\qpar^{\smin d}\zapar\gat$.\qedhere

\end{itemize}
\end{definition}

\begin{example}\label{example:taft-algebra-objects}
As an object of $\tmod$, we have $\taobj[1{,}\lambda]\cong\taft$.
However, the multiplication is
deformed by the element $\lambda\in\K$. In particular,
$\taobj[1{,}0]\cong\taft$ (where the isomorphism is given in \fullref{remark:taft-def}) is non-semisimple, while
$\taobj[1{,}1]$ is isomorphic to the twisted tensor product $\K[\Z/n\Z]\otimes_{\qpar}\K[\Z/n\Z]$  which is semisimple cf. \cite[Definition 2.1]{CSV95}. The twisting map is given by $\tau(g\otimes z)=\qpar z\otimes g$.
\end{example}

 Using \eqref{eq:taft-alg-ob-action}, one obtains
\begin{gather}\label{eq:x-action}
\xx\acts(\gat^{i}\zapar^{j})=
\qpar^{-di}(1+\qpar^{{-}1}+\dots+\qpar^{1{-}j})\gat^{i}\zapar^{j{-}1}.
\end{gather}
In particular, $\xx\acts\zapar^{n}=0$ since $1+\qpar^{-1}+\dots+\qpar^{{-}n{+}1}=0$. The following result follows directly from the definition of $\taobj$.

\begin{lemma}\label{lemma:taft-simple-object}
The sets
\[
\{\gat^{i}\zapar^{j}\mid 0\leq i\leq k-1,\; 0\leq j\leq n-1\}
,\quad
\{\zapar^{j}\gat^{i}\mid 0\leq i\leq k-1,\; 0\leq j\leq n-1\}
\]
are two distinct bases of $\taobj$ when $k> 1$. In particular $\mathrm{dim}_{\K}(\taobj)=kn=n^2/d$.
\end{lemma}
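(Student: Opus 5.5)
The plan is to show that each of the two families spans $\taobj$ and has exactly $kn$ elements; since $\dim_{\K}(\taobj)=kn$ is built into the definition, this is enough. By \fullref{definition:taft-algebra-objects}, $\taobj=\K[G/H]\otimes_{\K}\K[\zapar]/(\zapar^{n}-\lambda)$. The group $G/H$ is cyclic of order $n/d=k$, generated by the image of $\gat$. So $\{\gat^{i}\mid 0\leq i\leq k-1\}$ is a basis of $\K[G/H]$, and $\{\zapar^{j}\mid 0\leq j\leq n-1\}$ is a basis of $\K[\zapar]/(\zapar^{n}-\lambda)$ for every $\lambda$, since we quotient by a monic polynomial of degree $n$.

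The vector-space structure is the tensor product, and the multiplication is the twisted one in which $\gat^{i}\otimes\zapar^{j}$ corresponds to the product $\gat^{i}\zapar^{j}$. Hence the products $\gat^{i}\zapar^{j}$ with $0\leq i\leq k-1$ and $0\leq j\leq n-1$ form the tensor-product basis. This gives the first basis and $\dim_{\K}(\taobj)=kn=n^{2}/d$, using $n=dk$.

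For the second family, apply the commutation relation $\gat\zapar=\qpar^{-d}\zapar\gat$ repeatedly. This gives $\zapar^{j}\gat^{i}=\qpar^{dij}\gat^{i}\zapar^{j}$, a nonzero scalar multiple of a first-family element. The transition matrix from the first family to the second is therefore diagonal and invertible, so the second family is also a basis.

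The one point to check is that this relation is consistent with $\gat^{k}=1$ in $G/H$, since otherwise the algebra would collapse. Conjugating $\zapar$ by $\gat^{k}$ gives the scalar $\qpar^{-dk}=\qpar^{-n}=1$, so the relation is well defined and the twisted product makes sense on the full tensor product. This is the only mild obstacle.

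For "distinct": when $k>1$ we have $\gat\neq1$, and then $\zapar\gat=\qpar^{d}\gat\zapar$ with $\qpar^{d}\neq1$, because $d<n$ and $\qpar$ is primitive. So the two bases genuinely differ as sets; when $k=1$ they coincide.
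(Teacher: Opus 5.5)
Your proof is correct and matches the paper, which only remarks that the lemma follows directly from the definition: the tensor-product basis $\gat^{i}\otimes\zapar^{j}$ is read as the products $\gat^{i}\zapar^{j}$, and $\zapar^{j}\gat^{i}=\qpar^{dij}\gat^{i}\zapar^{j}$ yields the second basis. Your consistency check could also note that conjugation by $\gat$ must preserve the relation $\zapar^{n}=\lambda$, which holds since $\qpar^{-dn}=1$; this concerns the well-definedness of $\taobj$ rather than the lemma itself.
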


\begin{comment}
\begin{proof}
The claim that $\taobj$ is an algebra object in $\tmod$ is an easy calculation and omitted.
Clearly, $\taobj$ is as a vector space of dimension $kn$ and the
elements $\gat^{i}\zapar^{j}$ are linearly independent.
\end{proof}
\end{comment}

\begin{proposition}\label{proposition:taft-simple-object}
The algebra object $\taobj$ is simple.
\end{proposition}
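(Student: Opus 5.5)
Let $I\subseteq\taobj$ be a nonzero two-sided ideal in $\tmod$, i.e.\ a $\taft$-submodule of $\taobj$ with $\taobj I\subseteq I$ and $I\taobj\subseteq I$. We aim to show $1\in I$, so that $I=\taobj$. Since $I$ is a left and right ideal of the underlying algebra $\taobj$, it suffices to produce a unit of $\taobj$ inside $I$.

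\textbf{Step 1: $I$ contains a nonzero $\gt$-eigenvector.} $I$ is stable under $\gt$, and $\gt$ acts diagonalizably on $\taobj$ in the basis $\{\gat^i\zapar^j\}$ of \fullref{lemma:taft-simple-object}. The eigenvalue of $\gat^i\zapar^j$ is $\qpar^{-di-j}$. Since $0\le i\le k-1$ and $0\le j\le n-1$, the exponent $di+j$ modulo $n$ takes each residue exactly $k$ times. Hence $I$ contains a nonzero element of a fixed weight space.

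\textbf{Step 2: descend with $\xx$ to a nonzero element of $\K[G/H]$.} $I$ is stable under $\xx$. By \eqref{eq:x-action}, $\xx$ lowers the $\zapar$-degree by one with coefficient $\qpar^{-di}[j]_{\qpar^{-1}}$, where $[j]_{\qpar^{-1}}=1+\qpar^{-1}+\dots+\qpar^{1-j}$. This coefficient is nonzero for $1\le j\le n-1$, because $\qpar$ is primitive. Take $0\neq a\in I$, write it as $a=\sum_{j} c_j(\gat)\zapar^j$ with $c_j\in\K[G/H]$, and let $m$ be the largest $j$ with $c_j\neq 0$. Then $\xx^m\acts a$ equals a nonzero scalar-twisted version of $c_m$, namely $\sum_i \qpar^{-dim}[m]!\,c_{m,i}\gat^i$, plus nothing else. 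Applying $\xx$ again kills the lower terms appropriately, so we obtain $0\neq b\in I\cap\K[G/H]$.

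One point needs care: $\zapar^n=\lambda$ does not interfere, since degrees are taken below $n$ and \eqref{eq:x-action} is valid on the basis. This is the main place where the computation must be checked.

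\textbf{Step 3: from $b$ to $1$.} $b\in\K[G/H]$ is a sum of $\gt$-weight vectors $\gat^i$ with distinct weights $\qpar^{-di}$, which are distinct for $0\le i\le k-1$. Applying the projectors onto $\gt$-eigenspaces, which are polynomials in $\gt$ and hence preserve $I$, gives some $\gat^i\in I$. But $\gat$ is invertible in $\taobj$, since $\gat^k=1$ in $\K[G/H]$. Multiplying by $\gat^{-i}$ gives $1\in I$, so $I=\taobj$.

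The main obstacle is Step 2: making sure the top-degree coefficient survives under $\xx^m$, i.e.\ that $\xx$ acting on $c(\gat)\zapar^j$ gives exactly $c(\qpar^{-d}\gat)[j]\zapar^{j-1}$, and that no wraparound via $\zapar^n=\lambda$ occurs. Working directly with a weight vector from Step 1 avoids this, since it makes the descent term-by-term explicit.
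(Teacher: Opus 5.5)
Your argument is correct and follows essentially the same route as the paper. You apply $\xx^{m}$, with $m$ the top $\zapar$-degree, to land on a nonzero element of $I\cap\K[G/H]$, and then use the pairwise distinct $\gt$-weights $\qpar^{-di}$ of the $\gat^{i}$ to isolate a single invertible term: you do this with eigenspace projectors (polynomials in $\gt$), while the paper iterates $\obstuff{u}\mapsto\gat^{-1}(\obstuff{u}-\gt\acts\obstuff{u})$, which is the same eigenvalue-separation idea (your Step 1 is unnecessary, though starting from a weight vector makes Step 3 immediate since $\xx$ maps weight vectors to weight vectors).
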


\begin{proof}
Let $\algstuff{I}\neq 0$ be an ideal.
Take $\obstuff{a}=a_{0}+a_{1}\zapar+\cdots+a_{n{-}1}\zapar^{n{-}1}\in\algstuff{I}$ with $a_j \in \K[G/H]$. Let $m \in \llbracket 0, n{-}1 \rrbracket$ be maximal such that $a_{m} \neq 0$. Then
\begin{equation*}
\xx^m \acts a = [m]_{\qpar^{-1}}! (\gat^m.a_m)
\end{equation*}
where $[m]_{\qpar^{-1}}!:=\prod_{j=1}^{m}{\frac{1-\qpar^{-j}}{1-\qpar^{-1}}}$ . Since $m <n$, $[m]_{\qpar^{-1}}! \in \K^{\ast}$, hence $0 \neq \xx^m \acts a \in \algstuff{I} \cap \K[G/H]$.
Now, let $\obstuff{u}=u_{0}+u_{1}\gat+\cdots+u_{k{-}1}\gat^{k{-}1}\in\algstuff{I}\cap\K[G/H]$,
with $u_{i}\in\K$ and $u_{k{-}1}\neq 0$. We claim that $u_{k{-}1}\in\algstuff{I}$. If $k=1$, this is immediate, so suppose that $k>1$. We have
\[
\gt\acts\obstuff{u}=\gt\acts(u_{0}+u_{1}\gat+\cdots+u_{k{-}1}\gat^{k{-}1})
=u_{0}+\qpar^{-d}u_{1}\gat+\dots+\qpar^{{-}(k{-}1)d}u_{k{-}1}\gat^{k{-}1}\in\algstuff{I},
\]
since $\gt\acts\algstuff{I}\subset\algstuff{I}$.
This implies that
\[
\gat^{-1}(\obstuff{u}-\gt\acts\obstuff{u})=
(1-\qpar^{-d})u_{1}+\dots+(1-\qpar^{-(k{-}1)d})u_{k{-}1}\gat^{k{-}2}\in\algstuff{I}.
\]
By repeating this argument, we see that $u_{k{-}1}\in\algstuff{I}$.
This shows that $\algstuff{I}$ contains a nonzero scalar, so $\algstuff{I}=\taobj$.
\end{proof}

For any $0\leq i\leq k-1$, let
\[
\obstuff{e}_{i}:=\tfrac{1}{k}{\textstyle\sum_{j=0}^{k-1}}\,\qpar^{-dij}\gat^{j}
\]
be the central idempotent of $\K[G/H]$ corresponding to the character
$\xi_{i}\colon\K[G/H]\to\K^{\ast}$, which is given by
$\xi_{i}(\gat)=\qpar^{di}$.

\begin{lemma}\label{lemma:taft-y-unique}
The element $\zapar$ is the only element of $\taobj$ that satisfies
$\gt\acts\zapar=\qpar^{-1}\zapar$,
$\xx\acts\zapar=1$ and
\begin{gather}\label{eq:2nd-prop}
\zapar={\textstyle\sum_{i=0}^{k{-}1}}\,\obstuff{e}_{i{-}1}\zapar\obstuff{e}_{i}.
\end{gather}
\end{lemma}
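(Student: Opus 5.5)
The plan is to check first that $\zapar$ has the three properties, and then to prove uniqueness by expanding a competitor in powers of $\zapar$ with coefficients in $\K[G/H]$. The three conditions will kill all but one coefficient. The basic commutation rule is $\gat\zapar=\qpar^{-d}\zapar\gat$, which gives $\gat^{j}\zapar=\qpar^{-dj}\zapar\gat^{j}$. Equivalently, $\zapar\gat^{j}=\qpar^{dj}\gat^{j}\zapar$.

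\emph{Existence.} The first two properties hold by \eqref{eq:taft-alg-ob-action}. For \eqref{eq:2nd-prop}, we compute
\[
\zapar\obstuff{e}_{i}=\tfrac1k\textstyle\sum_j\qpar^{-dij}\qpar^{dj}\gat^{j}\zapar=\obstuff{e}_{i-1}\zapar.
\]
Iterating, $\zapar^{m}\obstuff{e}_{i}=\obstuff{e}_{i-m}\zapar^{m}$. Hence $\obstuff{e}_{i-1}\zapar\obstuff{e}_{i}=\obstuff{e}_{i-1}^2\zapar=\obstuff{e}_{i-1}\zapar$. Summing over $i$ and using $\sum_i\obstuff{e}_i=1$ gives $\zapar$ back.

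\emph{Uniqueness.} Let $\obstuff{w}$ satisfy the three conditions. By \fullref{lemma:taft-simple-object}, write $\obstuff{w}=\sum_{j=0}^{n-1}a_j\zapar^{j}$ with $a_j\in\K[G/H]$. Since $\K[G/H]$ is commutative and $\zapar^{j}\obstuff{e}_i=\obstuff{e}_{i-j}\zapar^{j}$, we get
\[
\obstuff{e}_{i-1}a_j\zapar^{j}\obstuff{e}_i=\obstuff{e}_{i-1}\obstuff{e}_{i-j}a_j\zapar^{j},
\]
and this vanishes unless $j\equiv1\pmod k$. Condition \eqref{eq:2nd-prop} therefore forces $a_j=0$ for $j\not\equiv 1\pmod k$; in particular $a_0=0$ when $k>1$. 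When $k=1$ we have $d=n$ and $a_0\in\K$. A scalar has $\gt$-weight $1\neq\qpar^{-1}$ because $n\geq2$, so the $\gt$-weight condition gives $a_0=0$, since $\gt$ acts diagonally on the basis $\gat^{i}\zapar^{j}$.

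Next apply $\xx$ using \eqref{eq:x-action}. For $1\le j\le n-1$, the map $\xx$ sends $a_j\zapar^{j}$ to $[j]_{\qpar^{-1}}\,\tilde a_j\zapar^{j-1}$. Here $\tilde a_j$ is obtained from $a_j$ by rescaling the coefficient of $\gat^{i}$ by $\qpar^{-di}$, so $a_j\mapsto\tilde a_j$ is injective. Also $[j]_{\qpar^{-1}}\neq0$ for $0<j<n$. Comparing components in the basis $\gat^{i}\zapar^{j-1}$ in $\xx\acts\obstuff{w}=1$ yields two things. First, $\tilde a_j=0$, hence $a_j=0$, for $2\le j\le n-1$. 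Second, $\tilde a_1=1$, which forces $a_1=1$ because rescaling fixes the constant term and kills nothing. Hence $\obstuff{w}=\zapar$.

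The only delicate point I expect is the bookkeeping for the $j=0$ term. The $\xx$-condition cannot see $a_0$, so it has to be removed by the idempotent condition (when $k>1$) or by the weight condition (when $k=1$), as above. The rest is a direct comparison of coefficients in the basis of \fullref{lemma:taft-simple-object}.
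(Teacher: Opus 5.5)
Your proof is correct and follows the paper's route: the paper checks \eqref{eq:2nd-prop} via $\obstuff{e}_{i}\zapar=\zapar\obstuff{e}_{i+1}$ and then says only that uniqueness follows from \eqref{eq:x-action}. Your coefficient comparison is the detailed version of that step, and it makes explicit a point the paper leaves implicit: $\xx$ cannot detect the constant term $a_0$, so $a_0$ must be removed by the idempotent condition (genuinely needed when $d=1$, where $\zapar+c\gat$ satisfies the other two conditions) or by the weight condition when $k=1$.
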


\begin{proof}
Note that $\qpar^{d}$ is a primitive
$k^{\mathrm{th}}$ root of unity, so
$\obstuff{e}_{i}\obstuff{e}_{j}=\delta_{i,j}\obstuff{e}_{i}$, for all $i,j=0,\ldots,k-1$,
and $\obstuff{e}_{0}+\obstuff{e}_{1}+\dots+\obstuff{e}_{k{-}1}=1$.
Moreover $\obstuff{e}_{i}\zapar=\zapar\obstuff{e}_{i{+}1}$, so
$\zapar={\textstyle\sum_{i=0}^{k-1}}\,\obstuff{e}_{i{-}1}\zapar\obstuff{e}_{i}$.
Using \eqref{eq:x-action}, the claim follows.
\end{proof}

\begin{proposition}\label{prop:taft-isos-A}
The algebra objects $\taobj$ and $\taobj[d^{\prime},\lambda^{\prime}]$ are isomorphic  if and only if
$d=d^{\prime}$ and $\lambda=\lambda^{\prime}$.
\end{proposition}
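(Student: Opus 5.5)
The "if" direction is trivial, so the work is to show that an isomorphism $\phi\colon\taobj\iso\taobj[d^{\prime},\lambda^{\prime}]$ of algebra objects, i.e. a $\taft$-linear algebra isomorphism, forces $d=d^{\prime}$ and $\lambda=\lambda^{\prime}$.

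The first step recovers $d$. By \fullref{lemma:taft-simple-object}, $\dim_{\K}\taobj=n^2/d$, and an isomorphism in $\tmod$ preserves dimension. Hence $n^2/d=n^2/d^{\prime}$, so $d=d^{\prime}$ and $k=k^{\prime}$.

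The second step recovers $\lambda$, and this is where the real content lies. I would use the characterisation of $\zapar$ in \fullref{lemma:taft-y-unique}. Since $\phi$ is $\taft$-linear, $\gt\acts\phi(\zapar)=\qpar^{-1}\phi(\zapar)$ and $\xx\acts\phi(\zapar)=\phi(1)=1$. The idempotents $\obstuff{e}_i$ should be intrinsic as well. The $\gt$-invariant, $\xx$-annihilated part of the algebra is $\K[G/H]$; this follows from \eqref{eq:x-action}, because $\xx\acts(\gat^i\zapar^j)\neq0$ for $0<j<n$, and the $\gt$-weights separate the monomials. Since $\phi$ is $\taft$-linear, it restricts to an algebra automorphism-type isomorphism of $\K[G/H]$ that also preserves the $\gt$-eigenvalue of $\gat$. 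Thus $\phi(\gat)=c\gat$ for some $c\in\K^\ast$, and $\gat^k=1$ forces $c^k=1$. Consequently $\phi$ permutes the idempotents by a shift, $\phi(\obstuff{e}_i)=\obstuff{e}_{i+s}$ for some $s$, and condition \eqref{eq:2nd-prop}, being invariant under such a shift, transports to $\phi(\zapar)$. By \fullref{lemma:taft-y-unique}, therefore, $\phi(\zapar)=\zapar^{\prime}$.

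Applying $\phi$ to the relation $\zapar^n=\lambda\cdot1$ then gives $\lambda\cdot1=\phi(\zapar)^n=(\zapar^{\prime})^n=\lambda^{\prime}\cdot1$, so $\lambda=\lambda^{\prime}$.

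The main obstacle is showing that $\phi$ maps $\K[G/H]$ to itself and respects the idempotent decomposition, since this is what makes \eqref{eq:2nd-prop} transfer. If the argument above via invariants and weights proves delicate, I would fall back on a more direct route. First, $\phi(\zapar)$ lies in the $\qpar^{-1}$-weight space of $\gt$, which is spanned by elements $\gat^i\zapar^{j}$ with $-di-j\equiv-1$. Second, the condition $\xx\acts\phi(\zapar)=1$ combined with \eqref{eq:x-action} narrows the possibilities. One then writes $\phi(\zapar)=\zapar^{\prime}+w$ with $\xx\acts w=0$, which puts $w$ in $\K[G/H]$ of weight $\qpar^{-1}$. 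When $d>1$ no such weight occurs, so $w=0$. The case $d=1$ needs a separate check: there I would compute the central element $\zapar^n$ directly, using $\phi(\zapar)^n$ together with the $\gt$-equivariance.
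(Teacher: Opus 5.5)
Your proposal is correct and follows the paper's proof. You get $d=d^{\prime}$ from the dimension count of \fullref{lemma:taft-simple-object}, then $\phi(\zapar)=\zapar$ from the uniqueness in \fullref{lemma:taft-y-unique}, and finally $\lambda\cdot 1=\phi(\zapar)^{n}=\lambda^{\prime}\cdot 1$. Your check that $\phi$ preserves the kernel of $\xx\acts\placeholder$ (which is $\K[G/H]$), sends $\gat\mapsto c\gat$ with $c^{k}=1$, and so only cyclically shifts the $\obstuff{e}_{i}$, is what makes \eqref{eq:2nd-prop} transport; the paper compresses this into ``all these properties have to be preserved by $\phi$''. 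The fallback is therefore unnecessary, and for $d=1$ it would in any case need the relation $\gat\zapar=\qpar^{-1}\zapar\gat$, not just $\gt$-equivariance, to exclude a summand $a\gat$ in $\phi(\zapar)$ (otherwise one only gets $\lambda=\lambda^{\prime}+a^{n}$).
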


\begin{proof}
Suppose that $\phi\colon\taobj\to\taobj[d^{\prime},\lambda^{\prime}]$ is an
isomorphism of algebra objects. Then \fullref{lemma:taft-simple-object}
immediately implies that $d=d^{\prime}$. Thanks to \fullref{lemma:taft-y-unique}, we know that $\zapar$ is the only element of $\taobj$ and $\taobj[d,\lambda^{\prime}]$ that satisfies $\gt\acts\zapar=\qpar^{-1}\zapar$,
$\xx\acts\zapar=1$ and \eqref{eq:2nd-prop}. This implies that $\phi(\zapar)=\zapar$, since all these properties have to be preserved by $\phi$. We get that $\lambda=\lambda^{\prime}$, because
$\lambda\cdot 1=\phi(\zapar^{n})=\phi(\zapar)^{n}=\lambda^{\prime}\cdot 1$. The other direction is trivial.
\end{proof}

Recall from \fullref{subsection:taft-algebra-objects} that
$\morstuff{U}\colon\oneMod_{\tmod}(\taobj)\to\tmod$
denotes the forgetful functor, which is right
adjoint to the free functor defined by
$\morstuff{F}(\tmodule)=\tmodule\otimes\taobj$.

\begin{lemma}\label{lemma:decomp-A}
We have
\[
\morstuff{U}(\taobj)\cong{\textstyle\bigoplus_{i=0}^{k-1}}\,\tprmod[1{-}di].
\]
In particular, $\morstuff{U}(\taobj)$ is a projective $\taft$-module.
\end{lemma}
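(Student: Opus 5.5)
The plan is to decompose $\morstuff{U}(\taobj)$ explicitly along the basis of \fullref{lemma:taft-simple-object}, using the action formula \eqref{eq:x-action}. For each fixed $i\in\llbracket 0,k-1\rrbracket$, consider the subspace
\[
\tmodule[W]_i:=\K\{\gat^{i}\zapar^{j}\mid 0\leq j\leq n-1\}\subset\taobj .
\]
By \eqref{eq:taft-alg-ob-action}, $\gt\acts(\gat^{i}\zapar^{j})=\qpar^{-di-j}\gat^{i}\zapar^{j}$. By \eqref{eq:x-action}, $\xx$ sends $\gat^{i}\zapar^{j}$ to a scalar multiple of $\gat^{i}\zapar^{j-1}$, and $\xx\acts\gat^{i}=0$. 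Hence each $\tmodule[W]_i$ is a $\taft$-submodule, and $\morstuff{U}(\taobj)=\bigoplus_{i=0}^{k-1}\tmodule[W]_i$ as $\taft$-modules. It remains to identify each $\tmodule[W]_i$ with a projective indecomposable.

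The key point is that the scalars $\qpar^{-di}(1+\qpar^{-1}+\dots+\qpar^{1-j})$ are nonzero for $1\leq j\leq n-1$, because $\qpar^{-1}$ is a primitive $n$-th root of unity. So $\xx$ acts on $\tmodule[W]_i$ as a single Jordan chain of length $n$:
\[
\gat^{i}\zapar^{n-1}\mapsto \gat^{i}\zapar^{n-2}\mapsto\cdots\mapsto\gat^{i}\mapsto 0 .
\]
I will rescale the basis along this chain. Set $m_1:=\gat^{i}\zapar^{n-1}$ and $m_{k+1}:=\xx\acts m_k$. Then $m_{k}$ is a nonzero multiple of $\gat^{i}\zapar^{n-k}$, with $\gt$-eigenvalue $\qpar^{-di-(n-k)}=\qpar^{(1-di)+k-1}$. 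This matches exactly the defining action of $\tinde[n,1-di]$ in \fullref{lemma:taft-inde}. Therefore $\tmodule[W]_i\cong\tinde[n,1-di]=\tprmod[1-di]$, and \fullref{lemma:taft-prinjective} gives projectivity. Since direct sums of projectives are projective, the "in particular" statement follows.

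The only delicate points are bookkeeping. First, $\gat^{i}$ is only defined modulo $H$, i.e.\ for $i$ modulo $k$. The label $1-di$ is well defined modulo $n$, since $dk=n$. Second, one must check that the relation $\zapar^{n}=\lambda$ does not disturb the chain. It does not, because only the exponents $0\leq j\leq n-1$ occur and $\xx\acts\gat^i=0$. The computation itself is routine once \eqref{eq:x-action} is taken as given.
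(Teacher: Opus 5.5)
Your proposal is correct and is essentially the paper's proof. The paper's summands $\taobj(i)$ are spanned by $\qpar^{-dij}\gat^{i}(\xx^{j}\acts\zapar^{n-1})=\xx^{j}\acts(\gat^{i}\zapar^{n-1})$, which is exactly your rescaled chain in $\tmodule[W]_i$, and you spell out the invariance, nonvanishing and $\gt$-eigenvalue checks that the paper leaves implicit (just rename your chain index, since $k$ is already $n/d$).
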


\begin{proof}
Consider the following (alternative) basis of $\taobj$:
\[
\{
\qpar^{-dij}\gat^{i}(\xx^{j}\acts\zapar^{n{-}1})\mid 0\leq i\leq k-1,\; 0\leq j\leq n-1
\}.
\]
For each $0\leq i\leq k-1$, let
\[
\taobj{}(i)=\K\{
\qpar^{-dij}\gat^{i}(\xx^{j}\acts\zapar^{n{-}1})\mid 0\leq j\leq n-1
\}.
\]
Each $\taobj(i)$ is $\taft$-invariant,
and  $\taobj(i)\cong\tprmod[1{-}di]$.
This gives the claimed decomposition and the statement
then follows from \fullref{lemma:taft-prinjective}.
\end{proof}

\begin{remark}\label{remark:taft-eigenspace}
To avoid possible confusion arising from conventions, we note that as a
$\taft$-module we have
\[
\mathrm{hd}(\taobj(i))\cong\tsimple[1-di], \quad \mathrm{soc}(\taobj(i))\cong\tsimple[-di].
\]
\end{remark}

\begin{theorem}\label{theorem:semisimplicity}
For each simple object $\tmodule$ of $\oneMod_{\tmod}(\taobj)$, there exists a unique $r \in \llbracket 0, d\smin 1 \rrbracket$, such that  $\tmodule \cong \tsimple\otimes\taobj$.
\end{theorem}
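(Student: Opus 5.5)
The plan has three parts: show that every free module $\tsimple\otimes\taobj$ is simple, realize every simple module object as a quotient of such a free module, and then pin down which $r$ give isomorphic modules using the adjunction $\morstuff{F}\dashv\morstuff{U}$ together with \fullref{lemma:decomp-A} and \fullref{remark:taft-eigenspace}.

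\emph{Step 1: $\tsimple\otimes\taobj$ is simple in $\oneMod_{\tmod}(\taobj)$.} Since $\tsimple\otimes\placeholder$ is an autoequivalence of $\tmod$ (\fullref{corollary:taft-tensor-exact}), and it commutes with the right $\taobj$-action, it induces an autoequivalence of $\oneMod_{\tmod}(\taobj)$. So it suffices to treat $r=0$, i.e. to show that $\taobj$ has no nonzero proper right ideals that are $\taft$-submodules. For this I would rerun the proof of \fullref{proposition:taft-simple-object} using only right multiplication.
\begin{itemize}
\item Let $\algstuff{I}\neq 0$ be such a right ideal. Applying $\xx^m$ to an element $\obstuff{a}\in\algstuff{I}$ whose top $\zapar$-degree is $m$ produces a nonzero element of $\algstuff{I}\cap\K[G/H]$. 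This uses only that $\algstuff{I}$ is $\taft$-stable.
\item Given $\obstuff{u}\in\algstuff{I}\cap\K[G/H]$, the element $(\obstuff{u}-\gt\acts\obstuff{u})\gat^{-1}$ lies in $\algstuff{I}$, because $\algstuff{I}$ is a right ideal. Since $\K[G/H]$ is commutative, it equals the element $\gat^{-1}(\obstuff{u}-\gt\acts\obstuff{u})$ used in that proof.
\item Iterating as before places a nonzero scalar in $\algstuff{I}$, so $\algstuff{I}=\taobj$.
\end{itemize}

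\emph{Step 2: existence.} Let $\tmodule$ be a simple module object. Its underlying $\taft$-module $\morstuff{U}(\tmodule)$ is nonzero, so it has a simple submodule, which is isomorphic to some $\tsimple$ by \fullref{lemma:taft-simples}. By the adjunction, $\Hom_{\taobj[]}(\tsimple\otimes\taobj,\tmodule)\cong\Hom_{\tmod}(\tsimple,\morstuff{U}(\tmodule))\neq 0$. A nonzero morphism between simple objects is an isomorphism, so $\tmodule\cong\tsimple\otimes\taobj$ for some $r\in\Z/n\Z$.

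\emph{Step 3: reduction of $r$ modulo $d$, and uniqueness.} By the adjunction, Step 1 and \fullref{lemma:decomp-A},
\[
\Hom_{\taobj[]}(\tsimple\otimes\taobj,\tsimple[r']\otimes\taobj)\cong\Hom_{\tmod}(\tsimple[r-r'],\taobj).
\]
By \fullref{remark:taft-eigenspace}, the socle of $\taobj$ is $\bigoplus_{i=0}^{k-1}\tsimple[-di]$. Hence this Hom space is nonzero exactly when $r-r'\equiv -di \pmod n$ for some $i$. Because $d\mid n$, this is equivalent to $r\equiv r' \pmod d$. In that case a nonzero morphism between the simple objects of Step 1 is an isomorphism; otherwise they are not isomorphic. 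Both existence within $\llbracket 0,d-1\rrbracket$ and uniqueness follow. As a sanity check, when $r'-r=d$ the isomorphism should be explicitly left multiplication by $\gat$, which is right $\taobj$-linear and rescales the $\gt$-weights by $\qpar^{-d}$.

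The main obstacle is Step 1. One must make sure that the simplicity argument really uses only $\taft$-stability and right multiplication, and in particular that no left multiplication by $\zapar$ is needed. The socle computation in Step 3 depends on \fullref{remark:taft-eigenspace} being correct, so I would double-check the sign conventions there.
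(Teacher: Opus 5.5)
Your proof is correct, but it takes a different route from the paper. The paper never shows directly that $\tsimple\otimes\taobj$ is simple. Instead it does the following:
\begin{itemize}
\item It uses \fullref{lemma:decomp-A} to show that $\taobj$, and hence every $\tsimple\otimes\taobj$, is projective in $\oneMod_{\tmod}(\taobj)$.
\item It splits the sequences $0\to\tinde[\ell-1,r+1]\otimes\taobj\to\tinde\otimes\taobj\to\tsimple\otimes\taobj\to 0$, so every free module decomposes into copies of the $\tsimple\otimes\taobj$.
\item It computes, via the adjunction and the socles of the $\tprmod$'s, that the projective generator $\bigoplus_{r=0}^{d-1}\tsimple\otimes\taobj$ has endomorphism algebra $\K^d$. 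Morita theory then identifies the whole category with $\K^d$-modules.
\end{itemize}

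You replace the projectivity and Morita part with one observation. The proof of \fullref{proposition:taft-simple-object} only uses $\taft$-stability and multiplication by $\gat^{-1}$ inside the commutative subalgebra $\K[G/H]$. Hence $\taobj$ is already simple as a right module over itself. Existence then follows from a single adjunction argument, and your Hom computation is the same socle computation the paper does.

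What each route buys:
\begin{itemize}
\item Your route is shorter and more elementary for the statement as posed, and it never needs projectivity of $\morstuff{U}(\taobj)$.
\item It does not give the semisimplicity of $\oneMod_{\tmod}(\taobj)$ (\fullref{corollary:taft-module-cat}). The paper extracts this from the same argument and relies on it later, in \fullref{proposition:taft-module-cat} and \fullref{proposition:taft-2-rep}. To recover it you would still need splitting input, such as the paper's projectivity of the free modules.
\end{itemize}

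Your flagged worries are harmless. No left multiplication by $\zapar$ is needed. The iteration works for any nonzero $u\in\algstuff{I}\cap\K[G/H]$ if you track its highest nonzero $\gat$-coefficient. The socle $\ker(\xx)=\K[G/H]$, with $\gt$-weights $\qpar^{-di}$, is correct. In any case your conclusion depends only on the subgroup $d\Z/n\Z$, which is insensitive to the sign convention.
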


\begin{proof}
First of all, $\tsimple\otimes\taobj$ is a projective object in
$\oneMod_{\tmod}(\taobj)$ for all $r \in \Z/n\Z$.
Indeed, thanks to \fullref{lemma:decomp-A}, we know that $\morstuff{U}(\taobj)$ is a projective $\taft$-module. Since the forgetful functor $\morstuff{U}$ is exact, its left adjoint $\morstuff{F}$ send projective to projective. Hence ${\morstuff{F}(\morstuff{U}(\taobj))=\morstuff{U}(\taobj) \otimes \taobj}$ is projective in $\oneMod_{\tmod}(\taobj)$. Moreover, the multiplication of $\taobj$ is split by $\morstuff{u}\otimes \mathrm{id}_{\taobj}$, where we recall that $\morstuff{u}$ denotes the unit morphism of the algebra object $\taobj$. Therefore $\taobj$ is a direct summand of the projective right $\taobj$-module $\morstuff{U}(\taobj) \otimes \taobj$, and hence $\taobj$ is projective in $\oneMod_{\tmod}(\taobj)$.

Finally, by \fullref{corollary:taft-tensor-exact} the endofunctor $\tsimple\otimes\placeholder$ of $\tmod$  is invertible, hence gives an autoequivalence of $\oneMod_{\tmod}(\taobj)$, with inverse $\tsimple[-r]\otimes\placeholder$. It therefore preserves projective objects. Thus $\tsimple \otimes \taobj$ is projective for every $r$.

This implies that
\[
\Hom_{\taobj}(\tsimple\otimes\taobj,\placeholder)\colon
\oneMod_{\tmod}(\taobj)\to\mathrm{Vec}
\]
is an exact functor to the category of vector
spaces, since $\taobj$ is projective, see \fullref{lemma:decomp-A}, and
$\tsimple[-r]\otimes\placeholder$, being a right adjoint, is exact. The claim follows.

Furthermore, we get
\begin{gather*}
\Hom_{\taobj}(\tsimple\otimes\taobj,\tsimple[r{+}d]\otimes\taobj)
\cong
\Hom_{\taobj}(\taobj,\tsimple[d]\otimes\taobj)\cong
\Hom_{\taobj}(\taobj,\taobj),
\end{gather*}
because $\tsimple[-r]\otimes\tsimple[r{+}d]\cong\tsimple[d]$ holds in $\tmod$,
see \fullref{proposition:taft-tensor}, and
$\tsimple[d]\otimes\taobj\cong\taobj$ holds in $\oneMod_{\tmod}(\taobj)$ (the
latter isomorphism
is determined by $1\otimes1\mapsto\gat^{-1}$ and
the requirement that it is a morphism of right $\taobj$-modules).
This implies that, for all $r \in \Z/n\Z$, we have an isomorphism
$\tsimple\otimes\taobj\cong\tsimple[r{+}d]\otimes\taobj$ in
$\oneMod_{\tmod}(\taobj)$.
\medskip

Second, for every $\tmodule \in \tmod$, the free $\taobj$-module $\tmodule \otimes \taobj$ is isomorphic to a direct sum of objects of the form
$\tsimple\otimes\taobj$, for $0\leq r\leq d-1$.
It is enough to show this for $\tinde$. We have a short exact sequence:
\begin{equation*}
0 \to \tinde[\ell-1,r+1] \to \tinde \to \tsimple \to 0.
\end{equation*}

Since the tensor product is biexact in a tensor category \cite[Proposition 4.2.1]{EGNO}, we get
\begin{equation*}
0 \to \tinde[\ell-1,r+1] \otimes \taobj \to \tinde \otimes \taobj \to \tsimple \otimes \taobj \to 0.
\end{equation*}

Moreover, we know that $\tsimple \otimes \taobj$ is projective, hence this sequence splits:
\begin{equation*}
\tinde \otimes \taobj \simeq \tsimple \otimes \taobj \oplus \tinde[\ell-1,r+1] \otimes \taobj.
\end{equation*}

Induction gives $\tinde \otimes \taobj \simeq \bigoplus_{j=0}^{\ell-1}{\tsimple[r+j] \otimes \taobj}$. This implies that the original $\tinde\otimes\taobj$ also decomposes as claimed.

Next, by \fullref{proposition:taft-tensor} and \fullref{lemma:decomp-A}, we have for any $0\leq r^{\prime}\leq d-1$ that
\[
\morstuff{U}(\tsimple[r^{\prime}]\otimes\taobj)
\cong{\textstyle\bigoplus_{i=0}^{k{-}1}}\,\tprmod[r^{\prime}{+}1{-}di].
\]
This gives
\begin{gather*}
\Hom_{\taobj}(\tsimple\otimes\taobj,\tsimple[r^{\prime}]\otimes\taobj)
\cong
\Hom_{\tmod}(\tsimple,\morstuff{U}(\tsimple[r^{\prime}]\otimes\taobj))\cong
\bigoplus_{i=0}^{k{-}1}\Hom_{\tmod}(\tsimple,\tprmod[r^{\prime}{+}1{-}di]),
\end{gather*}
using the adjointness of the functors $\morstuff{F}$ and $\morstuff{U}$ again.
Recall that $\mathrm{soc}(\tprmod[r^{\prime}{+}1{-}di])\cong\tsimple[r^{\prime}{-}di]$, for any $0\leq i\leq k-1$, so
\[
\Hom_{\tmod}(\tsimple,\tprmod[r^{\prime}{+}1{-}di])\cong\Hom_{\tmod}(\tsimple,\tsimple[r^{\prime}{-}di]).
\]
If $0\leq r,r^{\prime}\leq d-1$ and $0\leq i\leq k-1$, then $r \equiv r^{\prime} \smin di \mod n$ if
and only if $r = r^{\prime}$ and $i=0$. Hence,
by Schur's lemma, we get further that
\begin{equation} \label{eq:hom-comp}
\Hom_{\taobj}(\tsimple\otimes\taobj,\tsimple[r^{\prime}]\otimes \taobj)\cong
\begin{cases}
\K, &\text{if }r=r^{\prime},\\
0, &\text{else}.
\end{cases}
\end{equation}

This finishes the proof that $\{\tsimple\otimes\taobj\mid 0\leq r\leq d-1\}$ forms a complete and
irredundant set of simple objects in $\oneMod_{\tmod}(\taobj)$. Indeed, the projective object $\obstuff{P}:= \bigoplus_{r=0}^{d{-}1}{\tsimple[r] \otimes \taobj}$ is a (projective) generator since we know that every free $\taobj$-module is a direct sum of objects $\tsimple\otimes \taobj$ and that every $\taobj$-module is a quotient of a free $\taobj$-module. %general to algebra objects
Moreover, \eqref{eq:hom-comp} gives that $\End_{\taobj}(\obstuff{P}) \cong \K^d$. Hence $\oneMod_{\tmod}(\taobj) \simeq \End_{\taobj}(\obstuff{P})^{\mathrm{op}}\text{-}\catstuff{M}\mathrm{od}_{\mathrm{fd}}$. The category $\oneMod_{\tmod}(\taobj)$  is thus semisimple with pairwise nonisomorphic simple objects $\tsimple[r] \otimes \taobj$, with $r \in \llbracket 0, d \smin 1 \rrbracket$.
\end{proof}

%Since $\tsimple\otimes\taobj$ is projective, cf. Claim 1 in \fullref{theorem:semisimplicity},
The proof of \fullref{theorem:semisimplicity} gives in particular the following corollary.

\begin{corollary}\label{corollary:taft-module-cat}
$\oneMod_{\tmod}(\taobj)$ is a semisimple category.
\end{corollary}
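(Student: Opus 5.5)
This follows directly from the proof of \fullref{theorem:semisimplicity}, so I would extract the relevant steps rather than argue anew. Set $\obstuff{P}:=\bigoplus_{r=0}^{d-1}\tsimple[r]\otimes\taobj$. The plan is to show that $\obstuff{P}$ is a projective generator of $\oneMod_{\tmod}(\taobj)$ whose endomorphism algebra is semisimple, and then apply the standard Morita-type description of a finite abelian category with a projective generator.

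First, projectivity. By \fullref{lemma:decomp-A}, $\morstuff{U}(\taobj)$ is a projective $\taft$-module. The free functor $\morstuff{F}$ is left adjoint to the exact functor $\morstuff{U}$, so it preserves projectives. Hence $\morstuff{U}(\taobj)\otimes\taobj$ is projective. Since the multiplication of $\taobj$ is split by $\morstuff{u}\otimes\mathrm{id}$, the object $\taobj$ is a direct summand of it and is therefore projective. The autoequivalences $\tsimple\otimes\placeholder$ from \fullref{corollary:taft-tensor-exact} then make every $\tsimple\otimes\taobj$ projective.

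Second, generation. Every $\taobj$-module $\tmodule$ is a quotient of the free module $\morstuff{U}(\tmodule)\otimes\taobj$, via its action map. It then suffices that each free module $\tmodule\otimes\taobj$ is a direct sum of the $\tsimple[r]\otimes\taobj$ with $0\le r\le d-1$. For this I would:
\begin{itemize}
\item apply the split short exact sequences $0\to\tinde[\ell-1,r+1]\otimes\taobj\to\tinde\otimes\taobj\to\tsimple\otimes\taobj\to0$ inductively on $\ell$, the splitting coming from projectivity of the right-hand term;
\item reduce indices modulo $d$ using $\tsimple[d]\otimes\taobj\cong\taobj$.
\end{itemize}

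Third, the endomorphism algebra. Using the adjunction $\morstuff{F}\dashv\morstuff{U}$ together with the socles of the $\tprmod$, one gets \eqref{eq:hom-comp}. This gives $\End_{\taobj}(\obstuff{P})\cong\K^d$, a product of copies of $\K$. Consequently $\oneMod_{\tmod}(\taobj)\simeq\End_{\taobj}(\obstuff{P})^{\mathrm{op}}\text{-}\catstuff{M}\mathrm{od}_{\mathrm{fd}}\simeq\mathrm{Vec}^{\oplus d}$, which is semisimple.

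The only point needing care is the generation step. The argument requires that every object of $\oneMod_{\tmod}(\taobj)$ is a quotient of a finite direct sum of copies of $\obstuff{P}$, and that $\oneMod_{\tmod}(\taobj)$ is a finite abelian category so that the equivalence with modules over $\End(\obstuff{P})^{\mathrm{op}}$ applies. Both facts are standard for algebra objects in finite tensor categories, and I would simply cite them.
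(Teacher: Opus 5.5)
Your proposal is correct and follows essentially the same route as the paper, which derives the corollary directly from the proof of \fullref{theorem:semisimplicity}. That proof proceeds through the same steps you list: projectivity of the $\tsimple\otimes\taobj$, decomposition of free modules via the split sequences together with $\tsimple[d]\otimes\taobj\cong\taobj$, the computation \eqref{eq:hom-comp} giving $\End_{\taobj}(\obstuff{P})\cong\K^d$, and the Morita-type identification with $\End_{\taobj}(\obstuff{P})^{\mathrm{op}}$-modules.
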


\subsection{Module categories of $\tmod$ with bottom cell apex}\label{subsection:taft-simple-algebra-objects-bottom}

%If $\obstuff{A}$ is an algebra object for the bottom cell, then by definition $\tprmod \otimes \obstuff{A}=0$ for $r \in \llbracket 0, n-1\rrbracket$. Tensoring $\tprmod[0] \twoheadrightarrow \tsimple[0]$ with $\obstuff{A}$ gives $0 \twoheadrightarrow A$. Hence \cite[Theorem 3.17]{EO04} implies that there are no exact categorical representations for the bottom cell. Moreover, \TODO: check + add this argument:  I think exactness should actually be automatic here. A simple categorical representation can be written as mod(A), with A simple from the right and hence simple as an algebra object. Coulembier–Stroiński–Zorman, Theorem 7.1, now shows that every simple algebra object in a finite tensor category is exact. In our particular case, one could also deduce this from Skryabin. \\

Let $\catstuff{M}$ be a module category of $\tmod$ with bottom cell apex. Having $\cell[b]$ as apex implies that for every object $\obstuff{M} \in \catstuff{M}$, $\tprmod \otimes \obstuff{M}=0$ for $r \in \llbracket 0, n-1\rrbracket$. Since $\catstuff{M}$ is a module category, tensoring $\tprmod[0] \twoheadrightarrow \tsimple[0]$ with $\obstuff{M}$ gives $0 \twoheadrightarrow \obstuff{M}$, since the action of a module category is right exact in the $\tmod$ variable. Hence $\catstuff{M}=0$.
This rules out nonzero abelian module categories, cf. \cite[Definition 7.3.1]{EGNO}. To construct categorical representations with bottom cell apex, we have to look for finitary $2$-representations.

\begin{remark}
Finitary representations need not be abelian. Moreover, the semisimplification functor is not right exact \cite[Definition 2.5]{EO22}, so that the preceding argument does not apply to what will follow in \fullref{section:taft-2rep-second}.
\end{remark}

\section{Taft \texorpdfstring{$2$}{2}-representation theory with apex $\cell[u]$}\label{section:taft-2rep-second}
Recall from \fullref{subsection:taft-cells} that we identify monoidal categories with its delooping. In this section we will study the $2$-representations associated with $\taobj$.

\begin{definition}\label{definition:2-reps}
A (left) $2$-representation of a monoidal category $\catstuff{C}$ is
a category $\catstuff{M}$ together with
a (strict) monoidal functor $\rho: \catstuff{C} \to \mathrm{End}(\catstuff{M})$ where $\mathrm{End}(\catstuff{M})$ is the monoidal category of endofunctors of $\catstuff{M}$ with composition as monoidal product and identity functor as unit, see \cite[Example 2.3.12]{EGNO}. To simplify the notation, the action functor $\rho$ will usually be denoted by $\otimes$, so $\obstuff{X} \otimes \obstuff{M} := \rho(\obstuff{X})(\obstuff{M})$ for $(\obstuff{X}, \obstuff{M}) \in \catstuff{C} \times \catstuff{M}$. Let $(\catstuff{M}, \rho)$ be a $2$-representation. We will say that this $2$-representation is
\begin{itemize}
\item \textit{finitary} if $\catstuff{M}$ is $\mathrm{Hom}$-finite ($\mathrm{dim}_{\K}\left(\mathrm{Hom}(\obstuff{X},\obstuff{Y})\right)$ is finite, for all $\obstuff{X}, \obstuff{Y} \in \catstuff{M}$), idempotent-complete and has finitely many isomorphism classes of indecomposable objects.
\item \textit{simple} if it does not have any nonzero, proper $\catstuff{C}$-stable ideals.
\end{itemize}

A morphism of $2$-representations is a functor $\morstuff{F}:\catstuff{M} \to \catstuff{N}$ equipped with a family of isomorphisms $\morstuff{\Phi}_{\obstuff{X},\obstuff{M}}:\morstuff{F}(\obstuff{X} \otimes \obstuff{M}) \iso \obstuff{X} \otimes \morstuff{F}(\obstuff{M})$ natural in $\obstuff{X} \in \catstuff{C}$ and in $\obstuff{M} \in \catstuff{M}$ such that
\begin{equation*}
\begin{cases}
\morstuff{\Phi}_{\obstuff{X} \otimes \obstuff{Y},\obstuff{M}}= (\mathrm{id}_{\obstuff{X}} \otimes \morstuff{\Phi}_{\obstuff{Y},\obstuff{M}})\circ \morstuff{\Phi}_{\obstuff{X},\obstuff{Y}\otimes \obstuff{M}},\\
\morstuff{\Phi}_{1,\obstuff{M}}=\mathrm{id}_{\morstuff{F}(\obstuff{M})}.
\end{cases}
\end{equation*}

\end{definition}

In this article, all $2$-representations are assumed to be finitary.

\begin{remark}
The reader may wonder why do not call these $2$-representations simple transitive. As observed in \cite{Mie26} just before Definition $11$, every simple finitary $2$-representation is transitive.
\end{remark}

\begin{example}\label{example:2-reps-algebra-objects}
Let $\taobj[]$ be an algebra object in a monoidal
category $\catstuff{C}$. Then the category
of its right modules $\oneMod_{\catstuff{C}}(\taobj[])$
is a left $2$-representation of $\catstuff{C}$
where $\obstuff{X}\in\catstuff{C}$ acts as $\obstuff{X}\otimes\placeholder$.
\end{example}

Let us denote by $\Gros(\catstuff{C})$, the split Grothendieck group of a category $\catstuff{C}$.
We observe that $\Gros(\oneMod_{\tmod}(\taobj))$ of
$\oneMod_{\tmod}(\taobj)$ is an $\N$-module
whose $\N$-basis is given by the Grothendieck classes
\[
\{[\tsimple\otimes\taobj]\mid 0\leq r\leq d-1\}.
\]

Recall that we have defined $\taftm(d)$ in \fullref{subsection:taft-n-modules}.
\begin{proposition}\label{proposition:taft-decat}
We have $\Gros(\oneMod_{\tmod}(\taobj))\cong_{\N}\taftm(d)$ as $\taftp$-modules.
\end{proposition}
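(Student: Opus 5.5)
The plan is to compute the action of the two generators $\xpar=[\tinde[2,0]]$ and $\ypar=[\tsimple[1]]$ on the $\N$-basis $\{[\tsimple\otimes\taobj]\mid 0\leq r\leq d-1\}$. By \fullref{proposition:taft-ring}, the whole $\taftp$-action, and hence the action of every element of $\taftb(\tmod)$, is determined by these two matrices. Theorem \ref{theorem:semisimplicity} already shows that these classes form a basis. Its proof also gives the isomorphism $\tsimple\otimes\taobj\cong\tsimple[r{+}d]\otimes\taobj$, so indices may be read modulo $d$.

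For $\ypar$, I use \fullref{proposition:taft-tensor}: we have $\tsimple[1]\otimes(\tsimple\otimes\taobj)\cong\tsimple[r{+}1]\otimes\taobj$. When $r=d-1$, the periodicity brings this back to $\tsimple[0]\otimes\taobj$. Sending $m_{r+1}\mapsto[\tsimple[r]\otimes\taobj]$, the matrix of $\ypar$ becomes $\amatrix[\Theta_d]$, i.e. $\ypar\acts m_i=m_{i+1}$ with indices modulo $d$.

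For $\xpar$, I use the splitting established in the second step of the proof of Theorem \ref{theorem:semisimplicity}. The short exact sequence $0\to\tsimple[r{+}1]\to\tinde[2,r]\to\tsimple\to0$, tensored with $\taobj$, splits because $\tsimple\otimes\taobj$ is projective in $\oneMod_{\tmod}(\taobj)$. Combining this with associativity of the action and $\tinde[2,0]\otimes\tsimple\cong\tinde[2,r]$ gives
\[
\tinde[2,0]\otimes(\tsimple\otimes\taobj)\cong\tinde[2,r]\otimes\taobj\cong(\tsimple\otimes\taobj)\oplus(\tsimple[r{+}1]\otimes\taobj).
\]
Hence $\xpar\acts m_i=m_i+m_{i+1}$, which is exactly the action on $\taftm(d)$. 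The two modules therefore agree on the generators, so they are isomorphic as $\taftp$-modules, and the identification of bases is the required permutation. In particular, $\taftm(d)$ is an honest $\N$-module, as used in \fullref{proposition:taft-big-apex}.

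The only delicate point is making sure the basis classes are pairwise distinct and that no other indecomposables appear. This is settled by \eqref{eq:hom-comp} and the semisimplicity in Theorem \ref{theorem:semisimplicity}, which show that $\Gros$ is free on exactly these $d$ classes. Given that, the isomorphism follows directly.
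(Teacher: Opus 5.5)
Your proposal is correct and is essentially the paper's argument: you compute the action of $\tsimple[1]$ and $\tinde[2,0]$ on the simples $\tsimple\otimes\taobj$ from \fullref{theorem:semisimplicity} via \fullref{proposition:taft-tensor}. You also make explicit two points the paper leaves implicit: the splitting $\tinde[2,r]\otimes\taobj\cong(\tsimple\otimes\taobj)\oplus(\tsimple[r+1]\otimes\taobj)$, and the periodicity of indices modulo $d$.
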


\begin{proof}
It is enough to compute the
action of $\tsimple[1]$ and $\tinde[2,0]$ on
the set of simple objects identified in \fullref{theorem:semisimplicity}. That we get the same action as for $\taftm(d)$, follows from \fullref{proposition:taft-tensor}.
\end{proof}

Note that the parameter $\lambda$ does not play any role
on the decategorified level. We will now see how it shows up on the categorified level.

%A $2$-representation $\catstuff{M}$ of a tensor
%category $\catstuff{C}$ is assumed to be
%additionally abelian and locally finite.

We recall the definition from \cite[Definition 7.5.1]{EGNO} of an exact module category that we extend to this framework.

\begin{definition}
Let $\catstuff{C}$ be a tensor category. An abelian $2$-representation $\catstuff{M}$ is \textit{exact},
if for every projective object $\tmodule[P]\in\catstuff{C}$ and
every object $\tmodule[N]\in\catstuff{M}$ the object $\tmodule[P]\otimes\tmodule[N]$
is projective in $\catstuff{M}$.
\end{definition}

\begin{remark}\label{remark:taft-exact}
Let $\catstuff{M}$ be $2$-representation of a
tensor category $\catstuff{C}$. Then, for
any object $\tmodule[M]\in\catstuff{C}$ and any projective
object $\tmodule[Q]\in\catstuff{M}$,
the object $\tmodule[M]\otimes\tmodule[Q]$ is always projective in $\catstuff{M}$,
cf. \cite[Exercise 7.5.2]{EGNO}.
\end{remark}

\begin{example}\label{example:taft-exact}
If the unit object in $\catstuff{C}$ is projective,
then the exact $2$-representations of $\catstuff{C}$
are precisely the semisimple $2$-representations of $\catstuff{C}$,
since any of their objects has to be projective.
\end{example}

\begin{proposition}\label{proposition:taft-module-cat}
The $2$-representation
$\oneMod_{\tmod}(\taobj)$ of $\tmod$
is simple.
\end{proposition}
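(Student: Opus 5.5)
We need to show that $\oneMod_{\tmod}(\taobj)$ has no nonzero proper $\tmod$-stable ideals. The plan is to exploit \fullref{theorem:semisimplicity} and \fullref{corollary:taft-module-cat}: the category is semisimple, with pairwise nonisomorphic simple objects $\obstuff{L}_r:=\tsimple\otimes\taobj$ for $r\in\llbracket 0,d\smin 1\rrbracket$, and $\End_{\taobj}(\obstuff{L}_r)\cong\K$ by \eqref{eq:hom-comp}.

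First we describe all ideals of a semisimple finitary category. Let $\catstuff{I}$ be a nonzero ideal and pick a nonzero morphism $f\colon \obstuff{X}\to\obstuff{Y}$ in $\catstuff{I}$. Decompose $\obstuff{X}$ and $\obstuff{Y}$ into simples and compose $f$ with the inclusions and projections of summands. Since there are no nonzero morphisms between nonisomorphic simples, this yields a nonzero endomorphism of some $\obstuff{L}_r$ lying in $\catstuff{I}$. Because $\End(\obstuff{L}_r)\cong\K$, this endomorphism is invertible, so $\mathrm{id}_{\obstuff{L}_r}\in\catstuff{I}$. Hence every ideal is determined by the set $S\subset\llbracket 0,d\smin 1\rrbracket$ of indices $r$ with $\mathrm{id}_{\obstuff{L}_r}\in\catstuff{I}$. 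Conversely, $\catstuff{I}$ consists of all morphisms factoring through direct sums of the $\obstuff{L}_s$ with $s\in S$. The ideal is proper exactly when $S$ is a proper subset.

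Second, we use $\tmod$-stability. If $\mathrm{id}_{\obstuff{L}_r}\in\catstuff{I}$, then $\mathrm{id}_{\tsimple[1]}\otimes\mathrm{id}_{\obstuff{L}_r}=\mathrm{id}_{\tsimple[1]\otimes\obstuff{L}_r}\in\catstuff{I}$. We have $\tsimple[1]\otimes\obstuff{L}_r\cong\obstuff{L}_{r+1}$, with indices modulo $d$, using \fullref{proposition:taft-tensor} together with the isomorphism $\tsimple[r]\otimes\taobj\cong\tsimple[r+d]\otimes\taobj$ from the proof of \fullref{theorem:semisimplicity}. It follows that $\mathrm{id}_{\obstuff{L}_{r+1}}\in\catstuff{I}$, so $S$ is stable under $r\mapsto r+1$. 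Since $S\neq\emptyset$, we get $S=\llbracket 0,d\smin 1\rrbracket$. Every object is then a direct sum of the $\obstuff{L}_r$, so all identities lie in $\catstuff{I}$, and $\catstuff{I}$ is everything.

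Finally, we check that the $2$-representation is finitary, so that the notion applies. It is Hom-finite and idempotent complete, being equivalent to $\K^d$-modules, and it has $d$ indecomposables. The main subtle point is the first step: we need the endomorphism rings of the simples to be exactly $\K$, which is supplied by \eqref{eq:hom-comp}, and we need the precise identification $\tsimple[1]\otimes\obstuff{L}_{d-1}\cong\obstuff{L}_0$. Both were already established earlier in the paper.
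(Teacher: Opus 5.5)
Your proof is correct and follows the same route as the paper. Semisimplicity forces any nonzero $\tmod$-stable ideal to contain the identity of some simple $\tsimple[r]\otimes\taobj$, and stability under $\tsimple[1]\otimes\placeholder$, which cyclically permutes the $d$ simples, then forces it to contain all of them; you simply spell out the Schur-lemma reduction and the finitary check that the paper leaves implicit.
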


\begin{proof}
Since $\oneMod_{\tmod}(\taobj)$ is semisimple by
\fullref{corollary:taft-module-cat}, and $\tsimple[1]$ acts transitively on the simples, it is a simple  $2$-representation of $\tmod$. Indeed, any nonzero ideal contain the identity morphism of some simple. Transitivity then implies that it contains the identity morphism of all simple objects.
%while exactness also follows directly from the semisimplicity, since any object of a semisimple abelian category is projective.
\end{proof}

\begin{theorem}\label{theorem:taft-module-cat}
The $2$-representations
$\oneMod_{\tmod}(\taobj)$ and $\oneMod_{\tmod}(\taobj[d^{\prime},\lambda^{\prime}])$
are equivalent if and only if ${d=d^{\prime} \text{ and } \lambda=\lambda^{\prime}}$.
\end{theorem}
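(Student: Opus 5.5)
The ``if'' direction is immediate: equal parameters give isomorphic algebra objects, hence literally the same $2$-representation. For the converse, suppose $\morstuff{F}\colon\oneMod_{\tmod}(\taobj)\to\oneMod_{\tmod}(\taobj[d^{\prime},\lambda^{\prime}])$ is an equivalence of $2$-representations. The parameter $d$ is read off on the decategorified level. $\morstuff{F}$ induces an $\N$-equivalence of split Grothendieck groups, so \fullref{proposition:taft-decat} gives $\taftm(d)\cong_{\N}\taftm(d^{\prime})$. These modules have dimensions $d$ and $d^{\prime}$, so $d=d^{\prime}$, as in the uniqueness part of \fullref{proposition:taft-big-apex}. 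Since $\lambda$ is invisible there, the rest of the argument has to be categorical.

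For $\lambda$, I would use internal Homs, following \cite[Sections 7.9--7.10]{EGNO}. Both module categories are semisimple by \fullref{corollary:taft-module-cat}, hence exact, and indecomposable by \fullref{proposition:taft-module-cat}. So for each object $\obstuff{N}$ the internal End $\underline{\End}(\obstuff{N})$ exists and is an algebra object in $\tmod$. The steps are as follows.
\begin{itemize}
\item A $\tmod$-module equivalence preserves internal Homs, and so induces an isomorphism of algebra objects $\underline{\End}(\obstuff{N})\cong\underline{\End}(\morstuff{F}(\obstuff{N}))$.
\item For the regular module we have $\underline{\End}(\taobj)\cong\taobj$ as algebra objects \cite[Example 7.9.8]{EGNO}.
\item $\morstuff{F}$ sends the simple object $\taobj$ to some simple object. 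By \fullref{theorem:semisimplicity} this is $\tsimple\otimes\taobj[d,\lambda^{\prime}]$ for some $r$.
\item Because $\tsimple$ is invertible, I expect $\underline{\End}(\tsimple\otimes\obstuff{N})\cong\tsimple\otimes\underline{\End}(\obstuff{N})\otimes\tsimple^{\ast}$, with the conjugated algebra structure, using $\tsimple^{\ast}\cong\tsimple[-r]$. This should follow from the adjunction defining internal Hom together with rigidity.
\end{itemize}
Together these give an isomorphism of algebra objects $\taobj\cong\tsimple\otimes\taobj[d,\lambda^{\prime}]\otimes\tsimple[-r]$.

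It remains to identify the conjugated algebra with some $\taobj[d,\mu]$ and compute $\mu$. Write $1\otimes a\otimes 1$ for elements of the conjugate. The $\gt$-weights of these elements are unchanged, since the weights $r$ and $-r$ cancel. For $\xx$, we have $\Delta^{(2)}(\xx)=\xx\otimes1\otimes1+\gt\otimes\xx\otimes1+\gt\otimes\gt\otimes\xx$, and $\xx$ kills $\tsimple$ and $\tsimple[-r]$. Hence $\xx\acts(1\otimes\zapar\otimes1)=\qpar^{r}(1\otimes 1\otimes 1)$. Set $\zapar^{\prime}:=\qpar^{-r}\,(1\otimes\zapar\otimes1)$ and $\gat^{\prime}:=1\otimes\gat\otimes1$. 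These satisfy the defining relations and action of \fullref{definition:taft-algebra-objects}, with $(\zapar^{\prime})^{n}=\qpar^{-rn}\lambda^{\prime}=\lambda^{\prime}$. So the conjugate is isomorphic to $\taobj[d,\lambda^{\prime}]$, giving $\taobj\cong\taobj[d,\lambda^{\prime}]$, and \fullref{prop:taft-isos-A} yields $\lambda=\lambda^{\prime}$.

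The main obstacle is the middle step: showing that a module equivalence preserves internal End as an \emph{algebra} object, and that conjugating by the invertible object $\tsimple$ really produces the twisted algebra structure described above. I would first try to derive both facts directly from the defining adjunction $\Hom_{\tmod}(\obstuff{X},\underline{\Hom}(\obstuff{M},\obstuff{N}))\cong\Hom(\obstuff{X}\otimes\obstuff{M},\obstuff{N})$. If that becomes too heavy, the fallback is to quote the Morita-theoretic statement in \cite[Section 7.10]{EGNO}, namely that $\oneMod_{\tmod}(\taobj)\simeq\oneMod_{\tmod}(\obstuff{B})$ implies $\obstuff{B}\cong\underline{\End}(\obstuff{N})$ for some object $\obstuff{N}$. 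One would then still need to check, using \fullref{theorem:semisimplicity} and the fact that every object is a direct sum of the $\tsimple\otimes\taobj$, that a simple algebra of the correct dimension $n^2/d$ forces $\obstuff{N}$ to be simple.
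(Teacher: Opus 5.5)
Your proposal is correct and follows essentially the same route as the paper's proof. You get $d=d^{\prime}$ by counting simples, which is the same as comparing dimensions of split Grothendieck groups; you then compare internal Ends using $\underline{\End}(\taobj)\cong\taobj$, the fact that the equivalence sends $\taobj$ to some $\tsimple\otimes\taobj[d,\lambda^{\prime}]$, and the isomorphism $\tsimple\otimes\taobj[d,\lambda^{\prime}]\otimes\tsimple[-r]\cong\taobj[d,\lambda^{\prime}]$ given by rescaling $\zapar$ by $\qpar^{\pm r}$ (your $\zapar^{\prime}$ is the inverse of the paper's $\zapar\mapsto\qpar^{r}\zapar$), before concluding with \fullref{prop:taft-isos-A}. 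The step you flag as the main obstacle, that module equivalences preserve internal End as algebra objects, is used by the paper as a standard fact without further proof.
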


\begin{proof}
As before, one direction is trivial, so
suppose that $\oneMod_{\tmod}(\taobj)\cong\oneMod_{\tmod}(\taobj[d^{\prime},\lambda^{\prime}])$
as $2$-representations of $\tmod$. Then we must have $d=d^{\prime}$, because the
two categories have to have the same number of simples up to isomorphism, cf. \fullref{theorem:semisimplicity}.

Moreover, adjointness of $\morstuff{F}$ and $\morstuff{U}$ gives a natural isomorphism
\[
\Hom_{\taobj}(\tsimple\otimes\taobj,\taobj)\cong
\Hom_{\tmod}(\tsimple,\taobj).
\]
This implies that
\[
\underline{\End}(\taobj)\cong\taobj
\]
as algebra objects in $\tmod$, where the underline notation means the internal Hom, cf. \cite[Section 7.9]{EGNO},
which is the representing object of the functor
$\Hom_{\taobj}(\placeholder\otimes\taobj,\taobj)$.

In fact, for any $0\leq r\leq d-1$, we also have
\begin{gather}\label{eq:equiv-2reps}
\underline{\End}(\tsimple[r]\otimes\taobj)
\cong\taobj,
\end{gather}
because
\begin{comment}
\begin{gather*}
\Hom_{\taobj}(\tsimple[r^{\prime}]\otimes\tsimple\otimes\taobj,\tsimple\otimes\taobj)\cong
\Hom_{\taobj}(\tsimple[{-}r]\otimes\tsimple[r^{\prime}]\otimes\tsimple\otimes\taobj,\taobj)\cong
\\
\Hom_{\taobj}(\tsimple[{-}r{+}r^{\prime}{+}r]\otimes\taobj,\taobj)\cong
\Hom_{\taobj}(\tsimple[r^{\prime}]\otimes\taobj,\taobj).
\end{gather*}
\end{comment}
\begin{gather*}
\Hom_{\taobj}(\tmodule\otimes\tsimple\otimes\taobj,\tsimple\otimes\taobj)\cong
\Hom_{\tmod}(\tmodule\otimes\tsimple,\tsimple\otimes\taobj)
\\
\hspace*{5.9cm} \cong \Hom_{\tmod}(\tmodule,\tsimple\otimes\taobj\otimes \tsimple[{-}r]).
\end{gather*}
\
In other words, for every simple object $\tsimple\in\oneMod_{\tmod}(\taobj)$,
we have $\underline{\End}(\tsimple)\cong\taobj$ in $\tmod$ since the morphism $\tsimple\otimes\taobj\otimes \tsimple[{-}r]\iso \taobj, \gat \mapsto \gat, \zapar \mapsto \qpar^r\zapar$ is an isomorphism as algebra objects in $\tmod$.
Similarly, for every simple object
$\tsimple[r^{\prime}]\in\oneMod_{\tmod}(\taobj[d,\lambda^{\prime}])$, we have
$\underline{\End}(\tsimple[r^{\prime}])\cong\taobj[d,\lambda^{\prime}]$
in $\tmod$.

Moreover, the equivalence in \eqref{eq:equiv-2reps} sends simples to simples and
commutes with the action of $\tmod$. So
there exists a $0\leq r^{\prime\prime}\leq d-1$ such that the equivalence
sends $\tsimple\otimes\taobj$ to $\tsimple\otimes\tsimple[r^{\prime\prime}]\otimes\taobj[d^{\prime},\lambda^{\prime}]$,
for all $0\leq r\leq d-1$. Therefore, we get
\[
\Hom_{\taobj}(\tsimple\otimes \taobj,\taobj)\cong
\Hom_{\taobj[d,\lambda^{\prime}]}(\tsimple\otimes\tsimple[r^{\prime\prime}]\otimes\taobj[d,\lb^{\prime}],\tsimple[r^{\prime\prime}]\otimes\taobj[d,\lambda^{\prime}])
\]
for all $0\leq r\leq d-1$. This implies that
\[
\taobj\cong\underline{\End}\left(\taobj\right)\cong
\underline{\End}\left(\tsimple[r^{\prime\prime}]\otimes\taobj[d,\lambda^{\prime}]\right)\cong\taobj[d,\lambda^{\prime}]
\]
as algebra objects of $\tmod$.
Hence, the statement follows from \fullref{prop:taft-isos-A}.
\end{proof}

This corollary is then a direct consequence of \fullref{theorem:taft-module-cat}.

\begin{corollary}\label{corollary:taft-module-cat-many}
There are infinitely many, pairwise non-equivalent,
exact, simple $2$-repre\-sentations
of the category $\tmod$.
\end{corollary}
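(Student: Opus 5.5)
The corollary should follow from \fullref{theorem:taft-module-cat} once I produce an infinite set of parameters and check that each resulting $2$-representation has the required properties. I fix a divisor $d$ of $n$, for instance $d=1$. I consider the family $\{\oneMod_{\tmod}(\taobj)\mid \lambda\in\K\}$. Since $\K$ is algebraically closed, it is infinite, so this family is indexed by an infinite set.

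Next I check, for each $\lambda$, the three properties. For simplicity, \fullref{proposition:taft-module-cat} already states that $\oneMod_{\tmod}(\taobj)$ is a simple $2$-representation. For being finitary, \fullref{theorem:semisimplicity} and \fullref{corollary:taft-module-cat} show that the category is semisimple with exactly $d$ simple objects $\tsimple\otimes\taobj$, $0\leq r\leq d-1$, each with endomorphism ring $\K$. Hence it is Hom-finite, idempotent-complete and has finitely many indecomposables. For exactness, every object of a semisimple abelian category is projective. So for any projective $\tmodule[P]\in\tmod$ and any $\tmodule[N]$, the object $\tmodule[P]\otimes\tmodule[N]$ is projective. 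In fact this holds for every object of $\tmod$, not only for projective ones.

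For pairwise non-equivalence, \fullref{theorem:taft-module-cat} applied with $d=d^{\prime}$ says that $\oneMod_{\tmod}(\taobj)\simeq\oneMod_{\tmod}(\taobj[d,\lambda^{\prime}])$ forces $\lambda=\lambda^{\prime}$. So distinct values of $\lambda$ give non-equivalent $2$-representations, and the family is infinite.

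There is no serious obstacle. The only points to state explicitly are that $\K$ is infinite and that semisimplicity gives exactness. All the substantive work is contained in \fullref{prop:taft-isos-A} and \fullref{theorem:taft-module-cat}, which identify $\lambda$ through the internal End of a simple object.
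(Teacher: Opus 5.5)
Your proposal is correct and is essentially the paper's argument. The paper records the corollary as a direct consequence of \fullref{theorem:taft-module-cat}. Your choice of a fixed divisor $d$ with $\lambda$ ranging over the infinite field $\K$ spells out the details the paper leaves implicit: simplicity comes from \fullref{proposition:taft-module-cat}, and exactness and finitarity come from the semisimplicity in \fullref{corollary:taft-module-cat}.
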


\section{Taft \texorpdfstring{$2$}{2}-representation theory with apex $\cell[b]$}\label{section:taft-2rep-second}

Recall that we identify every monoidal category $\catstuff{C}$ with its delooping, i.e. with the $2$-category with a unique object $*$, hom-category $\mathrm{Hom}(*,*)=\catstuff{C}$ and horizontal composition given by the monoidal product in $\catstuff{C}$.

\subsection{Stable category and semisimplification of $\tmod$}\label{subsec:stable-cat}

\begin{definition}
Denote by $\tpid$ the categorical ideal of morphisms that factor through projective objects i.e. $\tpid(\obstuff{M},\obstuff{N}) := \{\obstuff{M} \to \obstuff{P} \to \obstuff{N} \mid \obstuff{P} \text{ projective}\}$. Since tensoring a $\taft$-module by a projective $\taft$-module gives a projective $\taft$-module (cf. \fullref{proposition:taft-tensor}), $\tpid$ defines a tensor ideal of $\tmod$. We consider the \textit{stable category} of $\tpid$ denoted by $\stmod$ which is the monoidal category:
\begin{center}
$\bullet$ $\mathrm{Obj}(\stmod) := \mathrm{Obj}(\tmod)$,
\qquad $\bullet$ $\stmod(\obstuff{M}, \obstuff{N}) := \tmod(\obstuff{M}, \obstuff{N})/\tpid(\obstuff{M},\obstuff{N})$.
\end{center}
\end{definition}

By definition, every projective module acts by zero on a transitive $2$-representation with apex $\cell[b]$, hence it factors through $\stmod$.
Recall that $\tmod$ is equipped with a natural pivotal structure cf. \fullref{proposition:taft-fiat}. A direct computation gives that:
\begin{equation}
\label{computation:dim}
\mathrm{dim}^L(\tinde[l,r])=\qpar^r\frac{1-\qpar^l}{1-\qpar}, \qquad \mathrm{dim}^R(\tinde[l,r])=\qpar^{1-l-r}\frac{1-\qpar^l}{1-\qpar}.
\end{equation}
Hence, $\mathrm{dim}^L(\tinde[l,r])=0 \iff \mathrm{dim}^R(\tinde[l,r])=0 \iff l=n$.
The categorical ideal of negligible morphisms $\catstuff{N}$ defined by $\catstuff{N}(\obstuff{M}, \obstuff{N}):=\{f: \obstuff{M} \to \obstuff{N} \mid \mathrm{Tr}(fg)=0, \forall g: \obstuff{N} \to \obstuff{M}\}$, for every indecomposable objects $\obstuff{M},\obstuff{N}$ is then a tensor ideal.

\begin{lemma}
For $n \geq 3$, the tensor ideal of negligible morphisms is a nonzero and proper ideal of $\stmod$.
\end{lemma}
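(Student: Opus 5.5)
We have to show two things. First, the ideal $\catstuff{N}$ is well defined on $\stmod$, i.e.\ it contains $\tpid$. Second, on $\stmod$ it is neither zero nor everything. Throughout I use the dimension formula \eqref{computation:dim} and the fact that $\tmod$ is a spherical-free but pivotal category, so traces are computed with $a_{\tmodule}$ from \eqref{eq:piv-struct}.

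\textbf{Step 1: $\tpid\subseteq\catstuff{N}$.} Let $f$ factor through a projective $\obstuff{P}$. For any $g$, cyclicity of the trace gives that $\mathrm{Tr}(fg)$ is the trace of an endomorphism of $\obstuff{P}$. Since $\obstuff{P}$ is a direct sum of the $\tprmod$, it is enough to treat endomorphisms of a single $\tprmod=\tinde[n,r]$ (off-diagonal components contribute nothing to the trace).

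$\End(\tprmod)$ is local, spanned by $\xx^j$ acting suitably; these maps are the identity times a scalar plus a nilpotent part. The trace of a nilpotent endomorphism is zero: the trace is $\mathrm{tr}_{\K}(\gt^{-1}\circ h)$ up to convention, and $\gt^{-1}$ commutes with $\taft$-linear $h$. The trace of the identity is $\dim(\tprmod)=0$ by \eqref{computation:dim} with $l=n$. Hence $\mathrm{Tr}(fg)=0$, so $\catstuff{N}$ descends to a tensor ideal of $\stmod$.

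\textbf{Step 2: properness.} The identity of $\tsimple[0]$ has trace $1\neq0$, so it is not negligible. Hence the image of $\catstuff{N}$ in $\stmod$ does not contain $\mathrm{id}_{\tsimple[0]}$, which is nonzero in $\stmod$ since $\tsimple[0]$ is not projective.

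\textbf{Step 3: nonzero (the main point).} We need a negligible morphism that does not factor through a projective. Take $n\ge3$ and the morphism $f\colon\tinde[2,0]\to\tinde[2,0]$ given by $\xx$, i.e.\ $m_1\mapsto m_2$, $m_2\mapsto0$. This is $\taft$-linear when the $\gt$-weights match. If the weights do not match, I would instead use the composite $\tinde[2,0]\twoheadrightarrow\tsimple[0]$ followed by a suitable embedding of the socle, or the nilpotent radical endomorphism of $\tinde[l,r]$ for some $2\le l\le n-1$. Concretely, take $h$ in the radical of $\End(\tinde[l,r])$ with $l\geq2$, sending the top $m_1$ to the socle $m_l$, which requires $l\equiv1 \bmod n$. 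Since that fails for $2\le l\le n-1$, the cleaner choice is a map between different indecomposables: $f\colon\tinde[2,r]\to\tinde[2,r+1]$ with $m_1\mapsto m_2$ (head onto socle). The weights match, since $\gt$ acts by $\qpar^{r}$ on both.

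Negligibility: for any $g\colon\tinde[2,r+1]\to\tinde[2,r]$, the composite $fg$ is an endomorphism of a non-isomorphic target through a radical map. It is therefore nilpotent, so $\mathrm{Tr}(fg)=0$ by the argument of Step 1.

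Non-factorization through projectives: a map $\tinde[2,r]\to\tprmod[s]\to\tinde[2,r+1]$ that is nonzero must send the head $\tsimple[r]$ to the socle. Any map $\tinde[2,r]\to\tprmod[s]$ lands in the socle of $\tprmod[s]$, which is the bottom two layers, since $\tprmod[s]$ is uniserial of length $n>2$. Any map $\tprmod[s]\to\tinde[2,r+1]$ kills the radical squared. For $n\ge3$ the image of the first map lies in $\mathrm{rad}^{n-2}\tprmod[s]\subseteq\mathrm{rad}\,\tprmod[s]$, and a nonzero composite would force the head of $\tprmod[s]$ to be involved. So the composite vanishes, and $f\notin\tpid$.

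This uniserial layer bookkeeping is the step I expect to need care. It is also where $n\ge3$ enters: for $n=2$ there are no $\tinde$ with $2\le l\le n-1$, and the argument is vacuous.
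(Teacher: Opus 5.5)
Your Steps 1 and 2 are fine. One small slip in Step 1: $\End(\tprmod)$ is just $\K$, since $\tsimple$ occurs only once in $\tprmod$, so the description via $\xx^j$ is wrong, but this does not affect the conclusion. The gap is in Step 3, in two places.

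\textbf{The morphism you choose does not exist.} In $\tinde[2,r+1]$ the vector $m_2$ has $\gt$-weight $\qpar^{r+2}$, not $\qpar^{r}$. So $m_1\mapsto m_2$ is not $\taft$-linear, and in fact $\mathrm{Hom}(\tinde[2,r],\tinde[2,r+1])=0$ for $n\geq 3$. The head $\tsimple[r]$ of $\tinde[2,r]$ is the socle of $\tinde[2,r-1]$, so the head-to-socle map goes $\tinde[2,r]\to\tinde[2,r-1]$.

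\textbf{Even after this correction, the non-factorization argument only works for $n\geq 4$.} You know two things. The image of any map $\tinde[2,r]\to\tprmod[s]$ lies in $\mathrm{rad}^{n-2}\tprmod[s]$. Any map $\tprmod[s]\to\tinde[2,r-1]$ kills $\mathrm{rad}^{2}\tprmod[s]$. Together these force the composite to vanish only when $n-2\geq 2$. Your sentence about the head of $\tprmod[s]$ being involved is not an argument, and for $n=3$ the claim is false. Indeed, $\tinde[2,r]\cong\mathrm{rad}\,\tprmod[r-1]$ and $\tinde[2,r-1]\cong\tprmod[r-1]/\mathrm{rad}^2\tprmod[r-1]$. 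The composite $\mathrm{rad}\,\tprmod[r-1]\hookrightarrow\tprmod[r-1]\twoheadrightarrow\tprmod[r-1]/\mathrm{rad}^2\tprmod[r-1]$ is exactly your head-to-socle map. So for $n=3$ that map lies in $\tpid$ and is zero in $\stmod$, and your proof does not establish nonvanishing in that case.

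\textbf{The fix} is to use a morphism with a simple end, as the paper does: the socle inclusion $\iota\colon\tsimple[j+1]\hookrightarrow\tinde[2,j]$. It is negligible because $\mathrm{Hom}(\tinde[2,j],\tsimple[j+1])=0$, the head of $\tinde[2,j]$ being $\tsimple[j]$, so every trace $\mathrm{Tr}(g\iota)$ vanishes. The paper leaves non-factorization implicit, but a weight check settles it uniformly for $n\geq 3$:
\begin{itemize}
\item A nonzero map $\tsimple[j+1]\to\tprmod[s]$ forces $\mathrm{soc}\,\tprmod[s]=\tsimple[s-1]\cong\tsimple[j+1]$, i.e.\ $s=j+2$.
\item A nonzero map $\tprmod[j+2]\to\tinde[2,j]$ needs a vector of weight $\qpar^{j+2}$ in $\tinde[2,j]$.
\item The weights of $\tinde[2,j]$ are only $\qpar^{j}$ and $\qpar^{j+1}$, so this is impossible once $n\geq 3$.
\end{itemize}
The head projection $\tinde[2,j]\twoheadrightarrow\tsimple[j]$ works equally well, by the same kind of check.
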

\begin{proof}
The simple module $\tsimple[j+1]$ is the socle of $\tinde[2,j]$, which is different from $\tsimple[j]$, its head. Thus $\oneHom_{\stmod}(\tinde[2,j],\tsimple[j+1]) = 0$. If $\iota: \tsimple[j+1] \hookrightarrow \tinde[2,j]$ denotes the canonical inclusion, we have $\mathrm{Tr}(g\iota)=0$ for every $g: \tinde[2,j] \to \tsimple[j+1]$, hence $\iota \in \catstuff{N}(\tsimple[j+1],\tinde[2,j])$. This ideal is moreover proper since $\mathrm{id}_{\tsimple[0]} \notin \catstuff{N}$.
\end{proof}

Since our goal is to construct simple $2$-representations, the stable category is still "too big". Indeed, the ideal of negligible morphisms is contained in its categorical radical and is nilpotent. Its image in a simple finitary $2$-representation is then a nilpotent stable ideal, hence must be $0$. To that end let us consider the semisimplification of $\tmod$. We recall the definition of the semisimplification of a pivotal tensor category as constructed in \cite[\S 2.3]{EO22}.

\begin{definition}
Let $\catstuff{C}$ be a pivotal tensor category such that $\mathrm{dim}^L(\obstuff{M})=0$ if and only if $\mathrm{dim}^R(\obstuff{M})=0$ for any indecomposable object $\obstuff{M}$ of $\catstuff{C}$.  We can then define the \textit{semisimplification}  $\overline{\catstuff{C}} := \catstuff{C}/\catstuff{N}$.
\end{definition}

 Thus, \cite[Proposition 2.4]{EO22} gives that the category $\sstmod$ is a well defined pivotal fusion category.

Note that the semisimplification functor $\pi: \tmod \to \sstmod$ factors through $\stmod$. Indeed, \eqref{computation:dim} gives that the zero-dimensional indecomposable modules are precisely the projective covers $\tinde[n,r]$. This implies that $\catstuff{P} \subset \catstuff{N}$ and that there exists a canonical functor $\sigma: \stmod \to \sstmod$.

\begin{lemma}
The functor $\sigma$ induces an isomorphism of $\N$-based rings ${\Gros(\sigma) : \Gros(\stmod) \iso \Gros(\sstmod)}$.
\end{lemma}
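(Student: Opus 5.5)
We need to show that $\sigma$ induces a bijection on isomorphism classes of indecomposables that is compatible with the $\N$-basis structure. The plan is to compare the indecomposables on both sides and then observe that the ring structures are induced by the same tensor product.

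First, identify the indecomposables of $\stmod$. Since $\tmod$ is Krull--Schmidt and $\tpid$ is an ideal contained in the radical on non-projective indecomposables, $\stmod$ is again Krull--Schmidt. Its indecomposable objects are exactly the images of the non-projective indecomposables $\tsimple$ and $\tinde$ ($2\le\ell\le n-1$), while projectives become zero. These images are pairwise non-isomorphic. The reason is that for a non-projective indecomposable $\obstuff{M}$, the endomorphism ring $\End_{\stmod}(\obstuff{M})$ is a quotient of the local ring $\End_{\tmod}(\obstuff{M})$ by an ideal not containing the identity, hence is still local. Moreover, an isomorphism in $\stmod$ between non-projective indecomposables lifts to a split map modulo projectives, forcing an isomorphism in $\tmod$ by Krull--Schmidt. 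So $\Gros(\stmod)$ has $\N$-basis indexed by $\cell[b]$, and it is the quotient of $\chi(\tmod)_{\Z}$ by the span of the $[\tprmod]$.

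Second, identify the indecomposables (equivalently the simples) of $\sstmod$. By \cite[\S 2.3]{EO22}, the simple objects of the semisimplification are the images of the indecomposables of nonzero dimension. Distinct such indecomposables give non-isomorphic simples, and indecomposables of zero dimension become zero. By \eqref{computation:dim}, the indecomposables of nonzero dimension are exactly those in $\cell[b]$. Hence $\sigma$ sends the $\N$-basis of $\Gros(\stmod)$ bijectively onto the $\N$-basis of $\Gros(\sstmod)$, so $\Gros(\sigma)$ is an isomorphism of abelian groups preserving bases.

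Third, check multiplicativity. Both $\stmod\to\sstmod$ and $\tmod\to\stmod$ are monoidal functors, so $\Gros(\sigma)$ is a ring map. The structure constants on each side are obtained from the decomposition of \fullref{proposition:taft-tensor}, with the projective summands discarded. Thus it is an isomorphism of $\N$-based rings; additivity of $\sigma$ gives compatibility with direct sums.

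The main obstacle is the injectivity on isomorphism classes, i.e.\ that non-isomorphic non-projective indecomposables stay non-isomorphic in $\sstmod$. I would use the \cite{EO22} description: an indecomposable $\obstuff{M}$ has $\End_{\sstmod}(\obstuff{M})=\K$ exactly when $\dim\obstuff{M}\neq 0$. Furthermore, $\Hom_{\sstmod}(\obstuff{M},\obstuff{N})=0$ for non-isomorphic indecomposables, since a composite $\obstuff{M}\to\obstuff{N}\to\obstuff{M}$ lies in the radical and is therefore nilpotent with trace zero.
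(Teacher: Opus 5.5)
Your proposal is correct and follows essentially the same route as the paper, whose proof simply notes that the nonzero indecomposables of $\stmod$ (the images of $\tsimple$ and $\tinde$, $2\leq \ell\leq n-1$) are sent bijectively by $\sigma$ onto the simple objects of $\sstmod$. You also spell out what the paper leaves implicit: that $\stmod$ is Krull--Schmidt with exactly these indecomposables, that non-isomorphic ones stay non-isomorphic in $\sstmod$ (radical composites are nilpotent and hence have zero trace), and that multiplicativity follows because $\sigma$ is monoidal.
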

\begin{proof}
The set $\{\tinde[l,r] \mid l \in \llbracket 1, n-1 \rrbracket, r \in \llbracket 0, n-1 \rrbracket\}$ of nonzero indecomposables of $\stmod$ is by construction sent to the simple objects of $\sstmod$, hence the isomorphism.
\end{proof}

Let $\twmod[a] := \tinde[2a+1, -a],$ for $0 \leq a \leq \lfloor \frac{n-2}{2} \rfloor$ and $\catstuff{W}$ the full subcategory of $\sstmod$ generated by these objects.
%Suppose now that $n=2(\ell+1)$, so that $\catstuff{W}= \langle \twmod[0],\dots,\twmod[\ell] \rangle$.

Take $\xi \in \K$ a such that $\xi^2$ is a primitive $n^{\mathrm{th}}$ root of unity and denote by $\tilde{\catstuff{C}}_{\xi}:=\catstuff{C}_{n-2}(\xi)_{\mathrm{ad}}$ where $\catstuff{C}_{n-2}(\xi)$ denotes the Verlinde category associated with ($\mathfrak{sl}_2$,$\xi$) and $\catstuff{C}_{n-2}(\xi)_{\mathrm{ad}}$ is the adjoint subcategory cf. \cite[Definition 4.14.5]{EGNO}.
% If the reader wonders why this notation is consistent, see \cite[Example 4.14.6]{EGNO}.
 Fix $\xi \in \K$ so that $\xi^2=\qpar$.

\begin{proposition}\label{prop:tensor-equiv}
 The category $\catstuff{W}$ is a pivotal fusion subcategory of $\sstmod$. Moreover, there exists a tensor equivalence $\catstuff{W} \simeq \tilde{\catstuff{C}}_{\xi}$ mapping $\twmod[a] \to \obstuff{V}_{2a}$, where $\obstuff{V}_{2a}$ denotes the simple object of even highest weight $2a$.
\end{proposition}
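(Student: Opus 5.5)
The plan has two parts. First I show that $\catstuff{W}$ is a pivotal fusion subcategory whose fusion rules are those of $\tilde{\catstuff{C}}_{\xi}$. Then I upgrade the resulting isomorphism of based rings to a tensor equivalence, using a classification of categorifications together with a dimension computation that fixes the parameter.

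\textbf{Closure and fusion rules.} I would use \fullref{proposition:taft-tensor}, remembering that in $\sstmod$ all $\tprmod$ become zero, since they have dimension zero by \eqref{computation:dim}.
\begin{itemize}
\item Take $0\le a\le b$, with $\ell=2a+1$ and $\ell'=2b+1$. The nonprojective summands of $\twmod[a]\otimes\twmod[b]$ are $\tinde[2(b-a+i)-1,\,a-b-i+1]$.
\item Writing $c=b-a+i-1$, each such summand is exactly $\twmod[c]$.
\item If $\ell+\ell'\le n$, the index $i$ runs from $1$ to $2a+1$, so $c$ runs over $b-a,\dots,a+b$.
\item If $\ell+\ell'>n$, the index $i$ runs from $1$ to $n-2b-1$, so $c$ runs over $b-a,\dots,n-2-a-b$.
\end{itemize}
This gives
\[
\twmod[a]\otimes\twmod[b]\cong\textstyle\bigoplus_{c=|a-b|}^{\min(a+b,\,n-2-a-b)}\twmod[c]
\]
in $\sstmod$. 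This is precisely the truncated Clebsch--Gordan rule $\obstuff{V}_{2a}\otimes\obstuff{V}_{2b}$ in $\catstuff{C}_{n-2}(\xi)$ restricted to even weights.

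The remaining fusion-subcategory checks are as follows.
\begin{itemize}
\item The unit is included, since $\twmod[0]=\tsimple[0]$.
\item Duals are included: \fullref{lemma:taft-duals} gives $\tinde[2a+1,-a]^{\ast}\cong\tinde[2a+1,1-(2a+1)+a]=\twmod[a]$, so each $\twmod[a]$ is self-dual.
\item $\catstuff{W}$ inherits semisimplicity and the pivotal structure from $\sstmod$, since it is a full subcategory closed under $\otimes$, $\oplus$ and duals.
\end{itemize}
Hence $\catstuff{W}$ is a pivotal fusion subcategory, and $\twmod[a]\mapsto\obstuff{V}_{2a}$ is an isomorphism of based rings $\Gros(\catstuff{W})\cong\Gros(\tilde{\catstuff{C}}_{\xi})$.

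\textbf{From fusion rules to a tensor equivalence.} Here I would invoke the classification of fusion categories with $\mathrm{SO}(3)$-type (adjoint $\mathfrak{sl}_2$ Verlinde) fusion rules, in the style of Kazhdan--Wenzl and its $\mathrm{SO}(3)$ analogue. Such a category is generated by the self-dual object $\twmod[1]$, with $\twmod[1]^{\otimes 2}\cong\twmod[0]\oplus\twmod[1]\oplus\twmod[2]$. It is determined up to tensor equivalence by a root-of-unity parameter $q$, which can be read off from the categorical dimension of $\twmod[1]$, namely $q^{2}+1+q^{-2}$, together with a twist (sign) ambiguity.

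The dimension computation uses \eqref{computation:dim}:
\[
\mathrm{dim}^L(\twmod[1])=\qpar^{-1}(1+\qpar+\qpar^2)=\xi^{-2}+1+\xi^{2},
\]
and the same formula gives $\mathrm{dim}^R(\twmod[1])=\qpar^{-1}(1+\qpar+\qpar^{2})$. Thus left and right dimensions agree on $\catstuff{W}$, which is spherical, and the parameter is $q=\xi$ up to the usual symmetries $\xi\mapsto\pm\xi^{\pm1}$. Those symmetries give equivalent adjoint subcategories, because passing to even weights removes the sign issue. The classification then yields $\catstuff{W}\simeq\tilde{\catstuff{C}}_{\xi}$ with $\twmod[a]\mapsto\obstuff{V}_{2a}$.

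\textbf{Main obstacle and backup route.} The main obstacle is the second step: the classification is delicate, and one must check that the dimension, together with a trivalent-vertex normalization, really pins down the associator. If citing it proves awkward, I would instead construct a functor directly, using the identification of $\taft$ with the Borel part of $\sqgroup$ \cite[Lemma 8.4]{KW16}.
\begin{itemize}
\item Restriction gives a tensor functor from tilting modules over quantum $\mathfrak{sl}_2$ at $\xi$ to $\tmod$.
\item This functor sends the tilting module of highest weight $2a$ to $\twmod[a]$ up to a grading twist, and sends negligible tilting modules to modules that are projective over $\taft$.
\item It should therefore descend to a tensor functor between semisimplifications, whose image on even weights is $\catstuff{W}$.
\item Full faithfulness on simples follows because both sides are fusion categories with matching fusion rules and the functor is nonzero on simples.
\end{itemize}
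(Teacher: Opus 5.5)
Your fusion-rule computation and the fusion-subcategory checks are correct and agree with the paper. For generic $n$ in characteristic zero, your main route is also the paper's: classify categorifications of the $\mathrm{SO}(3)$ Verlinde ring and fix the parameter by $\dim\twmod[1]=\xi^{2}+1+\xi^{-2}$. The gap is that you treat this classification as uniform in $n$ and in the characteristic.

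\emph{Small $n$.} Your claim that a categorification is pinned down by $\dim\twmod[1]$, up to the harmless symmetries $\xi\mapsto\pm\xi^{\pm1}$, fails in exactly the cases the general theorems exclude. The paper applies \cite[Theorem A.3]{EO22} only for even $n>6$ and \cite[Remark A.4(iii)]{EO22} only for odd $n>3$.
For $n=4$ you have $\twmod[1]^{\otimes 2}\cong\twmod[0]$, so there is not even a trivalent vertex. The fusion ring is that of $\Z/2\Z$, and both $\mathrm{Vec}_{\Z/2\Z}$ and its twist by the nontrivial class of $\mathrm{H}^3(\Z/2\Z,\K^{\ast})$ admit pivotal structures in which the nontrivial simple has dimension $1$. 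So you must actually compute the associator of $\catstuff{W}$, as the paper does.
For $n=6$ the fusion ring is that of $\mathrm{Rep}(\mathfrak{S}_3)$. It has three inequivalent categorifications, and in each the non-invertible simple has dimension $2$. They are told apart by a Frobenius--Schur indicator, namely the rotation eigenvalue on the one-dimensional space $\Hom_{\sstmod}(\tsimple[0],\twmod[1]^{\otimes 3})$. Your discussion of sign ambiguities never touches this.

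\emph{Positive characteristic.} These classification results are characteristic-zero statements, while $\K$ is only assumed to have characteristic not dividing $n$. You give no argument for $\mathrm{char}\,\K=p>0$. There the paper uses $\dim\catstuff{W}\neq 0$ to lift $\catstuff{W}$ to the Witt vectors \cite[Theorem 9.3]{ENO5} and compares with the characteristic-zero answer.

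Your backup route would remove both problems at once. It would be a genuinely different, characteristic-uniform proof that avoids classification altogether, but as written each step is only asserted. Identify $\tilde{\catstuff{C}}_{\xi}$ with the semisimplification of even-weight tilting modules over Lusztig's quantum group at $\xi$; then you need the following.
(a) Restrict type-$1$ modules of even weight, on which $K^{n}=1$, along $\gt\mapsto K$, $\xx\mapsto E$, checking relations and coproducts. Then $L(2a)$ restricts to $\tinde[2a+1,-a]=\twmod[a]$ on the nose, so no grading twist is needed. This holds because $E^{k}$ applied to the lowest weight vector is $[k]_\xi!$ times a nonzero vector, and $[k]_\xi\neq 0$ for $0<k<n$.
(b) Show that the negligible indecomposable tilting modules $T(m)$, $m\geq n-1$, restrict to projective $\taft$-modules. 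One way: $T(m)$ is a summand of $T(n-1)\otimes T(m-n+1)$, and the Steinberg module $T(n-1)$ is projective over the small quantum Borel.
(c) Show that negligible morphisms are killed. Since $\Hom(T(m),T(m'))=\delta_{m,m'}\K$ for $m,m'\leq n-2$, every negligible morphism factors through a negligible object.
(d) Conclude not from ``matching fusion rules''. What you need is that a nonzero tensor functor out of the fusion category $\tilde{\catstuff{C}}_\xi$ is faithful, and that it sends the simples $\obstuff{V}_{2a}$ to the pairwise non-isomorphic simples $\twmod[a]$. This gives full faithfulness with essential image $\catstuff{W}$.
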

\begin{proof}
Let us start by computing the fusion rules of $\catstuff{W}$. Modulo projectives, \eqref{eq:tensordecomp} gives:
\begin{equation*}
\twmod[a]\otimes \twmod[b] \cong \bigoplus_{c=|a-b|}^{\mathrm{min}(a+b, n-2-a-b)}{\twmod[c]}
\end{equation*}
Hence, $\catstuff{W}$ is closed under the tensor product and $\twmod[0]=\tsimple[0]$. Finally, \fullref{lemma:taft-duals}, gives that $\twmod[a]^*\cong \tinde[2a+1, 1-(2a+1)+a]=\twmod[a]$. We get that $\catstuff{W}$ is a fusion subcategory. The pivotal structure of $\sstmod$ restricts to $\catstuff{W}$ and \eqref{computation:dim} gives that $\catstuff{W}$ is spherical. We have:
\begin{equation*}
\mathrm{dim}(\twmod[a]) = \qpar^{-a}\frac{1-\qpar^{2a+1}}{1-\qpar} = \frac{\xi^{2a+1} - \xi^{-2a-1}}{\xi - \xi^{-1}} =: [2a+1]_{\xi}.
\end{equation*}
If $K$ is of characteristic $0$, we can then directly apply \cite[Theorem A.3]{EO22} when $n$ is even and bigger than $6$ and \cite[Remark A.4(iii)]{EO22} when $n$ is odd and bigger than $3$ to obtain the tensor equivalence. Indeed $\catstuff{W}$ has the correct decategorification. If $n=2(\ell+1)$:

\medskip
\begin{equation*}
\twmod[1]\otimes \twmod[b] \cong
\begin{cases}
\twmod[b-1] \oplus \twmod[b] \oplus \twmod[b+1], &\text{ if } 1 \leq b \leq \ell -1,\\
\twmod[\ell-1], &\text{ if } b=\ell.
\end{cases}
\end{equation*}
If $n=2\ell+1$
\begin{equation*}
\twmod[1]\otimes \twmod[b] \cong
\begin{cases}
\twmod[b-1] \oplus \twmod[b] \oplus \twmod[b+1], &\text{ if } 1 \leq b \leq \ell -2,\\
\twmod[\ell-2] \oplus \twmod[\ell-1], &\text{ if } b=\ell-1.
\end{cases}
\end{equation*}
\medskip

For $n=2,3$, since the only simple object is $\twmod[0]$ which is the trivial module, we directly get that $\catstuff{W} \simeq \mathrm{Vec}\simeq i\tilde{\catstuff{C}}_{\xi}$. For $n=4$, we now have two simple modules. Coming back to the bases element of the nontrivial module, we can show that the associator class in $\mathrm{H}^3(\Z/2\Z,\K^\ast)$ is trivial, hence the desired equivalence. Finally, the remaining case is $n=6$. In that case there are three fusion categories with these fusion rules \cite[Remark 6.6]{EGO04}. Thankfully, to distinguish these fusion categories, it is enough to compute the Frobenius-Schur indicator \cite[Corollary 3.3]{Tuck19}. In our case it can be done explicitly using the basis of the $3$-dimensional simple module. We thus get that $\catstuff{W} \simeq \mathrm{Rep}(\mathfrak{S}_3)$, where $\mathfrak{S}_3$ denote the symmetric group on $3$ letters.

Suppose now that $\K$ has positive characteristic. Note that $\mathrm{dim}(\catstuff{W}) = \frac{n}{2-\qpar-\qpar^{-1}} \neq 0 \in \K$ when $n>2$, and for $n=2$, we get $\mathrm{dim}(\catstuff{W})=1$, since in that case $\catstuff{W} \simeq \mathrm{Vec}$. Hence \cite[Theorem 9.3]{ENO5} can be applied to obtain a unique lift to the ring of Witt vectors of $\K$. The fusion category $\catstuff{W}$ is moreover separable thanks to \cite[Theorem 2.6.7]{DSS20}. Combining this with the preceding result in characteristic $0$ and \cite[Theorem 4.6]{E18} gives the desired tensor equivalence.
\end{proof}

\subsection{Simple $2$-representations}\label{subsection:taft-2-reps-struct}

In this section let us review the general structural results obtained in \cite{MMMTZ23} that will reduce the study to the one of regular exact, indecomposable $\sstmod$-module categories.

\begin{proposition}\label{proposition:bij-bottom-cell-rep-sstmod-rep}
We have the following bijection:
\begin{equation*}
\left\{ \begin{gathered}\text{finitary simple 2-representations}\\ \text{ of }\tmod \text{ with apex } \cell[b] \end{gathered}\right \} \simeq \left\{\text{finitary simple 2-representations of } \sstmod \right\}
\end{equation*}
\end{proposition}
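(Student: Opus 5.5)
The plan is to realise both directions of the bijection by restriction and inflation along the composite quotient functor $\pi=\sigma\circ(\tmod\to\stmod)\colon\tmod\to\sstmod$, which is full, essentially surjective on objects and strict monoidal. Restriction along $\pi$ sends a finitary simple $2$-representation $\catstuff{M}$ of $\sstmod$ to a finitary $2$-representation $\pi^{\ast}\catstuff{M}$ of $\tmod$ on the same underlying category. In the other direction, I will show that every finitary simple $2$-representation of $\tmod$ with apex $\cell[b]$ factors uniquely through $\pi$. The two constructions are then mutually inverse on equivalence classes, because morphisms of $2$-representations are compatible with the action and $\pi$ is full and bijective on objects up to isomorphism.

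The first step is to check that $\pi^{\ast}$ preserves and reflects simplicity and produces apex $\cell[b]$. Since $\pi$ is full and essentially surjective, a $\tmod$-stable ideal of $\pi^{\ast}\catstuff{M}$ is the same thing as an $\sstmod$-stable ideal of $\catstuff{M}$. Hence simplicity is preserved and reflected. The projectives $\tprmod$ have dimension $0$ by \eqref{computation:dim}, so they become zero in $\sstmod$ and therefore act by zero. On the other hand $\tsimple[0]$ acts as the identity on $\catstuff{M}\neq 0$. So the apex is $\cell[b]$, using \fullref{lemma:taft-cells}.

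The second step is the factorisation. Let $\catstuff{M}$ be simple with apex $\cell[b]$. By definition of the apex, the objects $\tprmod$ act by zero, so every morphism in $\tpid$ acts by zero: a morphism factoring through a projective $\obstuff{P}$ acts through $\rho(\obstuff{P})=0$. Hence the action factors through $\stmod$. Next, consider the image of $\catstuff{N}$ under the action. For each pair $X\in\tmod$, $\obstuff{M}\in\catstuff{M}$, collect all morphisms of the form $\rho(f)_{\obstuff{M}}$ composed on either side with arbitrary morphisms of $\catstuff{M}$, where $f$ ranges over negligible morphisms. Because $\catstuff{N}$ is a tensor ideal, this collection is a $\tmod$-stable ideal of $\catstuff{M}$.

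I expect the main obstacle to be showing that this ideal is proper, hence zero by simplicity. The approach I would try first is nilpotency. Negligible morphisms between indecomposables of $\stmod$ lie in the radical, since $\overline{\catstuff{T}}$ is semisimple and every non-negligible endomorphism of an indecomposable is invertible modulo $\catstuff{N}$. Because $\stmod$ has finitely many indecomposables and finite-dimensional hom spaces, the radical is nilpotent, so some power $\catstuff{N}^{m}$ vanishes. I then need the generated ideal in $\catstuff{M}$ to be nilpotent as well. For this I would use that a finitary $2$-representation is equivalent to the projective modules over a finite-dimensional algebra, and that every $\tmod$-object acts by a functor sending radical maps to radical maps modulo the image of identities. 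This is the delicate point. If the generated ideal contained an identity, then some $\mathrm{id}_{\obstuff{M}}$ would factor through $\rho(f)$ with $f$ negligible, and iterating $m$ times would give $\mathrm{id}_{\obstuff{M}}$ factoring through $\rho(f_1\otimes\cdots)$ of a zero composite, which is a contradiction. Alternatively, I would cite the general statement from \cite{MMMTZ23} that the action of a simple transitive $2$-representation annihilates the maximal nilpotent tensor ideal. Once this ideal is zero, the action descends to $\sstmod$, and by construction this inflation is inverse to $\pi^{\ast}$.
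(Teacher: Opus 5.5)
Your route is the paper's. Apex $\cell[b]$ kills $\tpid$. The ideal $\catstuff{I}$ of $\catstuff{M}$ generated by the $\rho(f)_{\obstuff{M}}$ with $f\in\catstuff{N}$ is $\tmod$-stable and must vanish by simplicity. Inflation along $\tmod\to\sstmod$ gives the inverse map. The paper only asserts that this image ideal is nilpotent. You rightly single this out as the one nontrivial step, but neither of your justifications establishes it.

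First, action functors need not send radical morphisms to radical morphisms. In the proof of \fullref{proposition:taft-cell-cat}, $f\colon\tprmod[1]\to\tprmod[0]$ is radical, yet $\mathrm{id}_{\tprmod[0]}$ factors through $\mathrm{id}_{\tprmod[1]}\otimes f$. Second, the ``iterate $m$ times'' step does not work as written. Expanding $\mathrm{id}_{\obstuff{M}}=\bigl(\sum_i u_i\circ\rho(f_i)_{\obstuff{M}_i}\circ v_i\bigr)^m$ produces terms containing $\rho(f_1)_{\obstuff{M}_1}\circ w\circ\rho(f_2)_{\obstuff{M}_2}$, where $w$ is an arbitrary morphism of $\catstuff{M}$. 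Such $w$ are not in the image of $\rho$, so nothing yet lets you compose $f_1$ with $f_2$ inside $\tmod$. Nilpotency of $\catstuff{N}$ in $\stmod$ concerns composites, not tensor products, so passing to $f_1\otimes\cdots$ does not help.

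The missing idea is rigidity of $\tmod$, which lets you absorb the sandwiched morphisms into the action. Write $f_1\colon\obstuff{Y}_1'\to\obstuff{Y}_1$, $f_2\colon\obstuff{Y}_2'\to\obstuff{Y}_2$ and $w\colon\rho(\obstuff{Y}_2)(\obstuff{M}_2)\to\rho(\obstuff{Y}_1')(\obstuff{M}_1)$. Use the adjunction $\rho(\obstuff{Y}_2)\dashv\rho({}^{\ast}\obstuff{Y}_2)$, whose counit is $\rho(e)$ for the evaluation $e\colon\obstuff{Y}_2\otimes{}^{\ast}\obstuff{Y}_2\to\tsimple[0]$. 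It gives $w=\rho(e\otimes\mathrm{id}_{\obstuff{Y}_1'})_{\obstuff{M}_1}\circ\rho(\obstuff{Y}_2)(\tilde w)$ for some $\tilde w\colon\obstuff{M}_2\to\rho({}^{\ast}\obstuff{Y}_2\otimes\obstuff{Y}_1')(\obstuff{M}_1)$. Naturality of $\rho(f_2)$ then yields
\[
\rho(f_1)_{\obstuff{M}_1}\circ w\circ\rho(f_2)_{\obstuff{M}_2}=\rho\bigl(f_1\circ(e\otimes\mathrm{id})\circ(f_2\otimes\mathrm{id})\bigr)_{\obstuff{M}_1}\circ\rho(\obstuff{Y}_2')(\tilde w).
\]
Since $\catstuff{N}$ is a two-sided tensor ideal, the morphism of $\tmod$ inside $\rho$ is a composite of two negligible morphisms. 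The right-hand side has the same shape as the left. By induction, every element of $\catstuff{I}^m$ is a sum of morphisms $u\circ\rho(g)_{\obstuff{M}}\circ v$, with $g$ a composite of $m$ negligible morphisms. For $m$ large, such $g$ lie in $\tpid$ by nilpotency of $\catstuff{N}$ in $\stmod$, and so act by zero. Hence $\catstuff{I}$ is nilpotent, contains no identity of a nonzero object, and vanishes by simplicity.

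With this argument inserted, your proof is complete and coincides with the paper's. Alternatively, you may cite a result, but check that its hypotheses include rigidity, since that is what the argument uses.
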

\begin{proof}
Take $\catstuff{M}$ a finitary simple $2$-representation of $\tmod$ with apex $\cell[b]$. Having $\cell[b]$ as apex implies that $\catstuff{M}$ is a finitary $2$-representation of $\stmod$. Moreover, the proper ideal $\catstuff{N}\cap \stmod$ is a nilpotent. Its image is a stable ideal. Since $\catstuff{M}$ is supposed to be simple, this implies that this image has to be zero, since the whole identity ideal cannot be nilpotent. Conversely, every finitary simple $2$-representation of $\sstmod$, is naturally a finitary simple $2$-representation of $\tmod$ and annihilates the cell $\cell[u]$, hence has apex $\cell[b]$.
\end{proof}

\begin{proposition}
Every finitary simple $2$-representation $\catstuff{M}$ of $\sstmod$ is semisimple and is equivalent to $\oneMod_{\sstmod}\left(\underline{\mathrm{End}}(\obstuff{N})\right)$ for $\obstuff{N}$ a simple object of $\catstuff{M}$.
%Moreover, ${\obstuff{A}_{\obstuff{N}}:=\underline{\mathrm{End}}(\obstuff{N}) \in \catstuff{M}}$ is entirely determined by $\Gros(\catstuff{M})$.
\end{proposition}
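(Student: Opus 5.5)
We prove the two assertions in turn: first semisimplicity of $\catstuff{M}$, then the description via internal End. Throughout, $\sstmod$ is a pivotal fusion category, so every object is projective and rigid. The key tool is that action functors $\obstuff{X}\otimes\placeholder$ have both adjoints, namely $\obstuff{X}^{\ast}\otimes\placeholder$ and ${}^{\ast}\obstuff{X}\otimes\placeholder$. Hence $\catstuff{M}$ is a finitary $2$-representation of a fiat category in the sense of \cite{MM11}, and we may use the structural results of \cite{MMMTZ23}.

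\textbf{Step 1: semisimplicity.} Consider the Jacobson radical $\mathrm{Rad}$ of the finitary category $\catstuff{M}$, i.e. the ideal of radical morphisms between indecomposables. We claim that $\mathrm{Rad}$ is $\sstmod$-stable.
\begin{itemize}
\item Take $f\in\mathrm{Rad}(\obstuff{M},\obstuff{M}')$ and $\obstuff{X}\in\sstmod$.
\item If $\obstuff{X}\otimes f$ were not radical, there would be indecomposable summands $\obstuff{Q}\subset_\oplus \obstuff{X}\otimes\obstuff{M}$ and $\obstuff{Q}'\subset_\oplus\obstuff{X}\otimes\obstuff{M}'$ on which the component of $\obstuff{X}\otimes f$ is an isomorphism.
\item Transport this through the adjunction $(\obstuff{X}\otimes\placeholder)\dashv(\obstuff{X}^{\ast}\otimes\placeholder)$. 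Since $\obstuff{X}^{\ast}\otimes\obstuff{X}$ contains $\mathbb{1}$ as a direct summand, split by evaluation and coevaluation in the semisimple category $\sstmod$ with nonzero dimensions, $f$ becomes a retract of $\obstuff{X}^{\ast}\otimes\obstuff{X}\otimes f$.
\item A nonradical component of $\obstuff{X}\otimes f$ then forces $f$ to have a split component, contradicting $f\in\mathrm{Rad}$.
\end{itemize}
This is the standard argument that the radical is stable when the acting category is semisimple with invertible dimensions, in the spirit of \cite{MMMTZ23}; the nonvanishing of $\mathrm{dim}$ from \eqref{computation:dim} for $l<n$ is what makes the splitting work.

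Since $\mathrm{Rad}$ is a proper $\sstmod$-stable ideal (it does not contain identities) and $\catstuff{M}$ is simple, $\mathrm{Rad}=0$. So every endomorphism ring of an indecomposable is $\K$, $\mathrm{Hom}$ between non-isomorphic indecomposables vanishes, and $\catstuff{M}$ is semisimple. It is in particular abelian.

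\textbf{Step 2: $\catstuff{M}$ is an indecomposable exact module category.} $\catstuff{M}$ is now a semisimple abelian module category over the fusion category $\sstmod$, with exact action. It is therefore exact, since all its objects are projective. It is indecomposable because simplicity implies transitivity (cf. \cite{Mie26}). Equivalently, any decomposition $\catstuff{M}=\catstuff{M}_1\oplus\catstuff{M}_2$ would give the stable ideal of morphisms supported on $\catstuff{M}_1$.

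\textbf{Step 3: internal End.} Pick a simple $\obstuff{N}\in\catstuff{M}$. Because $\catstuff{M}$ is a finite semisimple module category, $\underline{\Hom}(\obstuff{N},\placeholder)$ exists as the right adjoint of $\placeholder\otimes\obstuff{N}$, cf. \cite[Section 7.9]{EGNO}. Then $\obstuff{A}_{\obstuff{N}}:=\underline{\End}(\obstuff{N})$ is an algebra object. We apply \cite[Theorem 7.10.1]{EGNO} (Ostrik's theorem), which requires $\obstuff{N}$ to generate $\catstuff{M}$.
\begin{itemize}
\item Generation follows from indecomposability and semisimplicity: every simple $\obstuff{N}'$ occurs as a summand of some $\obstuff{X}\otimes\obstuff{N}$, by transitivity.
\item The theorem then gives an equivalence $\underline{\Hom}(\obstuff{N},\placeholder)\colon\catstuff{M}\iso\oneMod_{\sstmod}(\obstuff{A}_{\obstuff{N}})$ of module categories.
\item Finally, check that this is an equivalence of $2$-representations in the sense of \fullref{definition:2-reps}. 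The required coherent isomorphisms $\underline{\Hom}(\obstuff{N},\obstuff{X}\otimes\obstuff{M})\cong\obstuff{X}\otimes\underline{\Hom}(\obstuff{N},\obstuff{M})$ are supplied by \cite[Lemma 7.9.4]{EGNO}.
\end{itemize}

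\textbf{Main obstacle.} The main obstacle is Step 1: making the radical-stability argument rigorous in the merely additive (not yet abelian) finitary setting. I would first try to cite the corresponding statement from \cite{MMMTZ23} for fiat or fusion $2$-categories directly. Otherwise I would use the trace argument above with the spherical/pivotal structure, applied componentwise to indecomposables.
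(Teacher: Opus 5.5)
\paragraph{Steps 2 and 3.} These are fine. Once $\catstuff{M}$ is known to be semisimple, Ostrik's theorem \cite[Theorem 7.10.1]{EGNO} gives the description via $\underline{\End}(\obstuff{N})$, which the paper obtains by citing \cite[Corollary 4.8, Corollary 4.9]{MMMT19}.

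\paragraph{The gap is Step 1.} Your retract is $f=(p\otimes\mathrm{id})\circ(\mathrm{id}_{\obstuff{X}^{\ast}\otimes\obstuff{X}}\otimes f)\circ(\iota\otimes\mathrm{id})$ with $p\iota=\mathrm{id}_{\tsimple[0]}$. It only shows that $f$ lies in the ordinary ideal generated by $\obstuff{X}^{\ast}\otimes\obstuff{X}\otimes f$, i.e.\ ``$\obstuff{X}^{\ast}\otimes\obstuff{X}\otimes f$ radical $\Rightarrow$ $f$ radical''. That is the converse of what you need. A split component of $\obstuff{X}^{\ast}\otimes\obstuff{X}\otimes f$ may live in a summand $\obstuff{Y}\otimes f$ with $\tsimple[0]\neq\obstuff{Y}\subset_\oplus\obstuff{X}^{\ast}\otimes\obstuff{X}$, and then nothing forces $f$ to have a split component.

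The paper's own upper-cell data show that the ingredients you use (rigidity, $\tsimple[0]\subset_\oplus\obstuff{X}^{\ast}\otimes\obstuff{X}$, $\dim\obstuff{X}\neq0$) cannot suffice. For $n\geq 3$, let $\obstuff{X}=\tinde[2,0]$ act on the cell $2$-representation $\tpmod$. Take the nonzero radical map $f\colon\tprmod[1]\to\tprmod[0]$ from the proof of \fullref{proposition:taft-cell-cat}, whose image is $\mathrm{rad}(\tprmod[0])$. All your hypotheses hold, but $\obstuff{X}\otimes f$ is not radical:
\begin{itemize}
\item its cokernel is $\obstuff{X}\otimes\tsimple[0]\cong\tinde[2,0]$, with head $\tsimple[0]$;
\item the target $\obstuff{X}\otimes\tprmod[0]\cong\tprmod[0]\oplus\tprmod[1]$ has head $\tsimple[0]\oplus\tsimple[1]$.
\end{itemize}
This is an action of $\tmod$, not of $\sstmod$, so it does not contradict the statement. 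It only shows that your inference is invalid as written.

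\paragraph{What is actually needed.} For indecomposable $\obstuff{Q}$, you must show that the partial trace $\End(\obstuff{X}^{\ast}\otimes\obstuff{Q})\to\End(\obstuff{Q})$ sends radical endomorphisms to radical ones. Equivalently, every $\obstuff{X}\otimes\placeholder$, extended to the abelianization of $\catstuff{M}$, must send simple objects to semisimple ones. This is where semisimplicity of $\sstmod$ has to be used in an essential way, and your sketch gives no argument for it.

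Incidentally, $\tsimple[0]\subset_\oplus\obstuff{X}^{\ast}\otimes\obstuff{X}$ holds in any semisimple rigid category without a dimension hypothesis. So the appeal to \eqref{computation:dim} is not where the difficulty lies.

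The paper does not prove this step by hand: it imports semisimplicity from \cite[Proposition 2.16]{MMMTZ23}, which is exactly your stated fallback. With that citation replacing your Step 1, the rest of your proposal is a correct and somewhat more explicit version of the paper's argument.
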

\begin{proof}
Since $\sstmod$ is a pivotal fusion category, \cite[Proposition 2.16]{MMMTZ23} gives that $\catstuff{M}$ is thus necessarily semisimple.
This follows directly from \cite[Corollary 4.8, Corollary 4.9]{MMMT19}.
\end{proof}

\begin{remark}
As explained in \fullref{subsection:taft-simple-algebra-objects-bottom}, we are considering finitary $2$-representations since there are no module categories of $\tmod$ with bottom cell apex.
For fusion categories, the notions of finitary simple $2$-representation coincide with the notion of indecomposable exact module category cf. \cite[\S 8]{MMMTZ23}, \cite[Remark 13]{Mie26}.
\end{remark}

This proposition reduces the classification problem to the classification of simple algebra objects in $\sstmod$ up to Morita equivalences.

Before starting the case by case discussion, note that when $n>2$, $\sstmod$ can be equipped with a faithful $C_{2n}:=\Z/2n\Z$-grading where $\mathrm{deg}(\tinde[l,r]):= 2r+l-1$. For $n=2$, it appears that $\sstmod$ is equivalent to $\mathrm{Vec}_{\Z/2\Z}$, the monoidal category of finite-dimensional $\Z/2\Z$-graded vector spaces. Hence it is faithufully graded by $C_2$. The equation \eqref{eq:tensordecomp} shows that the tensor structure is compatible with this grading. Moreover, $\sstmod_0=\catstuff{W}$, and $\sstmod_g \neq 0$ for $g \in C_{2n}$. Indeed, if $g \equiv 0 [2]$, then $\tinde[1,g/2] \in \sstmod_g$ else $\tinde[2,\frac{g-1}{2}] \in \sstmod_g$. This turns $\sstmod = \bigoplus_{g \in C_{2n}}{\sstmod_g}$ into a faithfully graded pivotal fusion category in the sense of \cite[\S 4.14]{EGNO}. In the language of \cite{MM12}, this means that $\sstmod$ is an extension of $\catstuff{W}$ by $C_{2n}$.

Denote by $\catstuff{Q}$ the full subcategory of $\sstmod$ generated by $\tsimple[1]$. Note that since $\tsimple[1]^n \simeq \tsimple[0]$, we have $\catstuff{Q} \simeq \mathrm{Rep}(\Z/n\Z)$
\begin{definition}
An exact module category $\catstuff{M}$ of $\sstmod$ is called \textit{regular} if the restriction of $\catstuff{M}$ to $\catstuff{Q}$ is isotypic i.e. is equivalent to $\catstuff{V}^{\oplus k}$ for some indecomposable module category $\catstuff{V}$ of $\catstuff{Q}$ and $k \in \Z_{\geq 1}$.
\end{definition}

\begin{proposition}\label{prop:decat-is-ADET}
If $\catstuff{M}$ is an exact indecomposable regular module category of $\sstmod$, then $\Gros(\catstuff{M})$ is an admissible, regular, transitive, NIM $\N$-module with apex $\cell[b]$.
\end{proposition}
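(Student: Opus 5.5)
We check the defining properties of an admissible, regular, transitive, NIM $\N$-module with apex $\cell[b]$ one at a time on $\Gros(\catstuff{M})$. Throughout we use that $\catstuff{M}$ is a semisimple, finitary module category over the pivotal fusion category $\sstmod$, and we pull it back to $\tmod$ along the semisimplification functor $\pi$. The basis $\taftb(\Gros(\catstuff{M}))$ is the set of classes of simple objects of $\catstuff{M}$. Each element of $\taftb(\tmod)$ acts through $\pi$ by an exact functor, so its action matrix has entries in $\N$. The assignment $[\obstuff{X}]\mapsto[\pi(\obstuff{X})\otimes\placeholder]$ is a ring homomorphism from $\taftp_{\Z}$, which gives an $\N$-module.

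\textbf{Apex and NIM.} Projective modules become zero in $\sstmod$, since $\catstuff{P}\subset\catstuff{N}$. Hence $\cpoly[n](\mmatrix[\xpar],\mmatrix[\ypar])=\mmatrix[{[\tprmod[0]]}]=0$, and \fullref{lemma:taft-apex-matrix} gives apex $\cell[b]$ once transitivity is shown. For the NIM property, take simple objects $\obstuff{N},\obstuff{N}'$ of $\catstuff{M}$. Rigidity of $\sstmod$ gives
\[
\dim\Hom(\obstuff{Z}\otimes\obstuff{N},\obstuff{N}')=\dim\Hom(\obstuff{N},\obstuff{Z}^{\ast}\otimes\obstuff{N}').
\]
In the semisimple category $\catstuff{M}$ these dimensions are the multiplicities of the simples, so $\mmatrix[\zpar^{\ast}]=\mmatrix[\zpar]^{\mathrm{T}}$. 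We use here that left and right duals agree up to isomorphism via the pivotal structure.

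\textbf{Transitivity.} Indecomposability of $\catstuff{M}$ means that for any two simples $\obstuff{N},\obstuff{N}'$ there is some $\obstuff{Z}$ with $\obstuff{N}'$ a summand of $\obstuff{Z}\otimes\obstuff{N}$. Otherwise the additive closure of the simples reachable from $\obstuff{N}$ would be a proper direct summand as a module category, using duals to see that its complement is also stable. Every simple of $\sstmod$ is a summand of a tensor product of copies of $\tsimple[1]$ and $\tinde[2,0]$, by the recursion \eqref{eq:taft-action-XY}. Therefore every positive entry of some $\mmatrix[\obstuff{Z}]$ shows up in a power of $\mmatrix[\xpar]+\mmatrix[\ypar]$, so $\mmatrix[\xpar]+\mmatrix[\ypar]$ is irreducible.

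\textbf{Regularity and admissibility.} For regularity, restrict to $\catstuff{Q}\simeq\mathrm{Rep}(\Z/n\Z)$. Since $\catstuff{Q}$ is pointed with simples $\tsimple[r]$, an indecomposable $\catstuff{Q}$-module category $\catstuff{V}$ has simples permuted transitively by $\tsimple[1]$. Its decategorification is therefore a single cycle of $\mmatrix[\ypar]$ of some length $d$. Isotypicity $\catstuff{M}|_{\catstuff{Q}}\simeq\catstuff{V}^{\oplus k}$ then says that all cycles have length $d$. For admissibility, $\mmatrix[\ypar]$ is a permutation matrix of finite order, hence diagonalizable. The NIM relation gives $\mmatrix[\xpar]^{\mathrm{T}}=\mmatrix[\ypar]^{-1}\mmatrix[\xpar]$, and together with commutativity this makes $\mmatrix[\xpar]$ normal, exactly as in the proof of \fullref{proposition:taft-small-apex}. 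Normal matrices are diagonalizable over $\C$.

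\textbf{Main obstacle.} The delicate point is NIM. The definition requires $\mmatrix[\zpar^{\ast}]=\mmatrix[\zpar]^{\mathrm{T}}$ with $\zpar^\ast$ the class of the left dual, so I would check carefully that the adjunction $(\obstuff{Z}\otimes\placeholder)\dashv(\obstuff{Z}^{\ast}\otimes\placeholder)$ in a module category has the correct side. If the adjunction naturally produces ${}^{\ast}\obstuff{Z}$, then the pivotal isomorphism $\obstuff{Z}^{\ast}\cong{}^{\ast}\obstuff{Z}$, which already holds in $\tmod$ via $\phi\mapsto\gt\phi$, resolves it.
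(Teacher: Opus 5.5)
Your proposal is correct and follows essentially the same route as the paper: you take the simples of $\catstuff{M}$ as basis, use the rigidity adjunction to get $M_{\obstuff{Z}^{\ast}}=M_{\obstuff{Z}}^{\mathrm{T}}$ (hence NIM), and deduce diagonalizability of $\mmatrix[\xpar]$ from normality exactly as in \fullref{proposition:taft-small-apex}, while also spelling out the transitivity, regularity and apex checks that the paper only asserts. On your worry about sides, the paper uses the left-dual adjunction $\Hom(\obstuff{Z}^{\ast}\otimes\obstuff{L}_j,\obstuff{L}_i)\cong\Hom(\obstuff{L}_j,\obstuff{Z}\otimes\obstuff{L}_i)$, which gives the transpose relation directly without the pivotal isomorphism, though your fix via $\obstuff{Z}^{\ast}\cong{}^{\ast}\obstuff{Z}$ works equally well.
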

\begin{proof}
Consider the basis of $\Gros(\catstuff{M})$ given by the simple objects in $\catstuff{M}$. Semisimplicity of $\catstuff{M}$ implies that it is an $\N$-module. Moreover, transitivity and regularity directly follow from the fact that $\catstuff{M}$ is a regular module category and a simple $2$-representation. To prove transitivity, let us choose representatives $\obstuff{L}_1,\dots,\obstuff{L}_k$ of simple objects of $\catstuff{M}$. Then rigidity of $\sstmod$ gives the following adjunction isomorphism \cite[Proposition 2.10.8]{EGNO}:
\begin{equation*}
\mathrm{Hom}_{\catstuff{M}}(\obstuff{Z}^* \otimes \obstuff{L}_j,\obstuff{L}_i) \simeq \mathrm{Hom}_{\catstuff{M}}(\obstuff{L}_j,\obstuff{Z}\otimes \obstuff{L}_i)
\end{equation*}

 for each object $\obstuff{Z}$ of $\sstmod$. Thus, if we denote by $M_{\obstuff{Z}}:=\big(\mathrm{dim}(\mathrm{Hom}_{\catstuff{M}}(\obstuff{L}_i,\obstuff{Z}\otimes \obstuff{L}_j)\big)_{i,j \in \llbracket 1,k \rrbracket}$, then $M_{\obstuff{Z}^*}=M_{\obstuff{Z}}^{\mathrm{T}}$. Hence we can use the same argument used in \fullref{proposition:taft-small-apex} to prove that $M_{\xpar}$ is diagonalizable. The fact that this $\N$-module is NIM directly comes from rigidity.
\end{proof}

\subsection{Explicit constructions of type $\typeADET$ module categories}\label{subsection:explicit-const-mod-cat}

In this subsection we will construct exact, indecomposable, regular $\sstmod$-module categories for each pair $(\Gamma,d)$ with $d|n$ and $\Gamma$ an $\typeADET$ diagram with Coxeter number equal to $n$. Let $k:=n/d$ and consider the following object of $\sstmod$:
\begin{equation*}
\obstuff{R}_d := \bigoplus_{t=0}^{k-1}{\tsimple[td]}.
\end{equation*}

\begin{proposition}\label{prop:alg-to-mod-cat}
For every simple and separable algebra object $\obstuff{A} \in \catstuff{W}$, $\obstuff{B}_d(\obstuff{A}):=\obstuff{R}_d \otimes \obstuff{A}$ is a simple and separable algebra object in $\sstmod$.

Moreover $\oneMod_{\sstmod}\left(\obstuff{B}_d(\obstuff{A})\right)$ is an exact, indecomposable, regular $\sstmod$-module category.
\end{proposition}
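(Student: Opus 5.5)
We first make $\obstuff{R}_d$ into an algebra object. Let $H_d\subset \Z/n\Z$ be the subgroup of order $k=n/d$ generated by $d$. The simples $\tsimple[td]$, $t\in\llbracket 0,k-1\rrbracket$, span a pointed fusion subcategory $\catstuff{Q}_{H_d}\subset\catstuff{Q}\simeq\mathrm{Rep}(\Z/n\Z)\simeq \mathrm{Vec}_{\Z/n\Z}$ with trivial associator. Hence $\obstuff{R}_d$ is the twisted group algebra $\K[H_d]$ with trivial $2$-cocycle, i.e. the image of the regular algebra $\mathrm{Fun}(H_d)^\ast$ of $\mathrm{Vec}_{H_d}$. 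This algebra is simple and separable, since $\mathrm{char}\,\K\nmid n$ and hence $|H_d|\in\K^\ast$. Its module category in $\catstuff{Q}$ is $\mathrm{Vec}_{\Z/n\Z/H_d}$, which has $d$ simples permuted cyclically by $\tsimple[1]$.

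Next we form $\obstuff{R}_d\otimes\obstuff{A}$. By \fullref{prop:tensor-equiv} and the $C_{2n}$-grading, $\catstuff{W}=\sstmod_0$ and $\tsimple[r]\in\sstmod_{2r}$. We need a half-braiding of $\obstuff{R}_d$ relative to $\catstuff{W}$, that is, an isomorphism $c_{\obstuff{X}}:\obstuff{R}_d\otimes\obstuff{X}\iso\obstuff{X}\otimes\obstuff{R}_d$ natural in $\obstuff{X}\in\catstuff{W}$, compatible with the multiplication of $\obstuff{R}_d$ and monoidal in $\obstuff{X}$. With it, $\obstuff{R}_d\otimes\obstuff{A}$ becomes an algebra via $m_{\obstuff{R}}\otimes m_{\obstuff{A}}$ after inserting $c$. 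We construct $c$ by lifting to $\tmod$. There $\tsimple[r]\otimes\obstuff{X}\cong\obstuff{X}\otimes\tsimple[r]$ via the flip twisted by the action of $\gt$: the map $m\otimes x\mapsto \gt^{r}x\otimes m$ (or a variant) is $\taft$-linear. This holds because $\Delta(\xx)=\xx\otimes1+\gt\otimes\xx$ and $\xx$ kills $\tsimple[r]$. The resulting maps are natural, multiplicative in $r$, and descend to $\sstmod$. Associativity and unitality of $\obstuff{B}_d(\obstuff{A})$ then follow from the hexagon-type compatibilities of $c$.

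For separability, we take the tensor product of the separability idempotents of $\obstuff{R}_d$ and $\obstuff{A}$ (conjugated by $c$). For simplicity, we use that $\oneMod_{\sstmod}(\obstuff{B}_d(\obstuff{A}))$ is semisimple, being modules over a separable algebra in a fusion category. Simplicity of $\obstuff{B}_d(\obstuff{A})$ then amounts to indecomposability of this module category. To show that, we compute $\oneMod_{\sstmod}(\obstuff{B}_d(\obstuff{A}))\simeq \sstmod\boxtimes_{\catstuff{W}\boxtimes\catstuff{Q}_{H_d}}(\oneMod_{\catstuff{W}}(\obstuff{A})\boxtimes\mathrm{Vec})$, or more concretely, we observe that free modules $\obstuff{X}\otimes\obstuff{B}_d(\obstuff{A})$ generate. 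The simples of $\sstmod$ are $\tsimple[r]\otimes\tinde[\ell,0]$. Every simple of $\sstmod$ has the form $\tsimple[r]\otimes\obstuff{W}$ with $\obstuff{W}\in\catstuff{W}$, or $\tsimple[r]\otimes\tinde[2,0]\otimes\obstuff{W}$. Indeed, $\tinde[\ell,r]\otimes\tinde[2,0]$ contains all neighbouring lengths, so $\tinde[2,0]$ together with $\catstuff{Q}$ and $\catstuff{W}$ generate. Thus every simple module is a summand of $\tsimple[r]\otimes\obstuff{N}$ or $\tsimple[r]\otimes\tinde[2,0]\otimes\obstuff{N}$ with $\obstuff{N}$ in the image of $\oneMod_{\catstuff{W}}(\obstuff{A})$, which is indecomposable over $\catstuff{W}$ because $\obstuff{A}$ is simple. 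Since $\tinde[2,0]$ has nonzero Hom to $\tinde[2,0]\otimes\obstuff{N}$ and $\tinde[2,0]\otimes\tinde[2,0]\supset\tsimple[0]\oplus\tsimple[1]$, all simples are linked, which gives indecomposability. Exactness is automatic for semisimple module categories over a fusion category.

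For regularity, we restrict to $\catstuff{Q}$. A simple module $\obstuff{L}$ satisfies $\tsimple[d]\otimes\obstuff{L}\cong\obstuff{L}$, since $\obstuff{R}_d\otimes(-)$ absorbs $\tsimple[d]$ through the multiplication on $\obstuff{R}_d$. We then show that $\tsimple[r]\otimes\obstuff{L}\not\cong\obstuff{L}$ for $0<r<d$ by a grading argument: $\obstuff{B}_d(\obstuff{A})$ lives in degrees $2d\Z/2n\Z$, so module objects split by cosets of $C_{2n}/\langle 2d\rangle$, and $\tsimple[r]$ shifts the coset by $2r\neq0$. Hence every $\catstuff{Q}$-orbit has length exactly $d$, and the restriction is isotypic with $\catstuff{V}=\mathrm{Vec}_{\Z/d\Z}$, as required. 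We expect the main obstacle to be the half-braiding step, namely checking that the twisted flip is well defined and coherent after semisimplification. The grading argument for orbit length also needs care, because modules may be spread over several cosets.
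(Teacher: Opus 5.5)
Your construction of the multiplication, the separability splitting and the exactness argument agree with the paper. One small correction: the $\taft$-linear twisted flip is $1\otimes m\mapsto \gt^{-r}m\otimes 1$; with $\gt^{r}$, $\xx$-linearity fails by a factor $\qpar^{2r}$. However, the two substantive steps, simplicity and $\tsimple[d]\otimes\obstuff{L}\cong\obstuff{L}$, are not actually proved.

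\emph{Simplicity.} For a separable algebra it is legitimate to reduce to indecomposability of $\oneMod_{\sstmod}(\obstuff{B}_d(\obstuff{A}))$, but your linking argument is circular. Indecomposability of $\oneMod_{\catstuff{W}}(\obstuff{A})$ over $\catstuff{W}$ does not force its image under induction to lie in one $\sstmod$-summand. The image of $\obstuff{A}$ is the free module $\obstuff{B}_d(\obstuff{A})$, which meets every summand, so ``the image is linked'' is exactly the statement to be proved. Your concrete argument never uses the shape of $\obstuff{R}_d$. Verbatim, it would show that $(\tsimple[0]\oplus\tsimple[0])\otimes\obstuff{A}$, with the product algebra structure on $\tsimple[0]\oplus\tsimple[0]$, is simple, which is false. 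The remarks about $\tinde[2,0]$ do not help, since $\sstmod$-translates of an object automatically stay in its summand.

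The missing idea is the grading combined with invertibility. We have $\obstuff{B}_d(\obstuff{A})=\bigoplus_t\tsimple[td]\otimes\obstuff{A}$ with the pieces in pairwise distinct degrees $2td$. So a nonzero ideal $\algstuff{I}$ has some $\algstuff{I}\cap(\tsimple[td]\otimes\obstuff{A})\neq 0$. Multiplying by $\tsimple[-td]\otimes 1_{\obstuff{A}}$ then gives a nonzero ideal of $\obstuff{A}$, hence the unit. Your induction formula could also be completed: your $\catstuff{W}\boxtimes\catstuff{Q}_{H_d}$ is exactly $(\sstmod)_{C_{2d}}$, and induction from a graded piece is indecomposable because $C_{2n}$ permutes the coset summands transitively. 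But you did not carry this out.

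\emph{Regularity.} The claim that $\obstuff{R}_d$ absorbs $\tsimple[d]$ only gives $\tsimple[d]\otimes\obstuff{X}\otimes\obstuff{B}_d(\obstuff{A})\cong\obstuff{X}\otimes\obstuff{B}_d(\obstuff{A})$ for free modules. A simple $\obstuff{L}$ is only a summand of such a module, and this isomorphism may permute summands.

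For arbitrary $\obstuff{L}$ you need the composite $\tsimple[d]\otimes\obstuff{L}\to\obstuff{L}\otimes\tsimple[d]\to\obstuff{L}\otimes\obstuff{B}_d(\obstuff{A})\to\obstuff{L}$ to be $\obstuff{B}_d(\obstuff{A})$-linear. Testing on $\obstuff{L}=\obstuff{B}_d(\obstuff{A})$ shows this requires $\iota_d\colon\tsimple[d]\to\obstuff{B}_d(\obstuff{A})$ to be central for the half-braiding used. With the plain $\gt^{-d}$-flip this fails. On $\tsimple[d]\otimes(\tsimple[td]\otimes\obstuff{A})$ the two products differ by $\qpar^{-td^{2}}$, the scalar by which $\gt^{-d}$ acts on $\tsimple[td]$, and this is $\neq 1$ unless $n\mid d^{2}$.

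The paper fixes this by twisting with a grading character: $1\otimes m\mapsto\xi^{dp}\gt^{-d}m\otimes 1$ for $m$ of degree $p$. This map is natural in $\sstmod$ precisely because morphisms between different degrees vanish there.

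Your divisibility-by-$d$ argument is correct and is the paper's. Your worry about modules spread over several cosets is unnecessary: the coset components of a module are submodules, so a simple module lives on a single coset.
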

\begin{proof}
Note that by construction $\obstuff{R}_d$ is a central algebra object with multiplication given by $\tsimple[td] \otimes \tsimple[t'd] \simeq \tsimple[(t+t')d]$. Indeed, for every $\taft$-module $\tmodule$, consider ${\gamma_{r,\tmodule}:\tsimple \otimes \tmodule \iso \tmodule \otimes \tsimple}$, ${1 \otimes m \mapsto \gt^{-r}m \otimes 1}$. The morphism $\gamma_{r,\tmodule}$ is $\taft$-linear since $\gt^{-r}$ is grouplike. Moreover, it satisfies the half-braiding coherence cf. \cite[Definition 7.13.1]{EGNO}. The direct sum of these maps gives a half-braiding $\gamma_{\tmodule}:\obstuff{R}_d \otimes \tmodule \iso \tmodule \otimes \obstuff{R}_d$. Compatibility with the multiplication of $\obstuff{R}_d$ follows from $\gt^{{-}(a+b)}=\gt^{{-}a}\gt^{{-}b}$.

If $\obstuff{A}$ is an algebra object, we can define a multiplication on $\obstuff{B}_{d}(\obstuff{A})$ via the multiplication of $\obstuff{R}_d$ and $\obstuff{A}$. Associativity and the unit axiom follow from the one of $\obstuff{R}_d$ and $\obstuff{A}$.

 Let us now show that $ \obstuff{B}_d(\obstuff{A})$ is simple. The summand $\obstuff{B}_t := \tsimple[td] \otimes \obstuff{A}$ is homogeneous of degree $2td$. Take $\algstuff{I}\neq 0$ be an ideal of $\obstuff{B}_d(\obstuff{A})$. Take $t$ such that $\algstuff{I}_t:= \algstuff{I} \cap \obstuff{B}_t \neq 0$. Multiplying by the central invertible summand $\tsimple[-td] \otimes 1_{\obstuff{A}}$ identifies $\obstuff{B}_t$ with $\obstuff{B}_0=\obstuff{A}$. Hence $\algstuff{I}_0 := (\tsimple[-td] \otimes 1_{\obstuff{A}}) \otimes \algstuff{I}_t$ is a nonzero two-sided ideal of $\obstuff{A}$, and is therefore equal to  $\obstuff{A}$, since $\obstuff{A}$ is simple. In particular $\algstuff{I}=\obstuff{B}_d(\obstuff{A})$.

Furthermore, let us show that $\obstuff{B}_d(\obstuff{A})$ is separable. Since $k$ is invertible in $\K$, $\obstuff{R}_d$ is separable. Indeed, a splitting of the multiplication of $\obstuff{R}_d$ is given by
\begin{equation*}
s_{\obstuff{R}_d}(m_{td}):=\frac{1}{k}\sum_{i=0}^{k-1}{m_{id} \otimes m_{(t-i)d}}.
\end{equation*}
A splitting of the multiplication of $\obstuff{B}_d(\obstuff{A})$ is then given by ${(\mathrm{id}_{\obstuff{R}_d} \otimes \gamma_{\obstuff{A}} \otimes \mathrm{id}_{\obstuff{A}}) \circ (s_{\obstuff{R}_d} \otimes s_{\obstuff{A}})}$.

The fact that $\oneMod_{\sstmod}\left(\obstuff{B}_d(\obstuff{A})\right)$ is an exact module category follows from \cite[Remark 7.8.31]{EGNO} and the fact that $\obstuff{B}_d(\obstuff{A})$ is simple implies that the associated module category is indecomposable. Finally, if $C_{2d}:=\langle 2d \rangle \subseteq C_{2n}$, note that $\obstuff{B}_d(\obstuff{A}) \subset (\sstmod)_{C_{2d}} := \bigoplus_{g \in C_{2d}}{\sstmod_g}$. Hence, every simple $\obstuff{B}_d(\obstuff{A})$-module is supported on a coset of $C_{2d}$. Each $\tsimple[1]$-orbit of simple $\obstuff{B}_d(\obstuff{A})$-modules has length divisible by $d$ since $\tsimple[1]$ has degree $2$ and $2$ has order $d$ in $C_{2n}/C_{2d}$. To prove that the length is exactly $d$, it is enough to show that $\tsimple[d] \otimes \tmodule \simeq \obstuff{L}$, for every simple $\obstuff{B}_d(\obstuff{A})$-module $\obstuff{L}$. To prove that, consider the twisted half braiding morphism ${\hat{\gamma}_{d,\tmodule}:\tsimple[d] \otimes \tmodule \to \tmodule \otimes \tsimple[d]}$, ${1 \otimes m \mapsto \xi^{dp}\gt^{-d}m \otimes 1}$, where $\xi^2=\qpar$. If $\iota_d:\tsimple[d] \to \obstuff{B}_d(\obstuff{A})$ denotes the inclusion induced by the unit of $\obstuff{A}$, by construction we have $\morstuff{m}_{\obstuff{B}_d(\obstuff{A})} \circ (\iota_d \otimes \mathrm{id}_{\obstuff{B}_d(\obstuff{A})}) = \morstuff{m}_{\obstuff{B}_d(\obstuff{A})} \circ (\mathrm{id}_{\obstuff{B}_d(\obstuff{A})}\otimes \iota_d) \circ \hat{\gamma}_{d,\obstuff{B}_d(\obstuff{A})}$. We then get that the following composition is a morphism of $\obstuff{B}_d(\obstuff{A})$-modules:
\begin{equation*}
\tsimple[d] \otimes \obstuff{L} \xlongrightarrow{\hat{\gamma}_{d,\obstuff{L}}} \obstuff{L} \otimes \tsimple[d] \xlongrightarrow{\mathrm{id}_{\obstuff{L}}\otimes \iota_d} \obstuff{L} \otimes \obstuff{B}_d(\obstuff{A}) \xlongrightarrow{\acts_{\obstuff{L}}} \obstuff{L}.
\end{equation*}
This morphism is an isomorphism since $\tsimple[d] \subset \obstuff{B}_d(\obstuff{A})$ is invertible. Therefore, every $\tsimple[1]$-orbit generates an indecomposable $\catstuff{Q}$-module category equivalent to $\oneMod_{\catstuff{Q}}\left(\obstuff{R}_d\right)$. Indeed, the restriction of  $\oneMod_{\catstuff{\sstmod}}\left(\obstuff{B}_d(\obstuff{A})\right)$ to $\catstuff{Q}$ decomposes into the full additive subcategories generated by the $\tsimple[1]$-orbits of simple $\obstuff{B}_d(\obstuff{A})$-modules. Since every such orbit has length $d$, with stabilizer $C_{2d}$ all these indecomposable summands are mutually equivalent, thanks to the facts that $C_{2d}$ is cyclic and $\K$ is algebraically closed.
%$\mathrm{H}^2(H_d,\K^*)=0$
\end{proof}

\subsubsection{Type $\typeADE$}
If $\K$ is of characteristic zero, we can simply import the algebra objects constructed in \cite[Theorem 6.1]{Ost03}, thanks to \fullref{prop:tensor-equiv}. The following table lists the respective simple, separable algebra objects of $\catstuff{W}$

\medskip
\begin{center}
\begin{tabular}{|c|c|}
\hline
\text{Type} $\Gamma$ & \text{algebra object} $\obstuff{A}_{\Gamma}$\\
\hline
$\typeA_{n-1}$& $\tsimple[0]$ \\
\hline
$\typeD_{\ell+1}$, with $n=2\ell$ & $\tsimple[0] \oplus \tinde[n-1,\ell+1]$\\
\hline
$\typeE_6$ & $\tsimple[0] \oplus \tinde[7,9]$\\
\hline
$\typeE_7$ & $\tsimple[0] \oplus \tinde[9,14] \oplus \tinde[17,10]$\\
\hline
$\typeE_8$ & $\tsimple[0] \oplus \tinde[11,25] \oplus \tinde[19,21] \oplus \tinde[29,16].$\\
\hline
\end{tabular}
\end{center}
\medskip

The construction of these algebra objects extends over an algebraically closed field of characteristic not dividing $n$. Indeed, the Temperley-Lieb universal property used in the construction holds in that case \cite[Theorem 2.4 and Corollary 2.5]{Ost08}. Applying \fullref{prop:alg-to-mod-cat}, we get an exact, indecomposable, regular $\sstmod$-module category $\oneMod_{\sstmod}\left(\obstuff{B}_d(\obstuff{A}_{\Gamma})\right)$.

\subsubsection{Type $\typeT$}
We have to treat this type separately because it does not admit a categorification as a $\catstuff{W}$-module category. The important fact is that we can however construct such a categorification if we look at $\sstmod$-module categories. To do so, suppose that $n=2\ell+1$ and take $d|n$. As before let $k=n/d$. Let $\JT:=\tinde[n-1,1]$ and $\obstuff{K}_d:=\tsimple[-\frac{d(k-1)}{2}]\otimes \JT$. Since $\JT \otimes \JT \simeq \tsimple[0]$ and $\JT \otimes \tsimple[1] \simeq \tsimple[1] \otimes \JT$, we have that $\obstuff{K}_d \otimes \obstuff{K}_d \simeq \tsimple[d]$. Furthermore, $\obstuff{K}_d^{\otimes k} \simeq \JT$. Hence $\obstuff{K}_d$ has order $2k$.

\begin{lemma}\label{lemma:Kd-struct}
The full subcategory of $\sstmod$ generated by $\obstuff{K}_d$ is tensor equivalent to $\mathrm{Vec}_{\Z/2k\Z}$, the monoidal category of finite-dimensional $\Z/2k\Z$-graded vector spaces.
\end{lemma}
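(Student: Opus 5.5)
The plan is to show that the full subcategory generated by $\obstuff{K}_d$ is a pointed fusion category with group of invertibles $\Z/2k\Z$. Such categories are classified by $\mathrm{Vec}_{\Z/2k\Z}^{\omega}$ with $\omega\in\mathrm{H}^3(\Z/2k\Z,\K^\ast)\cong\Z/2k\Z$. So it then suffices to show that the associator class $\omega$ is trivial.

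\textbf{Step 1: the fusion rules.} First I would check that the powers $\obstuff{K}_d^{\otimes j}$, $0\le j\le 2k-1$, are pairwise non-isomorphic invertible simple objects of $\sstmod$. Since $\obstuff{K}_d^{\otimes 2}\simeq\tsimple[d]$ and $\JT\otimes\JT\simeq\tsimple[0]$, we get
\[
\obstuff{K}_d^{\otimes 2j}\simeq\tsimple[jd],\qquad \obstuff{K}_d^{\otimes 2j+1}\simeq\tsimple[jd-\frac{d(k-1)}{2}]\otimes\JT .
\]
These are distinct because the even powers are the simples $\tsimple[r]$ and the odd powers have length $n-1\neq 1$ (here $n>2$ is odd). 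Simplicity in $\sstmod$ holds because every nonzero indecomposable of $\stmod$ maps to a simple object. The subcategory is therefore pointed, with fusion ring $\Z[\Z/2k\Z]$.

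\textbf{Step 2: trivialising the associator.} This is the main obstacle. Because $n=dk$ is odd, $2k$ is even, so $\mathrm{H}^3$ is nontrivial and the fusion rules alone do not decide $\omega$. I would use the $C_{2n}$-grading together with the structure already known on the two natural subgroups.
\begin{itemize}
\item The even part $\langle\tsimple[d]\rangle\simeq\mathrm{Vec}_{\Z/k\Z}$ is a subcategory of $\catstuff{Q}\simeq\mathrm{Rep}(\Z/n\Z)$. Its associator is trivial since $\catstuff{Q}$ comes from one-dimensional modules of a Hopf algebra with strict associativity.
\item $\langle\JT\rangle$ has order $2$. Since $k$ is odd, $\Z/2k\Z\cong\Z/2\Z\times\Z/k\Z$, and hence
\[
\mathrm{H}^3(\Z/2k\Z,\K^\ast)\cong\mathrm{H}^3(\Z/2\Z,\K^\ast)\times\mathrm{H}^3(\Z/k\Z,\K^\ast).
\]
Each component of $\omega$ is detected by restriction to the corresponding subgroup.
\end{itemize}
So it suffices to show that the restriction to $\langle\JT\rangle$ is trivial. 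For an invertible object $X$ of order $2$, the class in $\mathrm{H}^3(\Z/2\Z,\K^\ast)=\{\pm1\}$ is given by the Frobenius--Schur indicator $\nu_2(X)$, i.e.\ by whether $X$ is symmetrically or antisymmetrically self-dual. This is the same device used for $n=6$ in \fullref{prop:tensor-equiv}.

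\textbf{Step 3: computing the indicator.} I would compute $\nu_2(\JT)$ explicitly in the basis $m_1,\dots,m_{n-1}$ of $\tinde[n-1,1]$. The plan is as follows.
\begin{itemize}
\item Fix an isomorphism $\JT\otimes\JT\to\tsimple[0]$ modulo negligible morphisms. Concretely, the unique-up-to-scalar $\taft$-invariant pairing composed with projection onto the non-projective summand.
\item Compare it with its transpose via the pivotal structure \eqref{eq:piv-struct}, which inserts $\gt^{-1}$.
\item The sign then reduces to a product of powers of $\qpar$ and $q$-binomial factors that can be evaluated directly.
\end{itemize}
A conceptually cleaner alternative, which I would try first, is to use $\dim(\JT)$ from \eqref{computation:dim}. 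This gives $\dim(\JT)=\qpar^{1}\frac{1-\qpar^{n-1}}{1-\qpar}=-1\cdot(\text{unit})$. I would then relate it to $\nu_2$ through sphericality of the even-degree part. If the indicator turns out to be $-1$, I would instead twist $\obstuff{K}_d$ by a suitable scalar normalisation, or note that replacing $\JT$ by $\tsimple[r]\otimes\JT$ may change the sign. The choice of the shift $-\frac{d(k-1)}{2}$ in $\obstuff{K}_d$ suggests the authors arranged exactly this.

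Once $\omega$ is trivial, the standard classification of pointed categories gives the tensor equivalence with $\mathrm{Vec}_{\Z/2k\Z}$.
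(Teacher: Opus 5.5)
Your Steps 1 and 2 reproduce the paper's reduction. The subcategory is pointed with group $\Z/2k\Z$, the associator restricted to $\langle\tsimple[d]\rangle\simeq\mathrm{Vec}_{\Z/k\Z}$ is trivial, and since $k$ is odd everything rests on the order-two subgroup, which is generated by $\obstuff{K}_d^{\otimes k}\simeq\JT$.

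The gap is in Step 3. The computation is left undone, and the quantity you propose to compute does not detect the class. The pivotal Frobenius--Schur indicator $\nu_2$ of an order-two invertible object depends on the pivotal structure: over $\C$, $\mathrm{Rep}(\Z/2\Z)$ and $\mathrm{sVec}$ have the same trivial associator but indicators $+1$ and $-1$. The pivotal-independent invariant is the scalar $\epsilon$ defined by $\varphi\otimes\mathrm{id}_{\JT}=\epsilon\,(\mathrm{id}_{\JT}\otimes\varphi)$ in the one-dimensional space $\Hom_{\sstmod}(\JT^{\otimes 3},\JT)$, for any isomorphism $\varphi\colon\JT\otimes\JT\to\tsimple[0]$. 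One has $\epsilon=\nu_2(\JT)\dim(\JT)$.

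Here $\dim(\JT)=-1$ exactly, by \eqref{computation:dim}. Comparing the invariant pairing $\varphi(m_i\otimes m_{n-i})=a_i$ with its transpose through \eqref{eq:piv-struct} gives $\nu_2(\JT)=\qpar^{-i}a_{n-i}/a_i=(-1)^n=-1$. So Step 3 as written would lead you to conclude that the associator is nontrivial, contradicting the statement.

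Your fallbacks do not help. Rescaling cannot change a cohomology class. The shift $-\frac{d(k-1)}{2}$ only serves to make $\obstuff{K}_d^{\otimes 2}\simeq\tsimple[d]$; for any shift the order-two subgroup is $\langle\JT\rangle$, so its class is intrinsic. Nor does $\dim(\JT)$ alone decide the question: only the product $\nu_2\dim$ does.

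To close the gap, compute $\epsilon$ directly, as the paper does.
\begin{itemize}
\item Take $\varphi(m_i\otimes m_j)=\delta_{j,n-i}\,a_i\,m$ with $a_i=(-1)^{i-1}\qpar^{i(i-1)/2}$.
\item Take $\psi(m)=\sum_{i=1}^{n-1}b_i\,m_i\otimes m_{n-i}$ with $b_i=(-1)^i\qpar^{-i(i+1)/2}$.
\item These maps are $\taft$-linear because $a_{i+1}+\qpar^ia_i=0$ and $b_{i-1}+\qpar^ib_i=0$.
\item One has $\varphi\circ\psi=\sum_i a_ib_i=-\sum_{i=1}^{n-1}\qpar^{-i}=1$.
\item One has $b_ja_{n-j}=1$, which uses that $n$ is odd.
\end{itemize}
Hence $(\varphi\otimes\mathrm{id})(\psi\otimes\mathrm{id})=\mathrm{id}_{\JT}=(\mathrm{id}\otimes\varphi)(\psi\otimes\mathrm{id})$, and since $\mathrm{id}_{\JT}\neq 0$ in $\sstmod$ this forces $\epsilon=1$.

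Equivalently, your indicator computation does work once you multiply by the dimension: $\epsilon=\nu_2(\JT)\dim(\JT)=(-1)(-1)=1$.

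A minor point: $\mathrm{H}^3(\Z/2k\Z,\K^\ast)\cong\mu_{2k}(\K)$, which is smaller than $\Z/2k\Z$ in characteristic $2$. This only makes the argument easier.
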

\begin{proof}
Applying \cite[Theorem 2.11.5]{EGNO}, gives that $\langle \obstuff{K}_d \rangle$ is tensor equivalent to $\mathrm{Vec}^{\omega}_{\Z/2k\Z}$ for some $[\omega] \in \mathrm{H}^3(\Z/2k\Z, \K^*)$. To conclude, it is enough to show that $[\omega]=0$. Since $\obstuff{K}_d^{\otimes 2} \simeq \tsimple[d]$ and that $\langle \tsimple[d] \rangle$ is tensor equivalent to $\mathrm{Vec}_{\Z/k\Z}$, we already have that $\mathrm{Res}^{\Z/2k\Z}_{\Z/k\Z}=0$. It remains to show that its restriction to $\langle \JT \rangle$ is also trivial. Indeed, thanks to \cite[Theorem 6.2.2]{Wei94}, we get that $\mathrm{H}^3(\Z/2k\Z, \K^{\ast})=\mu_{2k}$ the group of $2k$-roots of unity in $\K$. Under this identification, restriction to $\Z/k\Z$ sends $\zeta$ to $\zeta^2$ and to $\Z/2\Z$ sends $\zeta$ to $\zeta^k$. If we show that both restrictions are trivial, we get $\zeta^2=\zeta^k=1$. Since $k$ is odd, this implies that $\zeta=1$. To prove that the restriction to $\langle \JT \rangle$ is trivial we will do an explicit computation. Consider
\begin{center}
$\varphi: \begin{array}{ccc}
\JT \otimes \JT& \to& \tsimple[0]\\
m_i \otimes m_j &\mapsto &\begin{cases} a_im &\text{ if }j=n-i,\\
                                      0 &\text{ else} \end{cases}
\end{array}$
\end{center}
where $a_i:=(-1)^{i-1}\qpar^{\frac{i(i-1)}{2}}$. Moreover, consider
\begin{center}
$\psi: \begin{array}{ccc}
\tsimple & \to & \JT \otimes \JT\\
m &\mapsto &\sum_{i=1}^{n-1}{b_im_i\otimes m_{n-i}}
\end{array}$
\end{center}
where $b_i:=(-1)^i\qpar^{\frac{-i(i+1)}{2}}$. Note that $\varphi$ and $\psi$ are $\taft$-linear since $a_{i+1}+\qpar^ia_i=0$ and $b_{i-1}+\qpar^ib_i=0$. By construction, $\varphi\circ\psi=\mathrm{id}_{\tsimple[0]}$.
Now consider $E:=\varphi \otimes \mathrm{id}_{\JT}$ and $F:=(\mathrm{id}_{\JT}\otimes \varphi)$. Since $\JT$ is invertible and $\K$ is algebraically closed, we have $\mathrm{Hom}_{\sstmod}(\JT^{\otimes 3},\JT)\simeq \K$. Hence there exists a unique scalar $\epsilon \in \K$, such that $E=\epsilon F$. This scalar corresponds to the restriction of $[\omega]$ to the subgroup $\Z/2\Z$. To conclude, it is thus enough to show that $\epsilon=1$. Composing $E=\epsilon F$ with $\psi \otimes \mathrm{id}_{\JT}$, we have $E(\psi \otimes \mathrm{id}_{\JT})=\mathrm{id}_{\JT}$. Moreover, for every basis element $m_j$ of $\JT$, $F(\psi \otimes \mathrm{id}_{\JT}))(m_j)=b_ja_{n-j}m_j=m_j$, since $b_ja_{n-j}=1$. Hence $\mathrm{id}_{\JT}=\epsilon \mathrm{id}_{\JT}$ and therefore $\epsilon=1$.
\end{proof}

Consider now $\obstuff{B}_d(\typeT_{\ell}):=\bigoplus_{i=0}^{2k-1}{\obstuff{K}_d^{\otimes i}} \in \sstmod$ and let us construct an algebra structure on this object. For $i \in \llbracket 0, 2k-1 \rrbracket$, let $\iota_i:\obstuff{K}_d^{\otimes i} \to \obstuff{B}_d(\typeT_{\ell})$ denote the inclusion morphism. Moreover, thanks to \fullref{lemma:Kd-struct}, we can choose isomorphisms $c_{i,j}: \obstuff{K}_d^{\otimes i} \otimes \obstuff{K}_d^{\otimes j} \iso \obstuff{K}_d^{\otimes i+j}$, for each $i,j \in \llbracket 0, 2k-1 \rrbracket$ satisfying the coherence axiom
\begin{equation}\label{eq:coherence}
c_{i+j,l} \circ (c_{i,j} \otimes \mathrm{id}) = c_{i,j+l} \circ (\mathrm{id}\otimes c_{j,l}).
\end{equation}
Define the multiplication map componentwise
\begin{equation*}
\mu\circ(\iota_i \otimes \iota_j) := \iota_{i+j} \circ c_{i,j}
\end{equation*}
and the unit to be $\iota_0$.

\begin{proposition}\label{prop:type-T-eir}
The triple $(\obstuff{B}_d(\obstuff{A}_{\typeT_{\ell}}),\mu,\iota_0)$ defines a simple algebra object of $\sstmod$, such that the associated module category is regular. Moreover, if the characteristic of $\K$ is different from $2$, this algebra object is separable.
\end{proposition}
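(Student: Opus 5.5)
The plan is to recognise $\obstuff{B}_d(\typeT_{\ell})$ as the twisted group algebra $\bigoplus_{g\in\Z/2k\Z}\obstuff{K}_d^{\otimes g}$ of the pointed subcategory $\langle \obstuff{K}_d\rangle\simeq \mathrm{Vec}_{\Z/2k\Z}$ from \fullref{lemma:Kd-struct}, and to verify the three claims (algebra structure and simplicity, regularity, separability) in turn, following the pattern of \fullref{prop:alg-to-mod-cat}.

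\textbf{Algebra axioms and simplicity.} Indices are read modulo $2k$, using $\obstuff{K}_d^{\otimes 2k}\simeq\tsimple[0]$. Since \fullref{lemma:Kd-struct} makes the associator class trivial, the $c_{i,j}$ can be chosen as the strict multiplication of $\mathrm{Vec}_{\Z/2k\Z}$ transported along the equivalence, with $c_{0,j}$ and $c_{i,0}$ the unit isomorphisms. Associativity of $\mu$ then follows componentwise from \eqref{eq:coherence}: precompose both sides with $\iota_i\otimes\iota_j\otimes\iota_l$. The unit axioms follow from the normalisation of $c_{0,j}$ and $c_{i,0}$.

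For simplicity, let $\algstuff{I}\neq0$ be a two-sided ideal. Each summand $\obstuff{K}_d^{\otimes i}$ is simple, pairwise non-isomorphic, and invertible. Hence $\algstuff{I}$, being a subobject of a multiplicity-free semisimple object, is a direct sum of some of these summands. Pick one, $\obstuff{K}_d^{\otimes i}\subset \algstuff{I}$. Multiplying by $\obstuff{K}_d^{\otimes (2k-i)}$ via $c$, which is an isomorphism, gives $\iota_0(\tsimple[0])\subset\algstuff{I}$. Therefore $\algstuff{I}$ contains the unit and equals the whole algebra.

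\textbf{Regularity.} Simple modules over this algebra are exactly the simple modules over $\mathrm{Vec}_{\Z/2k\Z}$ transported, and the module category is $\sstmod$-equivalent to $\sstmod\boxtimes_{\langle\obstuff{K}_d\rangle}\mathrm{Vec}$. Concretely, its simples are of the form $X\otimes \obstuff{B}$ for $X$ simple in $\sstmod$, with $X\otimes\obstuff{B}\simeq Y\otimes \obstuff{B}$ if and only if $Y\simeq X\otimes \obstuff{K}_d^{\otimes i}$ for some $i$. The task is then to compute $\tsimple[1]$-orbits. We have $\tsimple[1]^{\otimes r}\otimes X\otimes\obstuff{B}\simeq X\otimes\obstuff{B}$ if and only if $\tsimple[r]\in\langle\obstuff{K}_d\rangle$. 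The invertible objects in $\langle\obstuff{K}_d\rangle$ of the form $\tsimple[r]$ are precisely the even powers, namely $\langle\tsimple[d]\rangle$: the odd powers contain the factor $\JT$, which is not of the form $\tsimple[r]$ in $\sstmod$ because the grading gives $\deg \JT = n$, which is odd. Thus every orbit has length exactly $d$, independent of $X$. As in the end of the proof of \fullref{prop:alg-to-mod-cat}, each orbit generates a $\catstuff{Q}$-module category equivalent to $\oneMod_{\catstuff{Q}}(\obstuff{R}_d)$, since $\mathrm{H}^2$ of a cyclic group with coefficients in $\K^\ast$ vanishes. This gives isotypicity.

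\textbf{Separability.} The splitting is $s(\iota_g)=\frac{1}{2k}\sum_{h}(\iota_h\otimes\iota_{g-h})\circ c_{h,g-h}^{-1}$. This requires $2k$ to be invertible in $\K$. The factor $k$ divides $n$, so it is invertible by the standing hypothesis on the characteristic, and $2$ is invertible by the extra hypothesis. Bimodule compatibility follows again from \eqref{eq:coherence}. Exactness of the module category then follows from \cite[Remark 7.8.31]{EGNO}.

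\textbf{Main obstacle.} I expect the hard step to be the regularity argument, specifically justifying that odd powers of $\obstuff{K}_d$ are never isomorphic in $\sstmod$ to any $\tsimple[r]$ and that the stabiliser computation is uniform over all simples. The grading argument is what I would try first.
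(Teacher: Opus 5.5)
Your outline follows the paper's proof. Associativity and the unit axioms come from \eqref{eq:coherence}, simplicity from multiplicity-freeness and invertibility of the summands, separability from the splitting with factor $\frac{1}{2k}$, and regularity from computing $\tsimple[1]$-orbits on the simples $\obstuff{X}\otimes\obstuff{B}$. However, one step in the regularity argument is asserted rather than proved, and it is exactly where the oddness of $n$ enters. Your isomorphism criterion for the modules $\obstuff{X}\otimes\obstuff{B}$ only gives: $\tsimple[r]\otimes\obstuff{X}\otimes\obstuff{B}\simeq\obstuff{X}\otimes\obstuff{B}$ if and only if $\tsimple[r]\otimes\obstuff{X}\simeq\obstuff{X}\otimes\obstuff{K}_d^{\otimes i}$ for some $i$. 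Passing from this to ``$\tsimple[r]\in\langle\obstuff{K}_d\rangle$'' cancels $\obstuff{X}$. That cancellation requires that no nontrivial invertible object of $\sstmod$ fixes a simple object. In the same way, your claim that every $\obstuff{X}\otimes\obstuff{B}$ is simple needs the action of $\langle\obstuff{K}_d\rangle$ on $\mathrm{Irr}(\sstmod)$ to be free. Neither fact is automatic: for $n=2\ell\geq 4$ one has $\JD\otimes\tinde[\ell,r]\simeq\tinde[\ell,r]$ in $\sstmod$, so the analogous cancellation fails.

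The grading argument you intended to try does close the gap. Since $\deg\tsimple[r]=2r$ and $\deg\JT=n$, you get $\deg\obstuff{K}_d=n-d(k-1)=d$, which has order $2k$ in $C_{2n}$. So the grading is injective on $\langle\obstuff{K}_d\rangle$, and $\obstuff{B}$ is supported on $C_d=\langle d\rangle$. Two consequences follow:
\begin{itemize}
\item If $\mathrm{Hom}(\obstuff{X},\obstuff{X}\otimes\obstuff{K}_d^{\otimes i})\neq 0$, then $id\equiv 0 \pmod{2n}$, so $i\equiv 0\pmod{2k}$. This gives freeness, hence $\mathrm{End}_{\obstuff{B}}(\obstuff{X}\otimes\obstuff{B})\cong\K$.
\item If $\tsimple[r]\otimes\obstuff{X}\simeq\obstuff{X}\otimes\obstuff{K}_d^{\otimes i}$, then $2r\equiv id\pmod{2n}$. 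Since $d$ is odd, $i$ is even and $d\mid r$.
\end{itemize}
The paper argues the same way. It notes that $\obstuff{B}$ is supported on $C_d$ and that the least $p$ with $2p\in C_d$ is $d$, which gives divisibility of orbit lengths by $d$. It states freeness of $\langle\obstuff{K}_d\rangle$ on simples because $n$ is odd. For the converse it uses $\tsimple[d]\simeq\obstuff{K}_d^{\otimes 2}$ together with the half-braiding $\gamma_{d,\obstuff{X}}$ to show that $\tsimple[d]$ fixes every simple $\obstuff{B}$-module. With the degree computation inserted, the rest of your proposal goes through as written.
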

\begin{proof}
The equation \eqref{eq:coherence} gives directly the associativity of $\mu$ and the unit axioms are satisfied by the unit coherence axioms of the tensor equivalence given by \fullref{lemma:Kd-struct}.

Let us now prove that this algebra is simple. Take $\algstuff{I}$ a nonzero (two-sided) ideal of $\obstuff{B}_d(\obstuff{A}_{\typeT_{\ell}})$. Since $\obstuff{K}_d^{\otimes 0}, \obstuff{K}_d,\dots, \obstuff{K}_d^{\otimes 2k-1}$ are pairwise nonisomorphic, invertible, simple objects of $\langle \obstuff{K}_d \rangle$. This implies that $\obstuff{B}_d(\obstuff{A}_{\typeT_{\ell}})$ is multiplicity-free, hence there exists $i \in \llbracket 0, 2k-1 \rrbracket$ such that $\obstuff{K}_d^{\otimes i} \subset \algstuff{I}$. Multiplying by $\obstuff{K}_d^{-\otimes i}$ gives that $\tsimple[0]$ is a subobject of $\algstuff{I}$, hence $\algstuff{I}=\obstuff{B}_d(\obstuff{A}_{\typeT_{\ell}})$.

As in the proof of \fullref{prop:alg-to-mod-cat}, to prove that the associated module category is regular, it is enough to show that every $\tsimple[1]$-orbit of simple objects of $\oneMod_{\sstmod}(\obstuff{B}_d(\obstuff{A}_{\typeT_{\ell}}))$ has length $d$. In this case $\obstuff{B}_d(\obstuff{A}_{\typeT_{\ell}})$ is supported on $C_d:= \langle d \rangle \subseteq C_{2n}$. Since $n$ is odd, $d$ is odd, hence the smallest integer $p$ such that $2p \in C_d$ is $d$, which shows that the length of every $\tsimple[1]$-orbit is divisible by $d$. Furthermore, observe that every simple $\obstuff{B}_d(\obstuff{A}_{\typeT_{\ell}})$-module is isomorphic to $\obstuff{X} \otimes \obstuff{B}_d(\obstuff{A}_{\typeT_{\ell}})$ for some simple $\obstuff{X} \in \sstmod$. By construction, the set of simple objects of $\sstmod$ is $\{\tsimple,\tinde \mid r \in \Z/n\Z, 0 < \ell <n\}$. Moreover, the action of $\langle \obstuff{K}_d \rangle$ on this set is free since $n$ is odd. Hence  if $\obstuff{X}$ is a simple object of $\sstmod$, $\obstuff{X} \otimes \obstuff{B}_d(\obstuff{A}_{\typeT_{\ell}})= \bigoplus_{i=0}^{2k-1}{\obstuff{X}\otimes \obstuff{K}_d^{\otimes i}}$ is a simple $\obstuff{B}_d(\obstuff{A}_{\typeT_{\ell}})$-module since $(\obstuff{X}\otimes \obstuff{K}_d^{\otimes i})_i$ are nonisomorphic simple objects of $\sstmod$ permuted transitively by the tensor powers of $\obstuff{K}_d$. Moreover, every simple $\obstuff{B}_d(\obstuff{A}_{\typeT_{\ell}})$-module occurs in this way by the free-forgetful adjunction $(\mathrm{F},\mathrm{U})$ defined in \fullref{subsection:taft-algebra-objects}.
Since $\tsimple[d] \simeq \obstuff{K}_d \otimes \obstuff{K}_d$, we get that $\morstuff{m}:\tsimple[d] \otimes \obstuff{B}_d(\obstuff{A}_{\typeT_{\ell}}) \to \obstuff{B}_d(\obstuff{A}_{\typeT_{\ell}})$ is an isomorphism of $\obstuff{B}_d(\obstuff{A}_{\typeT_{\ell}})$-modules. This gives that:
\begin{equation*}
\tsimple[d] \otimes \obstuff{X} \otimes \obstuff{B}_d(\obstuff{A}_{\typeT_{\ell}}) \xrightarrow{\gamma_{d,\obstuff{X}}\otimes \mathrm{id}_{\obstuff{B}_d(\obstuff{A}_{\typeT_{\ell}})}} \obstuff{X} \otimes \tsimple[d] \otimes \obstuff{B}_d(\obstuff{A}_{\typeT_{\ell}}) \xrightarrow{\mathrm{id}_{\obstuff{X}} \otimes \morstuff{m}} \obstuff{X} \otimes \obstuff{B}_d(\obstuff{A}_{\typeT_{\ell}})
\end{equation*}
is an isomorphism for every simple object $\obstuff{X} \in \sstmod$ where $\gamma_{d,\obstuff{X}}$ is the half-braiding introduced in the proof of \fullref{prop:alg-to-mod-cat}. Therefore, every $\tsimple[1]$-orbit has length exactly $d$.

Finally, $\obstuff{B}_d(\obstuff{A}_{\typeT_{\ell}})$ is separable since $2k \in \K^*$. Consider the following splitting of $\mu$
\begin{center}
$s:\begin{array}{ccccc}
\obstuff{B}_d(\obstuff{A}_{\typeT_{\ell}}) & \to & \obstuff{B}_d(\obstuff{A}_{\typeT_{\ell}}) \otimes \obstuff{B}_d(\obstuff{A}_{\typeT_{\ell}})\\
(x_j)_{j} & \mapsto & (\frac{1}{2k}\sum_{i=0}^{2k-1}{c^{-1}_{i,j-i}(x_j)})_{j}\\
\end{array}$.
\end{center}
\end{proof}

\begin{remark}
We will see later on that $\oneMod_{\sstmod}(\obstuff{B}_d(\obstuff{A}_{\typeT_{\ell}}))$ is an exact $\sstmod$-module category even when the characteristic of $\K$ is equal to $2$.
Note that
\begin{equation*}
\obstuff{B}_d(\obstuff{A}_{\typeT_{\ell}}) \simeq \bigoplus_{i=0}^{k-1}{\tsimple[id] \oplus \tsimple[id]\otimes \JT} \simeq \obstuff{R}_d \otimes (\tsimple[0] \oplus \JT).
\end{equation*}
Hence define $\obstuff{A}_{\typeT_{\ell}}:=\tsimple[0] \oplus \JT$. It is an algebra object of $\sstmod$ which is not in $\catstuff{W}$ since the degree of $\JT$ is $n$ and not $0$.
\end{remark}

\subsubsection{Equivariantization}
For $\Gamma$ of type $\typeDT$, we will construct $\oneMod_{\sstmod}(\obstuff{B}_d(\obstuff{A}_{\Gamma}))$ as a certain equivariantization of the exact, indecomposable, regular $\sstmod$-module category of type $\typeA$ cf \cite[Definition 2.7.2]{EGNO}. Note that:
\begin{equation*}
{\oneMod_{\sstmod}(\obstuff{B}_d(\obstuff{A}_{\typeA_{n-1}}))=\oneMod_{\sstmod}(\obstuff{R}_d)}.
\end{equation*}
If $n=2\ell$, let $\JD:=\tinde[n-1,\ell+1]$. When we do not need to treat type $\typeD$ and $\typeT$ separately we will just use $\J$ for $\JD$ and $\JT$. Let us also denote by $\obstuff{O}_{\J}:=\obstuff{R}_d \otimes \J$. The multiplications of $\obstuff{B}_d(\obstuff{A}_{\typeD})$ and of $\obstuff{B}_d(\obstuff{A}_{\typeT})$ induce an isomorphism of $\obstuff{R}_d$-bimodules $\theta_{\J}:\obstuff{O}_{\J} \otimes_{\obstuff{R}_d} \obstuff{O}_{\J} \iso \obstuff{R}_d$. We can now define a module autoequivalence:
\begin{center}
$\Phi_{\J}:\begin{array}{ccc}
\oneMod_{\sstmod}(\obstuff{R}_d) &\to & \oneMod_{\sstmod}(\obstuff{R}_d)\\
\algstuff{N} & \mapsto & \algstuff{N}\otimes_{\obstuff{R}_d} \obstuff{O}_{\J}
\end{array}$.
\end{center}
Since $\J\otimes \J \simeq \tsimple[0]$, $\Phi_{\J}$ defines an action of $\Z/2\Z$ on $\oneMod_{\sstmod}(\obstuff{R}_d)$ in the sense of \cite[Definition 4.15.1]{EGNO}. Let us denote the $\Z/2\Z$-equivariantization by $\oneMod_{\sstmod}(\obstuff{R}_d)^{\J}$. It is naturally a $\sstmod$-module category.

\begin{proposition}\label{prop:equi-coh}
If $n=2\ell$, then  $\oneMod_{\sstmod}(\obstuff{R}_d)^{\JD}$ is equivalent to $\oneMod_{\sstmod}(\obstuff{B}_d(\obstuff{A}_{\typeD_{\ell+1}}))$ as $\sstmod$-module categories.
If $n=2\ell+1$, then  $\oneMod_{\sstmod}(\obstuff{R}_d)^{\JT}$ is equivalent to $\oneMod_{\sstmod}(\obstuff{B}_d(\obstuff{A}_{\typeT_{\ell}}))$ as $\sstmod$-module categories.
\end{proposition}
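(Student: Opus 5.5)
We identify both sides with module categories over one explicit algebra object, using the standard dictionary between equivariantization and crossed-product algebras. Write $\obstuff{B}:=\obstuff{B}_d(\obstuff{A}_{\Gamma})$ for $\Gamma=\typeD_{\ell+1}$ or $\typeT_{\ell}$. In both cases $\obstuff{B}\simeq \obstuff{R}_d\otimes(\tsimple[0]\oplus\J)\simeq \obstuff{R}_d\oplus\obstuff{O}_{\J}$ as an $\obstuff{R}_d$-bimodule. Its multiplication restricted to $\obstuff{O}_{\J}\otimes_{\obstuff{R}_d}\obstuff{O}_{\J}$ is exactly $\theta_{\J}$. So $\obstuff{B}$ is the $\Z/2\Z$-graded algebra $\obstuff{R}_d\oplus\obstuff{O}_{\J}$, the crossed product of $\obstuff{R}_d$ with the $\Z/2\Z$-action $\Phi_{\J}$. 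The first step is to check that the $\Z/2\Z$-action on $\oneMod_{\sstmod}(\obstuff{R}_d)$ is the one induced by the invertible bimodule $\obstuff{O}_{\J}$. Its tensor structure $\Phi_{\J}\circ\Phi_{\J}\cong\mathrm{id}$ is given by $\theta_{\J}$, and the compatibility on three factors is the associativity of $\obstuff{B}$.

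Second, I construct a functor $\mathcal{E}:\oneMod_{\sstmod}(\obstuff{R}_d)^{\J}\to\oneMod_{\sstmod}(\obstuff{B})$. An equivariant object is a pair $(\algstuff{N},u)$ with $u:\algstuff{N}\otimes_{\obstuff{R}_d}\obstuff{O}_{\J}\iso\algstuff{N}$ satisfying $u\circ(u\otimes\mathrm{id})=\mathrm{id}_{\algstuff{N}}\otimes\theta_{\J}$. The right $\obstuff{B}$-action on $\algstuff{N}$ is defined on $\algstuff{N}\otimes\obstuff{R}_d$ by the given action and on $\algstuff{N}\otimes\obstuff{O}_{\J}$ by $u$ precomposed with the projection to $\otimes_{\obstuff{R}_d}$. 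Associativity of this action is precisely the equivariance condition. Conversely, a $\obstuff{B}$-module restricts to an $\obstuff{R}_d$-module, and the $\obstuff{O}_{\J}$-part of its action factors through $\otimes_{\obstuff{R}_d}$. It yields such a $u$, which is invertible because $\obstuff{O}_{\J}$ is invertible with inverse itself via $\theta_{\J}$. These constructions are mutually inverse on objects and morphisms, since equivariant morphisms correspond exactly to morphisms commuting with the $\obstuff{O}_{\J}$-action. The left $\sstmod$-action is $\obstuff{X}\otimes\placeholder$ on both sides and commutes strictly with everything above, so $\mathcal{E}$ is an equivalence of $\sstmod$-module categories.

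This is the general statement that for a $G$-graded algebra $\bigoplus_{g}\obstuff{O}_g$ with invertible components, modules over it correspond to $G$-equivariant objects for the action of $G$ on modules over the degree-zero part, cf. \cite[\S 4.15]{EGNO}. I would phrase it that way to reduce the verification to the case of $\Z/2\Z$.

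The main obstacle is the first step. We must extract $\theta_{\J}$ from the multiplication of $\obstuff{B}$ and check that the associativity constraint on $\obstuff{O}_{\J}^{\otimes 3}$ has no extra sign, that is, that the $3$-cocycle is trivial. In type $\typeT$ this is supplied by \fullref{lemma:Kd-struct}, where the scalar $\epsilon=1$ was computed. In type $\typeD$ it follows from the existence of the associative algebra $\obstuff{A}_{\typeD_{\ell+1}}=\tsimple[0]\oplus\JD$ in $\catstuff{W}$ imported from \cite{Ost03}. Indeed, associativity of $\obstuff{A}_{\typeD_{\ell+1}}$ restricted to $\JD^{\otimes 3}$ is exactly the required identity. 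A secondary subtlety is that $\obstuff{O}_{\J}\otimes_{\obstuff{R}_d}\obstuff{O}_{\J}$ exists in the semisimple setting, because $\obstuff{R}_d$ is separable as shown in \fullref{prop:alg-to-mod-cat}.
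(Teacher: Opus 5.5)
Your proposal is correct and follows essentially the same route as the paper. The paper also restricts the $\obstuff{O}_{\J}$-part of a $\obstuff{B}_d$-action to obtain $u_{\algstuff{M}}\colon \algstuff{M}\otimes_{\obstuff{R}_d}\obstuff{O}_{\J}\to\algstuff{M}$, inverts it using $\theta_{\J}^{-1}$, matches associativity with the equivariance condition, and conversely builds the $\obstuff{R}_d\oplus\obstuff{O}_{\J}$-action from $u$. Your explicit check that $\Phi_{\J}$ is a coherent $\Z/2\Z$-action, via associativity of $\obstuff{B}$ on $\obstuff{O}_{\J}^{\otimes 3}$, fills in a point the paper only asserts from $\J\otimes\J\simeq\tsimple[0]$.
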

\begin{proof}
Let us explicitly construct such equivalence. Take $\algstuff{M}$ a $\obstuff{B}_d$-module where  $\obstuff{B}_d$ denotes either $\obstuff{B}_d(\obstuff{A}_{\typeD})$ or $\obstuff{B}_d(\obstuff{A}_{\typeT})$ according to the parity of $n$. Restricting the action of $\obstuff{B}_d$ to $\obstuff{R}_d$ on $\algstuff{M}$, gives a morphism of $\obstuff{R}_d$-modules: $u_{\algstuff{M}}: \algstuff{M} \otimes_{\obstuff{R}_d} \obstuff{O}_{\J} \to \algstuff{M}$. This morphism is an isomorphism. Indeed, using the inverse of $\theta_{\J}$, we can consider the composition ${(u_{\algstuff{M}}\otimes\mathrm{id}_{\obstuff{O}_{\J}}) \circ (\mathrm{id}_{\algstuff{M}} \otimes \theta_{\J}^{-1}): \algstuff{M} \otimes_{\obstuff{R}_d} \obstuff{R}_d \to \algstuff{M} \otimes_{\algstuff{R}_d} \obstuff{O}_{\J}}$. Finally, associativity of the $\obstuff{B}_d$-action on $\algstuff{M} \otimes_{\algstuff{R}_d} \obstuff{O}_{\J} \otimes_{\algstuff{R}_d} \obstuff{O}_{\J}$ gives:
$u_{\algstuff{M}} \circ \Phi_{\J}(u_{\algstuff{M}})$ is equal to $\Phi^2_{\J}(\algstuff{M}) \iso \algstuff{M}$. Hence $(\algstuff{M},u_{\algstuff{M}}) \in \oneMod_{\sstmod}(\obstuff{R}_d)^{\J}$.

Conversely, take $(\algstuff{M},u_{\algstuff{M}}) \in \oneMod_{\sstmod}(\obstuff{R}_d)^{\J}$, then we can define a $\obstuff{B}_d=\obstuff{R}_d \oplus \obstuff{O}_{\J}$-action on $\algstuff{M}$ using the composition of  $\algstuff{M} \otimes \obstuff{O}_{\J} \to \algstuff{M} \otimes_{\obstuff{R}_d} \obstuff{O}_{\J}$ with $u_{\algstuff{M}}$.
\end{proof}

\begin{remark}
Thanks to \fullref{prop:equi-coh}, we obtain that $\oneMod_{\sstmod}(\obstuff{B}_d(\obstuff{A}_{\typeT_{\ell}}))$ is an exact module category even when the characteristic of $\K$ is equal to $2$. This completes \fullref{prop:type-T-eir}. Indeed, it is enough to show that $\oneMod_{\sstmod}(\obstuff{R}_d)^{\JT}$ is semisimple. We know that $\oneMod_{\sstmod}(\obstuff{R}_d)$ is semisimple since $k=\frac{n}{d}$ is invertible in $\K$ hence $\obstuff{R}_d$ is separable. Furthermore, the action of $\Z/2\Z$ defined by $\Phi_{\JT}$ is free on the simple objects of $\oneMod_{\sstmod}(\obstuff{R}_d)$ since $n$ is odd. Hence thanks to \cite[Remark 4.15.8]{EGNO}, we can conclude that $\oneMod_{\sstmod}(\obstuff{R}_d)^{\JT}$ is also semisimple.
\end{remark}

\begin{proposition}\label{prop:decat-coherence}
For each pair $(\Gamma,d) \in \typeADET(n)$, we have:
\begin{equation*}
\Gros\Big(\oneMod_{\sstmod}\big(\obstuff{B}_d(\obstuff{A}_{\Gamma})\big)\Big) \cong_{\N} \taftm(\Gamma,d).
\end{equation*}
\end{proposition}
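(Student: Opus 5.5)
We propose to compute the simple objects of $\oneMod_{\sstmod}(\obstuff{B}_d(\obstuff{A}_{\Gamma}))$ together with the action of $\xpar=[\tinde[2,0]]$ and $\ypar=[\tsimple[1]]$ on them, and then match the result with the matrices defining $\taftm(\Gamma,d)$. The first step is to use the free--forgetful adjunction $(\morstuff{F},\morstuff{U})$. Every simple module is a summand of some $\obstuff{X}\otimes\obstuff{B}_d(\obstuff{A}_\Gamma)$ with $\obstuff{X}$ simple in $\sstmod$. Moreover,
\[
\Hom(\obstuff{X}\otimes\obstuff{B},\obstuff{Y}\otimes\obstuff{B})\cong\Hom_{\sstmod}(\obstuff{X},\obstuff{Y}\otimes\obstuff{R}_d\otimes\obstuff{A}_\Gamma).
\]
All multiplicities are therefore governed by the fusion rules of $\sstmod$, which we read off from \fullref{proposition:taft-tensor} modulo projectives. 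By \fullref{prop:decat-is-ADET} (or directly, by the constructions in \fullref{prop:alg-to-mod-cat} and \fullref{prop:type-T-eir}), $\Gros$ is already a regular, transitive, NIM, admissible $\N$-module with apex $\cell[b]$, and its $\ypar$-orbits all have length $d$.

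The cleanest route then goes through the uniqueness part of \fullref{proposition:taft-small-apex}. The second half of that proof shows that any admissible, regular, transitive, NIM $\N$-module with bottom cell apex is $\N$-equivalent to $\taftm(\Gamma',d')$. Here $d'$ is the common orbit length of $\ypar$, and $\Gamma'$ is the graph whose adjacency matrix is $\batrix$, the action of $\xpar$ on the $\ypar$-invariant orbit sums. The argument producing $\Gamma'$ uses only these properties, not the $\N$-module hypothesis in the circular sense. We already know $d'=d$, so it remains to identify $\Gamma'=\Gamma$.

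For this we pass to $\ypar$-orbits. Summing over orbits amounts to computing the $\catstuff{W}$-module category (respectively the graded piece) obtained by tensoring down along $\obstuff{R}_d$. Concretely, $\batrix$ is the fusion graph of $\tinde[2,0]$ acting on simple $\obstuff{B}_d(\obstuff{A}_\Gamma)$-modules up to tensoring with $\tsimple[1]$. The cases split as follows.
\begin{itemize}
\item For $\Gamma$ of type $\typeADE$, the algebra $\obstuff{A}_\Gamma$ lies in $\catstuff{W}\simeq\tilde{\catstuff{C}}_\xi$ (\fullref{prop:tensor-equiv}). Ostrik's classification \cite[Theorem 6.1]{Ost03} states that the fusion graph of $\obstuff{V}_1$ on $\oneMod_{\catstuff{C}_{n-2}(\xi)}(\obstuff{A}_\Gamma)$ is $\Gamma$. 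Since $\tinde[2,0]$ plays the role of $\obstuff{V}_1$ up to the invertible twist by $\tsimple[1]$, and $\sstmod$ is generated by $\catstuff{W}$ together with $\tsimple[1]$ and $\tinde[2,0]$, this identifies $\batrix$ with the adjacency matrix of $\Gamma$.
\item For type $\typeD$ and type $\typeT$, we would alternatively use the equivariantization description of \fullref{prop:equi-coh}. Start from type $\typeA$: there $\oneMod(\obstuff{R}_d)$ has simples $\tinde[\ell,r]\otimes\obstuff{R}_d$, and orbit sums give exactly the $\typeA_{n-1}$ graph. Then $\Phi_\J$ acts on this graph. In type $\typeD$ it is the reflection $\ell\mapsto n-\ell$, fixing the middle vertex. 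Equivariantization splits the fixed vertex into two and identifies each free orbit, producing $\typeD_{\ell+1}$. In type $\typeT$ the action is free, since $n$ is odd, and the quotient of $\typeA_{2\ell}$ is the folded graph $\typeT_\ell$ with its loop.
\end{itemize}

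Once $\batrix=\amatrix[\Gamma\coprod\Gamma^{\mathrm{op}}]$ (respectively with loop), the NIM relation $\batrix[j]^{\mathrm{T}}=\batrix[1-j]$ and the tree or loop argument at the end of the proof of \fullref{proposition:taft-small-apex} pin down the shifts. This yields $\Gros\cong_\N\taftm(\Gamma,d)$.

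The main obstacle is the type $\typeADE$ identification of the orbit graph with $\Gamma$ in the presence of the extra $\tsimple[1]$ and odd-degree simples. One must check that the $\tsimple[1]$-orbits of simple $\obstuff{B}_d(\obstuff{A}_\Gamma)$-modules correspond bijectively to simple $\obstuff{A}_\Gamma$-modules in $\catstuff{W}$, i.e.\ to vertices of $\Gamma$. I would try to establish this using the $C_{2n}$-grading: each orbit meets the degree-zero component $\catstuff{W}$ in a single simple $\obstuff{A}_\Gamma$-module. After that, the $\tinde[2,0]$-multiplicities are compared with those of $\twmod[1]=\tinde[2,0]^{\otimes 2}$ minus $\tsimple[1]$, which is \eqref{eq:taft-action-XY}. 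Since $\batrix^2-\imatrix$ is then the known $\twmod[1]$-graph, this determines $\batrix$ among $\typeADET$ adjacency matrices.
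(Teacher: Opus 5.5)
Your reduction through the second half of the proof of \fullref{proposition:taft-small-apex} is legitimate, since that argument only uses properties that $\Gros$ of an exact regular module category automatically has. It is a different route from the paper's. The paper identifies the simple $\obstuff{B}_d(\obstuff{A}_\Gamma)$-modules directly (via \cite{KO02} in type $\typeADE$, via \fullref{prop:equi-coh} in type $\typeT$) and reads off the action from \fullref{proposition:taft-tensor}. Types $\typeA$, $\typeD$ and $\typeT$ go through as you describe. In type $\typeD$, note that $\JD\otimes\tinde[n/2,r]\cong\tinde[n/2,r]$, so the middle simples are genuinely fixed and, by regularity, split into two $\ypar$-orbits of length $d$.

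The gap is type $\typeE$, i.e.\ your $\typeADE$ bullet. The claim that $\tinde[2,0]$ ``plays the role of $\obstuff{V}_1$ up to a twist by $\tsimple[1]$'' is not an argument. The only comparison available is $\catstuff{W}\simeq\tilde{\catstuff{C}}_\xi$, and nothing relates the odd-degree part of $\sstmod$ to the odd part of $\catstuff{C}_{n-2}(\xi)$. Indeed, in $\taftp$ one has $\xpar^2=\ypar([\twmod[1]]+[\tsimple[0]])$, and for even $n$ the object $\tsimple[1]$ has no square root.

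Your grading repair also miscounts. A simple module is supported on one coset of $\langle 2d\rangle\subseteq C_{2n}$, and $\tsimple[1]$ has degree $2$. So a $\ypar$-orbit only visits cosets of one parity, and orbits of odd parity never meet degree $0$. Simple $\obstuff{A}_\Gamma$-modules in $\catstuff{W}$ correspond to one colour class $\Gamma_0$ of the bipartition, not to all vertices. For instance, for $\Gamma=\typeA_{n-1}$ and $n\geq 3$ there are $\lfloor n/2\rfloor$ of them against $n-1$ orbits. Hence ``$\batrix^2-\imatrix$ is the known $\twmod[1]$-graph'' is only justified on the even block, and ``this determines $\batrix$ among $\typeADET$ matrices'' is left unproved.

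The even block does suffice to close the gap. Put $\obstuff{B}=\obstuff{B}_d(\obstuff{A}_\Gamma)$. For $\obstuff{X},\obstuff{Y}\in\catstuff{W}$,
\[
\Hom_{\obstuff{B}}(\obstuff{X}\otimes\obstuff{B},\obstuff{Y}\otimes\obstuff{B})\cong\bigoplus_{t=0}^{k-1}\Hom(\obstuff{X},\obstuff{Y}\otimes\tsimple[td]\otimes\obstuff{A}_\Gamma),
\]
and degrees force $t=0$. Together with $\tsimple[td]\otimes\obstuff{B}\cong\obstuff{B}$, this identifies the $\langle 2d\rangle$-supported part of the module category with $\oneMod_{\catstuff{W}}(\obstuff{A}_\Gamma)$. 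Each even-parity orbit meets this part exactly once. Consequently, $C_{\Gamma'}^2-\imatrix$ restricted to one colour class of $\Gamma'$ agrees, up to permutation, with Ostrik's $\obstuff{V}_2$-graph $(C_\Gamma^2-\imatrix)|_{\Gamma_0}$. Since $h(\Gamma')=n\in\{12,18,30\}$, a finite check rules out $\typeA_{n-1}$ and $\typeD_{n/2+1}$: compare the sizes of the colour classes, and the vertex degrees on the diagonal. This forces $\Gamma'=\Gamma$.
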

\begin{proof}
When $\Gamma$ is of type $\typeADE$, the results of \cite{KO02} identify the simple $\obstuff{A}_{\Gamma}$-modules with the vertices of $\Gamma$. The algebra object $\obstuff{B}_d(\obstuff{A}_{\Gamma})$ adds a cyclic label in $\Z/d\Z$. For type $\typeT$, \fullref{prop:equi-coh} identifies the corresponding module category with the equivariantization of the type $\typeA$ module category. Thus its simple objects are indexed by the orbits of the involution induced by $\obstuff{J}_T$. Now, it is enough to compute the action of $\tsimple[1]$ and $\tinde[2,0]$ on this set of simple objects type by type. That we get the same action as for $\taftm(\Gamma,d)$, follows from \fullref{proposition:taft-tensor}.
\end{proof}

\begin{remark}
Given the underlying $\taft$-module of a connected, simple, separable algebra object whose associated module category is regular, the function \texttt{underlying\_diagram} available here: \href{https://github.com/RaphaelPaegelow/decat\_taft}{semisimple\_taft.jl} computes the pairs $(\Gamma,d)$ associated to it. Note that while $\Gamma$ depends on the algebra structure, $d$ is only defined by this underlying module.
\end{remark}

\subsection{Categorical Clifford theory}\label{subsection:cat-clifford-theory}

In this subsection, let us introduce the portion of categorical Clifford theory needed below. This approach is parallel to the main results of \cite{MM12} but these cannot be directly applied since we can only apply them when $\K$ is of characteristic zero. Since we want to apply the results of this subsection to our concrete situation, let us work in the following framework. Let $\catstuff{C}$ be a fusion category faithfully graded by a finite cyclic group $G$, and let us denote by $\catstuff{D}:=\catstuff{C}_0$, where $0$ is the neutral element of $G$.

 Thanks to \cite[Theorem 6.1]{ENO10}, we have that $\catstuff{C}_g$ is an invertible $\catstuff{D}$-bimodule category for each $g \in G$. Moreover, $m_{g,h}:\catstuff{C}_g \boxtimes_{\catstuff{D}} \catstuff{C}_h \to \catstuff{C}_{gh}$ induced by the tensor product on $\catstuff{C}$ is an equivalence of $\catstuff{D}$-bimodule categories. In particular, $m_{g,g^{-1}}:\catstuff{C}_g \boxtimes_{\catstuff{D}} \catstuff{C}_{g^{-1}} \to \catstuff{D}$ is an equivalence of $\catstuff{D}$-bimodule categories.
%We can then consider a well defined action of $G$ on the set of equivalence classes of exact, indecomposable $\catstuff{D}$-module categories as $g\star [\catstuff{N}] := [\catstuff{C}_g \boxtimes_{\catstuff{D}} \catstuff{N}]$.

\begin{remark}
Although \cite[Theorem 6.1]{ENO10} is stated over an algebraically closed field of characteristic $0$, the proof can be without any effort generalized to an arbitrary algebraically closed field. Indeed, invertibility of the homogeneous components of faithfully graded fusion categories holds in general \cite[Proposition 3.9]{SS26}. Moreover, the proof of \cite[Theorem 6.1 (ii)]{ENO10} applies without a characteristic-zero assumption since it only uses rigidity, semisimplicity and Morita theory.
\end{remark}

Let $\catstuff{M}$ be an exact, indecomposable $\catstuff{C}$-module category. Consider the decomposition
\begin{equation*}
\catstuff{M}_{|\catstuff{D}}= \bigoplus_{\omega \in \Omega}{\catstuff{N}_{\omega}}
\end{equation*}
into indecomposable $\catstuff{D}$-module categories. We then define a group action of $G$ on $\Omega$ in the following way. The action bifunctor of $\catstuff{C}$ on $\catstuff{M}$ induces for each $g\in G$ a natural functor $a_g: \catstuff{C}_g \boxtimes_{\catstuff{D}} \catstuff{M} \to \catstuff{M}$ which is a $\catstuff{D}$-module equivalence since $\catstuff{C}_g$ is an invertible $\catstuff{D}$-bimodule. For each $\omega \in \Omega$, the restriction $a_{g,\omega}$ of $a_g$ to $\catstuff{C}_g \boxtimes_{\catstuff{D}} \catstuff{N}_{\omega}$ is an equivalence onto a unique indecomposable summand of  $\catstuff{M}_{|\catstuff{D}}$. Define $g.\omega$ to be the index of this indecomposable summand. The associativity constraint  of the $\catstuff{C}$-action restricted to $\catstuff{C}_g \times \catstuff{C}_h \times \catstuff{N}_{\omega}$ assures that $g.(h.\omega)=(gh).\omega$, hence that the $G$-action on $\Omega$ is well defined.

\begin{proposition}\label{prop:cat-cliff-1}
\begin{enumerate}[label=(\roman*)]
\item The action of  $G$  on $\Omega$ is transitive.
\item If $\omega \in \Omega$, let $H:=\mathrm{Stab}_{G}(\omega)$, then the $\catstuff{D}$-module structure on $\catstuff{N}_{\omega}$ extends to a $\catstuff{C}_H:=\bigoplus_{h \in H}{\catstuff{C}_h}$-module structure $\tilde{\catstuff{N}}_{\omega}$.
\item For each $\omega \in \Omega$, we have an equivalence of $\catstuff{C}$-module categories:
\begin{equation*}
\catstuff{M} \simeq \catstuff{C} \boxtimes_{\catstuff{C}_{H}} \tilde{\catstuff{N}}_{\omega}
\end{equation*}
where $H:=\mathrm{Stab}_{G}(\omega)$.
\end{enumerate}
\end{proposition}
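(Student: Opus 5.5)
We prove the three parts in order. Throughout we use that $\catstuff{C}_g \boxtimes_{\catstuff{D}} \catstuff{C}_{g^{-1}} \simeq \catstuff{D}$, that $a_g$ is a $\catstuff{D}$-module equivalence, and that $\catstuff{M}$ is semisimple because it is exact over a fusion category.

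\textbf{Part (i).} Fix $\omega \in \Omega$ and set
\[
\catstuff{M}' := \bigoplus_{\omega' \in G.\omega} \catstuff{N}_{\omega'} .
\]
Each $a_g$ sends $\catstuff{C}_g \boxtimes_{\catstuff{D}} \catstuff{N}_{\omega'}$ into $\catstuff{N}_{g.\omega'}$. Since every object of $\catstuff{C}$ is a direct sum of homogeneous objects, $\catstuff{M}'$ is a $\catstuff{C}$-module subcategory of $\catstuff{M}$. Its complement $\bigoplus_{\omega' \notin G.\omega} \catstuff{N}_{\omega'}$ is also $\catstuff{C}$-stable, by the same argument. So $\catstuff{M}$ splits as a direct sum of $\catstuff{C}$-module categories. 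Indecomposability of $\catstuff{M}$ forces the complement to vanish, which gives $G.\omega = \Omega$.

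\textbf{Part (ii).} For $h \in H$ the action bifunctor restricts to
\[
\catstuff{C}_h \times \catstuff{N}_\omega \to \catstuff{N}_{h.\omega} = \catstuff{N}_\omega .
\]
The associativity and unit constraints of $\catstuff{M}$ restrict to these components. Hence $\catstuff{N}_\omega$ is a $\catstuff{C}_H$-module category $\tilde{\catstuff{N}}_\omega$ extending its $\catstuff{D}$-structure. The only thing to check is that the summand really is stable, and this is exactly the definition of $H$.

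\textbf{Part (iii).} We use the induction functor
\[
\Psi : \catstuff{C} \boxtimes_{\catstuff{C}_H} \tilde{\catstuff{N}}_\omega \to \catstuff{M}, \qquad \obstuff{X} \boxtimes \obstuff{N} \mapsto \obstuff{X} \otimes \obstuff{N}.
\]
It is well defined because the action is $\catstuff{C}_H$-balanced, by the associativity constraint, and it is a $\catstuff{C}$-module functor by construction. Choose coset representatives $g_1, \dots, g_s$ of $G/H$. Then $\catstuff{C} = \bigoplus_i \catstuff{C}_{g_i H}$ as right $\catstuff{C}_H$-modules, so
\[
\catstuff{C} \boxtimes_{\catstuff{C}_H} \tilde{\catstuff{N}}_\omega \simeq \bigoplus_i \catstuff{C}_{g_i H} \boxtimes_{\catstuff{C}_H} \tilde{\catstuff{N}}_\omega .
\]
The key step is the identification
\[
\catstuff{C}_{g_i H} \boxtimes_{\catstuff{C}_H} \tilde{\catstuff{N}}_\omega \simeq \catstuff{C}_{g_i} \boxtimes_{\catstuff{D}} \catstuff{N}_\omega .
\]
This follows because $\catstuff{C}_{g_i H} \simeq \catstuff{C}_{g_i} \boxtimes_{\catstuff{D}} \catstuff{C}_H$ as $(\catstuff{D},\catstuff{C}_H)$-bimodule categories, using the equivalences $m_{g,h}$. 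Under this identification, $\Psi$ restricted to the $i$-th summand becomes $a_{g_i,\omega}$, which is an equivalence onto $\catstuff{N}_{g_i.\omega}$.

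The $g_i.\omega$ are pairwise distinct because the $g_i$ lie in different cosets of the stabilizer. By (i) they exhaust $\Omega$. Therefore $\Psi$ is an equivalence of $\catstuff{C}$-module categories.

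\textbf{Main obstacle.} The hardest step is the bimodule equivalence $\catstuff{C}_{gH} \simeq \catstuff{C}_g \boxtimes_{\catstuff{D}} \catstuff{C}_H$ and its compatibility with the $\catstuff{C}_H$-balancing, in arbitrary characteristic. I would derive it directly from invertibility of the graded components, via the Remark citing \cite{SS26} and \cite{ENO10}, and from associativity of relative Deligne products of semisimple module categories. Equivalently, one can realize everything through algebra objects: $\catstuff{M} \simeq \oneMod_{\catstuff{C}}(\obstuff{A})$ for an algebra object $\obstuff{A}$ supported in $\catstuff{C}_H$. One then checks that the relative product corresponds to induced modules $\obstuff{X} \otimes \obstuff{A}$, which avoids Deligne-product technicalities.
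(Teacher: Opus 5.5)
Your proposal is correct and follows essentially the same route as the paper: the orbit subcategory $\bigoplus_{\omega'\in G.\omega}\catstuff{N}_{\omega'}$ together with indecomposability gives (i), restricting the action and its constraints to $H$ gives (ii), and for (iii) you decompose over cosets and use $\catstuff{C}_{tH}\simeq\catstuff{C}_t\boxtimes_{\catstuff{D}}\catstuff{C}_H$ via the $m_{t,h}$, so that each summand of the induction functor becomes $a_{t,\omega}$. You also make explicit two points the paper leaves implicit: the complement in (i) is $\catstuff{C}$-stable, and the $g_i.\omega$ are pairwise distinct in (iii).
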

\begin{proof}
\begin{enumerate}[label=(\roman*)]
\item Let $O \subset \Omega$ be a $G$-orbit and denote by $\catstuff{M}_{O}:=\bigoplus_{\omega \in O}{\catstuff{N}_{\omega}}$. For every $g \in G$, $\catstuff{C}_g \otimes \catstuff{M}_{O} \subset \catstuff{M}_{O}$. Hence $\catstuff{M}_O$ is a $\catstuff{C}$-module subcategory of $\catstuff{M}$. Because $\catstuff{M}$ is indecomposable, this forces the $G$-action to have a unique orbit, hence to be transitive.
\item For every $h \in H$, the essential image of $a_{h,\omega}:\catstuff{C}_h \boxtimes_{\catstuff{D}} \catstuff{N}_{\omega} \to \catstuff{M}$ is $\catstuff{N}_{\omega}$. Hence $a_{h,\omega}:\catstuff{C}_h \boxtimes_{\catstuff{D}} \catstuff{N}_{\omega} \to \catstuff{N}_{\omega}$ is an equivalence. Taking the direct sum over all $h \in H$ gives a functor $\catstuff{C}_H \boxtimes_{\catstuff{D}} \catstuff{N}_{\omega} \to \catstuff{N}_{\omega}$. We can then naturally define an action of $\catstuff{C}_H$ on $\catstuff{N}_{\omega}$ by restricting associativity and unit constraints of the $\catstuff{C}$-action on $\catstuff{M}$.
\item  Note that the restriction of the action functor $\catstuff{N} \boxtimes \catstuff{N}_{\omega} \to \catstuff{M}$ is $\catstuff{C}_H$-balanced and gives a $\catstuff{C}$-module functor $\mathrm{A}:\catstuff{C} \boxtimes_{\catstuff{C}_H} \tilde{\catstuff{N}}_{\omega} \to \catstuff{M}$. Choose now representatives $t \in G$ of the cosets $G/H$. We then have:
\begin{equation*}
\catstuff{C} = \bigoplus_{t \in G/H}{\bigoplus_{h \in H}{\catstuff{C}_{th}}}.
\end{equation*}
If we denote by $\catstuff{C}_{tH}:=\bigoplus_{h \in H}{\catstuff{C}_{th}}$, then $\catstuff{C}_{tH} \boxtimes_{\catstuff{C}_H} \tilde{\catstuff{N}}_{\omega} \simeq \catstuff{C}_t \boxtimes_{\catstuff{D}} \catstuff{N}_{\omega}$ since ${\catstuff{C}_t \boxtimes_{\catstuff{D}} \catstuff{C}_{H} \simeq \catstuff{C}_{tH}}$. This holds because the multiplication functor $m_{t,h}$ is an equivalence for every $h \in H$. Hence we have $\left(\catstuff{C} \boxtimes_{\catstuff{C}_H} \tilde{\catstuff{N}}_{\omega}\right)_{|\catstuff{D}} \simeq \bigoplus_{t \in G/H}{\catstuff{C}_t \boxtimes_{\catstuff{D}} \catstuff{N}_{\omega}}$. By construction, we then have that the restriction of $\mathrm{A}:\catstuff{C}_t \boxtimes_{\catstuff{D}} \catstuff{N}_{\omega} \to \catstuff{N}_{t.\omega}$ is an equivalence. By $(i)$, the categories $\catstuff{N}_{t.\omega}$ for $t \in G/H$ are precisely all the indecomposable $\catstuff{D}$-summands of $\catstuff{M}_{|\catstuff{D}}$. Hence $\mathrm{A}$ is an equivalence.
\end{enumerate}
\end{proof}

\begin{remark}
The \fullref{prop:cat-cliff-1} gives that $|\Omega|=[G:H]$. Since $G$ is a cyclic group, $|\Omega|$ completely determines $H$.
\end{remark}

If $H$ is a subgroup of $G$ and $\tilde{\catstuff{N}}$ a $\catstuff{C}_H$-extension of an exact, indecomposable $\catstuff{D}$-module category $\catstuff{N}$. We can define an action of $G$ on the set of equivalence classes of $g.\tilde{\catstuff{N}}$ as the extension of $\catstuff{C}_g \boxtimes_{\catstuff{D}} \catstuff{N}$ inherited by $\catstuff{C} \boxtimes_{\catstuff{C}_H} \tilde{\catstuff{N}}$, for $g \in G$.

\begin{proposition}\label{prop:cat-cliff-2}
Let $H_1$ and $H_2$ be two subgroups of $G$. Take respectively $\tilde{\catstuff{N}}_1$ and $\tilde{\catstuff{N}}_2$ two $\catstuff{C}_{H_1}$ and $\catstuff{C}_{H_2}$-module categories such that their respective restrictions to $\catstuff{D}$, $\catstuff{N}_1$, $\catstuff{N}_2$ are exact and indecomposable. Then
\begin{equation*}
\catstuff{C}\boxtimes_{\catstuff{C}_{H_1}} \tilde{\catstuff{N}}_1 \simeq \catstuff{C}\boxtimes_{\catstuff{C}_{H_2}} \tilde{\catstuff{N}}_2 \iff \begin{cases} H_1=H_2,\\ \exists g \in G, g.\tilde{\catstuff{N}}_1\simeq \tilde{\catstuff{N}}_2 \text{ as } \catstuff{C}_H\text{-module categories}. \end{cases}
\end{equation*}
\end{proposition}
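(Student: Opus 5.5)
We prove the two implications separately, using \fullref{prop:cat-cliff-1} as the main tool. Write $\catstuff{M}_i:=\catstuff{C}\boxtimes_{\catstuff{C}_{H_i}}\tilde{\catstuff{N}}_i$. By the computation in the proof of \fullref{prop:cat-cliff-1}(iii), using $\catstuff{C}_{tH}\boxtimes_{\catstuff{C}_H}\tilde{\catstuff{N}}\simeq\catstuff{C}_t\boxtimes_{\catstuff{D}}\catstuff{N}$, the restriction is
\begin{equation*}
(\catstuff{M}_i)_{|\catstuff{D}}\simeq\bigoplus_{t\in G/H_i}\catstuff{C}_t\boxtimes_{\catstuff{D}}\catstuff{N}_i .
\end{equation*}
Each summand is exact and indecomposable, because $\catstuff{C}_t$ is an invertible $\catstuff{D}$-bimodule category. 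The summand indexed by $t=0$ is $\catstuff{N}_i$ itself. Its stabiliser under the $G$-action on $\Omega$ contains $H_i$, since the action of $\catstuff{C}_{H_i}$ preserves $\tilde{\catstuff{N}}_i$.

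\textbf{Step 1: the stabiliser is exactly $H_i$.} We must exclude that $\catstuff{C}_t\boxtimes_{\catstuff{D}}\catstuff{N}_i$ lands in the summand indexed by $0$ for some $t\notin H_i$. In $\catstuff{C}\boxtimes_{\catstuff{C}_{H_i}}\tilde{\catstuff{N}}_i$ the summands for different cosets are distinct direct summands by construction. The action of $\catstuff{C}_t$ sends the $sH_i$-summand to the $tsH_i$-summand. So the $G$-set $\Omega$ is isomorphic to $G/H_i$, and the stabiliser of every point is $H_i$ because $G$ is abelian.

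\textbf{Step 2: the forward direction.} Suppose $F\colon\catstuff{M}_1\to\catstuff{M}_2$ is an equivalence of $\catstuff{C}$-module categories. Restricting to $\catstuff{D}$, it permutes the indecomposable $\catstuff{D}$-summands. This gives a $G$-equivariant bijection $\Omega_1\cong\Omega_2$, because $F$ intertwines the functors $a_g$ up to the module structure isomorphisms. Comparing stabilisers and using Step 1 gives $H_1=H_2=:H$. Choose $g$ such that $F$ sends the summand $\catstuff{N}_1$ onto the summand $\catstuff{C}_{g^{-1}}\boxtimes_{\catstuff{D}}\catstuff{N}_2$. Then $F$ restricts to an equivalence of $\catstuff{D}$-module categories $\catstuff{N}_1\simeq\catstuff{C}_{g^{-1}}\boxtimes_{\catstuff{D}}\catstuff{N}_2$. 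This restriction is compatible with the action of $\catstuff{C}_H$ inherited from $\catstuff{M}_1$ and $\catstuff{M}_2$, because $F$ is a $\catstuff{C}$-module functor and $\catstuff{C}_H$ preserves both summands. By the definition of the $G$-action on extensions, this says $\tilde{\catstuff{N}}_1\simeq g^{-1}.\tilde{\catstuff{N}}_2$. Applying $g$ gives $g.\tilde{\catstuff{N}}_1\simeq\tilde{\catstuff{N}}_2$.

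\textbf{Step 3: the converse.} We need $\catstuff{C}\boxtimes_{\catstuff{C}_H}(g.\tilde{\catstuff{N}})\simeq\catstuff{C}\boxtimes_{\catstuff{C}_H}\tilde{\catstuff{N}}$ for every $g\in G$. By definition, $g.\tilde{\catstuff{N}}$ is the $\catstuff{C}_H$-submodule category $\catstuff{C}_g\boxtimes_{\catstuff{D}}\catstuff{N}$ of $\catstuff{M}:=\catstuff{C}\boxtimes_{\catstuff{C}_H}\tilde{\catstuff{N}}$. This submodule category is the summand of $\catstuff{M}_{|\catstuff{D}}$ indexed by $g.\omega$. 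Applying \fullref{prop:cat-cliff-1}(iii) to $\catstuff{M}$ at the point $g.\omega$, whose stabiliser is again $H$, gives $\catstuff{M}\simeq\catstuff{C}\boxtimes_{\catstuff{C}_H}(g.\tilde{\catstuff{N}})$. Any $\catstuff{C}_H$-equivalence $g.\tilde{\catstuff{N}}_1\simeq\tilde{\catstuff{N}}_2$ then induces the required equivalence, by functoriality of $\catstuff{C}\boxtimes_{\catstuff{C}_H}-$.

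\textbf{Main obstacle.} The delicate point is the second half of Step 2. One has to check that the $\catstuff{C}_H$-module structure transported along $F$ agrees with the extension structure defining $g.\tilde{\catstuff{N}}_2$. This means tracking the associativity constraints of the $\catstuff{C}$-action through the identification $\catstuff{C}_{g^{-1}}\boxtimes_{\catstuff{D}}\catstuff{N}_2\subset\catstuff{M}_2$. I would handle this by expressing both structures through the functors $a_{h}$, for $h\in H$, and the module constraints of $F$.
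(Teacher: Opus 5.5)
Your proposal is correct and follows essentially the same route as the paper. You decompose $(\catstuff{M}_i)_{|\catstuff{D}}$ into summands indexed by $G/H_i$, use the induced $G$-equivariant bijection $G/H_1\cong G/H_2$ and commutativity of $G$ to get $H_1=H_2$, restrict the equivalence to matching summands to obtain the $\catstuff{C}_H$-equivalence, and deduce the converse from \fullref{prop:cat-cliff-1}(iii). The only differences are cosmetic: the paper picks $g$ so that the summand indexed by $gH$ goes directly onto $\catstuff{N}_2$, which gives $g.\tilde{\catstuff{N}}_1\simeq\tilde{\catstuff{N}}_2$ without your final ``apply $g$'' step, and it leaves implicit the coherence check you single out as the main obstacle.
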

\begin{proof}
Let $\catstuff{M}_1 := \catstuff{C}\boxtimes_{\catstuff{C}_{H_1}} \tilde{\catstuff{N}}_1$ and $\catstuff{M}_2:=\catstuff{C}\boxtimes_{\catstuff{C}_{H_2}} \tilde{\catstuff{N}}_2$ and suppose that we have $\mathrm{B}:\catstuff{M}_1\to \catstuff{M}_2$ an equivalence of $\catstuff{C}$-module categories. For $i=1,2$, restricting to $\catstuff{D}$ admits the following decomposition in indecomposable $\catstuff{D}$-module categories:
\begin{equation*}
{\catstuff{M}_i}_{|\catstuff{D}} = \bigoplus_{tH_i \in G/H_i}{\catstuff{C}_{tH_i} \boxtimes_{\catstuff{C}_{H_i}} \tilde{\catstuff{N}}_i}.
\end{equation*}
In order to match with the notation used in \fullref{prop:cat-cliff-1} with $\Omega_i:=G/H_i$ and ${\catstuff{N}_{\omega_i}:=\catstuff{C}_{\omega_i} \boxtimes_{\catstuff{C}_{H_i}} \tilde{\catstuff{N}}_i}$ for $\omega_i \in \Omega_i$. The $G$-action is just $g.(tH)=gtH$.
Now, since $\mathrm{B}$ is in particular a $\catstuff{D}$-module equivalence, it induces a $G$-equivariant bijection $\overline{\mathrm{B}}: \Omega_1 \iso \Omega_2$. Therefore, we have $g \in G$ such that $\overline{\mathrm{B}}^{-1}(H_2)=gH_1$. Since $\overline{\mathrm{B}}$ is $G$-equivariant and $G$ is abelian, we get $H_1=H_2$. Let us denote this subgroup by $H$. Finally, because  $\mathrm{B}:\catstuff{N}_{gH} \to \catstuff{N}_{H}$ is compatible with the action of $\catstuff{C}_h$ for every $h \in H$, we can consider the $\catstuff{C}_H$-extensions which by definition gives that $g.\catstuff{N}_1 \simeq \catstuff{N}_2$ as $\catstuff{C}_H$-module categories. The implication in the opposite direction follows directly from \fullref{prop:cat-cliff-1}.
\end{proof}

If $\catstuff{N}$ is an exact, indecomposable $\catstuff{D}$-module category, let
\begin{center}
$\pi_{\catstuff{N}}:\begin{array}{ccc}
\mathrm{Aut}_{\catstuff{D}}(\catstuff{N}) &\to& \mathrm{Sym}\left(\mathrm{Irr}(\catstuff{N})\right)\\
 \mathrm{B} &\mapsto & ([S] \mapsto [\mathrm{B}(S)])
\end{array}$
\end{center}
where $\mathrm{Irr}(\catstuff{N})$ denotes the set of isomorphism classes of simple objects of the semisimple category $\catstuff{N}$.

\begin{proposition}\label{prop:cat-cliff-3}
Let $H=\langle  h \rangle$ be a subgroup of $G$ and $\obstuff{U} \in \catstuff{C}_h$ be an invertible object. Let $\tilde{\catstuff{N}}_1$, $\tilde{\catstuff{N}}_2$ be two $\catstuff{C}_H$-extensions of the same underlying exact, indecomposable $\catstuff{D}$-module category $\catstuff{N}$. Denote by $\otimes_1$ and $\otimes_2$ the respective $\catstuff{C}_H$-actions.\\
If $\pi_{\catstuff{N}}$ is injective and $\obstuff{U} \otimes_1 \obstuff{S} \simeq \obstuff{U} \otimes_2 \obstuff{S}$ for every simple object of $\catstuff{N}$, then $\tilde{\catstuff{N}}_1 \simeq \tilde{\catstuff{N}}_2$ as $\catstuff{C}_H$-module categories.
\end{proposition}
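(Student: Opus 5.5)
The plan is to compare the two extensions through the ``twist'' autoequivalence of $\catstuff{N}$ that each of them induces, and to use injectivity of $\pi_{\catstuff{N}}$ to show that the two twists agree. For $i=1,2$, the functor $\mathrm{T}_i:=\obstuff{U}\otimes_i\placeholder\colon\catstuff{N}\to\catstuff{N}$ is an autoequivalence of the underlying category, with quasi-inverse $\obstuff{U}^{\ast}\otimes_i\placeholder$, because $\obstuff{U}$ is invertible. It is not $\catstuff{D}$-linear. Instead, it is a twisted module functor, via the isomorphisms
\begin{equation*}
\obstuff{U}\otimes_i(\obstuff{X}\otimes\obstuff{S})\cong(\obstuff{U}\otimes\obstuff{X}\otimes\obstuff{U}^{\ast})\otimes\obstuff{U}\otimes_i\obstuff{S},
\end{equation*}
coming from the associativity constraints of $\otimes_i$. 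Here $\obstuff{X}\mapsto\obstuff{U}\otimes\obstuff{X}\otimes\obstuff{U}^{\ast}$ is the conjugation tensor autoequivalence $c_{\obstuff{U}}$ of $\catstuff{D}$, which is the same for $i=1,2$. I would therefore consider the composite $\mathrm{B}:=\mathrm{T}_2^{-1}\circ\mathrm{T}_1$. The $c_{\obstuff{U}}$-twists cancel, so $\mathrm{B}$ is a genuine $\catstuff{D}$-module autoequivalence, $\mathrm{B}\in\mathrm{Aut}_{\catstuff{D}}(\catstuff{N})$.

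Next, the hypothesis $\obstuff{U}\otimes_1\obstuff{S}\simeq\obstuff{U}\otimes_2\obstuff{S}$ for every simple $\obstuff{S}$ gives $\mathrm{B}(\obstuff{S})\simeq\obstuff{S}$, that is, $\pi_{\catstuff{N}}(\mathrm{B})=\mathrm{id}$. Injectivity of $\pi_{\catstuff{N}}$ then yields a $\catstuff{D}$-module natural isomorphism $\eta\colon\mathrm{B}\iso\mathrm{Id}_{\catstuff{N}}$. Rewritten, this is a natural isomorphism $\obstuff{U}\otimes_1\placeholder\iso\obstuff{U}\otimes_2\placeholder$ that is compatible with the $\catstuff{D}$-action twisted by $c_{\obstuff{U}}$.

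To build the equivalence of $\catstuff{C}_H$-module categories, I would take the identity functor of $\catstuff{N}$ and equip it with a $\catstuff{C}_H$-module structure. On $\catstuff{D}$ it is the identity constraint. Each $\catstuff{C}_{h^j}$ is an invertible $\catstuff{D}$-bimodule containing the invertible object $\obstuff{U}^{\otimes j}$, hence $\catstuff{C}_{h^j}\simeq\catstuff{D}\otimes\obstuff{U}^{\otimes j}$, so every object of $\catstuff{C}_{h^j}$ is of the form $\obstuff{X}\otimes\obstuff{U}^{\otimes j}$ with $\obstuff{X}\in\catstuff{D}$. I would then define the constraint on $\obstuff{X}\otimes\obstuff{U}^{\otimes j}\otimes\obstuff{S}$ by iterating $\eta$ $j$ times, combined with the associativity constraints of $\otimes_1$ and $\otimes_2$. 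Equivalently, since a $\catstuff{C}_H$-extension of $\catstuff{N}$ is determined by a lift of the group homomorphism $H\to\mathrm{BrPic}(\catstuff{D})$ to module data, this amounts to matching the data $(\mathrm{T}_i,\text{compatibility isomorphisms})$ for the generator $h$.

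The main obstacle is the cyclic consistency at $j=|H|$. There $\obstuff{U}^{\otimes|H|}$ lies in $\catstuff{D}$, and the constraint obtained by iterating $\eta$ must agree with the one prescribed by the $\catstuff{D}$-action. The discrepancy is a $\catstuff{D}$-module automorphism of $\mathrm{Id}_{\catstuff{N}}$. Since $\catstuff{N}$ is indecomposable and semisimple over the algebraically closed field $\K$, such automorphisms are nonzero scalars. Because $\eta$ is only determined up to a scalar, I would rescale it by a $|H|$-th root of the discrepancy, which exists as $\K$ is algebraically closed. This kills the obstruction, exactly as $\mathrm{H}^2(\text{cyclic},\K^{\ast})=0$ is used at the end of \fullref{prop:alg-to-mod-cat}. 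The remaining pentagon-type coherences then reduce to the naturality of $\eta$ and its $\catstuff{D}$-compatibility.
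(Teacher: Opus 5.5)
Your proposal is correct and follows the paper's argument. Your $\mathrm{T}_2^{-1}\circ\mathrm{T}_1$ differs from the paper's $\catstuff{D}$-module autoequivalence $\psi_2\circ\psi_1^{-1}$ only by inversion and conjugation by $\mathrm{T}_2$, injectivity of $\pi_{\catstuff{N}}$ identifies it with the identity, and your rescaling of $\eta$ by an $|H|$-th root is an explicit version of the paper's appeal to $\mathrm{H}^2(H,\K^{\ast})=0$ for cyclic $H$ over algebraically closed $\K$. Your extension of the module-functor structure from $\obstuff{U}$ to all of $\catstuff{C}_{h^j}$ fills in a step the paper leaves implicit.
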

\begin{proof}
Let us denote by $\psi_i: \catstuff{C}_h \boxtimes_{\catstuff{D}} \catstuff{N} \to \catstuff{N}$ be the equivalence induced by the extension $\tilde{\catstuff{N}}_i$, for $i=1,2$. Moreover, $\obstuff{U}$ induces two autoequivalences $\mathrm{T}_i$ of $\catstuff{N}$ defined as ${\mathrm{T}_i(\obstuff{N}):=\psi_i(\obstuff{U} \boxtimes \obstuff{N})=\obstuff{U}\otimes_i \obstuff{N}}$. Let $\mathrm{Q}$ denote the $\catstuff{D}$-module autoequivalence $\psi_2 \circ \psi_1^{-1}$, where $\psi_1^{-1}$ is one quasi-inverse of $\psi_1$. By hypothesis, $\mathrm{Q}(\mathrm{T}_1(\obstuff{S})) \simeq \mathrm{T}_2(\obstuff{S}) \simeq \mathrm{T}_1(\obstuff{S})$, for every simple object $\obstuff{S}$ of $\catstuff{N}$. Hence, $\pi_{\catstuff{N}}([\mathrm{Q}])= \mathrm{id}$. By injectivity of $\pi_{\catstuff{N}}$, we have that $\mathrm{Q} \simeq \mathrm{Id}_{\catstuff{N}}$. This implies that $\psi_1 \simeq \psi_2$ as $\catstuff{D}$-module equivalences. It remains to check that the associated associativity constraint are equal. Indeed this is the case because $\catstuff{N}$ is indecomposable, hence they might differ by an element of $\mathrm{H}^2(H,\K^*)$. Since $H$ is cyclic and $\K$ is algebraically closed, $\mathrm{H}^2(H,\K^{\ast})=\K^{\ast}/(\K^{\ast})^{\mathrm{ord}(h)}=0$, hence $\tilde{\catstuff{N}}_1 \simeq \tilde{\catstuff{N}}_2$ as $\catstuff{C}_H$-module categories.
\end{proof}

\subsection{Classification of exact, indecomposable, regular module categories of $\sstmod$}\label{subsection:eir-mod-cat-sst}

\begin{lemma}\label{lemma:application-cat-cliff-3}
If $\catstuff{M}$ be a $\sstmod$-module category such that $\Gros(\catstuff{M}) \cong_{\N} \taftm(\Gamma,d)$ for a certain pair $(\Gamma,d) \in \typeADET(n)$, then there exists an indecomposable $\catstuff{W}$-module category $\catstuff{N}$ which is a summand of $\catstuff{M}_{|\catstuff{W}}$ such that:
\begin{enumerate}[label=(\roman*)]

\item
\begin{equation*}
\mathrm{Stab}_{C_{2n}}(\catstuff{N}) = \begin{cases} \langle 2d \rangle & \text{ if }\Gamma \text{ is of type } \typeADE, \\ \langle d \rangle &\text{ if } \Gamma \text{ is of type } \typeT. \end{cases}
\end{equation*}
\item The equivalence class of $\catstuff{N}$ as a $\catstuff{W}$-module category is independent of $\catstuff{M}$.
\item $\pi_{\catstuff{N}}$ is injective.
\end{enumerate}
\end{lemma}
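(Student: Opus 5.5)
The plan is to read everything off the decategorification $\Gros(\catstuff{M}) \cong_{\N} \taftm(\Gamma,d)$, using the $C_{2n}$-grading of $\sstmod$ with $\sstmod_0=\catstuff{W}$.

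\textbf{Step 1: the summands of $\catstuff{M}_{|\catstuff{W}}$.} First I identify the indecomposable summands of $\catstuff{M}_{|\catstuff{W}}$ combinatorially. A simple object of $\catstuff{M}$ corresponds to a basis vector $m_{ij}$. The objects $\twmod[a]$ act through the even Chebyshev combinations $\tsimple[-a]\otimes\tinde[2a+1,0]$ of $\xpar$ and $\ypar$.

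The $\catstuff{W}$-summands are the connected components of the graph on $\{m_{ij}\}$ whose edges are given by $\mmatrix[\twmod[1]]=\mmatrix[\ypar]^{-1}(\mmatrix[\xpar]^2-\mmatrix[\ypar])$. From the explicit formula for $\taftm(\Gamma,d)$, each edge $(i\to h)$ raises the second index by $0$ or $1$ according to orientation. Therefore $\mmatrix[\ypar]^{-1}\mmatrix[\xpar]^2$ moves along paths of length two in the bipartite graph $\Gamma$, keeping $j$ up to a shift that is constant on paths.

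\begin{itemize}
\item For $\Gamma$ of type $\typeADE$, I will show that the components are indexed by a bipartition class of $\Gamma$ together with $j\in\Z/d\Z$. The resulting $\catstuff{W}$-module $\catstuff{N}$ has as Grothendieck group the $\twmod[1]$-graph on one parity class of $\Gamma$. This is exactly the $\catstuff{W}\simeq\tilde{\catstuff{C}}_\xi$-module underlying Ostrik's $\typeADE$ classification.
\item For type $\typeT$, the loop destroys the bipartition. The component instead mixes $j$ and $j+\frac{d+1}{2}$, and the $\catstuff{W}$-module is of type $\typeA_{n-1}$ restricted to even vertices.
\end{itemize}

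\textbf{Step 2: part (i), the stabilizer.} Part (i) then follows by tracking degrees. The element $\tsimple[1]$ has degree $2$ and shifts $j$ by $1$; an element of degree $1$ (for instance $\tinde[2,0]$) moves across one edge. The stabilizer of a component is the set of degrees $g$ such that some object of $\sstmod_g$ maps the component to itself. Counting components gives $[C_{2n}:\mathrm{Stab}]$ via \fullref{prop:cat-cliff-1}: there are $2d$ components in type $\typeADE$ (two parity classes times $d$) and $d$ in type $\typeT$. Since $C_{2n}$ is cyclic, this forces the subgroups $\langle 2d\rangle$ and $\langle d\rangle$ respectively.

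\textbf{Step 3: part (ii), independence from $\catstuff{M}$.} For (ii), the decategorification of $\catstuff{N}$ as a based $\Gros(\catstuff{W})$-module is determined by $(\Gamma,d)$ alone, by Step 1. I then invoke the classification of $\tilde{\catstuff{C}}_\xi$-module categories: Ostrik's classification \cite{Ost03}, transported along \fullref{prop:tensor-equiv}. In positive characteristic I would go through the lifting argument already used in \fullref{prop:tensor-equiv}. That classification says that each such based module has a unique categorification, up to the known exception where $\typeD_{\mathrm{even}}$ yields two module categories with the same fusion graph but different $\catstuff{W}$-restriction parity. I expect this identification to be the main obstacle, in particular checking that the $\catstuff{W}$-graph of one parity class pins down the module category. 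I would first try to handle it by identifying $\catstuff{N}$ as $\oneMod_{\catstuff{W}}(\obstuff{A})$ with $\obstuff{A}=\underline{\End}(\obstuff{S})$. Here $\obstuff{S}$ is chosen to be the simple object corresponding to the vertex of $\Gamma$ adjacent to the end of the longest leg, whose internal End is multiplicity-free and readable from $\mmatrix[\twmod[a]]$. Then I would use the Kirillov–Ostrik uniqueness of the algebra structure on $\obstuff{A}$.

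\textbf{Step 4: part (iii), injectivity of $\pi_{\catstuff{N}}$.} For (iii), I would identify $\mathrm{Aut}_{\catstuff{W}}(\catstuff{N})$ with the invertible objects of the dual category $\catstuff{W}^*_{\catstuff{N}}$. An invertible bimodule functor fixing all simples is, by semisimplicity and indecomposability, isomorphic to the identity twisted by a character of the universal grading group acting trivially. I would rule this out by showing that the dual category's invertibles act freely on $\mathrm{Irr}(\catstuff{N})$ whenever they are nontrivial. This uses the rank-two invertible object $\JD$ (or none) in the $\typeA/\typeD$ duals, together with the explicit graphs, case by case.
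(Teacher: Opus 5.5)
Your Steps 1 and 2 are sound. The index count via the transitivity in \fullref{prop:cat-cliff-1} is a cleaner proof of (i) than ``by construction'', and it shows that (i) holds for every summand. The genuine gap is in (iii).

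First, (iii) depends on which summand you take, and you never fix one. Take $(\typeD_4,d)$ with $n=6$, so $\catstuff{W}\simeq\mathrm{Rep}(\mathfrak{S}_3)$. A summand attached to the central vertex has one simple, on which $\twmod[1]$ acts with multiplicity $2$. So it is $\mathrm{Vec}$ via the fibre functor: its module autoequivalences form $\mathfrak{S}_3$, and all of them fix the unique simple. Similarly, the middle vertex of $\typeA_3$ gives $\mathrm{Vec}$ over $\catstuff{W}\simeq\mathrm{Vec}_{\Z/2\Z}$, with two non-isomorphic module autoequivalences. This is why the paper picks the colour class $\Gamma_\epsilon$ on which $\Aut^+(\Gamma)$ acts faithfully.

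Second, even on the correct summand your criterion is false. The leaf summand of $\typeD_4$ is $\mathrm{Rep}(\Z/3\Z)$ by restriction, and twisting by the inversion of $\Z/3\Z$ is a nontrivial module autoequivalence that fixes the trivial character. More generally, in type $\typeD$ the swap of the two fork leaves fixes the other simples of that class. So nontrivial invertibles of the dual do not act freely; only faithfulness holds. Proving faithfulness requires control of all of $\Aut_{\catstuff{W}}(\catstuff{N})$, which your description (``identity twisted by a character of the universal grading group'') does not provide. That description already fails for the fibre-functor module category over $\mathrm{Rep}(\mathfrak{S}_3)$, although $\mathrm{Rep}(\mathfrak{S}_3)$ has trivial universal grading group.

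The missing idea is the paper's lift $\hat{\catstuff{N}}:=\catstuff{C}_{n-2}(\xi)\boxtimes_{\catstuff{W}}\catstuff{N}$. Its $\obstuff{V}_1$-fusion graph is $\Gamma$, and its two $\catstuff{W}$-summands are indexed by the colour classes. Any $B\in\Aut_{\catstuff{W}}(\catstuff{N})$ induces a colour-preserving automorphism of $\Gamma$. If $B$ fixes $\Gamma_\epsilon$ pointwise, this automorphism is trivial. Then $\hat{B}$ fixes all simples and, $\Gamma$ being a tree, is isomorphic to the identity up to rescaling.

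Step 3 has a related weakness. Ostrik's Theorem 6.1 classifies $\catstuff{C}_{n-2}(\xi)$-module categories by their $\obstuff{V}_1$-graphs. It does not assert that a based $\Gros(\catstuff{W})$-module has a unique categorification over the adjoint subcategory, and it contains no ``$\typeD_{\mathrm{even}}$ exception'' to appeal to. Your fallback needs uniqueness of the algebra structure on $\underline{\End}(\obstuff{S})\in\catstuff{W}$. That algebra is in general noncommutative, so Kirillov--Ostrik's uniqueness for commutative algebras in the braided category does not apply directly.

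The same lift settles (ii). The categories $\hat{\catstuff{N}}_1$ and $\hat{\catstuff{N}}_2$ both have graph $\Gamma$, hence are equivalent. The equivalence induces an automorphism of $\Gamma$, and there are two cases:
\begin{itemize}
\item it preserves the bipartition, and restricting gives $\catstuff{N}_1\simeq\catstuff{N}_2$;
\item it swaps the bipartition, which happens only for $\typeA_{\mathrm{even}}$ (so $n$ odd); then you first compose with $\placeholder\otimes\obstuff{V}_{n-2}$.
\end{itemize}
In type $\typeT$, both (ii) and (iii) are immediate. There $\catstuff{N}$ is the regular $\catstuff{W}$-module, so any module autoequivalence satisfies $B\simeq\placeholder\otimes B(\tsimple[0])$.
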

\begin{proof}
When $n=2,3$ we have $\catstuff{W} \simeq \mathrm{Vec}$. Suppose now that $n \geq 4$.
Let us first construct $\catstuff{N}$ and then check that it has all the claimed properties. Choose $(\obstuff{L}_{ij})_{(i,j) \in \llbracket 1, v(\Gamma) \rrbracket \times \Z/d\Z}$ simple objects of $\catstuff{M}$ such that $[\obstuff{L}_{ij}]$ corresponds to $m_{ij}\in \taftm(\Gamma,d)$.\\
$\bullet$ If $\Gamma$ is of type $\typeADE$, the underlying graph is a connected tree. It thus admits a unique bipartition (up to interchanging the two colors) $\Gamma= \Gamma_0 \coprod \Gamma_1$. Let us denote by $\Aut^+(\Gamma)$ the group of graph automorphisms that preserves the two subsets of colors $\Gamma_0$ and $\Gamma_1$ and if $X$ is a set, let $\Sym(X)$ denote the symmetric group of $X$. We claim that there exists $\epsilon \in \{0,1\}$ such that the natural map $\Aut^+(\Gamma) \to \Sym(\Gamma_{\epsilon})$ is injective. Let us define a function $h:\llbracket 1, v(\Gamma) \rrbracket \to \Z_{\geq 0}$ such that $1 \in \Gamma_{h(1)}$ and for each oriented edge $x \to y$, $h(y)=h(x)+1$. In particular, we get $\left \{i \in \llbracket 1, v(\Gamma) \rrbracket \mid h(i) \equiv s \mod 2 \right\}=\Gamma_s$ for $s \in \{0,1\}$. Consider $\catstuff{N}:=\langle \{L_{ij} \mid 2j+h(i) \equiv \epsilon \mod 2d\} \rangle \subset \catstuff{M}$.\\
$\bullet$ If $\Gamma$ is of type $\typeT$, choose $a \in \Z/d\Z$ and consider $\catstuff{N}:=\langle \{L_{ij} \mid 2j+i \equiv a \mod d\} \rangle \subset \catstuff{M}$.

As defined $\catstuff{N}$ is clearly a $\catstuff{W}$-module category since $\catstuff{W}$ is tensor generated by $\obstuff{W}_1$ and $[\obstuff{W}_1]=\ypar^{\smin 1}\xpar^2-[\tsimple[0]]$. Moreover, the action of $\obstuff{W}_1$ is connected on the simples of $\catstuff{N}$, hence $\catstuff{N}$ is indecomposable. The fact that the stabilizer of $\catstuff{N}$ in the grading group $C_{2n}$ is the correct subgroup follows directly from the construction. Recall from \fullref{subsec:stable-cat}, that $\catstuff{C}_{n-2}(\xi)$ denotes the Verlinde category associated with $(\mathfrak{sl}_2,\xi)$, where  $\xi^2=\qpar\in \K$. Take $\mathrm{B} \in \Aut_{\catstuff{W}}(\catstuff{N})$ such that $\pi_{\catstuff{N}}(\mathrm{B})=\mathrm{id}$ and denote by $\hat{\catstuff{N}}:= \catstuff{C}_{n-2}(\xi) \boxtimes_{\catstuff{W}} \catstuff{N}$. Let us argue $(ii)$ and  $(iii)$ by splitting type $\typeADE$ and type $\typeT$.\\
Let $\Gamma$ be of $\typeADE$. Take two $\sstmod$-module categories $\catstuff{M}_1$ and $\catstuff{M}_2$ such that $\Gros(\catstuff{M}_i) \cong_{\N} \taftm(\Gamma,d)$. Let us show that $\catstuff{N}_1 \simeq \catstuff{N}_2$ as $\catstuff{W}$-module categories to obtain $(ii)$ in that case. The $\obstuff{V}_1$-fusion graph of $\hat{\catstuff{N}}_i$, the graph describing the action of $[\obstuff{V}_1]$ on $\Gros(\hat{\catstuff{N}}_i)$ with respect to the basis of simple objects of $\hat{\catstuff{N}}_i$ cf. \cite[\S 3.1]{EO04-2}, is $\Gamma$. By the classification of $\typeADE$ modules categories from \fullref{subsection:explicit-const-mod-cat}, we obtain an equivalence $\mathrm{G}:\hat{\catstuff{N}}_1 \to \hat{\catstuff{N}}_2$. Recall that $\catstuff{C}_{n-2}(\xi)$ is $\Z/2\Z$ faithfully graded with neutral component $\catstuff{W}$. The simple objects of the two indecomposable $\catstuff{W}$-summands of $\hat{\catstuff{N}}_i$ are indexed by $\Gamma_0$ and $\Gamma_1$, respectively. The equivalence $\mathrm{G}$ induces an automorphism of $\Gamma$. Since $\Gamma$ is connected and bipartite, a graph automorphism either preserves $\Gamma_0$ and $\Gamma_1$ or exchanges them. In the first case, this gives an equivalence between $\catstuff{N}_1$ and $\catstuff{N}_2$. In the second case, note that the only $\typeADE$ graph admitting an automorphism exchanging the two colors is even type $\typeA$. In that case $n=h(\Gamma)$ is odd and $\hat{\catstuff{N}}_i$ is equivalent to the regular $\catstuff{C}_{n-2}(\xi)$-module category. Composing $\mathrm{G}$ with $\_\otimes \obstuff{V}_{n-2}$ gives an equivalence preserving the bipartition, since $\obstuff{V}_{n-2}$ is invertible. After restricting to $\catstuff{W}$, we have constructed an equivalence between $\catstuff{N}_1$ and $\catstuff{N}_2$.\\
For $(iii)$, the induced autoequivalence $\hat{\mathrm{B}}:=\mathrm{Id}_{\catstuff{C}_{n-2}(\xi)} \boxtimes \mathrm{B}$ defines a graph automorphism of $\Gamma$ preserving $\Gamma_0$ and $\Gamma_1$, hence an element of $\Aut^+(\Gamma)$. By the choice of $\epsilon$, we get that $\hat{\mathrm{B}}$ fixes every simple object of $\hat{\catstuff{N}}$. Up to some rescaling, we get $\hat{\mathrm{B}} \simeq \mathrm{Id}_{\hat{\catstuff{N}}}$, which gives the desired result after restricting to $\catstuff{N}$.

Finally, in type $\typeT$, it is simpler since $\catstuff{W} \to \catstuff{N}, \obstuff{X} \mapsto \obstuff{X} \otimes \obstuff{L}_{1j_1}$ where $j_1$ is the unique element of $\Z/d\Z$ such that $2j_1+1 \equiv a \mod d$ defines an equivalence of $\catstuff{W}$-module categories. This directly gives $(ii)$. For $(iii)$, after this identification, we get that $\mathrm{B}$ is necessary of the form $\mathrm{B}(\obstuff{X})\simeq \obstuff{X} \otimes \mathrm{B}(\tsimple[0])$. Since $\pi_{\catstuff{N}}(\mathrm{B})=\mathrm{id}$, we get in particular that $\mathrm{B}(\tsimple[0]) \simeq \tsimple[0]$, hence that $\mathrm{B}\simeq \mathrm{Id}_{\catstuff{N}}$.
\end{proof}

We are now in position to prove a classification of all regular exact indecomposable module categories of $\sstmod$.

\begin{theorem}\label{thm:class-bottom}
Taking the split Grothendieck group induces a bijection between
\begin{equation*}
\left\{ \begin{gathered} \text{ exact, indecomposable, regular }\\ \sstmod\text{-module categories} \end{gathered} \right\} \big/_{\simeq} \iso \left\{ \begin{gathered} \text{admissible, regular, transitive, NIM} \\  \N\text{-modules with apex }\cell[b] \end{gathered} \right\} \big/_{\cong_{\N}}.
\end{equation*}
\end{theorem}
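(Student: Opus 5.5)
The plan is to check that the map is well defined, then prove surjectivity and injectivity separately, relying on the results already established. For well-definedness, \fullref{prop:decat-is-ADET} shows that for any exact, indecomposable, regular $\sstmod$-module category $\catstuff{M}$, the group $\Gros(\catstuff{M})$ with its basis of simple objects is an admissible, regular, transitive, NIM $\N$-module with apex $\cell[b]$. An equivalence of module categories permutes simple objects, so its decategorification is an $\N$-equivalence, and the map descends to equivalence classes.

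Surjectivity comes from the explicit constructions. By \fullref{proposition:taft-small-apex}, every target $\N$-module is $\N$-equivalent to $\taftm(\Gamma,d)$ for a unique $(\Gamma,d)\in\typeADET(n)$. For such a pair, the category $\oneMod_{\sstmod}(\obstuff{B}_d(\obstuff{A}_\Gamma))$ is exact, indecomposable and regular. For type $\typeADE$ this is \fullref{prop:alg-to-mod-cat}. For type $\typeT$ it is \fullref{prop:type-T-eir}, together with the remark after \fullref{prop:equi-coh}, which handles characteristic $2$. Finally, \fullref{prop:decat-coherence} identifies its split Grothendieck group with $\taftm(\Gamma,d)$.

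Injectivity is the main step, and I would do it with categorical Clifford theory. Let $\catstuff{M}_1,\catstuff{M}_2$ be exact, indecomposable, regular module categories with $\Gros(\catstuff{M}_1)\cong_{\N}\Gros(\catstuff{M}_2)\cong_{\N}\taftm(\Gamma,d)$. I would use the faithful $C_{2n}$-grading of $\sstmod$ with trivial component $\catstuff{W}$, and apply \fullref{lemma:application-cat-cliff-3} to obtain indecomposable $\catstuff{W}$-summands $\catstuff{N}_i\subset(\catstuff{M}_i)_{|\catstuff{W}}$ with three properties:
\begin{enumerate}[label=(\alph*)]
\item both have the same stabilizer $H$, namely $\langle 2d\rangle$ in type $\typeADE$ and $\langle d\rangle$ in type $\typeT$;
\item they are equivalent as $\catstuff{W}$-module categories;
\item $\pi_{\catstuff{N}_i}$ is injective.
\end{enumerate}
By \fullref{prop:cat-cliff-1}(iii), $\catstuff{M}_i\simeq\sstmod\boxtimes_{\sstmod_H}\tilde{\catstuff{N}}_i$. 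By \fullref{prop:cat-cliff-2}, it then suffices to show that, after transporting along the equivalence $\catstuff{N}_1\simeq\catstuff{N}_2$, the two $\sstmod_H$-extensions are equivalent, possibly after translating by some $g\in C_{2n}$.

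For this I would apply \fullref{prop:cat-cliff-3}. Since $H=\langle h\rangle$ is cyclic, I need an invertible object $\obstuff{U}\in\sstmod_h$. Take $\obstuff{U}=\tsimple[d]$ of degree $2d$ in type $\typeADE$, and $\obstuff{U}=\obstuff{K}_d$ in type $\typeT$; the latter has odd degree generating $\langle d\rangle$. It then remains to check that $\obstuff{U}\otimes_1 \obstuff{S}\simeq \obstuff{U}\otimes_2\obstuff{S}$ for every simple $\obstuff{S}$ of $\catstuff{N}$. This is a decategorified statement, so it can be read off from the matrix of $[\obstuff{U}]$ in $\taftm(\Gamma,d)$. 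In type $\typeADE$, $\ypar^d$ acts trivially on the chosen basis $m_{ij}$. In type $\typeT$, $\obstuff{K}_d$ is expressed through $\ypar$ and $\JT$, and $[\JT]$ is a Chebyshev polynomial in $\xpar,\ypar$.

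The hardest point is making the identification of simples compatible on both sides, so that the hypothesis of \fullref{prop:cat-cliff-3} genuinely compares the same simples. The equivalence $\catstuff{N}_1\simeq\catstuff{N}_2$ from \fullref{lemma:application-cat-cliff-3}(ii) could induce a nontrivial permutation of the labels. If so, I would precompose with a translate by $g\in C_{2n}$, or with the graph automorphism, to realign the labels; here the injectivity of $\pi_{\catstuff{N}}$ guarantees that agreement on isomorphism classes of simples is enough. Once the labels are aligned, \fullref{prop:cat-cliff-3} gives $\tilde{\catstuff{N}}_1\simeq\tilde{\catstuff{N}}_2$, and \fullref{prop:cat-cliff-2} gives $\catstuff{M}_1\simeq\catstuff{M}_2$.
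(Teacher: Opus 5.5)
Your proposal is correct and follows the paper's proof essentially step for step. Well-definedness comes from \fullref{prop:decat-is-ADET}, surjectivity from the explicit $\typeADET$ constructions, and injectivity from \fullref{lemma:application-cat-cliff-3}, \fullref{prop:cat-cliff-3} with $\obstuff{U}=\tsimple[d]$ or $\obstuff{K}_d$, and \fullref{prop:cat-cliff-2}.

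Your worry about aligning labels is unnecessary. As the paper notes, the decategorification already gives $\obstuff{U}\otimes_i\obstuff{S}\simeq\obstuff{S}$ for every simple $\obstuff{S}$ and $i=1,2$, so the hypothesis of \fullref{prop:cat-cliff-3} holds for any choice of equivalence $\catstuff{N}_1\simeq\catstuff{N}_2$.
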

\begin{proof}
The fact that this map is well defined follows from \fullref{prop:decat-is-ADET}. Moreover, surjectivity follows from the explicit constructions done in \fullref{subsection:explicit-const-mod-cat}.

Suppose now that $\Gros(\catstuff{M}_1)\cong_{\N} \Gros(\catstuff{M}_2)$. Thanks to \fullref{proposition:taft-small-apex}, we have $(\Gamma,d)$ such that $\Gros(\catstuff{M}_1) \cong_{\N} \taftm(\Gamma,d)$. Thanks to \fullref{lemma:application-cat-cliff-3}, one can choose two indecomposable summands $\catstuff{N}_i$ of ${\catstuff{M}_i}_{|\catstuff{W}}$ such that $\catstuff{N}_1 \simeq \catstuff{N}_2$ as $\catstuff{W}$-module categories and having the same stabilizer $H$ inside $C_{2n}$. Take
\begin{equation*}
\obstuff{U} := \begin{cases}\tsimple[d] & \text{ if } \Gamma \text{ of type } \typeADE,\\ \obstuff{K}_d & \text{ if } \Gamma \text{ of type } \typeT. \end{cases}
\end{equation*}
From the decategorification we then have $\obstuff{U} \otimes_{i} \obstuff{S} \simeq \obstuff{S}$. Hence \fullref{prop:cat-cliff-3} gives that $\tilde{\catstuff{N}}_1 \simeq \tilde{\catstuff{N}}_2$ as $\sstmod_{H}$-module categories. Finally \fullref{prop:cat-cliff-2} gives that $\catstuff{M}_1 \simeq \catstuff{M}_2$ and thus the injectivity of the split Grothendieck group.
\end{proof}

\begin{remark}
Pushing further the categorical Clifford theory should give that nonregular exact, indecomposable module categories of the semisimplification of the Taft category are classified by $(\Gamma,d, [\sigma])$ where $\sigma$ is a conjugacy class of an automorphism of $\Gamma$ preserving the bipartition of $\Gamma$ (if it exists) and of order dividing $k=n/d$.
For instance the admissible, transitive, NIM $\N$-module given in \fullref{ex:non-reg} is associated to the classifying data $(\typeA_3, 1, [s])$ where $s$ is the reflection that exchanges the two outer vertices of $\typeA_3$.
\end{remark}

\section{Cell \texorpdfstring{$2$}{2}-representations of \texorpdfstring{$\tmod$}{$\tmod$}}
In this last section, we will give explicitly expressions of the fundamental cell $2$-representations of $\tmod$. Recall from \fullref{rmk:cells} that in this context, the notions of left, right and two-sided cell coincide.

\begin{definition}\label{definition:2-cell-module}
Take $\cell[]$ a cell of $\tmod$. Let $\twocatstuff{L}_{\cell[]}(\geq)$ be the $2$-representation
of $\tmod$ spanned by
the additive closure of all indecomposable objects $\obstuff{X}$
which belong to the union of all cells $\cell[]^{\prime}\geq\cell[]$. Moreover, let $\catstuff{Z}(\geq)$ be the unique, maximal proper $\tmod$-stable ideal of $\twocatstuff{L}_{\cell[]}(\geq)$ that does not contain the identity morphism of any indecomposable object of $\cell[]$ cf. \cite[Lemma 3]{MM16c}.
We call $\twocatstuff{L}_{\cell[]}=\twocatstuff{L}_{\cell[]}(\geq)/\catstuff{Z}(\geq)$
the \textit{cell $2$-representation of $\cell[]$}.
\end{definition}

Recall the cells of $\tmod$ as calculated in \fullref{lemma:taft-cells},
and let us denote the category of finite-dimensional, projective $\taft$-modules by $\tpmod$.
By \fullref{lemma:taft-simples}, the indecomposable objects of $\tpmod$ are
the projective covers $\tprmod$ of the simples $\tsimple$.
\begin{proposition}\label{proposition:taft-cell-cat}
The cell $2$-representation $\twocatstuff{L}_{\cell[u]}$
is equivalent to $\tpmod$, as a $2$-representation of $\tmod$.
\end{proposition}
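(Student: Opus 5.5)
Since $\cell[u]$ is the maximal cell by \fullref{remark:taft-cell}, the union of all cells $\cell[]^{\prime}\geq\cell[u]$ is $\cell[u]$ itself. Hence $\twocatstuff{L}_{\cell[u]}(\geq)$ is the additive closure in $\tmod$ of the modules $\tprmod$, $r\in\Z/n\Z$, with $\tmod$ acting by left tensoring. This is exactly $\tpmod$ with its natural $2$-representation structure. It is well defined because the tensor product of any $\taft$-module with a projective module is projective, by \fullref{proposition:taft-tensor}, and $\tpmod$ is idempotent complete, Hom-finite and has finitely many indecomposables, so it is finitary. The statement therefore reduces to showing that the ideal $\catstuff{Z}(\geq)$ is zero. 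Equivalently, $\tpmod$ has no nonzero $\tmod$-stable ideal avoiding all identity morphisms $\mathrm{id}_{\tprmod}$.

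Let $\catstuff{I}$ be a nonzero $\tmod$-stable ideal of $\tpmod$, and pick a nonzero $f\colon\tprmod\to\tprmod[s]$ in $\catstuff{I}$. My plan is to produce $\mathrm{id}_{\tprmod[t]}\in\catstuff{I}$ for some $t$, using the functors $\tmodule\otimes\placeholder$ together with adjunctions. Precomposing and postcomposing with morphisms of $\tpmod$, I may take $f$ to be a nonzero morphism whose image is the socle $\tsimple[s+n-1]$ of $\tprmod[s]$. Concretely, $f$ is $\tprmod\twoheadrightarrow\tsimple\hookrightarrow\tprmod[s]$ with $r=s+n-1$, using \fullref{remark:taft-socle-head}. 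I would then use rigidity, via the standard argument in \cite[Lemma 3]{MM16c}. The adjunction $\Hom(\tmodule\otimes\tprmod,\tprmod[s])\cong\Hom(\tprmod,\tmodule^{\ast}\otimes\tprmod[s])$ shows that $\catstuff{I}$ contains the mates of $f$ under all such adjunctions. In particular, $\catstuff{I}$ contains $(\mathrm{ev}\otimes\mathrm{id})\circ(\mathrm{id}_{\tmodule^{\ast}}\otimes f)\circ(\cdots)$ for suitable coevaluations.

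The cleanest route is to tensor $f$ with $\tprmod[0]$ and apply \eqref{eq:tensordecomp-2}. Write $\mathrm{id}_{\tprmod[0]}\otimes f\colon\tprmod[0]\otimes\tprmod\to\tprmod[0]\otimes\tprmod[s]$; both sides decompose as $\bigoplus_{i}\tprmod[\cdot]$. Since $\tprmod[0]\otimes\placeholder$ is exact, the morphism $\mathrm{id}_{\tprmod[0]}\otimes f$ has image $\tprmod[0]\otimes\tsimple\cong\tprmod[r]$ by \eqref{eq:tensordecomp-0}. A projective image is then a direct summand of the target, and hence also a direct summand of the source, since $\tprmod[r]$ is injective as well. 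Writing $p\colon\tprmod[0]\otimes\tprmod[s]\to\tprmod[r]$ for the corresponding projection and $\iota'$ for a splitting through which $\mathrm{id}_{\tprmod[0]}\otimes f$ maps, the composite $p\circ(\mathrm{id}_{\tprmod[0]}\otimes f)\circ\iota'$ equals $\mathrm{id}_{\tprmod[r]}$, and it lies in $\catstuff{I}$ because $\catstuff{I}$ is $\tmod$-stable. So $\catstuff{I}$ contains the identity of an indecomposable object of $\cell[u]$, which forces $\catstuff{Z}(\geq)=0$. Therefore $\twocatstuff{L}_{\cell[u]}=\twocatstuff{L}_{\cell[u]}(\geq)\simeq\tpmod$, and the equivalence is the identity functor with trivial structure maps.

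I expect the main obstacle to be making precise the claim that $\mathrm{id}_{\tprmod[0]}\otimes f$ factors as a split epimorphism onto $\tprmod[r]$ followed by a split monomorphism. This amounts to checking that exactness of tensoring carries the epi–mono factorization of $f$ to one through $\tprmod[0]\otimes\tsimple$. It also requires that a morphism between projective-injective modules factoring through a projective-injective image splits on both sides, which is standard since $\taft$ is self-injective by \fullref{lemma:taft-prinjective}.
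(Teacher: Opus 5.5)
Your proof is correct, but it takes a different route from the paper.

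\textbf{The paper's route.} It first treats one specific nonzero $f\colon\tprmod[1]\to\tprmod[0]$. Realizing $\tprmod[0]$ and $\tprmod[1]$ inside $\taobj[1,0]$, it writes down explicit maps $\iota\colon\tprmod[0]\to\tprmod[1]\otimes\tprmod[1]$ and $\pi\colon\tprmod[1]\otimes\tprmod[0]\to\tprmod[0]$ built from the multiplication of the algebra object. It then checks $\pi(\mathrm{id}_{\tprmod[1]}\otimes f)\iota=\mathrm{id}_{\tprmod[0]}$ by a computation with \eqref{eq:x-action}. For a general nonzero $f$, it tensors with the regular module $\taft$ and appeals to \cite[Proposition 6.1.11]{EGNO} to conclude $\mathrm{id}_{\taft}\in\algstuff{I}$.

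\textbf{Your route.} You give a uniform argument. Exactness of $\tprmod[0]\otimes\placeholder$ carries the epi--mono factorization of $f$ to one of $\mathrm{id}_{\tprmod[0]}\otimes f$. The middle object $\tprmod[0]\otimes\mathrm{im}(f)$ is projective by \fullref{proposition:taft-tensor}, hence injective by \fullref{lemma:taft-prinjective}. So both halves split, and a corner of $\mathrm{id}_{\tprmod[0]}\otimes f$ is an identity morphism. This uses only facts established in Section~3, needs no explicit formulas, and handles every $f$ at once. The paper's version produces concrete splitting maps, but its general step is only sketched through the citation.

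\textbf{Two small remarks.}
\begin{itemize}
\item Your preliminary reduction to a morphism with simple image, $\tprmod[r]\twoheadrightarrow\tsimple[r]\hookrightarrow\tprmod[s]$, is valid: precompose with $\tprmod[a+\ell-1]\to\tprmod[a]$ whose image is the $(\ell-1)$-th radical, where $\ell$ is the length of $\mathrm{im}(f)$. It is, however, unnecessary. For any nonzero $f$, the module $\tprmod[0]\otimes\mathrm{im}(f)$ is already a nonzero projective-injective module, so the same splitting yields the identity of a nonzero projective, and hence of some $\tprmod[t]$.
\item In your sentence about the splitting, the roles are swapped. The image is a summand of the source because it is projective, and a summand of the target because it is injective. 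You state this correctly in your final paragraph.
\end{itemize}
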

\begin{proof}
Thanks to \fullref{lemma:taft-cells}, it is enough to prove that $\tpmod$ does not contain
any nonzero, proper $\tmod$-stable ideals.

Consider any nonzero morphism of $\taft$-modules $f\colon\tprmod[1]\to\tprmod[0]$. We are
going to show that the $\tmod$-stable ideal generated by $f$ contains the identity morphism of
$\tprmod[0]$ and is therefore not proper.
By \fullref{lemma:decomp-A}, we have
\begin{gather}\label{eq:decomp-aob}
\morstuff{U}(\taobj[1,0])\cong{\textstyle\bigoplus_{i=0}^{n\smin 1}}\,\tprmod[1\smin i]
\end{gather}
in $\tpmod$. As we showed in the proof of that lemma, under the isomorphism in
\eqref{eq:decomp-aob}, we have $\taobj[1,0](i)\cong\tprmod[1{-}i]$.
Therefore, we can take
$\{\gat(\xx^{j}\acts\zapar^{n{-}1})\mid j=0,\dots,n-1\}$ as a basis
of $\taobj[1,0](1)\cong\tprmod[0]$ and
$\{\xx^{j}\acts\zapar^{n{-}1}\mid j=0,\dots,n-1\}$ as a basis of
$\taobj[1,0](0)\cong\tprmod[1]$. On these bases,
the morphism $f\colon\tprmod[1]\to\tprmod[0]$ is uniquely determined by
\[
f(\zapar^{n{-}1})=c\gat(\xx\acts\zapar^{n{-}1}),
\]
for a certain nonzero scalar $c\in\K$. Without loss of generality, we can take $c=-\qpar^{-2}$.

There is a nonzero morphism of $\taft$-modules
$\iota\colon\tprmod[0]\to\tprmod[1]\otimes\tprmod[1]$,
uniquely determined by $\iota(\gat\zapar^{n{-}1})=\zapar\otimes\zapar^{n{-}1}$.
Similarly, there is a nonzero
morphism of $\taft$-modules $\pi\colon\tprmod[1]\otimes\tprmod[0]\to\tprmod[0]$, uniquely
determined by embedding $\tprmod[1]\otimes\tprmod[0]$
into $\taobj[1,0]\otimes\taobj[1,0]$,
multiplying in $\taobj[1,0]$ and projecting onto $\tprmod[0]$, using
\eqref{eq:decomp-aob}.

A simple calculation now shows that
\[
\pi(\mathrm{id}_{\tprmod[1]}\otimes f)\iota=\mathrm{id}_{\tprmod[0]}.
\]
More precisely,
\begin{gather*}
\pi(\mathrm{id}_{\tprmod[1]}\otimes f)\iota(\gat\zapar^{n-1})=-\qpar^{-2}\zapar\gat(\xx\acts\zapar^{n-1})
=-\qpar^{-2}\zapar\gat(1+\qpar^{{-}1}+\dots+\qpar^{2{-}n})\zapar^{n{-}2}
=\gat\zapar^{n{-}1},
\end{gather*}
using \eqref{eq:x-action} and that $1+\qpar^{-1}+\cdots+\qpar^{2{-}n}=-\qpar$.

If $f\colon\tprmod[i]\to\tprmod[j] \neq 0$ lies in a stable ideal $\algstuff{I}$, then $\mathrm{id}_{\taft} \otimes f \in \algstuff{I}$. By \cite[Proposition 6.1.11]{EGNO}, we get $\mathrm{id}_{\taft} \in \algstuff{I}$. Since $\taft \simeq \bigoplus_{r=0}^{n\smin 1}{\tprmod[r]}$ in $\tmod$, we have $\mathrm{id}_{\tprmod[r]} \in \algstuff{I}$, hence $\algstuff{I}=\tpmod$.
%Similar arguments prove the same for nonzero morphisms
%$f\colon\tprmod[i]\to\tprmod[j]$ in general,
%so there are indeed no nonzero proper $\tmod$-stable ideals in $\tpmod$.
\end{proof}

\begin{remark}
Even though $\tmod$
is a tensor category, $\twocatstuff{L}_{\cell[u]}$ is an example of a nonabelian cell $2$-representation, since $\tpmod$ is not abelian thanks to \fullref{subsection:taft-reps}.
\end{remark}

\begin{proposition}\label{proposition:taft-decat-prmod}
We have $\Gros(\twocatstuff{L}_{\cell[u]})\cong_{\N}\taftm(n)$ as $\taftp$-modules.
\end{proposition}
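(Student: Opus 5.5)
By \fullref{proposition:taft-cell-cat}, it suffices to compute the split Grothendieck group of $\tpmod$, with its natural $\N$-basis $\{[\tprmod]\mid r\in\Z/n\Z\}$, and to determine how $\xpar=[\tinde[2,0]]$ and $\ypar=[\tsimple[1]]$ act on this basis. By \fullref{proposition:taft-ring}, these two generators determine the whole $\taftp$-action. Moreover, an $\N$-equivalence is just a relabelling of the basis that intertwines the two action matrices.

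First, \eqref{eq:tensordecomp-0} gives $\tsimple[1]\otimes\tprmod\cong\tprmod[r+1]$. Hence $\mmatrix[\ypar]$ is the cyclic permutation $[\tprmod]\mapsto[\tprmod[r+1]]$, which is $\amatrix[\Theta_n]$ in the enumeration $m_{i}:=[\tprmod[i-1]]$. Second, the formula $\tinde\otimes\tprmod[r^{\prime}]\cong\bigoplus_{i=1}^{\ell}\tprmod[r+r^{\prime}-i+\ell]$ from \eqref{eq:tensordecomp}, specialised to $\ell=2$ and $r=0$, gives
\[
\tinde[2,0]\otimes\tprmod[r^{\prime}]\cong\tprmod[r^{\prime}+1]\oplus\tprmod[r^{\prime}],
\]
in agreement with \eqref{eq:taft-action-XY}. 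Thus $\mmatrix[\xpar]=\imatrix+\amatrix[\Theta_n]$, so that $\xpar\acts m_i=m_i+m_{i+1}$ and $\ypar\acts m_i=m_{i+1}$ with indices taken modulo $n$. This is exactly the defining action of $\taftm(n)$, since $d=n$ divides $n$.

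The isomorphism is therefore realised by the identity permutation matrix in this enumeration, which gives $\cong_{\N}$. The only step that needs care is bookkeeping. The indices in \eqref{eq:tensordecomp} must be read correctly: for $\ell=2$ the summands are $i=1,2$, giving shifts $r^{\prime}+1$ and $r^{\prime}$. One must also check that the left action of $\tmod$ on $\tpmod$ is the one used in the decomposition formulas; since tensor products commute up to isomorphism, either convention works.

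As a consistency check, this module is Taft-split, transitive, and has apex $\cell[u]$. By \fullref{proposition:taft-big-apex}, it is then forced to be $\taftm(d)$ for the unique $d$ equal to its rank $n$.
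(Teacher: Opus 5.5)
Your proof is correct and takes the paper's route. You reduce to $\tpmod$ via \fullref{proposition:taft-cell-cat}, use \fullref{lemma:taft-cells} for the basis $\{[\tprmod]\}$, and read off the action from \fullref{proposition:taft-tensor}, making explicit the computation $\mmatrix[\ypar]=\amatrix[\Theta_{n}]$ and $\mmatrix[\xpar]=\imatrix+\amatrix[\Theta_{n}]$ that the paper's one-line proof leaves implicit.
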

\begin{proof}
This follows directly from \fullref{lemma:taft-cells}
and \fullref{proposition:taft-cell-cat}.
\end{proof}

The following proposition shows that the failure of decategorification to distinguish upper-cell $2$-representations is not caused by having different collections of objects. Even if $\oneMod_{\tmod}(\taobj[n,\lambda])$ and $\tpmod \equiv \twocatstuff{L}_{\cell[u]}$ have essentially the same indecomposable objects and the same decategorification, the distinction lies in their morphism spaces.

\begin{proposition}\label{proposition:taft-2-rep}
For any $\lambda\in\K$, the forgetful functor
\[
\morstuff{U}\colon\oneMod_{\tmod}(\taobj[n,\lambda])\to\tpmod
\]
is an essentially surjective, faithful, but non-full, morphism of $2$-representations of $\tmod$.
\end{proposition}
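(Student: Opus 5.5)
The plan is to verify the four properties of $\morstuff{U}$ in turn, starting with the easy ones and using the explicit structure of $\oneMod_{\tmod}(\taobj[n,\lambda])$ from \fullref{theorem:semisimplicity}. Since $d=n$, we have $k=1$. So $\taobj[n,\lambda]=\K[\zapar]/(\zapar^n-\lambda)$ with $\gt\acts\zapar=\qpar^{-1}\zapar$ and $\xx\acts\zapar=1$, and as a $\taft$-module it is $\morstuff{U}(\taobj[n,\lambda])\cong\tprmod[1]$ by \fullref{lemma:decomp-A}.

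\textbf{Well-definedness and compatibility.} Every object of $\oneMod_{\tmod}(\taobj[n,\lambda])$ is a direct sum of the simples $\tsimple\otimes\taobj[n,\lambda]$, $0\le r\le n-1$. Their images are $\morstuff{U}(\tsimple\otimes\taobj[n,\lambda])\cong\tsimple\otimes\tprmod[1]\cong\tprmod[r+1]$ by \fullref{proposition:taft-tensor}. Hence $\morstuff{U}$ lands in $\tpmod$. It is a morphism of $2$-representations with $\morstuff{\Phi}$ the identity, because the left $\tmod$-action on right $\taobj[]$-modules is literally $\tmodule\otimes\placeholder$ on underlying objects, so the coherence conditions hold trivially.

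\textbf{Essential surjectivity and faithfulness.} As $r$ runs over $\Z/n\Z$, the modules $\tprmod[r+1]$ exhaust all indecomposable projectives of $\tmod$ by \fullref{lemma:taft-prinjective}. Since $\morstuff{U}$ is additive, it is essentially surjective onto $\tpmod$. Faithfulness is immediate, because a morphism of module objects is a morphism in $\tmod$ satisfying an extra condition, and $\morstuff{U}$ is the identity on morphisms.

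\textbf{Non-fullness.} This is the main point. I would compare endomorphism spaces. By \eqref{eq:hom-comp}, $\End_{\taobj[n,\lambda]}(\taobj[n,\lambda])\cong\K$. On the other hand, $\End_{\tmod}(\tprmod[1])$ has dimension equal to the multiplicity of $\tsimple[1]$ as a composition factor of $\tprmod[1]$. By \fullref{lemma:taft-inde}, the composition factors of $\tprmod[1]$ are $\tsimple[1],\tsimple[2],\dots,\tsimple[n]$, each appearing once. So $\dim\End_{\tmod}(\tprmod[1])=1$, and this comparison gives no contradiction.

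I therefore expect the obstruction to come from Hom-spaces between distinct simples. For $r\neq r'$, \eqref{eq:hom-comp} gives $\Hom_{\taobj[n,\lambda]}(\tsimple\otimes\taobj[n,\lambda],\tsimple[r']\otimes\taobj[n,\lambda])=0$. In $\tmod$, however, $\Hom_{\tmod}(\tprmod[r+1],\tprmod[r'+1])\neq0$ for all $r,r'$, since every simple occurs in every $\tprmod$. Concretely, take $r=0$ and $r'=1$. There is a nonzero $\taft$-linear map $\tprmod[1]\to\tprmod[2]$ sending the top to the radical layer $\tsimple[1]$ of $\tprmod[2]$. This map is not in the image of $\morstuff{U}$, so $\morstuff{U}$ is not full.

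The only step that needs care is confirming that \eqref{eq:hom-comp} is valid for $d=n$, i.e.\ $k=1$, where $i$ ranges only over $\{0\}$. It is, and it yields $\K^n$ as the total endomorphism algebra of the generator. By contrast, $\End_{\tmod}(\bigoplus_r\tprmod[r])\cong\taft^{\mathrm{op}}$ has dimension $n^2$. This dimension count gives a clean second proof of non-fullness, independent of the choice of $\lambda$.
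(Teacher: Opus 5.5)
Your proof is correct and follows the paper's route. Essential surjectivity comes from $\morstuff{U}(\tsimple\otimes\taobj[n,\lambda])\cong\tsimple\otimes\tprmod[1]\cong\tprmod[r+1]$ together with \fullref{theorem:semisimplicity}, and non-fullness comes from the semisimplicity of $\oneMod_{\tmod}(\taobj[n,\lambda])$. The only difference is that you make this last step explicit: \eqref{eq:hom-comp} kills the Hom-spaces between distinct simples, while there is a nonzero map $\tprmod[1]\to\tprmod[2]$ (equivalently, a count of $n$ versus $n^{2}$). The paper instead only observes that a semisimple category cannot be equivalent to $\tpmod$, leaving implicit that a full, faithful, essentially surjective functor would be an equivalence.
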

\begin{proof}
By \fullref{lemma:decomp-A}, we have
$\morstuff{U}(\taobj[n,\lambda])\cong\tprmod[1]$.
Since $\tsimple\otimes\tprmod[1]\cong\tprmod[1{+}r]$, for any $r=0,\dots,n\smin 2$,
we can use \fullref{theorem:semisimplicity} to
see that the indecomposable objects in
$\oneMod_{\tmod}(\taobj[n,\lambda])$ are precisely
the indecomposable projective $\taft$-modules. However, it is
not equivalent to $\tpmod$, because $\oneMod_{\tmod}(\taobj[n,\lambda])$ is
semisimple by \fullref{corollary:taft-module-cat}.
\end{proof}

Let us conclude this section by identifying the cell $2$-representation of the bottom cell $\cell[b]$ with the semisimplification of the Taft category.

\begin{proposition}
The cell $2$-representation $\twocatstuff{L}_{\cell[b]}$
is equivalent to $\sstmod$, as $2$-representation of $\tmod$.
\end{proposition}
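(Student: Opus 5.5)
We unwind \fullref{definition:2-cell-module} for $\cell[]=\cell[b]$. Since $\cell[b]$ is the minimal cell, every indecomposable lies in some cell $\cell[]'\geq\cell[b]$, so $\twocatstuff{L}_{\cell[b]}(\geq)$ is the whole $\tmod$, i.e.\ the regular $2$-representation given by left multiplication. The task is therefore to identify the ideal $\catstuff{Z}(\geq)$, the unique maximal $\tmod$-stable ideal of $\tmod$ containing no identity of an object of $\cell[b]$, with the ideal $\catstuff{N}$ of negligible morphisms. Once $\catstuff{Z}(\geq)=\catstuff{N}$ is shown, $\twocatstuff{L}_{\cell[b]}=\tmod/\catstuff{N}=\sstmod$, and the left regular action of $\tmod$ on $\tmod$ descends to the action through the semisimplification functor $\pi$. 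That is the claimed equivalence of $2$-representations.

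\textbf{Step 1: $\catstuff{N}\subseteq\catstuff{Z}(\geq)$.} The ideal $\catstuff{N}$ is a tensor ideal, so in particular it is stable under the left $\tmod$-action. By \eqref{computation:dim}, every $\tinde[l,r]$ with $l<n$ has nonzero dimension, so $\mathrm{id}_{\obstuff{X}}\notin\catstuff{N}$ for every $\obstuff{X}\in\cell[b]$. The maximality of $\catstuff{Z}(\geq)$ then gives the inclusion. We also note that $\catstuff{N}$ contains $\mathrm{id}_{\tprmod}$, since $\catstuff{P}\subset\catstuff{N}$.

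\textbf{Step 2: $\catstuff{Z}(\geq)\subseteq\catstuff{N}$.} Let $\catstuff{I}$ be any $\tmod$-stable ideal containing no identity of an object of $\cell[b]$. Suppose there is $f\colon \obstuff{M}\to\obstuff{N}$ in $\catstuff{I}\setminus\catstuff{N}$; we may take $\obstuff{M},\obstuff{N}$ indecomposable. Then there is $g\colon\obstuff{N}\to\obstuff{M}$ with $\mathrm{Tr}(gf)\neq 0$, and $gf\in\catstuff{I}$. Since $\End(\obstuff{M})$ is local and the trace is nonzero, $gf$ is not nilpotent, so $gf$ is invertible and $\mathrm{id}_{\obstuff{M}}\in\catstuff{I}$. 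The object $\obstuff{M}$ must then be projective. But every morphism out of or into a projective $\tprmod$ lies in $\catstuff{P}\subset\catstuff{N}$, so $f\in\catstuff{N}$, a contradiction.

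One could alternatively use stability: tensoring with $\obstuff{M}^{\ast}$ and composing with evaluation and coevaluation produces a map $\tsimple[0]\to\tsimple[0]$ given by the nonzero scalar $\mathrm{Tr}(gf)$. This would put $\mathrm{id}_{\tsimple[0]}\in\catstuff{I}$, which is excluded. This second route needs the ideal to be stable on both sides, or the pivotal/duality compatibility, which holds here by \fullref{rmk:cells}.

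\textbf{Step 3: conclusion.} Steps 1 and 2 give $\catstuff{Z}(\geq)=\catstuff{N}$. Hence $\twocatstuff{L}_{\cell[b]}=\tmod/\catstuff{N}=\sstmod$, and the action is the one described at the start. As a consistency check, $\sstmod$ is semisimple and $\tsimple[0]$ generates it under the action, so it is simple, as \fullref{proposition:bij-bottom-cell-rep-sstmod-rep} predicts.

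The main obstacle is Step 2. The trace argument must be handled carefully because $\tmod$ is not spherical: the left and right traces differ by invertible scalars, and which trace is nonzero is what matters. In addition, the negligible ideal as defined is only checked on indecomposables, so we must confirm it agrees with the version computed through the one-sided action.
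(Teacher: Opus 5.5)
Your proof is correct. Step 1 and the reduction to $\catstuff{Z}(\geq)=\catstuff{N}$ are exactly the paper's, but your Step 2 takes a genuinely different route.

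The paper gets $\catstuff{Z}(\geq)\subseteq\catstuff{N}$ from the fact that $\sstmod=\tmod/\catstuff{N}$ is a fusion category, hence a simple $2$-representation of itself and of $\tmod$. The image of $\catstuff{Z}(\geq)$ in $\sstmod$ is then a $\tmod$-stable ideal not containing $\mathrm{id}_{\tsimple[0]}$, so it vanishes. You argue elementwise instead. A morphism $f$ between indecomposables lying outside $\catstuff{N}$ produces an endomorphism $gf$ with nonzero trace in a local ring, which is therefore invertible. That forces the identity of a projective into the ideal, and the contradiction comes from $\catstuff{P}\subseteq\catstuff{N}$.

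What your route buys is that it never uses $\tmod$-stability of the ideal, nor the Etingof--Ostrik result that $\sstmod$ is fusion. It shows that $\catstuff{N}$ contains every ideal, stable or not, that avoids the identities of objects of $\cell[b]$. The paper's route is shorter once semisimplicity of $\sstmod$ is available. It also silently uses that a nonzero stable ideal of a fusion category contains $\mathrm{id}_{\tsimple[0]}$, which your argument does not need.

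The one fact you use without comment is that a nilpotent endomorphism has zero categorical trace. Here this is immediate: the trace of a $\taft$-linear endomorphism $h$ is the ordinary trace of $\gt^{\pm 1}h$, and $\gt$ commutes with $h$, so $\gt^{\pm1}h$ is nilpotent.

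This also disposes of the worries in your last paragraph. Your Step 2 works verbatim with either the left or the right trace, and non-sphericality plays no role. If you want to match the paper's convention $\mathrm{Tr}(fg)$ instead of $\mathrm{Tr}(gf)$, run the same argument in $\End(\obstuff{N})$. Either invoke cyclicity of the trace, or note directly that $fg$ invertible forces $\mathrm{id}_{\obstuff{N}}$ into the ideal. Your alternative route via evaluation and coevaluation is not needed.
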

\begin{proof}
Thanks to \fullref{lemma:taft-cells}, it is enough to prove that $\catstuff{Z}(\geq)=\catstuff{N}$, the tensor ideal of negligible morphisms of $\tmod$. Indeed, $\sstmod$ being a fusion category, it is a simple $2$-representation of $\sstmod$, hence a simple $2$-representation of $\tmod$. Moreover $\catstuff{N}$ does not contain the identify morphism of any object of $\cell[b]$. We thus get the desired equality.
\end{proof}

\printbibliography
\end{document}